\DeclareMathAlphabet{\mathpzc}{OT1}{pzc}{m}{it}
\newtheorem{cor}[subsubsection]{Corollary}
\newtheorem{lem}[subsubsection]{Lemma}
\newtheorem{prop}[subsubsection]{Proposition}
\newtheorem{conj}[subsubsection]{Conjecture}
\newtheorem{thm}[subsubsection]{Theorem}
\theoremstyle{remark}
\newtheorem{rem}[subsubsection]{Remark}
\newtheorem{example}[subsubsection]{Example}
\theoremstyle{remark}
\numberwithin{equation}{section}
\newcommand{\nc}{\newcommand}
\nc{\renc}{\renewcommand}
\nc{\ssec}{\subsection}
\nc{\sssec}{\subsubsection}
\nc{\on}{\operatorname}
\nc\ol{\overline}
\nc\wt{\widetilde}
\nc\tboxtimes{\wt{\boxtimes}}
\nc\tstar{\wt{\star}}
\nc{\alp}{\alpha}
\nc{\ZZ}{{\mathbb Z}}
\nc{\NN}{{\mathbb N}}
\nc{\OO}{{\mathbb O}}
\renc{\SS}{{\mathbb S}}
\nc{\DD}{{\mathbb D}}
\nc{\GG}{{\mathbb G}}
\renewcommand{\AA}{{\mathbb A}}
\nc{\Fq}{{\mathbb F}_q}
\nc{\Fqb}{\ol{{\mathbb F}_q}}
\nc{\Ql}{\ol{{\mathbb Q}_\ell}}
\nc{\id}{\text{id}}
\nc\X{\mathcal X}
\nc{\Hom}{\on{Hom}}
\nc{\Lie}{\on{Lie}}
\nc{\Loc}{\on{Loc}}
\nc{\Pic}{\on{Pic}}
\nc{\Bun}{\on{Bun}}
\nc{\IC}{\on{IC}}
\nc{\Aut}{\on{Aut}}
\nc{\rk}{\on{rk}}
\nc{\Sh}{\on{Sh}}
\nc{\Perv}{\on{Perv}}
\nc{\pos}{{\on{pos}}}
\nc{\Conv}{\on{Conv}}
\nc{\Sph}{\on{Sph}}
\nc{\Sym}{\on{Sym}}
\nc{\BunBb}{\overline{\Bun}_B}
\nc{\BunNb}{\overline{\Bun}_N}
\nc{\BunTb}{\overline{\Bun}_T}
\nc{\BunBbm}{\overline{\Bun}_{B^-}}
\nc{\BunBbel}{\overline{\Bun}_{B,el}}
\nc{\BunBbmel}{\overline{\Bun}_{B^-,el}}
\nc{\Buno}{\overset{o}{\Bun}}
\nc{\BunPb}{{\overline{\Bun}_P}}
\nc{\BunBM}{\Bun_{B(M)}}
\nc{\BunBMb}{\overline{\Bun}_{B(M)}}
\nc{\BunPbw}{{\widetilde{\Bun}_P}}
\nc{\BunBP}{\widetilde{\Bun}_{B,P}}
\nc{\GUb}{\overline{G/U}}
\nc{\GUPb}{\overline{G/U(P)}}
\nc\syminfty{\on{Sym}^{\infty}}
\nc\lal{\ol{\lambda}}
\nc\xl{\ol{x}}
\nc\thl{\ol{\theta}}
\nc\nul{\ol{\nu}}
\nc\mul{\ol{\mu}}
\nc{\oX}{\overset{\circ}{X}{}}
\nc{\hl}{\overset{\leftarrow}h{}}
\nc{\hr}{\overset{\rightarrow}h{}}
\nc{\M}{{\mathcal M}}
\nc{\N}{{\mathcal N}}
\nc{\F}{{\mathcal F}}
\nc{\D}{{\mathcal D}}
\nc{\Y}{{\mathcal Y}}
\nc{\G}{{\mathcal G}}
\nc{\E}{{\mathcal E}}
\nc{\CalC}{{\mathcal C}}
\nc\Dh{\widehat{\D}}
\renewcommand{\O}{{\mathcal O}}
\nc{\K}{{\mathcal K}}
\renewcommand{\S}{{\mathcal S}}
\nc{\T}{{\mathcal T}}
\nc{\V}{{\mathcal V}}
\renc{\P}{{\mathcal P}}
\nc{\A}{{\AA}}
\nc{\B}{{\BB}}
\nc{\U}{{\mathcal U}}
\nc{\frn}{{\check{\mathfrak u}(P)}}
\nc{\fC}{\mathfrak C}
\nc\f{{\mathfrak f}}
\nc{\qo}{{\mathfrak q}}
\nc{\po}{{\mathfrak p}}
\nc{\s}{{\mathfrak s}}
\nc\w{\text{w}}
\renewcommand{\r}{{\mathfrak r}}
\renewcommand{\mod}{{\on{-}\mathsf{mod}}}
\newcommand{\bimod}{{\on{-}\mathsf{bimod}}}
\nc\Spec{\on{Spec}}
\nc\Mod{\on{Mod}}
\nc{\tw}{\widetilde{\mathfrak t}}
\nc{\pw}{\widetilde{\mathfrak p}}
\nc{\qw}{\widetilde{\mathfrak q}}
\nc{\jw}{\widetilde j}
\nc{\grb}{\overline{\Gr_{X^{\fset}}}}
\nc{\I}{\mathcal I}
\renewcommand{\i}{\mathfrak i}
\renewcommand{\j}{\mathfrak j}
\nc{\lambdach}{{\check\lambda}}
\nc{\Lambdach}{{\check\Lambda}{}}
\nc{\much}{{\check\mu}}
\nc{\omegach}{{\check\omega}}
\nc{\nuch}{{\check\nu}}
\nc{\etach}{{\check\eta}}
\nc{\alphach}{{\check\alpha}}
\nc{\rhoch}{{\check\rho}}
\nc{\Hb}{\overline{\H}}
\nc{\BA}{{\mathbb{A}}}
\nc{\BB}{\mathbb{B}}
\nc{\BC}{{\mathbb{C}}}
\nc{\BD}{{\mathbb{D}}}
\nc{\BE}{{\mathbb{E}}}
\nc{\BF}{{\mathbb{F}}}
\nc{\BG}{{\mathbb{G}}}
\nc{\BH}{{\mathbb{H}}}
\nc{\BI}{{\mathbb{I}}}
\nc{\BM}{{\mathbb{M}}}
\nc{\BN}{{\mathbb{N}}}
\nc{\BO}{{\mathbb{O}}}
\nc{\BP}{{\mathbb{P}}}
\nc{\BQ}{{\mathbb{Q}}}
\nc{\BR}{{\mathbb{R}}}
\nc{\BS}{{\mathbb{S}}}
\nc{\BT}{{\mathbb{T}}}
\nc{\BV}{{\mathbb{V}}}
\nc{\BZ}{{\mathbb{Z}}}
\nc{\bbone}{\mathbbm{1}}
\nc{\bbA}{{\mathbb{A}}}
\nc{\bbB}{\mathbb{B}}
\nc{\bbC}{{\mathbb{C}}}
\nc{\bbD}{{\mathbb{D}}}
\nc{\bbE}{{\mathbb{E}}}
\nc{\bbF}{{\mathbb{F}}}
\nc{\bbG}{{\mathbb{G}}}
\nc{\bbH}{{\mathbb{H}}}
\nc{\bbI}{{\mathbb{I}}}
\nc{\bbL}{{\mathbb{L}}}
\nc{\bbM}{{\mathbb{M}}}
\nc{\bbN}{{\mathbb{N}}}
\nc{\bbO}{{\mathbb{O}}}
\nc{\bbP}{{\mathbb{P}}}
\nc{\bbQ}{{\mathbb{Q}}}
\nc{\bbR}{{\mathbb{R}}}
\nc{\bbS}{{\mathbb{S}}}
\nc{\bbT}{{\mathbb{T}}}
\nc{\bbU}{{\mathbb{U}}}
\nc{\bbV}{{\mathbb{V}}}
\nc{\bbW}{{\mathbb{W}}}
\nc{\bbX}{{\mathbb{X}}}
\nc{\bbY}{{\mathbb{Y}}}
\nc{\bbZ}{{\mathbb{Z}}}
\nc{\CA}{{\mathcal{A}}}
\nc{\CB}{{\mathcal{B}}}
\nc{\CE}{{\mathcal{E}}}
\nc{\CF}{{\mathcal{F}}}
\nc{\CH}{{\mathcal{H}}}
\nc{\CL}{{\mathcal{L}}}
\nc{\CC}{{\mathcal{C}}}
\nc{\CG}{{\mathcal{G}}}
\nc{\CM}{{\mathcal{M}}}
\nc{\CN}{{\mathcal{N}}}
\nc{\CK}{{\mathcal{K}}}
\nc{\CO}{{\mathcal{O}}}
\nc{\CP}{{\mathcal{P}}}
\nc{\CQ}{{\mathcal{Q}}}
\nc{\CR}{{\mathcal{R}}}
\nc{\CS}{{\mathcal{S}}}
\nc{\CU}{{\mathcal{U}}}
\nc{\CV}{{\mathcal{V}}}
\nc{\CW}{{\mathcal{W}}}
\nc{\CX}{{\mathcal{X}}}
\nc{\CY}{{\mathcal{Y}}}
\nc{\CZ}{{\mathcal{Z}}}
\nc{\CI}{{\mathcal{I}}}
\nc{\csM}{{\check{\mathcal A}}{}}
\nc{\oM}{{\overset{\circ}{\mathcal M}}{}}
\nc{\obM}{{\overset{\circ}{\mathbf M}}{}}
\nc{\oCA}{{\overset{\circ}{\mathcal A}}{}}
\nc{\obA}{{\overset{\circ}{\mathbf A}}{}}
\nc{\ooM}{{\overset{\circ}{M}}{}}
\nc{\osM}{{\overset{\circ}{\mathsf M}}{}}
\nc{\vM}{{\overset{\bullet}{\mathcal M}}{}}
\nc{\nM}{{\underset{\bullet}{\mathcal M}}{}}
\nc{\oD}{{\overset{\circ}{\mathcal D}}{}}
\nc{\obC}{{\overset{\circ}{\mathbf C}}{}}
\nc{\obD}{{\overset{\circ}{\mathbf D}}{}}
\nc{\oA}{{\overset{\circ}{\mathbb A}}{}}
\nc{\op}{{\overset{\bullet}{\mathbf p}}{}}
\nc{\oU}{{\overset{\bullet}{\mathcal U}}{}}
\nc{\oZ}{{\overset{\circ}{\mathcal Z}}{}}
\nc{\ofZ}{{\overset{\circ}{\mathfrak Z}}{}}
\nc{\oF}{{\overset{\circ}{\fF}}}
\nc{\fa}{{\mathfrak{a}}}
\nc{\fb}{{\mathfrak{b}}}
\nc{\fc}{{\mathfrak{c}}}
\nc{\fd}{{\mathfrak{d}}}
\nc{\ff}{{\mathfrak{f}}}
\nc{\fg}{{\mathfrak{g}}}
\nc{\fgl}{{\mathfrak{gl}}}
\nc{\fh}{{\mathfrak{h}}}
\nc{\fj}{{\mathfrak{j}}}
\nc{\fl}{{\mathfrak{l}}}
\nc{\fm}{{\mathfrak{m}}}
\nc{\fn}{{\mathfrak{n}}}
\nc{\fu}{{\mathfrak{u}}}
\nc{\fp}{{\mathfrak{p}}}
\nc{\fr}{{\mathfrak{r}}}
\nc{\fs}{{\mathfrak{s}}}
\nc{\ft}{{\mathfrak{t}}}
\nc{\fz}{{\mathfrak{z}}}
\nc{\fsl}{{\mathfrak{sl}}}
\nc{\hsl}{{\widehat{\mathfrak{sl}}}}
\nc{\hgl}{{\widehat{\mathfrak{gl}}}}
\nc{\hg}{{\widehat{\mathfrak{g}}}}
\nc{\chg}{{\widehat{\mathfrak{g}}}{}^\vee}
\nc{\hn}{{\widehat{\mathfrak{n}}}}
\nc{\chn}{{\widehat{\mathfrak{n}}}{}^\vee}
\nc{\fA}{{\mathfrak{A}}}
\nc{\fB}{{\mathfrak{B}}}
\nc{\fD}{{\mathfrak{D}}}
\nc{\fE}{{\mathfrak{E}}}
\nc{\fF}{{\mathfrak{F}}}
\nc{\fG}{{\mathfrak{G}}}
\nc{\fK}{{\mathfrak{K}}}
\nc{\fL}{{\mathfrak{L}}}
\nc{\fM}{{\mathfrak{M}}}
\nc{\fN}{{\mathfrak{N}}}
\nc{\fP}{{\mathfrak{P}}}
\nc{\fU}{{\mathfrak{U}}}
\nc{\fV}{{\mathfrak{V}}}
\nc{\fZ}{{\mathfrak{Z}}}
\nc{\bb}{{\mathbf{b}}}
\nc{\bc}{{\mathbf{c}}}
\nc{\bd}{{\mathbf{d}}}
\nc{\bbf}{{\mathbf{f}}}
\nc{\be}{{\mathbf{e}}}
\nc{\bg}{{\mathbf{g}}}
\nc{\bi}{{\mathbf{i}}}
\nc{\bj}{{\mathbf{j}}}
\nc{\bn}{{\mathbf{n}}}
\nc{\bo}{{\mathbf{o}}}
\nc{\bp}{{\mathbf{p}}}
\nc{\bq}{{\mathbf{q}}}
\nc{\bt}{{\mathbf{t}}}
\nc{\bu}{{\mathbf{u}}}
\nc{\bv}{{\mathbf{v}}}
\nc{\bx}{{\mathbf{x}}}
\nc{\bs}{{\mathbf{s}}}
\nc{\by}{{\mathbf{y}}}
\nc{\bw}{{\mathbf{w}}}
\nc{\bA}{{\mathbf{A}}}
\nc{\bK}{{\mathbf{K}}}
\nc{\bB}{{\mathbf{B}}}
\nc{\bC}{{\mathbf{C}}}
\nc{\bG}{{\mathbf{G}}}
\nc{\bD}{{\mathbf{D}}}
\nc{\bH}{{\mathbf{H}}}
\nc{\bM}{{\mathbf{M}}}
\nc{\bN}{{\mathbf{N}}}
\nc{\bO}{{\mathbf{O}}}
\nc{\bT}{{\mathbf{T}}}
\nc{\bV}{{\mathbf{V}}}
\nc{\bW}{{\mathbf{W}}}
\nc{\bX}{{\mathbf{X}}}
\nc{\bZ}{{\mathbf{Z}}}
\nc{\bS}{{\mathbf{S}}}
\nc{\sA}{{\mathsf{A}}}
\nc{\sB}{{\mathsf{B}}}
\nc{\sC}{{\mathsf{C}}}
\nc{\sD}{{\mathsf{D}}}
\nc{\sF}{{\mathsf{F}}}
\nc{\sG}{{\mathsf{G}}}
\nc{\sK}{{\mathsf{K}}}
\nc{\sM}{{\mathsf{M}}}
\nc{\sO}{{\mathsf{O}}}
\nc{\sW}{{\mathsf{W}}}
\nc{\sQ}{{\mathsf{Q}}}
\nc{\sP}{{\mathsf{P}}}
\nc{\sV}{{\mathsf{V}}}
\nc{\sS}{{\mathsf{S}}}
\nc{\sT}{{\mathsf{T}}}
\nc{\sZ}{{\mathsf{Z}}}
\nc{\sfp}{{\mathsf{p}}}
\nc{\sll}{{\mathsf{l}}}
\nc{\sr}{{\mathsf{r}}}
\nc{\bk}{{\mathsf{k}}}
\nc{\sg}{{\mathsf{g}}}
\nc{\sff}{{\mathsf{f}}}
\nc{\sfb}{{\mathsf{b}}}
\nc{\sfc}{{\mathsf{c}}}
\nc{\sd}{{\mathsf{d}}}
\nc{\se}{{\mathsf{e}}}
\nc{\BK}{{\bar{K}}}
\nc{\tA}{{\widetilde{\mathbf{A}}}}
\nc{\tB}{{\widetilde{\mathcal{B}}}}
\nc{\tg}{{\widetilde{\mathfrak{g}}}}
\nc{\tG}{{\widetilde{G}}}
\nc{\TM}{{\widetilde{\mathbb{M}}}{}}
\nc{\tO}{{\widetilde{\mathsf{O}}}{}}
\nc{\tU}{{\widetilde{\mathfrak{U}}}{}}
\nc{\TZ}{{\tilde{Z}}}
\nc{\tx}{{\tilde{x}}}
\nc{\tbv}{{\tilde{\bv}}}
\nc{\tfP}{{\widetilde{\mathfrak{P}}}{}}
\nc{\tz}{{\tilde{\zeta}}}
\nc{\tmu}{{\tilde{\mu}}}
\nc{\urho}{\underline{\rho}}
\nc{\uB}{\underline{B}}
\nc{\uC}{{\underline{\mathbb{C}}}}
\nc{\ui}{\underline{i}}
\nc{\uj}{\underline{j}}
\nc{\ofP}{{\overline{\mathfrak{P}}}}
\nc{\oB}{{\overline{\mathcal{B}}}}
\nc{\og}{{\overline{\mathfrak{g}}}}
\nc{\oI}{{\overline{I}}}
\nc{\eps}{\varepsilon}
\nc{\hrho}{{\hat{\rho}}}
\nc{\one}{{\mathbf{1}}}
\nc{\two}{{\mathbf{t}}}
\nc{\Rep}{{\mathop{\operatorname{\rm Rep}}}}
\nc{\Tot}{{\mathop{\operatorname{\rm Tot}}}}
\nc{\Ker}{{\mathop{\operatorname{\rm Ker}}}}
\nc{\Hilb}{{\mathop{\operatorname{\rm Hilb}}}}
\nc{\Ext}{{\mathop{\operatorname{\rm Ext}}}}
\nc{\CHom}{{\mathop{\operatorname{{\mathcal{H}}\it om}}}}
\nc{\GL}{{\mathop{\operatorname{\rm GL}}}}
\nc{\gr}{{\mathop{\operatorname{\rm gr}}}}
\nc{\Id}{{\mathop{\operatorname{\rm Id}}}}
\nc{\de}{{\mathop{\operatorname{\rm def}}}}
\nc{\length}{{\mathop{\operatorname{\rm length}}}}
\nc{\supp}{{\mathop{\operatorname{\rm supp}}}}
\nc{\Cliff}{{\mathsf{Cliff}}}
\nc{\Fl}{\on{Fl}}
\nc{\Fib}{{\mathsf{Fib}}}
\nc{\Coh}{{\on{Coh}}}
\nc{\QCoh}{{\on{QCoh}}}
\nc{\IndCoh}{{\on{IndCoh}}}
\nc{\FCoh}{{\mathsf{FCoh}}}
\nc{\reg}{{\text{\rm reg}}}
\nc{\cplus}{{\mathbf{C}_+}}
\nc{\cminus}{{\mathbf{C}_-}}
\nc{\cthree}{{\mathbf{C}_*}}
\nc{\Qbar}{{\bar{Q}}}
\nc\Eis{\on{Eis}}
\nc\Eisb{\ol\Eis{}}
\nc\Eisr{\on{Eis}^{rat}{}}
\nc\wh{\widehat}
\nc{\Def}{\on{Def_{\check{\fb}}(E)}}
\nc{\barZ}{\overline{Z}{}}
\nc{\barbarZ}{\overline{\barZ}{}}
\nc{\barpi}{\overline\pi}
\nc{\barbarpi}{\overline\barpi}
\nc{\barpip}{\overline\pi{}^+}
\nc{\barpim}{\overline\pi{}^-}
\nc{\fq}{\mathfrak q}
\nc{\fqb}{\ol{\fq}{}}
\nc{\fpb}{\ol{\fp}{}}
\nc{\fpr}{{\fp^{rat}}{}}
\nc{\fqr}{{\fq^{rat}}{}}
\nc{\hattimes}{\wh\otimes}
\nc{\bh}{{\bar{h}}}
\nc{\bOmega}{{\overline{\Omega(\check \fn)}}}
\nc{\seq}[1]{\stackrel{#1}{\sim}}
\nc{\cT}{{\check{T}}}
\nc{\cG}{{\check{G}}}
\nc{\cM}{{\check{M}}}
\nc{\cB}{{\check{B}}}
\nc{\cP}{{\check{P}}}
\nc{\ct}{{\check{\mathfrak t}}}
\nc{\cg}{{\check{\fg}}}
\nc{\cb}{{\check{\fb}}}
\nc{\cn}{{\check{\fn}}}
\nc{\cp}{{\check{\fp}}}
\nc{\cm}{{\check{\fm}}}
\nc{\cLambda}{{\check\Lambda}}
\nc{\cla}{{\check\lambda}}
\nc{\cmu}{{\check\mu}}
\nc{\cnu}{{\check\nu}}
\nc{\ceta}{{\check\eta}}
\nc{\DefbE}{{\on{Def}_{\cB}(E_\cT)}}
\nc{\imathb}{{\ol{\imath}}}
\nc{\rlr}{\overset{\longrightarrow}{\underset{\longrightarrow}\longleftarrow}}
\nc{\oBun}{\overset{\circ}\Bun}
\nc{\BunBbb}{\ol{\ol{Bun}}_B}
\nc{\BunBr}{\Bun_B^{rat}}
\nc{\BunBrsg}{\Bun_B^{rat,\on{s.g.}}}
\nc{\BunBrp}{\Bun_B^{rat,polar}}
\nc{\BunBrpbg}{\Bun_B^{rat,polar,\on{b.g.}}}
\nc{\BunBrpsg}{\Bun_B^{rat,polar,\on{s.g.}}}
\nc{\BunTrp}{\Bun_T^{rat,polar}}
\nc{\BunTrpbg}{\Bun_T^{rat,polar,\on{b.g.}}}
\nc{\BunTrpsg}{\Bun_T^{rat,polar,\on{s.g.}}}
\nc{\BunNr}{\Bun_N^{rat}}
\nc{\BunNre}{\Bun_N^{enh,rat}}
\nc{\BunTr}{\Bun_T^{rat}}
\nc{\Vect}{\on{Vect}}
\nc{\Whit}{\on{Whit}}
\nc{\bTb}{\ol{\on{CT}}}
\nc{\bTr}{\on{CT}^{rat}{}}
\nc\jmathr{\jmath^{rat}{}}
\nc{\ux}{\underline{x}}
\nc{\clambda}{{\check\lambda}}
\nc{\calpha}{{\check\alpha}}
\nc{\inftyGrpd}{{\mathsf{Grpd}_\infty}}
\nc{\fset}{\mathsf{fSet}}
\nc{\LocSysG}{\LocSys_{\cG}}
\nc{\Sing}{{\on{Sing}}}
\nc{\dr}{{\on{dR}}}
\nc{\Ind}{\on{Ind}}
\nc{\Sat}{\on{Sat}}
\nc{\Ho}{\on{Ho}}
\nc{\Res}{\on{Res}}
\nc{\sotimes}{\overset{!}\otimes}
\nc{\mmod}{{\on{-}}{\mathbf{mod}}}
\nc{\Maps}{\on{Maps}}
\nc{\CMaps}{{\mathcal Maps}}
\nc{\bMaps}{{\mathbf{Maps}}}
\nc{\dgSch}{\on{DGSch}}
\nc{\dgindSch}{\on{DGindSch}}
\nc{\indSch}{\on{indSch}}
\nc{\Sch}{\mathsf{Sch}}
\nc{\affdgSch}{\on{DGSch}^{\on{aff}}}
\nc{\affSch}{\on{Sch}^{\on{aff}}}
\nc{\Groupoids}{\on{Grpd}}
\nc{\inftypic}{\infty\on{-PicGrpd}}
\nc{\inftyCat}{{\mathsf{Cat}_{\infty}}}
\nc{\MoninftyCat}{\infty\on{-Cat}^{Mon}}
\nc{\SymMoninftyCat}{\infty\on{-Cat}^{\on{SymMon}}}
\nc{\SymMonStinftyCat}{\on{DGCat}^{\on{SymMon}}}
\nc{\MonStinftyCat}{\on{DGCat}^{Mon}}
\nc{\inftystack}{\on{Stk}}
\nc{\inftystackalg}{Stk^{1\text{-}alg}}
\nc{\inftyprestack}{\on{PreStk}}
\nc{\inftydgnearstack}{\on{NearStk}}
\nc{\inftydgstack}{\on{Stk}}
\nc{\inftydgstackalg}{DGStk^{1\text{-}alg}}
\nc{\inftydgprestack}{\on{PreStk}}
\nc{\HC}{\CH\bC}
\nc{\csupp}{\supp}
\nc{\Arth}{\on{Arth}}
\nc{\ArthG}{{\on{Arth}_\cG}}
\nc{\ul}{\underline}
\nc{\Z}{\mathcal{Z}}
\nc{\calN}{\N}
\nc{\calW}{\mathcal{W}}
\nc{\calF}{\mathcal{F}}
\nc{\calH}{\mathcal{H}}
\nc{\calO}{\mathcal{O}}
\nc{\calK}{\mathcal{K}}
\nc{\Ran}{\mathsf{Ran}}
\nc{\Jets}{\on{Jets}}
\nc{\act}{\mathsf{act}}
\nc{\Av}{\mathsf{Av}}
\nc{\Ad}{\on{Ad}}
\nc{\BGRan}{BG_{\Ran}}
\nc{\colim}{\on{colim}}
\nc{\codim}{\on{codim}}
\nc{\cpt}{{\on{cpt}}}
\nc{\dR}{{\on{dR}}}
\nc{\DGCat}{\mathsf{DGCat}}
\nc{\DGCatcont}{\on{DGCat}_{cont}}
\nc{\glob}{{\on{glob}}}
\nc{\loc}{{\on{loc}}}
\renewcommand{\op}{{\on{op}}}
\nc{\pt}{{\on{pt}}}
\nc{\PreStk}{{\mathsf{PreStk}}}
\nc{\Cat}{{\mathsf{Cat}}}
\nc{\ShvCat}{{\mathsf{ShvCat}}}
\nc{\restr}[2]{\left. #1 \right |_{#2}}
\nc{\uprestr}[2]{\left. #1 \right |^{#2}}
\nc{\bLoc}{{\mathbf{Loc}}}
\nc{\bGamma}{{\mathbf{\Gamma}}}
\nc{\bLocA}{\mathbf{Loc}^\A}
\nc{\bGammaA}{\mathbf{\Gamma}^\A}
\nc{\bLocB}{\mathbf{Loc}^\B}
\nc{\bGammaB}{\mathbf{\Gamma}^\B}
\nc{\bLocH}{\mathbf{Loc}^\H}
\nc{\bGammaH}{\mathbf{\Gamma}^\H}
\nc{\gen}{\mathsf{gen}}
\nc{\hto}{\hookrightarrow}
\nc{\ext}{\mathsf{ext}}
\nc{\ev}{\mathsf{ev}}
\nc{\rat}{\mathsf{rat}}
\nc{\usotimes}[1]{\underset{#1}{\otimes}}
\nc{\ustimes}[1]{\underset{#1}{\times}}
\nc{\uscolim}[1]{\underset{#1}{\colim}}
\nc{\ch}{{\mathfrak{ch}}}
\renc{\fD}{{\Dmod}}
\nc{\fH}{{\mathfrak{H}}}
\nc{\p}{{\mathfrak{p}}}
\renc{\r}{{\mathfrak{r}}}
\nc{\xto}{\xrightarrow}
\renc{\sec}{\section}
\nc{\enh}{\mathsf{enh}}
\renc{\gen}{\mathsf{gen}}
\nc{\BunGBgen}{\Bun_G^{B-\gen}}
\nc{\BunGHgen}{\Bun_G^{H-\gen}}
\nc{\BunGNgen}{\Bun_G^{N-\gen}}
\nc{\Fun}{\mathsf{Fun}}
\nc{\End}{\mathsf{End}}
\nc{\lr}{\xymatrix{ \ar@<-0.1ex>[r] \ar@<.8ex>[l]  & } }
\nc{\rr}{\xymatrix{ \ar@<-0.1ex>[r] \ar@<.8ex>[r]  & } }
\nc{\rrr}{\xymatrix{ \ar@<.2ex>[r] \ar@<.9ex>[r] \ar@<-0.5ex>[r] & } }
\nc{\Stab}{\mathsf{Stab}}
\nc{\Orb}{\mathsf{Orb}}
\renc{\exp}{\mathit{exp}}
\renc{\q}{\mathfrak{q}}
\nc{\virg}[1]{``#1"}
\renc{\bold}[1]{\boldsymbol{#1}}
\nc{\bigt}[1]{\big( #1 \big) }
\nc{\Bigt}[1]{\Big( #1 \Big) }
\nc{\extwhit}{{\CW h}(G,\mathsf{ext})}
\nc{\footcite}{\footnote}
\nc{\GA}{{G(\AA)}}
\nc{\GO}{{G(\OO)}}
\nc{\Shv}{\mathsf{Shv}}
\nc{\inc}{\mathsf{inc}}
\nc{\Par}{\mathsf{Par}}
\renc{\i}{\mathfrak{i}}
\nc{\NA}{N(\AA)}
\nc{\VA}{V(\AA)}
\nc{\Glue}{\mathsf{Glue}}
\nc{\laxlim}{\text{laxlim}}
\nc{\FT}{\mathsf{FT}}
\nc{\out}{\mathsf{out}}
\nc{\hol}{\mathsf{hol}}
\nc{\Hol}{\on{Hol}}
\nc{\add}{\mathsf{add}}
\nc{\sto}{\rightsquigarrow}
\nc{\squigto}{\rightsquigarrow}
\nc{\fW}{\mathfrak{W}}
\nc{\vrho}{\varrho}
\nc{\counit}{\mathsf{counit}}
\nc{\unit}{\mathsf{unit}}
\nc{\corr}{\mathsf{corr}}
\nc{\Corr}{\mathsf{Corr}}
\nc{\IndSch}{\mathsf{IndSch}}
\nc{\Tate}{{\mathsf{Tate}}}
\nc{\surjto}{\twoheadrightarrow}
\renc{\j}{\mathfrak{j}}
\nc{\J}{\mathcal{J}}
\nc{\pro}{\mathsf{pro}}
\nc{\fty}{\mathsf{ft}}
\nc{\Pro}{\mathsf{Pro}}
\nc{\coact}{\mathsf{coact}}
\nc{\aff}{\mathsf{aff}}
\nc{\Nilp}{\on{Nilp}}
\nc{\Gch}{{\check{G}}}
\nc{\Pch}{{\check{P}}}
\nc{\Mch}{{\check{M}}}
\nc{\Qch}{{\check{Q}}}
\nc{\LL}{\mathbb{L}}
\nc{\LS}{{\on{LS}}}
\nc{\x}{\varkappa} 
\nc{\Otimes}{\boldsymbol{\otimes}}
\nc{\Times}{\boldsymbol{\times}}
\nc{\flip}{\text{<}}
\nc{\coeffRan}{\mathsf{coeff}^{\Ran}}
\nc{\Ha}{H(\sA)}
\nc{\Groups}{\mathsf{Groups}}
\nc{\Groth}{\mathsf{Groth}}
\nc{\rlto}{\rightleftarrows}
\nc{\DGCatRan}{\ShvCatCrys(\Ran)}
\nc{\longto}{\longrightarrow}
\renc{\Jets}{\mathsf{Jets}}
\nc{\mer}{\mathsf{mer}}
\nc{\W}{\mathcal{W}}
\nc{\Sect}{\mathsf{Sect}}
\renc{\Maps}{\mathsf{Maps}}
\renc{\bf}{\mathbf{f}}
\nc{\y}{\mathtt{y}}
\renc{\x}{\mathtt{x}}
\nc{\un}{{\it un}}
\nc{\indep}{\mathsf{indep}}
\nc{\CoAlg}{\mathsf{CoAlg}}
\nc{\coeff}{\mathsf{coeff}}
\nc{\R}{\mathcal{R}}
\renc{\hat}{\widehat}
\nc{\TK}{T(\mathsf{K})} 
\nc{\TtKK}{\Tt(\mathpzc{K})} 
\nc{\TtK}{\Tt(\mathsf{K})} 
\nc{\KK}{\mathpzc{K}}
\nc{\Dmod}{\mathfrak{D}}
\nc{\curs}[1]{\mathpzc{#1}}
\nc{\Bshv}{\bold{\B}}
\nc{\Bind}{\H_{\indep}}
\nc{\BRan}{\H_{\Ran}}
\nc{\ARan}{\A_{\Ran}}
\nc{\Aind}{\A_{\indep}}
\nc{\GrRan}{\Gr}
\nc{\Gr}{\mathsf{Gr}}
\nc{\GrGRan}{\Gr_{G}}
\nc{\GrGind}{\Gr_{G}^{\indep}}
\nc{\Grind}[1]{\Gr_{#1}^{\indep} }
\nc{\GrGdom}{\curs{Gr}_G}
\nc{\GMapsRan}[1]{\mathsf{GMaps}(X,{#1})}
\nc{\GSectRan}[1]{\mathsf{GSect}({#1}/X)}
\nc{\GMapsind}[1]{\mathsf{GMaps}(X,{#1})^\indep}
\nc{\GSectind}[1]{\mathsf{GSect}({#1}/X)^\indep}
\nc{\GMapsdom}[1]{\curs{GMaps}(X,{#1})}
\nc{\GSectdom}[1]{\curs{GSect}({#1}/X)}
\nc{\chind}{\ch^{\indep}}
\nc{\chdom}{\curs{ch}}
\nc{\QSect}[1]{\curs{QSect}(#1/X)} 
\nc{\QMaps}[1]{\curs{QMaps}(X,#1)} 
\nc{\Zar}{\mathit{Zar}}
\nc{\loccit}{\textit{loc.$\,$cit.}}
\nc{\Crys}{\on{Crys}}
\nc{\ShvCatCrys}{\ShvCat^{\Crys}}
\nc{\BPE}{{\BP E}}
\nc{\BVE}{{\BV E}}
\nc{\BBE}{{\BB E}}
\nc{\Wh}{{{\CW}h}}
\nc{\ChiralCat}{\mathsf{ChiralCat}}
\nc{\RRep}{\mathfrak{R}ep}
\nc{\SSph}{\mathfrak{S}ph}
\nc{\tto}{\twoheadrightarrow}
\nc{\disj}{{\mathsf{disj}}}
\nc{\C}{\CC}
\nc{\Tch}{{\check{T}}}
\nc{\good}{\mathsf{good}}
\nc{\triv}{\mathsf{triv}}
\nc{\Alg}{\mathsf{Alg}}
\nc{\CAlg}{\mathsf{CAlg}}
\nc{\Spread}{\mathsf{Spread}}
\nc{\Dom}{\mathsf{Dom}}
\nc{\Jac}{\on{Jac}}
\renc{\CD}[1]{{#1}^{\on{CD}}}
\nc{\String}{\on{String}}
\renc{\min}{{\mathit{min}}}
\nc{\rrep}{\on-\!\mathbf{rep}}
\nc{\WWh}{\mathfrak{W}h}
\nc{\Grpd}{\mathsf{Grpd}}
\nc{\timesdisj}{\overset{\circ}\times}
\renc{\NA}{N(\sA)}
\nc{\chiral}{\mathsf{chiral}}
\nc{\Hopf}{\mathsf{Hopf}}
\nc{\heart}{\heartsuit}
\nc{\kk}{\mathbbm{k}} 
\nc{\HHom}{\CH{om}} 
\nc{\Cone}{\on{Cone}}
\nc{\EE}{\mathbb{E}}
\renc{\HC}{{\on{HC}}}
\nc{\HH}{{\on{HH}}}
\nc{\even}{{\on{even}}}
\nc{\SingSupp}{\on{SingSupp}}
\nc{\Supp}{\on{Supp}}
\nc{\temp}{{\mathit{temp}}}
\nc{\geom}{{\mathit{geom}}}
\nc{\ren}{{\mathit{ren}}}
\nc{\naive}{{\mathit{naive}}}
\nc{\conaive}{{\mathit{conaive}}}
\nc{\spec}{\mathit{spec}}
\nc{\gch}{\mathfrak{\check{g}}}
\nc{\Hecke}{\on{Hecke}}
\nc{\LSGch}{{\LS_\Gch}}
\nc{\Hsx}[2]{\H_{{#1} \leftarrow {#2}}}
\nc{\Hdx}[2]{\H_{{#1} \to {#2}}}
\nc{\Hcorr}[3]{ \H_{{#1} \leftarrow {#2} \to {#3}} }
\nc{\Hopcorr}[3]{ \H_{{#1} \to {#2} \leftto {#3}} }
\nc{\Hcat}{\H^{\mathit{cat}}}
\nc{\Hgeom}{\H^{\mathit{geom}}}
\nc{\Hcatsx}[2]{\Hcat_{{#1} \leftarrow {#2}}}
\nc{\Hcatdx}[2]{\Hcat_{{#1} \to {#2}}}
\nc{\Hcatcorr}[3]{ \Hcat_{{#1} \leftarrow {#2} \to {#3}} }
\nc{\Hcatopcorr}[3]{ \Hcat_{{#1} \to {#2} \leftto {#3}} }
\nc{\Hgeomsx}[2]{\Hgeom_{{#1} \leftarrow {#2}}}
\nc{\Hgeomdx}[2]{\Hgeom_{{#1} \to {#2}}}
\nc{\Hgeomcorr}[3]{ \Hgeom_{{#1} \leftarrow {#2} \to {#3}} }
\nc{\Hgeomopcorr}[3]{ \Hgeom_{{#1} \to {#2} \leftto {#3}} }
\nc{\ICohsx}[2]{\ICohW_{{#1} \leftarrow {#2}}}
\nc{\ICohdx}[2]{\ICohW_{{#1} \to {#2}}}
\nc{\ICohcorr}[3]{ \ICohW_{{#1} \leftarrow {#2} \to {#3}} }
\nc{\ICohopcorr}[3]{ \ICohW_{{#1} \to {#2} \leftto {#3}} }
\nc{\QCohsx}[2]{\QCohW_{{#1} \leftarrow {#2}}}
\nc{\QCohdx}[2]{\QCohW_{{#1} \to {#2}}}
\nc{\QCohcorr}[3]{ \QCohW_{{#1} \leftarrow {#2} \to {#3}} }
\nc{\QCohopcorr}[3]{ \QCohW_{{#1} \to {#2} \leftto {#3}} }
\renc{\AA}{\bbA}
\nc{\Asx}[2]{\AA_{{#1} \leftarrow {#2}}}
\nc{\Adx}[2]{\AA_{{#1} \to {#2}}}
\nc{\Acorr}[3]{ \AA_{{#1} \leftarrow {#2} \to {#3}} }
\nc{\Aopcorr}[3]{ \AA_{{#1} \to {#2} \leftto {#3}} }
\nc{\Bsx}[2]{\B_{{#1} \leftarrow {#2}}}
\nc{\Bdx}[2]{\B_{{#1} \to {#2}}}
\nc{\Bcorr}[3]{ \B_{{#1} \leftarrow {#2} \to {#3}} }
\nc{\Bopcorr}[3]{ \B_{{#1} \to {#2} \leftto {#3}} }
\nc{\ICohzero}[3]{\ICoh_0 \bigt{#1 \times_{{#2}_\dR} #3}}
\nc{\IndCohzero}{\ICohzero}
\nc{\form}[3]{#1 \times_{{#2}_\dR} #3 }
\nc{\ind}{{\mathsf{ind}}}
\nc{\oblv}{{\mathsf{oblv}}}
\nc{\Aff}{\mathsf{Aff}}
\nc{\dgAff}{\Aff}
\nc{\deloop}{\mathsf{deloop}}
\renc{\loop}{\mathsf{loop}}
\nc{\coev}{\mathsf{coev}}
\nc{\bE}{\mathbf{E}}
\nc{\ShvCatH}{\ShvCat^{\bbH}}
\nc{\ShvCatQW}{\ShvCat^{\QCohW}}
\nc{\ShvCatQ}{{\ShvCat^\Q}}
\nc{\bbimod}{\on{-}\mathbf{bimod}}
\nc{\Tw}{\mathsf{Tw}}
\nc{\Arr}{\mathsf{Arr}}
\nc{\bDelta}{\bold\Delta}
\nc{\BiCat}{\mathsf{BiCat}}
\nc{\Seg}{\mathsf{Seg}}
\nc{\Cart}{\mathsf{Cart}}
\nc{\Bimod}{\mathsf{Bimod}}
\nc{\lax}{\mathit{lax}}
\nc{\pr}{\mathsf{pr}}
\nc{\zero}{ \{ 0 \}   }
\nc{\Perf}{\mathsf{Perf}}
\nc{\leftto}{\leftarrow}
\nc{\lto}{\leftto}
\nc{\xlto}[1]{\xleftarrow{#1}}
\nc{\ltemp}{{}^\temp}
\nc{\TwCorr}{\mathsf{TwCorr}}
\nc{\Affover}[1]{{\Aff_{/#1}}}
\nc{\Affoverop}[1]{{( \Affover{#1})^\op}}
\nc{\AffOver}[2]{{(\Aff_{#1})_{/#2}}}
\nc{\AffOverop}[2]{{( \AffOver{#1}{#2})^\op}}
\nc{\aft}{{\mathit{aft}}}
\renc{\vert}{{\mathit{vert}}}
\nc{\horiz}{{\mathit{horiz}}}
\nc{\type}{{\mathit{type}}}
\nc{\adm}{{\mathit{adm}}}
\nc{\g}{\mathfrak{g}}
\nc{\free}{\mathsf{free}}
\nc{\Sform}{{S \times_{S_\dR} S}}
\nc{\Yform}{{\Y \times_{\Y_\dR} \Y}}
\nc{\SdR}{ {S_{\dR}}}
\nc{\laft}{{\mathit{laft}}}
\nc{\Affevcocaft}{\Aff_{\aft}^{< \infty}}
\nc{\Affaftevcoc}{\Aff_{\aft}^{< \infty}}
\nc{\Affevcoclfp}{\Aff_{\lfp}^{< \infty}}
\nc{\Schevcoclfp }{\Sch_{\lfp}^{< \infty}}
\nc{\Schevcocaft}{\Sch_{\aft}^{< \infty}}
\nc{\Schaftevcoc}{\Sch_{\aft}^{< \infty}}
\nc{\Stkevcoclfp}{\Stk_{\lfp}^{< \infty}}
\nc{\Stkevcoc}{\Stk^{< \infty}}
\nc{\evcoc}{\mathit{bdd}}
\nc{\ICoh}{\IndCoh}
\nc{\citep}{\cite}
\renc{\H}{\bbH}
\nc{\uno}{\mathbbm{1}}
\nc{\CohBig}{{\Coh^{-\infty}}}
\nc{\Tang}{\mathbb{T}}
\nc{\LieAlg}{\mathsf{LieAlg}}
\nc{\Serre}{{\on{Serre}}}
\nc{\MPreStk}{\CoeffSys}
\nc{\CoeffSys}{\mathsf{CoeffSys}}
\nc{\all}{{\on{all}}}
\nc{\QCohwedge}{\bbQ^\wedge}
\nc{\ICohwedge}{\bbI^\wedge}
\nc{\ICohW}{\ICohwedge}
\nc{\QCohW}{\QCohwedge}
\nc{\ShvCatA}{\ShvCat^{\AA}}
\nc{\ShvCatB}{{\ShvCat^\B}}
\nc{\naiveto}{{\xto{\naive}}}
\nc{\conaiveto}{{\xto{\conaive}}}
\nc{\strong}{\mathit{strong}}
\nc{\costrong}{\mathit{costrong}}
\nc{\conv}{\mathit{conv}}
\nc{\Q}{\bbQ}
\nc{\bY}{\mathbf{Y}}
\nc{\Loop}{\mathsf{LOOP}}
\nc{\DG}{{\on{DG}}}
\nc{\coind}{\mathsf{coind}}
\nc{\co}{\on{co}}
\nc{\laftdef}{{\mathit{laft-def}}}
\nc{\qsmooth}{{\mathit{q-smooth}}}
\nc{\smooth}{{\mathit{smooth}}}
\nc{\LKE}{\on{LKE}}
\nc{\RKE}{\on{RKE}}
\nc{\ShvCatAco}{\ShvCatA_{\co}}
\nc{\ShvCatHco}{\ShvCatH_{\co}}
\nc{\Stk}{\mathsf{Stk}}
\nc{\Stklfp}{\Stk_{\lfp}}
\nc{\doubleCat}{\mathsf{doubleCat}}
\nc{\Spaces}{\mathcal{S}\!\mathit{paces}}
\nc{\ALG}{\mathsf{ALG}}
\nc{\MAPS}{\mathsf{MAPS}}
\nc{\CAT}{\mathsf{CAT}}
\nc{\oneCat}{{\Cat_{\1}}}
\nc{\oneCAT}{{\CAT_{\1}}}
\nc{\twoCat}{{\Cat_{\2}}}
\nc{\twoCAT}{{\CAT_{\2}}}
\nc{\DGCAT}{\mathsf{DGCAT}}
\nc{\twoCatDG}{{\CAT_{\2}^\DG}}
\nc{\twoCATDG}{{\CAT_{\2}^\DG}}
\nc{\twoCATDGw}{{\CAT_{\2, w*}^\DG}}
\nc{\twoCATDGww}{{\CAT_{\2, ww*}^\DG}}
\nc{\AlgBimod}{\Alg^{\mathit{bimod}}}
\nc{\AlgBimodDGCat}{\AlgBimod(\DGCat)}
\nc{\ALGBimod}{\ALG^{\mathit{bimod}}}
\nc{\twoAlgBimod}{\ALGBimod}
\nc{\rev}{{\on{rev}}}
\nc{\lfp}{{\mathit{lfp}}}
\nc{\RBeck}{{\on{R-BC}}}
\nc{\LBeck}{{\on{L-BC}}}
\nc{\schem}{\mathit{schem}}
\nc{\proper}{\mathit{proper}}
\nc{\UQ}{\U^{\QCoh}}
\nc{\FormMod}{\mathsf{FormMod}}
\nc{\FormModunderStkevcoc}{\FormMod_{\Stkevcoc /}^\lfp}
\nc{\acts}{\mathrel{\reflectbox{$\circlearrowright$}}}
\nc{\irred}{\mathit{irred}}
\nc{\cusp}{\mathit{cusp}}
\nc{\Gm}{{\GG_m}}
\nc{\cl}{{\mathit{cl}}}
\nc{\vDmod}{\virg{\Dmod}}
\nc{\LY}{L\Y}
\nc{\MOD}{\on - \mathbf{MOD}}
\nc{\SHVCAT}{\on{SHVCAT}}
\nc{\EEis}{\bbE\!\on{is}}
\begin{document}

\title{Sheaves of categories with local actions of Hochschild cochains}
\author{Dario Beraldo}

\keywords{Hochschild cochains, quasi-smooth stacks, derived algebraic geometry, ind-coherent sheaves, singular support, formal completions, Hecke functors, derived Satake. \textit{Classification}. 14F05, 13D03, 18F99.}

\begin{abstract}
The notion of \emph{Hochschild cochains} induces an assignment from $\Aff$, affine DG schemes, to monoidal DG categories.
We show that this assignment extends, under appropriate finiteness conditions,
to a functor $\H: \Aff \to \AlgBimod(\DGCat)$, where the latter denotes the category of monoidal DG categories and bimodules.
Any functor $\A: \Aff \to \AlgBimod(\DGCat)$ gives rise, by taking modules, to a theory of sheaves of categories $\ShvCatA$.

In this paper, we study $\ShvCatH$. Vaguely speaking, this theory categorifies the theory of $\fD$-modules, in the same way as Gaitsgory's original $\ShvCat$ categorifies the theory of quasi-coherent sheaves. We develop the functoriality of $\ShvCatH$, its descent properties and the notion of $\H$-affineness.

We then prove the $\H$-affineness of algebraic stacks: for $\Y$ a stack satisfying some mild conditions, the $\infty$-category $\ShvCatH(\Y)$ is equivalent to the $\infty$-category of modules for $\H(\Y)$, the monoidal DG category of higher differential operators.
The main consequence, for $\Y$ quasi-smooth, is the following: if $\C$ is a DG category acted on by $\H(\Y)$, then $\C$ admits a theory of singular support in $\Sing(\Y)$, where $\Sing(\Y)$ is the space of singularities of $\Y$.

As an application to the geometric Langlands program, we indicate how derived Satake yields an action of $\H(\LSGch)$ on $\Dmod(\Bun_G)$, thereby equipping objects of $\Dmod(\Bun_G)$ with singular support in $\Sing(\LSGch)$.

\end{abstract}

\maketitle


\sec{Introduction}

\ssec{Overview}

The present paper is a contribution to the field of \emph{categorical algebraic geometry}. 
In this field, one studies schemes and stacks via their categorical invariants, as opposed to their usual linear invariants.
Among the usual invariants, typical examples are the coherent cohomology, the de Rham cohomology, the Picard group.
An example of a categorical invariant is the symmetric monoidal category of quasi-coherent sheaves; other examples, including the invariant $\ShvCatH$ appearing in the title of this paper, will be given below.


The extra level of categorical abstraction might appear unjustified at first sight, but it turns out to be quite useful in several concrete situations. 
In this paper, we will encounter a few: for instance, in Section \ref{sssec:consequences of H action}, Section \ref{sssec:second consequence} and Section \ref{ssec:LS}.

The interplay between categorical and ordinary algebraic geometry is likely to be very fruitful. For more on the comparison between the two points of view, we recommend the discussion and the dictionary appearing in \cite[Page 720]{Lurie:SAG}.

In the rest of this overview, after discussing some illuminating examples, we will roughly state the goals and the main results of this paper. These results and goals will be further clarified in the later sections of the introduction.

\sssec{}

As mentioned earlier, given a scheme or an algebraic stack $\Y$, its most basic categorical invariant is the symmetric monoidal differential graded (DG) category $\QCoh(\Y)$.

It turns out that there are strong analogies between the behaviour of $\QCoh(\Y)$ for an algebraic stack $\Y$ and the behaviour of $H^*(Y,\O_Y)$ for an affine scheme\footnote{We will soon be forced to consider DG schemes. By construction, the cohomology $H^*(Y,\O_Y)$ of an affine DG scheme is possibly nonzero in negative degrees: this explains the notation $H^*(Y,\O_Y)$ in place of the more tempting $H^0(Y,\O_Y)$.} 
 $Y$. In other words, categorical algebraic geometry has many more affine objects than ordinary algebraic geometry. Let us illustrate this principle with three examples.

\sssec{Tannaka duality.}

For $\Y$ an algebraic stack satisfying mild conditions, 
Tannaka duality (\cite[Chapter 9]{Lurie:SAG}) allows to \virg{recover} $\Y$ from the symmetric monoidal DG category $\QCoh(\Y)$. On the other hand, the DG algebra $H^*(\Y,\O_\Y)$ does not recover $\Y$, unless $\Y$ is an affine DG scheme.

\sssec{Tensor products.} \label{sssec:example-tensor product}

Given a diagram $X \to Z \leftto Y$ of (DG) affine schemes, one has
$$
H^*(X \times_Z Y, \O_{X \times_Z Y})
\simeq
H^*(X, \O_X) \underset{H^*(Z,\O_Z)}{\otimes^\mathbf{L}} H^*(Y,\O_Y).
$$
Note that it is essential that the fiber product be taken in the derived sense. This formula obviously fails for very simple non-affine schemes and stacks.
On the other hand, the categorical counterpart is the tensor product formula
\begin{equation} \label{eqn:overview-tensor-product}
\QCoh(\X \times_\Z \Y)
\simeq
\QCoh(\X )\usotimes{\QCoh(\Z)} \QCoh(\Y),
\end{equation}
which holds true for most algebraic stacks $\X$, $\Y$, $\Z$ that one encounters in practice: see, for instance, \cite{BFN}.

The RHS of the above formula involves the tensor product of DG categories (\cite{Lurie:HA}), which plays a crucial role in the theory. Note that $\QCoh(\Z)$ acts on $\QCoh(\X)$ and on $\QCoh(\Y)$ by pullback along the given maps $\X\xto f \Z\xleftarrow g \Y$.

\sssec{$1$-affineness.} \label{sssec:example-1-affineness}

In the categorical context, one considers \emph{categorified quasi-coherent sheaves} over a scheme or a stack $\Y$.
These categorified sheaves are defined in \cite{shvcat} under the name of \virg{sheaves of categories}, and in \cite[Chapter 10]{Lurie:SAG} under the name of \virg{quasi-coherent stacks}.
They assemble into an $\infty$-category denoted $\ShvCat(\Y)$.
We will recall and generalize the notion of $\ShvCat$ in Section \ref{ssec:shvcat-intro}.

In the above papers, it is proven that most algebraic stacks, while far from being affine schemes, are nevertheless \emph{$1$-affine}: by definition, $\Y$ is $1$-affine if the $\infty$-category $\ShvCat(\Y)$ is equivalent to the $\infty$-category of modules DG categories for $\QCoh(\Y)$.
This categorifies the classical fact that, for $Y$ an affine DG scheme, a quasi-coherent sheaf is the same as a module over $H^*(Y,\O_Y)$. 

\sssec{}

The above examples illustrate the point of view that $\QCoh(\Y)$ is the categorical counterpart of the algebra of  functions on an \emph{affine} DG scheme.

In \cite{centerH}, we introduced another monoidal DG category, $\H(\Y)$, which is the categorical counterpart of the algebra of differential operators on an affine DG scheme.

In a nutshell, the goal of the present paper is to develop the tensor product formula and the $1$-affineness result with $\H(\Y)$ in place of $\QCoh(\Y)$. 


\sssec{Tensor products for $\H$.}

The tensor product formula in the $\H$-situation is by necessity slightly different from \eqref{eqn:overview-tensor-product}. Indeed, as explained in detail later, there is no natural action of $\H(\Z)$ on $\H(\X)$. Rather, these two monoidal DG categories are connected by a \emph{transfer} bimodule category $\Hdx \X \Z$. (This is in perfect agreement with the situation of rings of differential operators, from which the notation is borrowed.) Under some conditions to be discussed later, the tensor product formula reads:
$$
\Hsx{\X}{\X \times_\Z \Y} 
\usotimes{\H(\X \times_\Z \Y)}
\Hdx{\X \times_\Z \Y}\Y
\simeq
\Hdx \X \Z 
\usotimes{\H(\Z)}
\Hsx \Z \Y.
$$
For some pleasing applications of this formula, the reader might look ahead at Sections \ref{ssec:Harish-Chandra} and \ref{ssec:LS}.

\sssec{$1$-affineness for $\H$ (aka, $\H$-affineness).}

The $1$-affineness mentioned in Section \ref{sssec:example-1-affineness} corresponds, in the $\H$-setup, to our main Theorem \ref{main-thm-intro}, which establishes a tight link between modules categories for $\H(\Y)$ and \emph{categorified D-modules} on $\Y$.
The latter are also called \emph{sheaves of categories over $\Y$ with local actions of Hochschild cochains}, and denoted by $\ShvCatH(\Y)$. As we explain in the following sections, the objects of $\ShvCatH(\Y)$ are the sheaves of categories for which a notion of singular support is defined and well-behaved.

\ssec{Singular support via the $\H$-action}

\sssec{}

In \cite{centerH}, we introduced a monoidal DG category $\H(\Y)$ attached to a quasi-smooth stack $\Y$.
Contrarily to $\QCoh(\Y)$, which can defined in vast generality, the construction of $\H(\Y)$ requires some (mild) conditions on $\Y$.
The definition of $\H(\Y)$ and the necessary conditions on $\Y$ are recalled in Section \ref{ssec:definition of HY}. For now, let us just say that  any quasi-smooth stack $\Y$ satisfies those conditions.

\sssec{}

As a brief reminder of the notion of quasi-smoothness: an algebraic stack $\Y$ is quasi-smooth if it is smooth-locally a global complete intersection. It follows that, for any geometric point $y \in \Y$, the $y$-fiber $\LL_{\Y,y}:= \restr{\LL_\Y}y$ of the contangent complex has cohomologies concentrated in degrees $[-1,1]$.

Thus, to a quasi-smooth stack $\Y$, we associate the stack $\Sing(\Y)$ that parametrizes pairs $(y, \xi)$ with $y \in \Y$ and $\xi \in H^{-1}(\LL_{\Y,y})$. This is the space that controls the singularities of $\Y$, see \cite{AG1}, and it is equipped with a $\Gm$-action that rescales the fibers of the projection $\Sing(\Y) \to \Y$.

\sssec{}

Suppose that a DG category $\C$ carries an action of $\H(\Y)$.
The goal of this paper is to explain how rich this structure is. 
As an example, let us informally state here the most important consequence of our main results:

\begin{thm} \label{vague-thm:sing-support}
Let $\Y$ be a quasi-smooth stack and $\C$ a left $\H(\Y)$-module. Then $\C$ is equipped with a \emph{singular support theory relative to $\Sing(\Y)$}.
\end{thm}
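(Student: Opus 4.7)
The plan is to extract, from the $\H(\Y)$-action on $\C$, a central action by $\QCoh(\Sing(\Y)/\Gm)$, and then derive the singular support theory from this commutative datum via standard tensor-product constructions. Throughout, the quasi-smoothness hypothesis is what gives access to $\Sing(\Y)$ and to the Hochschild-type computation recalled below.

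First, I would construct a symmetric monoidal functor
$$\Phi:\, \QCoh(\Sing(\Y)/\Gm) \longrightarrow Z(\H(\Y))$$
into the (monoidal) center of $\H(\Y)$. The source is natural thanks to the $\HH^\bullet$-computation valid for quasi-smooth $\Y$: a derived HKR-type theorem identifies $\HH^\bullet(\Y)$ with $\Gamma(\Sing(\Y), \O)$ as a $\Gm$-graded algebra, the $\Gm$ recording cohomological degrees. The construction of $\Phi$ should be obtained by combining this computation with the monoidal/bimodule structure on $\H(\Y)$ developed in \cite{centerH}; informally, one pictures $\LL\Y \simeq \Tang[-1]\Y$ delooping to $\Sing(\Y)$ after taking the $\Gm$-quotient by the scaling action.

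Second, composing the $\H(\Y)$-action on $\C$ with $\Phi$ endows $\C$ with the structure of a $\QCoh(\Sing(\Y)/\Gm)$-module in $\DGCat$. By $1$-affineness of $\Sing(\Y)/\Gm$ (which holds under the mild hypotheses on $\Y$ already in force), such an action is equivalent to viewing $\C$ as the global sections of a sheaf of categories $\mathscr{C}$ on $\Sing(\Y)/\Gm$. For every closed conic $N \subseteq \Sing(\Y)$ one then sets
$$\C_N \;:=\; \QCoh(N/\Gm) \usotimes{\QCoh(\Sing(\Y)/\Gm)} \C,$$
naturally viewed as a full subcategory of $\C$. The assignment $N \mapsto \C_N$ is the desired singular support theory: the expected properties (intersection and union of supports, functoriality along $\H(\Y)$-linear functors, good behaviour on compact objects under finiteness) are inherited from the corresponding operations on the lattice of closed conic subsets of $\Sing(\Y)$ and from the symmetric monoidal structure on $\QCoh(\Sing(\Y)/\Gm)\mod$.

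The hard part will be the construction of $\Phi$ together with its compatibilities: one must identify a copy of $\QCoh(\Sing(\Y)/\Gm)$ inside the $E_2$-center of $\H(\Y)$ in a manner compatible with the $\Gm$-grading, and verify that when $\C = \IndCoh(\Y)$ the resulting assignment $N \mapsto \IndCoh(\Y)_N$ coincides with the Arinkin--Gaitsgory singular support of \cite{AG1}. The first point relies on the monoidal properties of $\H(\Y)$ from \cite{centerH} combined with derived HKR, while the second is a compatibility check on generators, using that the action $\H(\Y) \acts \IndCoh(\Y)$ is the \virg{universal} example of an $\H(\Y)$-module.
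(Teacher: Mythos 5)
Your proposal takes a genuinely different route from the paper, but it has a gap that I do not think can be repaired along the lines you indicate. The paper's own argument is much lighter: it uses the $\H$-affineness theorem to identify $\H(\Y)\mmod$ with $\ShvCatH(\Y)$, then invokes smooth descent for $\ShvCatH$ (Theorem \ref{thm:smooth-descent-for-ShvCatH}) to reduce to the case of an affine $S$ mapping smoothly to $\Y$. For affine $S$ one has the monoidal equivalence $\H(S)\simeq\HC(S)^\op\mod$, so an $\H(S)$-module is precisely a DG category enriched over Hochschild cochains; the HKR graded algebra map $\Sym_{H^0(S,\O_S)}(H^1(\Tang_S)[-2])\to\HH^\bullet(S)$ then gives an action of $\Gamma(\Sing(S),\O)$ on $\Ext$-groups, and singular support is defined exactly as in \cite{AG1} (and \cite{BIK}) from that \emph{cohomological} action. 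No categorical action by $\QCoh(\Sing/\Gm)$ is needed or produced.

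The gap in your argument is the construction of $\Phi\colon\QCoh(\Sing(\Y)/\Gm)\to Z(\H(\Y))$. Two things go wrong. First, the statement that HKR ``identifies $\HH^\bullet(\Y)$ with $\Gamma(\Sing(\Y),\O)$'' is false: for $\Y$ quasi-smooth the cotangent complex lives in degrees $[-1,1]$, so $\HH^\bullet$ receives contributions from all of $\Sym(\Tang_\Y[-1])$, not just from $\Sym(H^1(\Tang_\Y)[-2])$; HKR only gives a graded algebra \emph{map} $\Sym(H^1(\Tang_\Y)[-2])\to\HH^\bullet(\Y)$, which is not an isomorphism. Second, even granting the map of graded algebras, you are asking for a lift to a symmetric monoidal functor into the $E_2$-center (equivalently, into an $E_3$-algebra), compatible with the $\Gm$-grading. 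This is a genuine lifting problem, and nothing in \cite{centerH} or in the HKR literature supplies it: the computation of $Z(\H(\Y))$ in \cite{centerH} produces a monoidal category of ind-coherent/crystalline nature on the derived loop space, and exhibiting $\QCoh(\Sing(\Y)/\Gm)$ as a central subcategory of $\H(\Y)$ is not established (and, if true, would itself be a significant theorem). Relatedly, even if $\Phi$ existed, your definition $\C_N:=\QCoh(N/\Gm)\otimes_{\QCoh(\Sing(\Y)/\Gm)}\C$ is a priori a \emph{different} construction from the Arinkin--Gaitsgory one (the latter being phrased via support conditions on $\operatorname{Hom}$-cohomology), and the comparison for $\C=\ICoh(\Y)$ is not a formality -- it would require an argument of its own, not just a ``compatibility check on generators''. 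In short: the paper's proof proceeds via $\H$-affineness plus smooth descent plus the cohomological enrichment; your proof requires a strong categorical centrality statement that is not available and is plausibly false in this generality.
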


\sssec{}

To make sense of this, we need to explain what we mean by \virg{singular support theory}. First and foremost, this means that there is a map (the singular support map) from objects of $\C$ to closed conical subsets of $\Sing(\Y)$. For each such subset $\N \subseteq \Sing(\Y)$, we set $\C_\N$ to be the full subcategory of $\C$ spanned by those objects with singular support contained in $\N$. 

The second feature of a singular support theory is that any inclusion $\N \subseteq \N'$ yields a colocalization (that is, an adjunction whose left adjoint is fully faithful) $\C_\N \rightleftarrows \C_{\N'}$.

\sssec{} \label{sssec:consequences of H action}

Thus, the datum of an action of $\H(\Y)$ on $\C$ immediately produces a multitude of semi-orthogonal decompositions of $\C$, one for each closed conical subset of $\Sing(\Y)$. 
Obviously, these decompositions help compute Hom spaces between objects of $\C$.

More generally, the philosophy\footnote{Strictly speaking, this is not a consequence of the results of this paper. We refer to the analysis of \cite{gluing}.}
 is that, in the presence of an $\H(\Y)$-action on $\C$, any decomposition of $\Sing(\Y)$ into \emph{atomic blocks} induces a decomposition of $\C$ into atomic blocks. 
By \virg{atomic blocks}, we mean closed conical subsets of $\Sing(\Y)$ that are of a particular significance or simplicity; for instance: the zero section, a particular fiber, or more generally the conormal bundle of a closed subset of $\Y$.
See \cite{AG2, gluing} for applications of this principle.

\sssec{}

It is also natural to require that singular support be functorial in $\C$. Namely, given an $\H(\Y)$-linear functor $F:\C \to \D$ and $\N \subseteq \Sing(\Y)$, we would like $F$ to restrict to a functor $\C_\N \to \D_\N$. Fortunately, this is also guaranteed by our theory.  Hence the statement of informal Theorem \ref{vague-thm:sing-support} could be improved as follows.

\begin{thm} \label{vague-thm:sing-support-IMPROVED}
For $\Y$ a quasi-smooth stack, $\H(\Y)$-module categories admit a singular support theory relative to $\Sing(\Y)$.
\end{thm}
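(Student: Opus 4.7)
The plan is to deduce the theorem from the $\H$-affineness result (Theorem~\ref{main-thm-intro}) together with an HKR-type computation, available under the quasi-smoothness hypothesis, that produces a commutative central algebra inside $\H(\Y)$. By $\H$-affineness, the datum of a left $\H(\Y)$-module category is equivalent to that of an object of $\ShvCatH(\Y)$, so the two perspectives may be used interchangeably.

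The essential geometric input is the construction of a $\Gm$-equivariant morphism
$$
\O(\Sing(\Y)) \longto \End_{\H(\Y)}(\uno_{\H(\Y)})
$$
of commutative DG algebras, where $\uno_{\H(\Y)}$ denotes the monoidal unit of $\H(\Y)$ (and the target is commutative since it is the endomorphism algebra of a unit in a monoidal $\infty$-category, hence an $\EE_2$-algebra). Since $\H(\Y)$ is constructed from Hochschild cochains on $\Y$, the right-hand side computes $\HH^\bullet(\QCoh(\Y))$; for $\Y$ quasi-smooth, an HKR-type calculation identifies this, up to regrading, with $\Gamma(\Y, \Sym_{\O_\Y}(\Tang_\Y[-1]))$, which after accounting for the internal grading on the bar construction matches $\O(\Sing(\Y))$ equipped with its natural $\Gm$-action rescaling cotangent fibers. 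Equivalently, this promotes $\H(\Y)$ to a $\QCoh(\Sing(\Y)/\Gm)$-linear monoidal DG category.

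Granting this central morphism, every left $\H(\Y)$-module $\C$ inherits an action of $\QCoh(\Sing(\Y)/\Gm)$ by central endomorphisms. For each closed conical subset $\N \subseteq \Sing(\Y)$, I define $\C_\N \subseteq \C$ to be the full subcategory on objects set-theoretically supported on $\N$, in the standard sense of support for DG categories acted on by a commutative algebra object of $\DGCat$. The colocalization $\C_\N \rlto \C_{\N'}$ for $\N \subseteq \N'$ then follows formally, by base change along the open embedding $\Sing(\Y) \setminus \N' \hookrightarrow \Sing(\Y) \setminus \N$ and standard properties of localization of module categories. Functoriality along an $\H(\Y)$-linear functor $F : \C \to \D$ is automatic, since $F$ commutes with the central $\QCoh(\Sing(\Y)/\Gm)$-action and hence preserves the support subcategories.

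The main obstacle is the construction of the central morphism with the correct $\Gm$-equivariance and commutative-algebra structure. The individual ingredients—HKR for quasi-smooth stacks, the $\EE_2$-structure on unit endomorphisms in a monoidal $\infty$-category, and the realization of $\Sing(\Y)$ as the relative spectrum $\Spec_\Y \Sym_{\O_\Y}(\Tang_\Y[1])$—are each well-understood, but assembling them coherently into a monoidal action compatible with the definition of $\H(\Y)$ given in the paper is the core work. Once this step is in place, the singular support theory, together with its colocalization and functoriality properties, is essentially a formal consequence of $\H$-affineness.
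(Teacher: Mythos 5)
Your route is structurally different from the paper's, and the difference hides a genuine gap. The paper deduces the statement in two steps: (i) combine $\H$-affineness with the smooth-descent theorem for $\ShvCatH$ (Theorem~\ref{thm:smooth-descent-for-ShvCatH}) to reduce to the case of an affine quasi-smooth scheme $S$ mapping smoothly to $\Y$; (ii) in the affine case, use only the \emph{cohomological} HKR map $\Sym_{H^0(S,\O_S)} (H^1(\Tang_S)[-2]) \to \HH^\bullet(S)$, a classical graded-algebra map, together with the definition of singular support via actions of graded rings on homotopy groups of mapping spaces, in the style of \cite{AG1} and \cite{BIK}. You instead try to work directly at the level of the stack $\Y$ and to produce a chain-level central morphism $\O(\Sing(\Y)) \to \End_{\H(\Y)}(\uno_{\H(\Y)})$ of commutative DG algebras, promoting $\H(\Y)$ to a $\QCoh(\Sing(\Y)/\Gm)$-linear monoidal category, and then invoke support theory for module categories over a commutative algebra object of $\DGCat$. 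If this derived central morphism existed with the required coherences, the rest of your argument (colocalizations for $\N \subseteq \N'$, functoriality in $\C$) would indeed be formal.

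The gap is that you never construct this morphism, and this is not a small omission: the paper's proof is designed precisely to \emph{avoid} having to produce it. The target $\End_{\H(\Y)}(\uno_{\H(\Y)})$ is only an $\EE_2$-algebra, not a commutative ($\EE_\infty$) DG algebra, so a map "of commutative DG algebras" into it does not literally make sense; one would need a map of $\EE_2$-algebras out of $\O(\Sing(\Y))$, and constructing this with the correct $\Gm$-equivariance and compatibilities is a derived-HKR statement that is substantially stronger than what singular support theory requires. The cohomological HKR map the paper uses is classical and suffices, because the \cite{AG1}-style singular support is defined purely in terms of the induced action on cohomology of $\Hom$'s. You also omit the smooth-descent reduction, which is not merely cosmetic: singular support in $\Sing(\Y)$ for $\Y$ a stack is defined in \cite{AG1} by descent along smooth atlases, so the reduction to affines is how the notion acquires meaning in the first place. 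In short, your strategy trades the paper's two routine inputs (smooth descent and classical HKR on cohomology) for a single much harder derived input whose construction you explicitly defer, so the argument as written is incomplete.
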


\begin{rem}
The proof of this theorem is an easy consequence of the construction of $\H(\Y)$ (namely, the relation with Hochschild cochains as in Section \ref{ssec:Hochschild}) and our $\H$-affineness theorem, Theorem \ref{main-thm-intro}.
\end{rem}

\begin{rem}
Our expectation on possible usages of this theorem is the following. It is generally difficult to directly equip $\C$ with a  singular support theory relative to $\Sing(\Y)$; instead, one should try to exhibit an action of $\H(\Y)$ on $\C$. In Section \ref{ssec:Hecke}, we will illustrate a concrete application of this point of view on the geometric Langlands program.
\end{rem}

\sssec{}

There exists a monoidal functor $\QCoh(\Y) \to \H(\Y)$: hence, an $\H(\Y)$-action on $\C$ means in particular that $\C$ admits a $\QCoh(\Y)$-action. Thus, our theorem above can be regarded as an improvement of the following one in the setting of quasi-smooth stacks.

\begin{thm}
Let $\Y$ be an algebraic stack (not necessarily quasi-smooth).Then left $\QCoh(\Y)$-modules are equipped with a \emph{support theory} relative to $\Y$.
\end{thm}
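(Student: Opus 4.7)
The plan is to transfer the support filtration that exists on $\QCoh(\Y)$ itself to an arbitrary $\QCoh(\Y)$-module $\C$ by base change over $\QCoh(\Y)$. This is substantially softer than the $\H$-counterpart, because the underlying \virg{supports} are classical (not \emph{singular}) and the monoidal category $\QCoh(\Y)$ carries a well-understood idempotent-algebra description of the support subcategories.

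First I would record the support structure internal to $\QCoh(\Y)$. For $Z \subseteq \Y$ closed with open complement $j\colon U \hookrightarrow \Y$, let $\QCoh_{Z}(\Y) \subseteq \QCoh(\Y)$ denote the full subcategory of sheaves set-theoretically supported on $Z$, equivalently the kernel of $j^{\ast}$. The inclusion $\iota_{Z}\colon \QCoh_{Z}(\Y) \hookrightarrow \QCoh(\Y)$ is colimit-preserving and fully faithful, and admits a continuous right adjoint $\Gamma_{Z}$ (local cohomology); thus $(\iota_{Z},\Gamma_{Z})$ exhibits $\QCoh_{Z}(\Y)$ as a colocalization of $\QCoh(\Y)$. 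Since $\QCoh_{Z}(\Y)$ is a monoidal ideal, it is equivalent to modules over the idempotent commutative algebra $\iota_{Z}\Gamma_{Z}(\O_{\Y}) \in \QCoh(\Y)$, and inclusions $Z \subseteq Z'$ induce the evident fully faithful embeddings $\QCoh_{Z}(\Y) \hookrightarrow \QCoh_{Z'}(\Y)$.

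Next, for a left $\QCoh(\Y)$-module $\C$, I set
$$
\C_{Z} \;:=\; \QCoh_{Z}(\Y) \usotimes{\QCoh(\Y)} \C.
$$
Because $\QCoh_{Z}(\Y)$ is an idempotent $\QCoh(\Y)$-algebra, base change against $\C$ preserves the colocalization property: $\C_{Z} \hookrightarrow \C$ is fully faithful with right adjoint obtained by tensoring $\C$ against $\Gamma_{Z}$, and $\C_{Z} \hookrightarrow \C_{Z'}$ for $Z \subseteq Z'$. One then defines the support $\Supp(c)$, for $c \in \C$, to be the intersection of the closed subsets $Z$ with $c \in \C_{Z}$. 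Functoriality in $\QCoh(\Y)$-linear maps $F\colon \C \to \D$ is automatic from naturality of the tensor product: $F$ intertwines the colocalizations onto $\C_{Z}$ and $\D_{Z}$.

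The only point requiring genuine verification is the claim that base change along $\C$ preserves the idempotent algebra structure on $\QCoh_{Z}(\Y)$, and hence the fully faithful embedding it encodes. This is a formal consequence of the general theory of idempotent algebras in presentable symmetric monoidal $\infty$-categories. When $\Y$ is qcqs (the setting of practical interest), one may alternatively present $\QCoh_{Z}(\Y)$ as $\colim_{n} \QCoh(Z_{n})$ along nilpotent thickenings $Z_{n} \hookrightarrow \Y$ and verify the colocalization properties directly after tensoring with $\C$.
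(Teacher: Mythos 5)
Your argument is correct, and it is worth noting that the paper itself offers no proof of this statement: it is stated only as a known classical baseline against which the quasi-smooth improvement (Theorem~\ref{vague-thm:sing-support-IMPROVED}) is measured, so there is no "paper proof" in the strict sense to compare against. Your route --- identify $\QCoh_{Z}(\Y)\hookrightarrow\QCoh(\Y)$ as a smashing colocalization controlled by the idempotent object $\iota_{Z}\Gamma_{Z}(\O_{\Y})$, then observe that smashing (co)localizations are stable under $-\otimes_{\QCoh(\Y)}\C$ --- is the standard direct argument and is the correct conceptual reason the result holds. The alternative route, more in the spirit of how the paper proves the $\H$-version, would be to invoke $\Q$-affineness of $\Y$ to identify $\QCoh(\Y)\mmod$ with $\ShvCatQ(\Y)$, and then glue the affine support theories along a smooth atlas via descent; your argument is more elementary in that it avoids $1$-affineness entirely and works whenever local cohomology $\Gamma_Z$ is continuous, which holds in the paper's qcqs setting. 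One small terminological slip to watch: $\iota_Z\Gamma_Z(\O_\Y)$ is an idempotent object but not a \emph{unital} algebra --- the counit points the other way, $\iota_Z\Gamma_Z(\O_\Y)\to\O_\Y$ --- so "modules over an idempotent commutative algebra" should really read "objects local with respect to the non-unital idempotent," i.e.\ those $\F$ for which $\iota_Z\Gamma_Z(\O_\Y)\otimes\F\to\F$ is an equivalence. This does not affect the argument, since smashingness of the colocalization (equivalently, $\QCoh(\Y)$-linearity of $\Gamma_Z$) is what you actually use when passing to $\C_Z = \QCoh_Z(\Y)\otimes_{\QCoh(\Y)}\C$.
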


\ssec{The monoidal category $\H(\Y)$} \label{ssec:definition of HY}

Let us now recall the elements that go into the definition of $\H(\Y)$, following \cite{AG2} and \cite{centerH}. Although the applications of this theory so far concern only $\Y$ quasi-smooth, the natural setup for $\H(\Y)$ is more general. Namely, we assume that $\Y$ is a quasi-compact algebraic stack which is perfect, bounded\footnote{alias: eventually coconnective} and locally of finite presentation (lfp). See \cite{BFN} for the notion of \virg{perfect stack}.

\sssec{}

The definition of $\H$ requires some familiarity with the theory of ind-coherent sheaves on formal completions. We refer to \cite[Chapter III]{Book}, or to \cite{centerH} for a quick review. 

Nevertheless, let us recall the most important concepts. First, $\Y_\dR$ denotes the \emph{de Rham} prestack of $\Y$, whence $\Yform$ is the formal completion of the diagonal $\Delta: \Y \to \Y \times \Y$.
Second, we have the standard functor
$$
\Upsilon_\Y:
\QCoh(\Y)
\longto
\ICoh(\Y),
$$
which is the functor of acting on the dualizing sheaf $\omega_\Y \in \ICoh(\Y)$. The boundedness condition on $\Y$ is imposed so that $\Upsilon_\Y$ is fully faithful.

\sssec{}

We define $\H(\Y)$ to be the full subcategory of $\ICoh(\Yform)$ cut out by the requirement that the image of the pullback functor $\Delta^!: \ICoh(\Yform) \to \ICoh(\Y)$ be contained in the subcategory $\Upsilon_\Y(\QCoh(\Y)) \subseteq \ICoh(\Y)$.
Now, $\ICoh(\Yform)$ has a monoidal structure given by convolution, that is, pull-push along the correspondence
$$
\Yform \times \Yform
\xleftarrow{p_{12} \times p_{23}}
\form \Y \Y \Y \times_{\Y_\dR} \Y \xto{p_{13}} \form \Y \Y \Y.
$$
The lfp assumption on $\Y$ is crucial: it ensures that $\H(\Y)$ is preserved by this multiplication, thereby inheriting a monoidal structure.

\begin{example}

Of course, $\H(\Y)$ admits two obvious module categories: $\ICoh(\Y)$ and $\QCoh(\Y)$.
For $\ICoh(\Y)$, the theory of singular support of Theorem \ref{vague-thm:sing-support} reduces to the one developed by \cite{AG1} and before by \cite{BIK}.

\end{example}

\begin{example}

By \cite{AG1}, objects of $\QCoh(\Y)$ have singular support contained in the zero section of $\Sing(\Y)$: in our language, this is expressed  by the fact that the action of $\H(\Y)$ on $\QCoh(\Y)$ factors through the monoidal localization 
$$
\H(\Y)
\tto
\QCoh(\Yform).
$$
The construction and the study of this monoidal localization is deferred to another publication.
For now, let us say that we will call $\C \in \H(\Y) \mmod$ \emph{tempered} if the $\H(\Y)$-action factors through the above monoidal quotient.

\end{example}

\ssec{$\H$ for Hecke} \label{ssec:Hecke}

In this section, we anticipate a future application of Theorem \ref{vague-thm:sing-support}.
The reader not interested in geometric Langlands might well skip ahead to Section \ref{ssec:Hochschild}.

\sssec{} \label{sssec:second consequence}

Let us recall the rough statement of the geometric Langlands conjecture, see \cite{AG1}: there is a canonical equivalence $\Dmod(\Bun_G) \simeq \ICoh_\N(\LSGch)$.
This conjecture predicts in particular that any $\F \in \Dmod(\Bun_G)$ has a (nilpotent) singular support in $\Sing(\LSGch)$. The question that prompted the writing of this paper and the study of $\H$ is the following: \emph{is it possible to exhibit this structure on $\Dmod(\Bun_G)$ independently of the geometric Langlands conjecture?}

\medskip

Having such a notion is evidently desirable, as it allows to cut out $\Dmod(\Bun_G)$ into several subcategories by imposing singular support conditions.
For instance, the zero section $O_{\LSGch} \subseteq \Sing(\LSGch)$ ought to give rise to the DG category $\Dmod(\Bun_G)_{O_{\LSGch}}$ of \emph{tempered} $\fD$-modules.

\sssec{} \label{sssec:discussion of vanishing}

Our Theorem \ref{vague-thm:sing-support} gives a way to answer the above question. We make the following claim, which we plan to address elsewhere: \emph{there is a canonical action of $\H(\LSGch)$ on $\Dmod(\Bun_G)$.}

Modulo technical and foundational details, the construction of such action goes as follows:
\begin{itemize}
\item
consider the action of the \emph{renormalized}\footnote{See \cite[Section 12.2.3]{AG1} for the pointwise (as opposed to $\Ran$) version} spherical category $\Sph^\ren_{G,\Ran}$ on $\Dmod(\Bun_G)$;
\item
\emph{derived geometric Satake} over $\Ran$ yields a monoidal equivalence between $\Sph^\ren_{G,\Ran}$ and the (not yet defined) convolution monoidal DG category
$$
\Sph^{\spec, \ren}_{\Gch, \Ran}
:=
\ICoh
\Bigt{
\bigt{
 \LSGch(D) \times_{\LSGch(D^\times) } \LSGch(D) 
 }^\wedge_{\LSGch(D)}
 }_\Ran;
$$

\item
the argument of \cite{Roz-thesis} yields a monoidal localization 
$$
\Sph^{\spec, \ren}_{\Gch, \Ran}
\tto 
\H(\LSGch),
$$
with kernel denoted by $\K$;
\item
now consider the \emph{spherical category} $\Sph^{\spec, \naive}_{G, \Ran}$, the monoidal localization 
$$
\Sph^{\spec,\naive}_{G, \Ran} \tto \QCoh(\LSGch)
$$
with kernel denoted $\K^{\naive}$, and the monoidal functor
$$
\Sph^{\spec, \naive}_{G, \Ran} \longto \Sph^{\spec, \ren}_{\Gch, \Ran};
$$
\item
by construction, the essential image of the resulting functor $\K^{\naive} \to \K$ generates the target under colimits;
\item
the \emph{vanishing theorem} (\cite{Outline}) states that objects of $\K^{\naive}$ act by zero on $\Dmod(\Bun_G)$, whence the same is true for objects of $\K$: in other words, the $\Sph^\ren_{G,\Ran}$-action on $\Dmod(\Bun_G)$ factors through an action of $\H(\LSGch)$.
\end{itemize}

In particular, the construction implies that $\H(\LSGch)$ acts on $\Dmod(\Bun_G)$ by Hecke functors.

\ssec{$\H$ for Hochschild} \label{ssec:Hochschild}

To motivate the definition of $\H(\Y)$ and to explain the connection with singular support, it is instructive to look at the case $\Y =S$ is an affine DG scheme. Under our standing assumptions, $S$ is of finite type, bounded and with perfect cotangent complex. (Hereafter, we denote by $\Affevcoclfp$ the $\infty$-category of such affine schemes.)
In this case, the monoidal category $\H(S)$ is very explicit: it is the monoidal DG category of right modules over the $E_2$-algebra 
$$
\HC(S) := \End_{\QCoh(S \times S)}(\Delta_*(\O_S))
$$
of Hochschild cochains on $S$.
Under the equivalence $\H(S) \simeq \HC(S)^\op \mod$, the monoidal functor $\QCoh(S) \to \H(S)$ corresponds to induction along the $E_2$-algebra map $\Gamma(S,\O_S) \to \HC(S)^\op$.

\sssec{}

From this description, one observes that Theorem \ref{vague-thm:sing-support} is obvious in the affine case. 
Indeed, as we have just seen, the datum of $\C \in \H(S) \mmod$ means that $\C$ is enriched over $\HC(S)^\op$. Now, the HKR theorem yields a graded algebra map
$$
\Sym_{H^0(S,\O_S)} (H^1(\Tang_S)[-2])
\longto
\HH^\bullet(S),
$$ 
and, by definition, singular support for objects of $\C$ is computed just using the action of the LHS on $H^\bullet(\C)$.

\sssec{} \label{sssec:hierarchy}

In summary,  there is a hierarchy of structures that a DG category $\C$ might carry:
\begin{itemize}
\item
an action of the $E_2$-algebra $\HC(S)^\op$;
\item
an action of the commutative graded algebra $\Sym_{H^0(S,\O_S)} H^1(S, \Tang_S)[-2]$ on $H^\bullet(\C)$;
\item
an action of the commutative algebra ${H^0(S,\O_S)}$ on $H^\bullet(\C)$. 
\end{itemize}
The first two data endow objects of $\C$ with singular support, which is a closed conical subset of $\Sing(S)$, see \cite{AG1}. The third datum only allows to define ordinary support in $S$.

\ssec{Sheaves of categories} \label{ssec:shvcat-intro}

Next, we would like to generalize the above constructions to non-affine schemes and then to algebraic stacks.
The key hint is that singular support of quasi-coherent and ind-coherent sheaves can be computed smooth locally. Thus, we hope to be able to glue the local $\HC$-actions as well.

\sssec{}

The first step towards this goal is to understand the functoriality of $\H(S) \mmod$ along maps of affine schemes. This is not immediate, as $\HC(S)$ is not functorial in $S$. In particular, for $f:S \to T$ a morphism in $\Affevcoclfp$, there is no natural monoidal functor between $\H(T)$ and $\H(S)$.
However, these two monoidal categories are connected by a canonical bimodule 
$$
\Hdx ST
:= \ICoh_0((S \times T)^\wedge_S).
$$

\begin{example}
Observe that $\Hdx S\pt \simeq \QCoh(S)$, and $\Hdx SS = \H(S)$.
\end{example}

\sssec{} \label{intro-sssec:H on affine}

Moreover, for any string $S \to T \to U$ in $\Affevcoclfp$, there is a natural functor
\begin{equation} \label{eqn:lax-composition-H}
\Hdx ST \usotimes{\H(T)} \Hdx TU
\longto
\Hdx SU,
\end{equation}
given by convolution along the obvious correspondence
$$
(S \times T)^\wedge_S
\times 
(T \times U)^\wedge_T
\longleftarrow
(S \times T \times U)^\wedge_S
\longto
(S \times U)^\wedge_S.
$$
We will prove in Theorem \ref{thm:H-strict-on-schemes} that (\ref{eqn:lax-composition-H}) is an equivalence of  $(\H(S), \H(U))$-bimodules. It follows that the assignment $[S \to T] \squigto \Hdx ST$ upgrades to a functor
$$
\H: 
\Affevcoclfp
\longto
\AlgBimod(\DGCat),
$$
where $\AlgBimod(\DGCat)$ is the $\infty$-category whose objects are monoidal DG categories and whose morphisms are bimodules.

\sssec{}

A functor 
$$
\A: \Aff \to \AlgBimod(\DGCat)
$$ 
(or a slight variation, e.g. the functor $\H:\Affevcoclfp \to \AlgBimod(\DGCat)$) will be called a \emph{coefficient system} in this paper.
Informally, $\A$ consists of the following pieces of data:
\begin{itemize}
\item
for an affine scheme $S$, a monoidal DG category $\A(S)$;
\item 
for a map of affine schemes $f: S \to T$, an $(\A(S), \A(T))$-bimodule $\Adx ST$;
\item
for any string of affine schemes $S \to T \to U$, an $(\A(S), \A(U))$-bilinear equivalence
\begin{equation} 
\nonumber
\Adx ST \usotimes{\A(T)} \Adx TU
\longto
\Adx SU,
\end{equation}
\item
a system of coherent compatibilities for higher compositions.
\end{itemize}
The reason for the terminology is that each $\A$ is the coefficient system for a sheaf of categories attached to it. More precisely, the datum of $\A$ as above allows to define a functor
$$
\ShvCat^\A: 
\PreStk^\op
\longto
\inftyCat
$$
as follows:
\begin{itemize}
\item
for $S$ affine, we set $\ShvCatA(S) = \A(S) \mmod$;

\item
for $f: S \to T$ a map in $\Aff$, we have a structure pullback functor
$$
f^{*,\A}:
\ShvCatA(T) = \A(T) \mmod
\xto{\Adx ST \usotimes{\A(T) } -}
\ShvCatA(S) = \A(S) \mmod;
$$

\item
for $\Y$ a prestack, we define $\ShvCatA(\Y)$ be right Kan extension along the inclusion $\Aff \hto \PreStk$, that is, 
$$
\ShvCatA(\Y)
=
\lim_{S \in (\Aff_{/\Y})^\op}
\A(S) \mmod.
$$
\end{itemize}
Thus, an object of $\ShvCatA(\Y)$ is a collection of $\A(S)$-modules $\C_S$, one for each $S$ mapping to $\Y$, together with compatible equivalences $\Adx ST \otimes_{\A(T)} \C_T \simeq \C_S$.

\begin{example}

The easiest nontrivial example of coefficient system is arguably the one denoted by $\Q$ and defined as 
$$
\Q(S) := \QCoh(S), 
\hspace{.4cm}
\Q_{S \to T}
:= 	\QCoh(S) \in (\QCoh(S), \QCoh(T)) \bbimod.
$$
The theory of sheaves of categories associated to $\Q$ is the \virg{original one}, developed by D. Gaitsgory in \cite{shvcat}. In \loccit, such theory was denoted by $\ShvCat$; in this paper, for the sake of uniformity, we will instead denote it by $\ShvCatQ$.
\end{example}

\begin{example}

Parallel to the above, consider the coefficient system $\DD: \Aff_\aft \to \AlgBimod(\DGCat)$ defined by
$$
\DD(S) := \Dmod(S), 
\hspace{.4cm}
\DD_{S \to T}
:= 	\Dmod(S) \in (\Dmod(S), \Dmod(T)) \bbimod.
$$
The theory $\ShvCat^\DD$ is the theory of \emph{crystals of categories}, also discussed in \cite{shvcat}.
\end{example}

\begin{rem} \label{rem:analogies}
The following list of analogies is sometimes helpful: $\ShvCatQ$ categorifies quasi-coherent sheaves, $\ShvCat^\DD$ categorifies locally constant sheaves, $\ShvCatH$ categorifies $\fD$-modules.
\end{rem}

\ssec{$\H$-affineness}

In line with the first of the above analogies, the foundational paper \cite{shvcat} constructs an explicit adjunction
\begin{equation} \label{adj:Loc-bGamma-intro}
\nonumber
\begin{tikzpicture}[scale=1.5]
\node (a) at (0,1) {$\bLoc_\Y: \QCoh(\Y) \mmod $};
\node (b) at (3,1) {$\ShvCatQ(\Y): \bGamma_\Y$.};
\path[->,font=\scriptsize,>=angle 90]
([yshift= 1.5pt]a.east) edge node[above] { } ([yshift= 1.5pt]b.west);
\path[->,font=\scriptsize,>=angle 90]
([yshift= -1.5pt]b.west) edge node[below] {} ([yshift= -1.5pt]a.east);
\end{tikzpicture}
\end{equation}
In line with the analogy again, a prestack $\Y$ is said to be $1$-affine if these adjoints are mutually inverse equivalences.
This is tautologically true in the case $\Y$ is an affine scheme. However, there are several other examples: most notably many algebraic stacks (precisely, quasi-compact bounded algebraic stacks of finite type and with affine diagonal) are $1$-affine, see \cite[Theorem 2.2.6]{shvcat}.

For the sake of uniformity, we take the liberty to rename \virg{$1$-affineness} with \virg{$\Q$-affineness}.

\sssec{}

One of our main constructions is the adjunction
\begin{equation} \label{adj:Loc-bGammaH-intro}
\begin{tikzpicture}[scale=1.5]
\node (a) at (0,1) {$\bLoc^\H_\Y: \H(\Y) \mmod $};
\node (b) at (3,1) {$\ShvCatH(\Y): \bGamma^\H_\Y$,};
\path[->,font=\scriptsize,>=angle 90]
([yshift= 1.5pt]a.east) edge node[above] { } ([yshift= 1.5pt]b.west);
\path[->,font=\scriptsize,>=angle 90]
([yshift= -1.5pt]b.west) edge node[below] {} ([yshift= -1.5pt]a.east);
\end{tikzpicture}
\end{equation}
sketched below (and discussed thoroughly in Section \ref{ssec:localiz-global-sect-H-affineness}). Contrarily to the $\Q$-case, in the $\H$-case we do not allow $\Y$ to be an arbitrary prestack, but we need $\Y$ to be an algebraic stack satisfying the conditions that make $\H(\Y)$ well defined, see Section \ref{ssec:definition of HY}.

\sssec{}

The definition of the left adjoint $\bLoc^\H_\Y$ is easy. For a map $S \to \Y$ with $S \in \Affevcoclfp$, look at the $(\H(S), \H(\Y))$-bimodule $\Hdx S\Y := \ICoh_0((S \times \Y)^\wedge_S)$. Given $\C \in \H(\Y) \mmod$, we form the $\H$-sheaf of categories 
$$
\bLoc^\H_\Y(\C)
:=
\{
\Hdx S\Y
\usotimes{\H(\Y)}
\C
\}_S.
$$
To define the right adjoint $\bGamma^\H_\Y$, we need to make sure that each bimodule $\Hdx S\Y$ admits a right dual. Such right dual exists and it is fortunately the obvious $(\H(\Y), \H(S))$-bimodule 
$$
\Hsx \Y S := \ICoh_0((\Y \times S)^\wedge_S).
$$
From this, it is straightforward to see that 
$$
\bGamma^\H_\Y (\{\E_S\}_S)
\simeq
\lim_{S \in (\Affevcoclfp)^\op}
\Hsx \Y S \usotimes{\H(S)} \E_S,
$$
with its natural left $\H(\Y)$-module structure.

\sssec{}

Our main theorem reads:

\begin{thm} \label{main-thm-intro}
Any $\Y \in \Stkevcoclfp$ is \emph{$\H$-affine}, that is, the adjoint functors in (\ref{adj:Loc-bGammaH-intro}) are equivalences. 
\end{thm}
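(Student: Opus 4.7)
The strategy is to proceed by descent from the affine case. When $\Y=S$ is an object of $\Affevcoclfp$, the index category $(\Affevcoclfp)_{/S}$ admits $\id_S$ as a terminal object, so the right Kan extension defining $\ShvCatH(S)$ collapses to $\H(S)\mmod$, and under this identification both $\bLoc^\H_S$ and $\bGamma^\H_S$ are canonically the identity. Hence the affine case is tautological, and the problem is to upgrade this to all $\Y\in\Stkevcoclfp$.

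For a general $\Y$, I would choose a smooth atlas $p\colon S\tto \Y$ with $S\in\Affevcoclfp$, and form the Čech nerve $S^\bullet/\Y$; under the standing hypotheses on $\Y$ (in particular affine diagonal) each $S^n=S\times_\Y\cdots\times_\Y S$ is again in $\Affevcoclfp$. The goal then splits into two descent statements:
\begin{enumerate}
\item[(a)] $\ShvCatH(\Y)\simeq \Tot\ShvCatH(S^\bullet)$;
\item[(b)] $\H(\Y)\mmod\simeq \Tot\,\H(S^\bullet)\mmod$.
\end{enumerate}
Statement (a) is almost formal from the definition of $\ShvCatH$ as a right Kan extension along $\Aff\hto\PreStk$, together with the fact that $|S^\bullet|\simeq\Y$ as a colimit in $\PreStk$ (smooth descent for algebraic stacks). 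The content is statement (b): we must show that the functor $\Hdx{S}{\Y}\usotimes{\H(\Y)}(-)\colon \H(\Y)\mmod\to \H(S)\mmod$ is comonadic, and that the resulting comonad is the one attached to the $\H(S^1)$-bimodule structure on $\Hsx{S}{S^1}\otimes_{\H(S^1)}\Hdx{S^1}{S}$ read off from the simplicial structure.

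Comonadicity is proved via Barr--Beck--Lurie: the right adjoint $\Hsx{\Y}{S}\usotimes{\H(S)}(-)$ preserves colimits (since $\Hsx{\Y}{S}$ is right-dualizable, as used in the construction of $\bGamma^\H_\Y$) and is conservative because $p$ is smooth and surjective, which makes the unit $\uno_{\H(\Y)}\to \Hsx{\Y}{S}\usotimes{\H(S)}\Hdx{S}{\Y}$ the beginning of a descent-theoretic resolution (faithfully flat descent for $\ICoh$ on the relevant formal completions). To identify the comonad one invokes the tensor product formula advertised earlier in the introduction, applied to the Cartesian square $S\times_\Y S\rightrightarrows S\to\Y$, giving
$$
\Hdx{S}{\Y}\usotimes{\H(\Y)}\Hsx{\Y}{S}\;\simeq\;\ICoh_{0}\!\bigl((S\times S)^\wedge_{S\times_\Y S}\bigr),
$$
which matches the $\H(S^1)$-structure. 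Once (a) and (b) are in place, the adjunction $(\bLoc^\H_\Y,\bGamma^\H_\Y)$ is identified with the totalization of the $(\bLoc^\H_{S^n},\bGamma^\H_{S^n})$, each of which is an equivalence by the affine case, so the same is true on $\Y$.

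\textbf{Main obstacle.} The crux is (b), and inside (b) the identification of the descent comonad with $\H(S\times_\Y S)$. This is where the full strength of the theory developed in the paper—the functoriality $\H\colon\Affevcoclfp\to\AlgBimod(\DGCat)$ together with the base change/tensor product formula for transfer bimodules on formal completions—really enters. I expect the technical heart to consist of verifying this base change through the $\Upsilon$--cutout that defines $\H(\Y)\subseteq\ICoh(\Yform)$, using the lfp hypothesis to preserve the subcategories under the relevant convolutions.
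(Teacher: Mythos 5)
Your proposal takes a genuinely different route from the paper. The paper does \emph{not} run a Barr--Beck descent argument along a Čech nerve of $\Y$ at the level of module $\infty$-categories; instead it exploits the morphism of coefficient systems $\Q\to\H$, which produces a conservative forgetful functor $\oblv_\Y^{\Q\to\H}\colon\ShvCatH(\Y)\to\ShvCatQ(\Y)$ with left adjoint $\ind_\Y^{\Q\to\H}$. After checking that $\ind_\Y^{\Q\to\H}$ sends $\Q_{/\Y}$ to $\H_{/\Y}$ (Lemma~\ref{lem:computation of induction of Q over Y}), one gets two commuting squares relating $(\bLoc_\Y,\bGamma_\Y)$ to $(\bLoc^\H_\Y,\bGamma^\H_\Y)$ via $\oblv[\delta_\Y]$ and $\oblv_\Y^{\Q\to\H}$; the $\H$-affineness is then squeezed out of Gaitsgory's $\Q$-affineness of $\Y$ and the conservativity of the two forgetful functors. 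That argument buys a very short proof: all the hard descent-theoretic content is outsourced to the already-known $1$-affineness theorem. Your route would have to re-derive that content.

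There are also two concrete gaps in the proposal. First, step (a) is not ``almost formal'': the geometric realization $|S^\bullet|$ of the Čech nerve agrees with $\Y$ only after smooth sheafification, not as a colimit in $\PreStk$, so right Kan extension along $\Aff\hto\PreStk$ does not automatically collapse the limit over $(\Affevcoclfp)_{/\Y}$ to the totalization over the Čech nerve. What you need is exactly Theorem~\ref{thm:smooth-descent-for-ShvCatH} (smooth descent for $\ShvCatH$), and its proof is not a formality: it rests on Lemma~\ref{lem:help for descent} and the Beck--Chevalley Lemma~\ref{lem:Beck-Chevalley-for-ICoh-formal-complet-?} for the discontinuous $(-)^?$-pullbacks. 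Second, in step (b) the Barr--Beck hypothesis is misstated: to exhibit $\H(\Y)\mmod$ as comodules over the Čech comonad on $\H(S)\mmod$, it is the \emph{left} adjoint $\Hdx{S}{\Y}\usotimes{\H(\Y)}(-)$ that must be conservative, not the right adjoint. Unwinding $\Hdx S\Y\simeq\QCoh(S)\usotimes{\QCoh(\Y)}\H(\Y)$, conservativity of that pullback reduces to the conservativity of $\QCoh(S)\usotimes{\QCoh(\Y)}(-)$ on $\QCoh(\Y)\mmod$ for a smooth surjection $S\to\Y$ --- which is precisely the heart of the $\Q$-affineness theorem in \cite{shvcat}. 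So your plan is not circular, but it does not avoid that input; the paper simply quotes it in a more economical way. The identification of the descent comonad with $\H(S\times_\Y S)$ via the tensor product formula is the correct remaining ingredient and is available (via ambidexterity of $\H$, Theorem~\ref{thm:H-ambidextrous}).
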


In the rest of this introduction, we will explain our two applications of this theorem: the relation with singular support as in Theorem \ref{vague-thm:sing-support}, and the functoriality of $\H$ for algebraic stacks.

\ssec{Change of coefficients} \label{ssec:change of coeff}

Coefficient systems form an $\infty$-category.
By definition, a morphism $\A \to \B$ consists of an $(\A(S), \B(S))$-bimodule $M(S)$ for any $S \in \Aff$, and of a system of compatible equivalences
\begin{equation} \label{eqn:morphisms-coeff-systems}
\Adx ST \usotimes{\A(T)} M(T)
\simeq
M(S) \usotimes{\B(S)}
\Bdx ST.
\end{equation}
Under mild conditions, a morphism of coefficient systems $\A \to \B$ gives rise to an adjunction
\begin{equation} \label{adj:ind-oblv-intro}
\begin{tikzpicture}[scale=1.5]
\node (a) at (0,1) {$\ind_{\Y}^{\A \to \B}: \ShvCat^\A(\Y) $};
\node (b) at (3,1) {$\ShvCat^\B(\Y)  : \oblv_\Y^{\A \to \B}$,};
\path[->,font=\scriptsize,>=angle 90]
([yshift= 1.5pt]a.east) edge node[above] { } ([yshift= 1.5pt]b.west);
\path[->,font=\scriptsize,>=angle 90]
([yshift= -1.5pt]b.west) edge node[below] {} ([yshift= -1.5pt]a.east);
\end{tikzpicture}
\end{equation}
which may be regarded as a categorified version of the usual \virg{extension/restriction of scalars} adjunction.

\begin{example}

For instance, $\QCoh$ yields a morphism $\H \to \DD$: i.e., $\QCoh(S)$ is naturally an $(\H(S), \Dmod(S))$-bimodule and there are natural equivalences
$$
\Hdx ST \usotimes{\H(T)} \QCoh(T)
\simeq
\QCoh(S)
\usotimes{\Dmod(S)}
\DD_{S \to T}
$$
for any $S \to T$. In fact, both sides are obviously equivalent to $\QCoh(S)$.

\end{example}

\begin{example}

Similarly, $\ICoh$ gives rise to a morphism $\DD \to \H$: indeed, both sides of
$$
\DD_{S \to T}
\usotimes{\DD(T)}
\ICoh(T)
\simeq
\ICoh(S)
\usotimes{\H(S)}
\Hdx ST
$$
are equivalent to $\ICoh(T^\wedge_S)$, as shown in the main body of the paper.

\end{example}

\begin{rem}
Continuing the analogies of Remark \ref{rem:analogies}, one may think of $\QCoh(\Y)$ as a categorification of the algebra $\O_\Y$ of functions on $\Y$ (a left $\fD$-module). Likewise, $\ICoh(\Y)$ categorifies the space of distributions on $\Y$ (a right $\fD$-module). Then the $\H$-affineness theorem states that $\H$ categorifies the algebra of differential operators on $\Y$.
These observations help remember/explain the directions of the morphisms $\H \to \DD$ and $\DD \to \H$ in the two examples above: $\QCoh$ is naturally a left $\H$-module, while $\ICoh$ is naturally a right $\H$-module.
\end{rem}

\begin{rem}
Our Theorem \ref{thm:H-functoriality-intro} shows that the morphism $\QCoh: \H \to \DD$ is \virg{optimal} in that the natural monoidal functor
$$
\Dmod(Y) 
\longto
 \Fun_{\H(Y)} (\QCoh(Y), \QCoh(Y))
$$
is an equivalence for any $Y \in \Schevcoclfp$. 
On the other hand, the morphism $\ICoh: \DD \to \H$ is not optimal: in another work (see \cite{gluing} for more in this direction), we plan to show that
\begin{equation} \label{eqn:what acts on ICOH}
  \Fun_{\H(Y)} (\ICoh(Y), \ICoh(Y))
 \simeq
\vDmod(LY),
\end{equation}
where $\vDmod(LY)$ is the monoidal DG category introduced in \cite{centerH}. For $Y$ quasi-smooth, $\vDmod(LY)$ is closely related to $\Dmod(\Sing(Y))$. Let us point out that the above equivalence (\ref{eqn:what acts on ICOH}) would provide an answer to the question \virg{\emph{What acts on $\ICoh$}?} raised in \cite[Remark 1.4.3]{AG2}.
\end{rem}

\begin{example}

Another morphism of coefficient systems of interest in this paper is $\Q \to \H$, the one induced by the monoidal functor $\QCoh(S) \to \H(S)$. In this case, the adjunction (\ref{adj:ind-oblv-intro}) categorifies the induction/forgetful adjunction between quasi-coherent sheaves and left $\fD$-modules.

\end{example}

\sssec{} \label{sssec:formal version}

Here is how the $\H$-affineness Theorem \ref{main-thm-intro} implies Theorem \ref{vague-thm:sing-support}. 
The datum of a left $\H(\Y)$-action $\C$ corresponds the datum of an object $\wt\C \in \ShvCatH(\Y)$. 
Now, on the one hand $\ShvCatH$ satisfies \emph{smooth descent}, see Theorem \ref{thm:smooth-descent-for-ShvCatH}. On the other hand, singular support is computed smooth locally. Hence, we are back to Theorem \ref{vague-thm:sing-support} for affine schemes, which has already been discussed.


\ssec{Functoriality of $\H$ for algebraic stacks}

The $\H$-affineness theorem has another consequence: it allows to extend the assignment 
$\Y \squigto \H(\Y)$ to a functor out of a certain $\infty$-category of correspondences of stacks.

\sssec{}

Indeed, as we prove in this paper, $\ShvCatH$ enjoys a rich functoriality: besides the structure pullbacks $f^{*,\H}: \ShvCatH(\Z) \to \ShvCatH(\Y)$ associated to $f: \Y \to \Z$, there are also pushforward functors  $f_{*,\H}$ (right adjoint to pullbacks) satisfying base-change along cartesian squares.

\medskip
 
Now, Theorem \ref{main-thm-intro} guarantees that the assignment $\Y \squigto \H(\Y)$ enjoys a parallel functoriality, as stated in the following theorem.

\begin{thm} \label{thm:H-functoriality-intro}
There is a natural functor
$$
\Corr(\Stkevcoclfp)_{\evcoc; \all}  
\longto 
\AlgBimod(\DGCat)
$$
that sends
$$
\X \squigto \H(\X),
\hspace{.4cm} 
[\X \leftto \W \to \Y]
\squigto
\Hcorr \X \W \Y := \ICoh_0((\X \times \Y)^\wedge_\W).
$$
Here $\Corr(\Stkevcoclfp)_{\evcoc; \all})$ is the $\infty$-category whose objects are objects of $\Stkevcoclfp$ and whose $1$-morphisms are given by correspondences $[\X \leftto \W \to \Y]$ with bounded left leg.
\end{thm}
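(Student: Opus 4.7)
The plan is to combine the $\H$-affineness theorem (Theorem \ref{main-thm-intro}) with the pullback-pushforward functoriality of $\ShvCatH$ developed in the previous sections. The first stage is to promote $\ShvCatH$ to a functor out of the correspondence category,
$$
\ShvCatH_{\Corr}: \Corr(\Stkevcoclfp)_{\evcoc; \all} \longto \DGCatcont,
$$
sending a correspondence $[\X \xleftarrow{f} \W \xto{g} \Y]$ to the appropriate pull-push composite of $g^{*,\H}$ and $f_{*,\H}$ (the direction being dictated by the paper's convention that an $(\A,\B)$-bimodule represents a morphism $\A \to \B$ in $\AlgBimod$, and thus a functor between module categories going the opposite way). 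The boundedness assumption on the left leg $f$ ensures that $\W \in \Stkevcoclfp$, so that the intermediate category $\ShvCatH(\W)$ is itself covered by Theorem \ref{main-thm-intro} and the relevant $\ICoh_0$ of the formal completion is well-defined. The standard Gaitsgory--Rozenblyum machinery then packages this heuristic pull-push assignment into an honest $\infty$-functor, provided one verifies base change and higher coherences along cartesian squares; these follow from the base change identities for $\ShvCatH$ established in the main body of the paper.

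In the second stage, I translate $\ShvCatH_{\Corr}$ into the desired functor via $\H$-affineness. For each $\Y \in \Stkevcoclfp$, Theorem \ref{main-thm-intro} provides a natural equivalence $\ShvCatH(\Y) \simeq \H(\Y) \mmod$. Under this dictionary, a continuous functor between module categories of monoidal DG categories becomes a bimodule, functorially in all variables. Applying this stagewise converts $\ShvCatH_{\Corr}$ into the desired
$$
\H_{\Corr}: \Corr(\Stkevcoclfp)_{\evcoc; \all} \longto \AlgBimod(\DGCat),
$$
with $\H_{\Corr}(\Y) = \H(\Y)$ on objects.

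In the third stage, I identify $\H_{\Corr}([\X \xleftarrow{f} \W \xto{g} \Y])$ with $\Hcorr \X \W \Y := \ICoh_0((\X \times \Y)^\wedge_\W)$. The bimodule attached to a continuous functor $\ShvCatH(\Y) \to \ShvCatH(\X)$ is recovered by evaluating on the canonical object $\bLoc^\H_\Y(\H(\Y)) \in \ShvCatH(\Y)$ and then applying $\bGamma^\H_\X$. Using the explicit descriptions $\bLoc^\H_\Y(\H(\Y))_T = \Hdx T \Y$ and
$$
\bGamma^\H_\X(\{\E_S\}_S) \simeq \lim_{S \in (\Affevcoclfp_{/\X})^\op} \Hsx \X S \usotimes{\H(S)} \E_S,
$$
the computation reduces to iterated applications of the affine tensor-product formula (Theorem \ref{thm:H-strict-on-schemes}) together with base change for $\ICoh_0$ along formal completions. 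Globalising these affine identifications via smooth descent for $\ShvCatH$ yields the claimed $\Hcorr \X \W \Y$ together with its $(\H(\X), \H(\Y))$-bimodule structure.

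The principal obstacle I anticipate is Stage 1: packaging the pullback, pushforward and base-change data for $\ShvCatH$ into a genuine $\infty$-functor out of the correspondence category requires careful management of the higher coherences of the Gaitsgory--Rozenblyum formalism, and in particular the verification that the base change isomorphisms for $\ShvCatH$ extend to a coherent system on the $(2,\infty)$-category of cartesian squares with bounded legs. Stages 2 and 3 are comparatively routine once the first is in place: Stage 2 is the bimodule-versus-functor dictionary made functorial, while Stage 3 reduces via the explicit formulas for $\bLoc^\H$ and $\bGamma^\H$ to affine tensor-product calculations already established.
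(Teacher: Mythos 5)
Your proposal follows essentially the same route as the paper: you construct what the paper calls $\Hcat$ by feeding the pull-push functoriality $\ShvCatH_{*,*}$ (packaged via the Gaitsgory--Rozenblyum correspondence machinery with base-change supplied by the ambidexterity of $\H$) through the $\H$-affineness equivalences, and then identify the resulting bimodules with $\ICoh_0((\X \times \Y)^\wedge_\W)$ by computing with the explicit formulas for $\bLoc^\H$ and $\bGamma^\H$ using smooth descent and the affine tensor-product identity---which is precisely the computation carried out in the paper's proof of Theorem \ref{main-thm-Hgoem is strict}. The only mild discrepancy is a cosmetic one of variance conventions in which direction the pull-push functor points relative to the bimodule it encodes, but this does not affect the substance.
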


\sssec{}

In the rest of this introduction, we exploit such functoriality in the case of classifying spaces of algebraic groups (Section \ref{ssec:Harish-Chandra}) and in the case of local systems over a smooth complete curve (Section \ref{ssec:LS}).

\ssec{$\H$ for Harish-Chandra} \label{ssec:Harish-Chandra}

For $\Y$ smooth, $\H(\Y)$ is equivalent to $\ICoh(\Yform)$, with its natural convolution monoidal structure. For instance, if $G$ is an affine algebraic group, we have
$$
\H(BG)
\simeq
\ICoh(G \backslash G_\dR /G).
$$
This is the monoidal category of Harish-Chandra bimodules for the group $G$, see \cite[Section 2.3]{Be} for the connection with the theory of weak/strong actions on categories.
Likewise, 
$$
\Hdx {\pt}{BG} \simeq \IndCoh(G_\dR/G)
$$
is the DG category $\g \mod$ of modules for the Lie algebra $\g = \Lie(G)$.
More generally, for a group morphism $H \to G$, we have
$$
\Hsx {BG}{BH}
=
\ICoh((BG \times BH)^\wedge_{BH})
\simeq
\ICoh(G \backslash G_\dR/ H )
\simeq
\g \mod^{H,w}.
$$
This is the correct derived enhancement of the ordinary category of Harish-Chandra $(\g,H)$-modules.

\sssec{}

Theorem \ref{thm:H-functoriality-intro} yields the following equivalences:
$$
\Hdx {BH}{BG} \usotimes{\H(BG)} \Hdx {BG}\pt
\xto{\;\; \simeq \;\;}
\Hdx {BH}{\pt}
\simeq
\QCoh(BH); 
$$
$$
\Hdx {\pt}{BG} \usotimes{\H(BG)} \Hsx {BG}{\pt}
\xto{\;\; \simeq \;\;}
\Dmod(G);
$$
$$
\Hsx {\pt}{BG} \usotimes{\H(BG)} \Hdx {BG}\pt
\xto{\;\; \simeq \;\;}
\Dmod(BG).
$$

\sssec{}

Another way to prove these is via the theory of DG categories with $G$-action, see \cite[Section 2]{Be}. 
For instance, it was proven there that, for any category $\C$ equipped with a right strong action of $G$, there are natural equivalences:
$$
\C^{G,w} \usotimes{\H(BG)} \Rep(G) \simeq \C^{G},
\hspace{.4cm}
\C^{G,w} 
\usotimes{\H(BG)}
\g \mod
\simeq
\C.
$$
Now, let $\C = {}^{H,w}\Dmod(G)$, $\C = \Dmod(G)$ and $\C= \Vect$ respectively.

\sssec{}

For generalizations of these computations to the topological setting, the reader may consult \cite{top-chiral}.

\ssec{The gluing theorems in geometric Langlands} \label{ssec:LS}

More interesting than $\H(BG)$ is the monoidal DG category $\H(\LS_G)$, to which we now turn attention. Observe that, by construction, we have 
$$
\Hcorr \Y \X \pt
\simeq
\ICoh_0(\Y^\wedge_\X).
$$
With this notation, the \emph{spectral gluing theorem} of \cite{AG2} may be rephrased as follows: there is an explicit $\H(\LS_\Gch)$-linear localization adjunction
\begin{equation} \label{adj:spectral-gluing-original}
\begin{tikzpicture}[scale=1.5]
\node (a) at (0,1) {$(\gamma^\spec)^L: \Glue_{P} \;
\Hcorr {\LS_\Gch}{\LS_\Pch}{\pt}$};
\node (b) at (4,1) {$\ICoh_\N(\LS_\Gch): \gamma^\spec$.};
\path[->>,font=\scriptsize,>=angle 90]
([yshift= 1.5pt]a.east) edge node[above] {$ $} ([yshift= 1.5pt]b.west);
\path[right hook ->,font=\scriptsize,>=angle 90]
([yshift= -1.5pt]b.west) edge node[below] {$ $} ([yshift= -1.5pt]a.east);
\end{tikzpicture}
\end{equation}
Here we have switched to the Langlands dual $\Gch$ as we are going to discuss Langlands duality, and it is customary to have Langlands dual groups on the spectral side.

\sssec{}

Let $\Mch$ be the Levi quotient of a parabolic $\Pch$. By Theorem \ref{thm:H-functoriality-intro}, we can rewrite
$$
\Hcorr {\LS_\Gch}{\LS_\Pch}{\pt}
\simeq
\Hcorr {\LS_\Gch}{\LS_\Pch}{\LS_{\Mch}}
\usotimes{\H(\LS_\Mch)}
\Hdx{\LS_\Mch}{\pt}
\simeq
\Hcorr {\LS_\Gch}{\LS_\Pch}{\LS_{\Mch}}
\usotimes{\H(\LS_\Mch)}
\QCoh(\LS_\Mch).
$$
By the $\H$-affineness theorem, we reinterpret the bimodule $\Hcorr {\LS_\Gch}{\LS_\Pch}{\LS_{\Mch}}$, or better the functor
$$
\EEis_{\Pch}:
\H(\LS_\Mch) \mmod
\longto
\H(\LS_\Gch) \mmod
$$
attached to it, as an \emph{Eisenstein series} functor in the setting of $\H$-sheaves of categories.

\sssec{}

These considerations shed light on the LHS of (\ref{adj:spectral-gluing-original}). Coupled with the construction of Section \ref{sssec:discussion of vanishing}, they allow to formulate a conjecture on the automorphic side of geometric Langlands. This conjecture explains how $\Dmod(\Bun_G)$ can be reconstructed algorithmically out of tempered $\fD$-modules for all the Levi's of $G$, including $G$ itself.

\begin{conj}[(Automorphic gluing)] \label{conj:aut-gluing}
There is an explicit $\H(\LS_\Gch)$-linear localization adjunction
\begin{equation} \label{adj:automorphic-gluing}
\begin{tikzpicture}[scale=1.5]
\node (a) at (0,1) {$\gamma^L: \Glue_{P} \;
\EEis_{\Pch} (\Dmod(\Bun_M)^\temp)$};
\node (b) at (3.5,1) {$\Dmod(\Bun_G): \gamma$.};
\path[->>,font=\scriptsize,>=angle 90]
([yshift= 1.5pt]a.east) edge node[above] {$ $} ([yshift= 1.5pt]b.west);
\path[right hook ->,font=\scriptsize,>=angle 90]
([yshift= -1.5pt]b.west) edge node[below] {$ $} ([yshift= -1.5pt]a.east);
\end{tikzpicture}
\end{equation}
\end{conj}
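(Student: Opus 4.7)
The plan is to transport the spectral gluing adjunction (\ref{adj:spectral-gluing-original}) of \cite{AG2} to the automorphic side via the tempered form of geometric Langlands for proper Levi subgroups. Using Theorem \ref{thm:H-functoriality-intro} together with the identification $\QCoh(\LS_\Mch) \simeq \Hdx{\LS_\Mch}{\pt}$, one computes
\[
\EEis_{\Pch}(\QCoh(\LS_\Mch))
\simeq
\Hcorr{\LS_\Gch}{\LS_\Pch}{\LS_\Mch}
\usotimes{\H(\LS_\Mch)}
\QCoh(\LS_\Mch)
\simeq
\Hcorr{\LS_\Gch}{\LS_\Pch}{\pt},
\]
which is exactly the $P$-component appearing on the LHS of the spectral gluing. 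Granting the tempered Langlands equivalence $\Dmod(\Bun_M)^\temp \simeq \QCoh(\LS_\Mch)$ for each proper Levi $M$, together with its compatibility with the bimodule $\Hcorr{\LS_\Gch}{\LS_\Pch}{\LS_\Mch}$ that implements parabolic induction on both sides, the LHS of (\ref{adj:automorphic-gluing}) matches the LHS of (\ref{adj:spectral-gluing-original}). The content of the conjecture then becomes that the RHS match as well, i.e., that $\Dmod(\Bun_G)$ is the correct $\H(\LS_\Gch)$-linear reflective subcategory of the glued category.

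Concretely, I would construct the right adjoint $\gamma$ out of parabolic constant terms. For each standard parabolic $P$ with Levi $M$, the functor $\on{CT}_{P^-}: \Dmod(\Bun_G) \to \Dmod(\Bun_M)$ is expected to land in $\Dmod(\Bun_M)^\temp$ (this is a form of temperedness of constant terms); applying $\EEis_{\Pch}$ then produces one $P$-component of $\gamma$. Transitivity of constant terms for nested parabolics supplies the coherent compatibilities needed to assemble these into a functor to $\Glue_P$. The left adjoint $\gamma^L$ is then the corresponding assembly of Eisenstein series, and the adjunction $(\gamma^L, \gamma)$ is governed by the standard (Eisenstein, constant term) adjunction. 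Its $\H(\LS_\Gch)$-linearity reduces to Hecke-compatibility of geometric Eisenstein series on the Drinfeld compactifications $\BunPb$, which in turn follows from derived Satake in families.

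This reduces the proof to two ingredients: (i) the tempered form of geometric Langlands for each proper Levi of $G$, which naturally suggests an induction on the semisimple rank of $G$; and (ii) an \emph{Eisenstein--cuspidal recollement} for $\Dmod(\Bun_G)$ —every $\F \in \Dmod(\Bun_G)$ should be reconstructible from its tower of tempered constant terms. The hardest part will be (ii). On the spectral side it reflects the stratification of $\Nilp \subseteq \Sing(\LS_\Gch)$ by nilpotent orbits, which is precisely what makes the argument of \cite{AG2} succeed; transferring it to the automorphic side requires exhibiting the matching cuspidal decomposition compatibly with all Eisenstein functors. Once the $\H(\LS_\Gch)$-action of Section \ref{sssec:discussion of vanishing} is installed, $\H$-affineness (Theorem \ref{main-thm-intro}) and smooth descent for $\ShvCatH$ reformulate both sides of (\ref{adj:automorphic-gluing}) as objects of $\ShvCatH(\LS_\Gch)$, whereupon the comparison becomes local on $\LS_\Gch$ and can be addressed one nilpotent stratum at a time using the spectral gluing.
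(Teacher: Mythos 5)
The statement you are addressing is labelled a \emph{conjecture} in the paper, and the paper offers no proof of it. Immediately after stating it, the paper remarks that the adjunction \eqref{adj:automorphic-gluing} will be constructed in a follow-up article (and is ``relatively easy''), while the difficult part — full faithfulness of the right adjoint — is left open. So there is no proof of record to compare your proposal against; what you have written is likewise a plan rather than a proof, and it would have been cleaner to flag this at the outset.

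That said, your sketch of the \emph{construction} is plausible and consistent with the paper's comments: the right adjoint $\gamma$ assembled from constant-term functors $\on{CT}_{P^-}$, the left adjoint $\gamma^L$ from Eisenstein functors, with transitivity of constant terms supplying the coherence data, and $\H(\LS_\Gch)$-linearity coming from derived Satake as outlined in Section \ref{sssec:discussion of vanishing}. The identification $\EEis_{\Pch}(\QCoh(\LS_\Mch)) \simeq \Hcorr{\LS_\Gch}{\LS_\Pch}{\pt}$ via Theorem \ref{thm:H-functoriality-intro} also matches the computation the paper carries out just before stating the conjecture. Where you part ways with the paper's intent is in taking the tempered Langlands equivalence $\Dmod(\Bun_M)^\temp \simeq \QCoh(\LS_\Mch)$ as a hypothesis: in this paper $\Dmod(\Bun_M)^\temp$ is meant to be defined intrinsically through the $\H(\LS_\Mch)$-action (the ``tempered'' condition of Section \ref{ssec:definition of HY}), and the virtue of Conjecture \ref{conj:aut-gluing} is precisely that it lives entirely on the automorphic side. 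Importing tempered Langlands for all proper Levis makes your reduction closer to ``deduce from Langlands duality'' than to an independent automorphic reconstruction; it also introduces a genuine dependency chain (tempered GLC for Levis) that is itself far from established and not assumed anywhere in the paper.

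Finally, you are transparent that item (ii) — an Eisenstein--cuspidal recollement, i.e.\ that every $\F \in \Dmod(\Bun_G)$ is reconstructible from its tempered constant terms — is the crux, and that you have no argument for it. This is exactly the point the paper flags as the difficult, open part. So there is no contradiction with the paper, but there is also no proof: both you and the paper stop at the same open problem, and your proposal adds a further unverified assumption (tempered Langlands for Levis) along the way.
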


\sssec{}

Some comments on this conjecture and on some future research directions:
\begin{enumerate}
\item
We will construct the adjunction (\ref{adj:automorphic-gluing}) in a follow-up paper; this will be relatively easy. The difficult part is to show that the right adjoint is fully faithful.
\item
Actually, the conjecture can be pushed even further, as it is possible to guess what the essential image $\gamma$ is: this follows from an explicit description of the essential image of $\gamma^\spec$, see \cite{gluing}. 
\item
Clearly, Conjecture \ref{conj:aut-gluing} is related to the extended Whittaker conjecture, see \cite{Outline} and \cite{ext-whit}. The LHS of (\ref{adj:automorphic-gluing}) is expected to be smaller than the extended Whittaker category.
\end{enumerate}

\ssec{Conventions}

We refer \cite{Book}, \cite{shvcat} or \cite{centerH} for a review of our conventions concerning category theory and algebraic geometry.
In particular:
\begin{itemize}
\item
we always work over an algebraically closed field $\kk$ of characteristic zero;
\item
we denote by $\DGCat$ the (large) symmetric monoidal $\infty$-category of small \emph{cocomplete} DG categories over $\kk$ and continuous functors, see \cite{Lurie:HA} or \cite{Book}.
\end{itemize}

\ssec{Structure of the paper}

Section \ref{sec:cat-algebra} is devoted to recalling some higher algebra: a few facts about rigid monoidal DG categories and their module categories, as well as several $\2$-categorical constructions (correspondences, lax $\2$-functors, algebras and bimodules).

The first part of Section \ref{sec:ICoh zero} is a reminder of the theory of $\ICoh_0$, as developed in \cite{centerH}. In the second part of the same section, we discuss the $\2$-categorical functoriality of $\H$.

Section \ref{sec:Coeff systems} introduces the notion of coefficient system, providing several examples of interest in present, as well as future, applications. In particular, we define the (a priori lax) coefficient system $\H$ and prove it is strict.

In Section \ref{sec:base-change}, we discuss the (left, right, ambidextrous) Beck-Chevalley conditions for coefficient systems. These conditions (which are satisfied in the examples of interest) guarantee that the resulting theory of sheaves of categories is very rich functorially: e.g., it has pushforwards and base-change.

Finally, in Section \ref{sec:shvcatH}, we define $\ShvCatH$, the theory of \emph{sheaves of categories with local actions of Hochschild cochains}, and prove the $\H$-affineness of algebraic stacks.

\ssec{Acknowledgements}
The main idea behind the present paper was conceived as a result of five illuminating conversations with Dennis Gaitsgory (Paris, August 2015). It is a pleasure to thank him for his generosity in explaining and donating his ideas.
Thanks are also due to Dima Arinkin, David Ben-Zvi, Ian Grojnowski, David Jordan, Tony Pantev, Sam Raskin and Pavel Safronov for their interest and influence.
Finally, I am grateful to the referee for the very helpful technical report.

Research partially supported by EPSRC programme grant EP/M024830/1 Symmetries and Correspondences, and by grant ERC-2016-ADG-74150.

\section{Some categorical algebra} \label{sec:cat-algebra}

In this section we recall some $\1$- and $\2$-categorical algebra needed later in the main sections of the paper. All results we need concern the theory of algebras and bimodules.
More specifically, we first need criteria for dualizability of bimodule categories; secondly, we need some abstract constructions that relate \virg{algebras and bimodules} with $\2$-categories of correspondences.

We advise the reader to skip this material and get to it only if necessary.

\ssec{Dualizability of bimodule categories}

Recall that $\DGCat$ admits colimits (as well as limits) and its tensor product preserves colimits in each variable, \cite{Lurie:HA}. Hence, by \loccit, we have a good theory of dualizability of algebras and bimodules in $\DGCat$, whose main points we record below.
We will need a criterion that relates the dualizability of a bimodule to the dualizability of its underlying DG category.

\sssec{}

First, let us fix some terminology. 
Algebra objects in a symmetric monoidal $\infty$-category are always unital in this paper. In particular, monoidal DG categories are unital.
Given $A$ an algebra, denote by $A^\rev$ the algebra obtained by reversing the order of the multiplication.
For a left $A$-module $M$ and a right $A$-module $N$, we denote by $\pr: N \otimes M \to N \otimes_A M$ the tautological functor. 

\medskip

Our conventions regarding bimodules are as follows: an $(A,B)$-bimodule $M$ is acted on the left by $A$ and on the right by $B$. Hence, endowing $C \in \DGCat$ with the structure of an $(A,B)$-bimodule amounts to endowing it with the structure of a left $A \otimes B^\rev$-module.

\sssec{}  \label{sssec:right-left-dualizability of bimodules}

Let $M$ be an $(A, B)$-bimodule. We say that $M$ is \emph{left dualizable} (as an $(A, B)$-bimodule) if there exists a $(B,A)$-bimodule $M^L$ (called the \emph{left dual} of $M$) realizing an adjunction
$$ 
\begin{tikzpicture}[scale=1.5]
\node (a) at (0,1) {$M^L \otimes_A - : A \mod$};
\node (b) at (3,1) {$B \mod : M \otimes_B - $.};
\path[->,font=\scriptsize,>=angle 90]
([yshift= 1.5pt]a.east) edge node[above] {$ $} ([yshift= 1.5pt]b.west);
\path[->,font=\scriptsize,>=angle 90]
([yshift= -1.5pt]b.west) edge node[below] {$ $} ([yshift= -1.5pt]a.east);
\end{tikzpicture}
$$
Similarly, $M$ is \emph{right dualizable} if there exists $M^R \in (B,A)\bimod$ (the \emph{right dual} of $M$) realizing an adjunction
$$ 
\begin{tikzpicture}[scale=1.5]
\node (a) at (0,1) {$M \otimes_B - : B \mod$};
\node (b) at (3,1) {$A \mod : M^R \otimes_A - $.};
\path[->,font=\scriptsize,>=angle 90]
([yshift= 1.5pt]a.east) edge node[above] {$ $} ([yshift= 1.5pt]b.west);
\path[->,font=\scriptsize,>=angle 90]
([yshift= -1.5pt]b.west) edge node[below] {$ $} ([yshift= -1.5pt]a.east);
\end{tikzpicture}
$$
We say that an $(A,B)$-bimodule $M$ is \emph{ambidextrous} if both $M^L$ and $M^R$ exist and are equivalent as $(B,A)$-bimodules.

\begin{rem}

Being (left or right) dualizable as a $(\Vect, \Vect)$-bimodule is equivalent to being dualizable as a DG category. 
By definition, being \virg{left (or right) dualizable as a right $A$-module} means being \virg{left (or right) dualizable as a $(\Vect,A)$-module}. Similarly for left $A$-modules.

\end{rem}

\sssec{}

Let $M$ be an $(A,B)$-bimodule which is dualizable as a DG category. Then we can contemplate three $(B,A)$-bimodules: $M^L, M^R$ (if they exist) as well as $M^*$, the dual of $\oblv_{A,B}(M)$ equipped with the dual actions.

In particular, a monoidal DG category $A$ is called \emph{proper} if it
is dualizable as a plain DG category. In this case, we denote by $S_A := A^*$ its dual, equipped with the tautological $(A,A)$-bimodule structure.

\sssec{} \label{sssec:very rigid}

Recall the notion of \emph{rigid} monoidal DG category, see \cite[Appendix D]{shvcat}.
Any rigid $A$ is automatically proper.
Furthermore, its dual $S_A:=A^*$ comes equipped with the canonical object $1_A^{fake} := (u^R)^\vee(\kk)$, where $u^R$ is the (continuous) right adjoint to the unit functor $u:\Vect \to A$.
The left $A$-linear functor
$$
\sigma_A:
A 
\longto
S_A,
\hspace{.4cm}
a
\squigto
a \star 1_A^{fake}
$$
is an equivalence: in particular, any rigid monoidal category is self-dual.
We say that $A$ is \emph{very rigid} if the canonical equivalence $\sigma_A: A \to S_A$ admits a lift to an equivalence of $(A,A)$-bimodules.\footnote{Compare this notion with the more general notion of \virg{symmetric Frobenius algebra object}, discussed in \cite[Remark 4.6.5.7]{Lurie:HA}.}

\begin{prop} 
Let $A,B$ be rigid monoidal DG categories and $M$ an $(A,B)$-bimodule which is dualizable as a DG category. Then $M$ is right dualizable as an $(A,B)$-bimodule and $
M^R \simeq M^* \otimes_{A} S_A$. Likewise, $M$ is left dualizable and $M^L \simeq S_B \otimes_{B} M^*$. 
\end{prop}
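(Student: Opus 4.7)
My plan is to reduce the claim to an internal-Hom computation and then invoke rigidity. By the symmetry of the statement (swapping the roles of $A$ and $B$), it suffices to establish right-dualizability with the explicit formula $M^R \simeq M^* \otimes_A S_A$; the corresponding formula $M^L \simeq S_B \otimes_B M^*$ will then follow by applying the same argument to $M$ viewed as a $(B^\rev, A^\rev)$-bimodule, with the rigidity of $B$ playing the role previously played by that of $A$.

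The crux is the identification, for any $N \in A\mmod$,
\[\Fun_A(M, N) \;\simeq\; (M^* \otimes_A S_A) \otimes_A N,\]
as an equivalence of left $B$-modules, functorial in $N$. Here the $B$-action on the left-hand side comes from the right $B$-action on $M$, while the $B$-action on the right-hand side comes from the inherited left $B$-action on $M^*$. This equivalence immediately yields right-dualizability: the right adjoint to $M \otimes_B - : B\mmod \to A\mmod$ is the internal Hom $\Fun_A(M, -)$, so the displayed formula exhibits this right adjoint as $M^R \otimes_A -$ with $M^R := M^* \otimes_A S_A$, which is naturally a $(B, A)$-bimodule.

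To prove the Hom-tensor identification, I would exploit the rigidity of $A$ as follows. The $\Vect$-dualizability of $M$ gives $\Fun_\Vect(M, N) \simeq M^* \otimes N$, and $\Fun_A(M, N)$ is obtained from this by taking $A$-equivariance. Rigidity of $A$ provides a continuous right adjoint $u_A^R$ to the unit $u_A: \Vect \to A$, and this allows the $A$-equivariant structure to be computed as a balanced tensor product against $S_A$: concretely, the fake unit $1_A^{fake} = (u_A^R)^\vee(\kk) \in S_A$ and the left $A$-linear equivalence $\sigma_A: A \simeq S_A$ let one rewrite $A$-invariants in $\Fun_\Vect(M, N)$ as an $S_A$-twisted coinvariant, yielding the desired formula.

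The principal obstacle is the rigorous verification of the Hom-tensor formula while keeping track of which side of the various $A$-module structures is being contracted. Since we are assuming only rigidity (not very rigidity) of $A$, the equivalence $\sigma_A$ is only left $A$-linear, and the right $A$-action on $M^R$ genuinely comes from $S_A$ rather than from $A$ via transport. Careful bookkeeping of the rigidity coherence---namely, the adjunction $u_A \dashv u_A^R$ and the defining relation $\sigma_A(a) = a \star 1_A^{fake}$---is required to ensure that the formula holds as a statement about $(B, A)$-bimodules and not merely about underlying DG categories. Once this is in hand, right-dualizability with the claimed $M^R$ follows formally, and the symmetric argument (exchanging $A$ and $B$, and reversing variances) yields the left-dualizability statement with $M^L \simeq S_B \otimes_B M^*$.
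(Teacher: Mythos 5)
Your approach is essentially the same as the paper's, which does not carry out the argument itself either: it cites the ``if'' direction of \cite[Proposition~D.5.4]{shvcat} together with \cite[Corollary~D.4.5]{shvcat}, and identifies the twist $(-)_{\psi_A}$ there with $-\otimes_A S_A$ here. The Hom-tensor identification you single out as ``the crux'' --- namely that
$$
\Fun_A(M,N) \;\simeq\; (M^* \otimes_A S_A) \otimes_A N
$$
as $B$-modules, functorially in $N$ --- is precisely the content of those cited statements. So your plan correctly locates the nontrivial step and the role of $S_A$, but as written the proposal stops short of actually proving it: you gesture at passing from the cobar resolution computing $\Fun_A(M,N)$ to a bar complex computing the relative tensor product, and you correctly note this uses the continuous right adjoint $u_A^R$ that rigidity supplies, but you do not carry out the rewriting of the totalization as a colimit nor verify that the resulting identification is $(B,A)$-bilinear. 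That rewriting (each coface map in the cobar complex acquiring a left adjoint by rigidity, so the limit is a colimit, which is then recognized as the balanced tensor product against $S_A$) is exactly what Gaitsgory's Corollary~D.4.5 is for. To make your proposal a complete proof you would either need to execute that cosimplicial-to-simplicial argument with the bookkeeping you acknowledge is required, or simply cite the relevant results as the paper does. Also be careful with your reduction of the $M^L$ formula to the $M^R$ formula via $(B^{\rev},A^{\rev})$-bimodules: this is fine, but you should check that it produces $S_B\otimes_B M^*$ rather than $M^*\otimes_B S_B$, i.e. that the variance of $\sigma_B$ (which is only one-sided $B$-linear without very rigidity) comes out on the correct side after the reversal.
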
 

\begin{proof}
The formula for $M^R$ is proven as in the \virg{if} direction of \cite[Proposition D.5.4]{shvcat}, which in turn is a consequence of \cite[Corollary D.4.5]{shvcat}. In the notation there, the twist $(-)_{\psi_A}$ corresponds to our $- \otimes_A S_A$.
The formula for $M^L$ follows similarly.
\end{proof}

\begin{cor} \label{cor:very rigid has ambidextrous modules}
Let $A,B$ be very rigid and $M$ an $(A,B)$-bimodule which is dualizable as a DG category. Then we have canonical $(B,A)$-linear equivalences $M^R \simeq M^* \simeq M^L$.
\end{cor}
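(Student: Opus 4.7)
The plan is to reduce the statement immediately to the preceding Proposition together with the definition of very rigidity.

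By the Proposition, since $A$ and $B$ are in particular rigid and $M$ is dualizable as a DG category, we already have the canonical $(B,A)$-linear equivalences
\[
M^R \simeq M^* \otimes_{A} S_A, \qquad M^L \simeq S_B \otimes_{B} M^*.
\]
Thus the only thing remaining is to identify $M^* \otimes_A S_A$ and $S_B \otimes_B M^*$ with $M^*$ as $(B,A)$-bimodules.

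For this, I would invoke the very rigidity hypothesis directly: by definition, the left $A$-linear equivalence $\sigma_A : A \to S_A$ admits a lift to an equivalence of $(A,A)$-bimodules, and similarly $\sigma_B : B \to S_B$ lifts to a $(B,B)$-bimodule equivalence. Tensoring the first equivalence on the left over $A$ with the $(B,A)$-bimodule $M^*$ gives a $(B,A)$-linear equivalence
\[
M^* \simeq M^* \otimes_A A \simeq M^* \otimes_A S_A \simeq M^R,
\]
where the middle equivalence is $\mathrm{id}_{M^*} \otimes_A \sigma_A$. Symmetrically, tensoring the $(B,B)$-linear equivalence $B \simeq S_B$ on the right over $B$ with $M^*$ yields
\[
M^* \simeq B \otimes_B M^* \simeq S_B \otimes_B M^* \simeq M^L.
\]

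There is no real obstacle here: the content of the corollary is entirely encoded in the preceding Proposition, and the role of \emph{very rigid} (as opposed to merely rigid) is precisely to upgrade $\sigma_A$ and $\sigma_B$ from left-linear equivalences to bimodule equivalences, which is exactly what is needed so that the tensor products $- \otimes_A S_A$ and $S_B \otimes_B -$ act trivially at the level of $(B,A)$-bimodules. The only mild care required is to track that the bimodule structures on both sides do match after the identifications, which is immediate from the fact that $\sigma_A$ and $\sigma_B$ are promoted to bimodule maps.
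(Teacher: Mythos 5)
Your proof is correct and is precisely the intended argument: the paper gives no separate proof of this corollary because it follows immediately from the preceding Proposition by tensoring with the $(A,A)$-bimodule equivalence $\sigma_A : A \simeq S_A$ (and likewise for $B$) supplied by very rigidity. Your identification of the role of the "very rigid" upgrade — promoting $\sigma_A$, $\sigma_B$ to bimodule equivalences so that $-\otimes_A S_A$ and $S_B \otimes_B -$ are the identity on $(B,A)$-bimodules — is exactly the point.
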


\ssec{Some $\2$-categorical algebra}

In this section, we recall some abstract $\2$-categorical nonsense and provide some examples of $\2$-categories and of lax $\2$-functors between them. 
All the statements below look obvious enough and no proof will be given.

\sssec{}

We assume familiarity with the notion of \emph{$\2$-category} and with the notion of (lax) $\2$-functor between $\2$-categories; a reference is, for instance, \cite[Appendix A]{Book}. 
For an $\2$-category $\bC$, we denote by $\bC^{1-\op}$ the $\2$-category obtained from $\bC$ by flipping the $1$-arrows. Similarly, we denote $\bC^{2-\op}$ the $\2$-category obtained by flipping the directions of the $2$-arrows.

\sssec{Correspondences} \label{sssec:Corresp}

Let $\C$ be an $\infty$-category equipped with fiber products.
We refer to \cite[Chapter V.1]{Book} for the construction of the $\infty$-category of correspondences associated to $\C$. 
In particular, for $\vert$ and $\horiz$ two subsets of the space morphisms of $\C$ satisfying some natural requirements, one considers the $\infty$-category $\Corr(\C)_{\vert; \horiz}$, defined in the usual way: objects of $\Corr(\C)_{\vert; \horiz}$ coincide with the objects of $\C$, while $1$-morphisms in $\Corr(\C)_{\vert; \horiz}$ are given by correspondences
$$
[c \leftto h \to d]
$$
with left leg in $\vert$ and right leg in $\horiz$.

\medskip

To enhance $\Corr(\C)_{\vert; \horiz}$ to an $\2$-category, we must further choose a subset $\adm \subset \vert \cap \horiz$ of \emph{admissible arrows}, closed under composition. Then, following \cite[Chapter V.1]{Book}, one defines the $\2$-category  
$$
\Corr(\C)_{\vert; \horiz}^{\adm}.
$$
This is one of the most important $\2$-categories of the present paper.

\medskip

To fix the notation, recall that a $2$-arrow 
$$
[c \leftto h \to d] \implies [c \leftto h' \to d]
$$ 
in $\Corr(\C)_{\vert; \horiz}^{\adm}$ is by definition an admissible arrow $h \to h'$ compatible with the maps to $c \times d$.

\medskip

As explained in \cite[Chapter V.3]{Book}, $\Corr(\C)_{\vert; \horiz}^{\adm}$ is symmetric monoidal with tensor product induced by the Cartesian symmetric monoidal product on $\C$.

\sssec{Algebras and bimodules}

The other important $\2$-category of this paper is $\ALGBimod(\DGCat)$, the $\2$-category of monoidal DG categories, bimodules, and natural transformations. We refer to \cite{Rune} for a rigorous construction. More generally, \loccit \,gives a  construction of $\ALGBimod(\S)$ for any (nice enough) symmetric monoidal $\2$-category $\S$. 

We denote by $\AlgBimod(\S)$ the $\1$-category underlying $\ALGBimod(\S)$: that is, the former is obtained from the latter by discarding non-invertible $2$-morphisms.

\sssec{}

There is an obvious functor
\begin{equation} \label{eqn:from-Alg-to-AlgBimod}
\iota_{\Alg \to \Bimod}: 
\Alg(\DGCat)^\op \longto \AlgBimod(\DGCat)
\end{equation}
that is the identity on objects and that sends a monoidal functor $A \to B$ to the $(B, A)$-bimodule $B$.

\medskip

The tautological functor
$$
\AlgBimod(\DGCat)^\op \xto{\mmod}
\inftyCat
$$
upgrades to a (strict) $\2$-functor
$$
\ALGBimod(\DGCat)^{1-\op}
\xto{\mmod}
\inftyCat,
$$
where now $\inftyCat$ is considered as an $\2$-category.

\sssec{} \label{sssec:example of lax 2 functor}

Let $\C$ denote an $\1$-category admitting fiber products and equipped with the cartesian symmetric monoidal structure. Let $F: \C^\op \longto \DGCat$ be a lax-monoidal functor. 
(The example we have in mind is $\C = \PreStk$ and $F = \QCoh$.)

These data give rise to a lax $\2$-functor
$$
\wt F:
\bigt{ \Corr(\C)_{\all; \all}^{\all} }^{2-\op}
\longto
\ALGBimod(\DGCat),
$$
described informally as follows:
\begin{itemize}
\item
an object $c \in \C$ gets sent to $F(c)$, with its natural monoidal structure; 
\item
a correspondence $[c \leftto h \to d]$ gets sent to the $(F(c), F(d))$-bimodule $F(h)$;
\item
a map between correspondences, given by an arrow $h' \to h$ over $c \times d$, gets sent to the associated $(F(c), F(d))$-linear arrow $F(h) \to F(h')$;
\item
for two correspondences $[c \leftto h \to d]$ and $[d \leftto k \to e]$, the lax composition is encoded by the natural $(F(c), F(e))$-linear arrow 
$$
F(h) \usotimes{F(d)} F(k)
\longto
F(h \times_d k).
$$
\end{itemize}

\sssec{} \label{sssec:from lax monoidal to alg and bimodules}

Here is another example of the interaction between lax-monoidal functors and lax $\2$-functors.
Let $F: \C \to \D$ be a lax-monoidal functor between \virg{well-behaved} monoidal $\1$-categories. Then $F$ induces a lax $\2$-functor
$$
\wt F:
\ALGBimod(\C)
\longto
\ALGBimod(\D).
$$
To define it, it suffices to recall that, since $F$ is lax monoidal, it preserves algebra and bimodule objects. The fact that $\wt F$ is a lax $\2$-functor comes from the natural map (not necessarily an isomorphism)
$$
F(c') \otimes_{F(c)} F(c'') \longto F(c' \otimes_{c} c'').
$$

\sssec{} \label{sssec:loop-mod}

Recall the $\infty$-category $\Mod(\DGCat)$ whose objects are pairs $(A, M)$ with $A$ a monoidal DG category and $M$ an $A$-module. Morphisms $(A,M) \to (B,N)$ consist of pairs $(\phi, f)$ where $\phi: A \to B$ is a monoidal functor and $f: M\to N$ an $A$-linear functor.

\medskip

There is a lax $\2$-functor
\begin{equation} \label{eqn:loop-mod}
\Loop_{\Mod}:
\Mod(\DGCat)^\op
\longto
\ALGBimod(\DGCat),
\end{equation}
described informally as follows:
\begin{itemize}
\item
an object $(A,M) \in \Mod(\DGCat)$ goes to the monoidal DG category $ \End_A(M) := \Fun_A(M,M)$;
\item
a morphism $(A,M) \xto{(\phi,f)} (B,N)$ gets sent to the $(\End_B(N),\End_A(M))$-bimodule $\Fun_A(M, N)$;
\item 
a composition $(A,M) \xto{(\phi,f)} (B,N) \xto{(\psi,g)} (C,P)$ goes over to the $(\End_C(P),\End_A(M))$-bimodule
$$
\Fun_B(N, P) \usotimes{\End_B(N)} \Fun_A(M, N);
$$
\item
the lax structure comes from the tautological morphism (not invertible, in general)
\begin{equation} \label{eqn:composition-of-Fun(C,C')}
\Fun_B(N, P) \usotimes{\End_B(N)} \Fun_A(M, N)
\longto 
\Fun_A(M,P)
\end{equation}
induced by composition.
\end{itemize}

\sssec{} \label{sssec: paradigm: subfunctor of a lax-functor}

\renc{\A}{{\mathbb{A}}}

For later use, we record here the following tautological observation. Let $\I$ be an $\1$-category and $\A: \I \to \ALGBimod(\DGCat)$ be a lax $\2$-functor. Assume given the following data:
\begin{itemize}
\item
for each $i \in \I$, a monoidal subcategory $\A'(i) \hto \A(i)$;

\smallskip

\item
for each $i \to j$, a full subcategory $\A'_{i \to j} \hto \Adx ij$ preserved by the $(\A'(i), \A'(j))$-action.
 
\smallskip

\end{itemize}
Assume furthermore that, for each string $i \to j \to k$, the functor 
$$
\A'_{i \to j} \otimes \A'_{j \to k} 
\hto
 \Adx ij \otimes \Adx jk 
 \xto{\; \pr \; }
 \Adx ij \otimes_{\A(j)} \Adx jk 
\xto{\; \eta_{i \to j \to k} \,} 
\Adx ik
$$
lands in $\A'_{i \to k} \subseteq \A_{i \to k}$.
Then the assigment 
$$
i \squigto \A'(i),
\hspace{.4cm}
(i \to j) \squigto \A'_{i \to j} 
$$
naturally upgrades to a \emph{lax} $\2$-functor $\A': \I \to \ALGBimod(\DGCat)$.

\section{$\ICoh_0$ on formal moduli problems} \label{sec:ICoh zero}

In the section, we study the sheaf theory $\ICoh_0$ from which $\H$ originates. As mentioned in the introduction of \cite{centerH}, $\ICoh_0$ enjoys $\1$-categorical functoriality as well as $\2$-categorical functoriality.
The former was developed in \loccit, and recalled here in Theorem \ref{thm:1-functoriality of ICohzero}. The latter is one of the main subjects of the present paper: it consists of an extension of the assignment $\Y \squigto \H(\Y)$ to a lax $\2$-functor from a certain $\2$-category of correspondences to $\ALGBimod(\DGCat)$.

\ssec{The $\1$-categorical functoriality}

In this section, we review the definition of the assignment $\ICoh_0$ and its basic functoriality. We follow \cite{centerH} closely.

\sssec{}

Let $\Stk$ denote the $\infty$-category of perfect quasi-compact algebraic stacks of finite type and with affine diagonal, see e.g. \cite{BFN}.
Inside $\Stk$, we single out the subcategory $\Stkevcoclfp$ consisting of those stacks that are bounded and with perfect cotangent complex (both properties can be checked on an atlas).

\sssec{}

For $\C$ an $\infty$-category, denote by $\Arr(\C) := \C^{\Delta^1}$ the $\infty$-category whose objects are arrows in $\C$ and whose $1$-morphisms are commutative squares.
We will be interested in the $\infty$-category $\Arr(\Stkevcoclfp)$ and in the functor
\begin{equation} \label{funct: ICOHzero as a functor out of formalmod-opposite}
\ICoh_0:
\Arr(\Stkevcoclfp) 
^\op
\longto
\DGCat
\end{equation}
defined by 
$$
[\Y \to \Z]
\squigto
\ICoh_0(\Z^\wedge_\Y).
$$
Recall from \cite{AG2} or \cite{centerH} that $\ICoh_0(\Z^\wedge_\Y)$ is defined by the pull-back square
\begin{equation} \label{diag:definition-ICoh0}
\begin{tikzpicture}[scale=1.5]
\node (M) at (0,1) {$\ICoh_0(\Z^\wedge_\Y)$ };
\node (IM) at (0,0) {$\ICoh(\Z^\wedge_\Y)$};
\node (N) at (2.5,1) {$\QCoh(\Y)$};
\node (IN) at (2.5,0) {$\ICoh(\Y).$};
\path[right hook->,font=\scriptsize,>=angle 90]
(M.south) edge node[right] { $\iota$ } (IM.north);
\path[right hook->,font=\scriptsize,>=angle 90]
(N.south) edge node[right] { $\Upsilon_\Y$ } (IN.north);
\path[->,font=\scriptsize,>=angle 90]
(M.east) edge node[above] { $('f)^{!,0}$ } (N.west);
\path[->,font=\scriptsize,>=angle 90]
(IM.east) edge node[above] { $('f)^!$ } (IN.west);
\end{tikzpicture}
\end{equation}
In particular, when writing $\ICoh_0(\Z^\wedge_\Y)$ we are committing a potentially dangerous abuse of notation: it would be better to write $\ICoh_0 (\Y \to \Z^\wedge_\Y)$, as the latter category depends on the formal moduli problem $\Y \to \Z^\wedge_\Y$ and in particular on the derived structure of $\Y$.

\sssec{}

For two objects $[\Y_1 \to \Z_1]$ and $[\Y_2 \to \Z_2]$ in $\Arr(\Stkevcoclfp)$, a morphism $\xi$ from the former to the latter is given by a commutative square
\begin{equation} \label{diam:morphism in Arrow category}
\begin{tikzpicture}[scale=1.5]
\node (00) at (0,0) {$\Z_1$};
\node (10) at (1.5,0) {$\Z_2$.};
\node (01) at (0,.8) {$\Y_1$};
\node (11) at (1.5,.8) {$\Y_2$};
\path[->,font=\scriptsize,>=angle 90]
(00.east) edge node[above] {$\xi_{bottom}$} (10.west);
\path[->,font=\scriptsize,>=angle 90]
(01.east) edge node[above] {$\xi_{top}$} (11.west);
\path[->,font=\scriptsize,>=angle 90]
(01.south) edge node[right] {$'f_1 $} (00.north);
\path[->,font=\scriptsize,>=angle 90]
(11.south) edge node[right] {$'f_2$} (10.north);
\end{tikzpicture}
\end{equation}
%
The structure pullback functor
$$
\xi^{!,0}:
\ICoh_0((\Z_2)^\wedge_{\Y_2})
\longto
\ICoh_0((\Z_1)^\wedge_{\Y_1})
$$
is the obvious one induced by the pullback functor $\xi^!: \ICoh((\Z_2)^\wedge_{\Y_2}) \to \ICoh((\Z_1)^\wedge_{\Y_1})$, where we are abusing notation again by confusing $\xi$ with the map $(\Z_1)^\wedge_{\Y_1} \to (\Z_2)^\wedge_{\Y_2}$. We will do this throughout the paper, hoping it will not be too unpleasant for the reader.

\sssec{}

Let us now recall the extension of (\ref{funct: ICOHzero as a functor out of formalmod-opposite}) to a functor out of a category of correspondences.
Notice that $\Arr(\PreStk)$ admits fiber products, computed objectwise; its subcategory $\Arr(\Stkevcoclfp)$ is closed under products, but not under fiber products. Thus, to have a well-defined category of correspondences, we must choose appropriate classes of horizontal and vertical arrows.

\medskip

We say that a commutative diagram (\ref{diam:morphism in Arrow category}), thought of as a morphism in $\Arr(\Stkevcoclfp)$, is schematic (or bounded, or proper) if so is the top horizontal map. It is clear that 
\begin{equation} \label{cat:Corr of Arrows 1-cat}
\Corr \Bigt{ \Arr(\Stkevcoclfp)}_{\schem \& \evcoc;\all}
\end{equation}
is well-defined. 

For the theorem below, we will need to further upgrade (\ref{cat:Corr of Arrows 1-cat}) to an $\2$-category by allowing as admissible arrows (see Section \ref{sssec:Corresp} for the terminology) those $\xi$'s that are schematic, bounded and proper. We denote by
 $$
\Corr \bigt{ \Arr(\Schevcoclfp) }_{\schem \& \evcoc;\all}
^{\schem \& \evcoc \& \proper}
$$
the resulting $\2$-category.

\sssec{}

If $\xi$ is bounded and schematic in the above sense, then the pushforward $\xi_*^\ICoh: \ICoh((\Z_1)^\wedge_{\Y_1}) \to \ICoh((\Z_2)^\wedge_{\Y_2})$ is continuous and it preserves the $\ICoh_0$-subcategories, thereby descending to a functor $\xi_{*,0}$. For the proof, see \cite{centerH}.

\begin{thm} \label{thm:1-functoriality of ICohzero}
The above pushforward functors upgrade the functor $\ICoh_0^!$ of (\ref{funct: ICOHzero as a functor out of formalmod-opposite}) to an $\2$-functor 
$$
\ICoh_0:
\Corr \bigt{ \Arr(\Schevcoclfp) }_{\schem \& \evcoc;\all}
^{\schem \& \evcoc \& \proper}
\longto
\DGCat,
$$ 
where $\DGCat$ is viewed as an $\2$-category in the obvious way.
\end{thm}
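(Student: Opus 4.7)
My plan is to apply the general framework of \cite[Chapter V.1]{Book} for extending a ``$!$-pullback'' functor to a 2-functor on a 2-category of correspondences. The input data that machinery demands are: (i) a contravariant $!$-pullback functoriality defined on all morphisms, (ii) a covariant $*$-pushforward functoriality defined on the chosen class of ``vertical'' arrows (here, schematic and bounded), (iii) a base-change compatibility between $*$-pushforward and $!$-pullback along Cartesian squares whose vertical legs are in the chosen class, and (iv) an adjunction between $*$-pushforward and $!$-pullback along the ``admissible'' arrows (here, schematic, bounded, and proper). Given (i)--(iv), the abstract construction produces the desired 2-functor, with 2-morphisms of $\Corr$ encoded by units/counits of the admissible adjunctions. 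We already have (i) from (\ref{funct: ICOHzero as a functor out of formalmod-opposite}).

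\textbf{Pushforwards.} For $\xi: [\Y_1 \to \Z_1] \to [\Y_2 \to \Z_2]$ bounded and schematic, I would first check that the ambient pushforward $\xi_*^{\ICoh}: \ICoh((\Z_1)^\wedge_{\Y_1}) \to \ICoh((\Z_2)^\wedge_{\Y_2})$ is continuous and restricts to a functor $\xi_{*,0}$ between the $\ICoh_0$ subcategories. This is precisely the content recalled from \cite{centerH} in the paragraph preceding the theorem; concretely, given $\F \in \ICoh_0((\Z_1)^\wedge_{\Y_1})$, base change on formal completions rewrites $('f_2)^!\, \xi_*^{\ICoh}\F$ in terms of $(\xi_{\mathrm{top}})_*^\QCoh$ applied to some object of $\QCoh(\Y_1)$, using the compatibility $\Upsilon_{\Y_2} \circ (\xi_{\mathrm{top}})_*^\QCoh \simeq \xi_*^{\ICoh} \circ \Upsilon_{\Y_1}$ which holds precisely because the boundedness hypothesis guarantees that the relevant $*$-pushforwards have finite cohomological amplitude.

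\textbf{Base change and adjunction.} For (iii), I would reduce the base-change identity for $\ICoh_0$ to the corresponding one for ambient $\ICoh$ on formal completions: $\ICoh_0$ is carved out by a condition on $!$-pullback, and that condition is stable under both $!$-pullback (tautologically) and the $*$-pushforwards constructed above, so the ambient base-change identity automatically restricts. Ambient base change for $\ICoh$ on formal completions is standard from \cite{Book}. For (iv), the $(\xi_*^{\ICoh}, \xi^{!,\ICoh})$-adjunction along bounded schematic proper maps descends to $(\xi_{*,0}, \xi^{!,0})$ on $\ICoh_0$, because both adjoint functors individually preserve the $\ICoh_0$ subcategories: the unit and counit therefore restrict.

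\textbf{Main obstacle.} The principal subtlety is not the sheaf theory itself but the book-keeping in the 1-category $\Arr(\Stkevcoclfp)$: fiber products are computed componentwise but $\Arr(\Stkevcoclfp)$ is \emph{not} closed under arbitrary fiber products. One must check that all Cartesian diagrams required by the abstract machinery -- those whose vertical legs are bounded and schematic in the sense of the theorem -- land back inside $\Arr(\Stkevcoclfp)$, and that the bounded schematic (resp.\ bounded schematic proper) class is stable under base change along arbitrary morphisms of arrows. This stability, together with compatibility of boundedness and local finite presentation under the fiber products that actually appear, is exactly what forces the choice of classes $\schem \& \evcoc$ for vertical arrows and $\schem \& \evcoc \& \proper$ for admissible arrows in the theorem statement. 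Once these stability properties are verified at the level of $\Arr(\Stkevcoclfp)$, the remainder of the argument is a formal invocation of \cite[Chapter V.1]{Book}.
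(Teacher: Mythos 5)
Your proposal is correct and matches the substance of what the paper does; the paper is even terser, dispatching the theorem in a remark by saying it is ``deduced (essentially formally)'' from the pre-built $\2$-functor $\ICoh$ on $\Corr(\PreStk_{\laft})_{\text{ind-inf-schem; all}}^{\text{ind-inf-sch \& ind-proper}}$ of \cite[Chapter III.3]{Book}, after passing through the functor $[\Y \to \Z] \mapsto \Z^\wedge_\Y$ and cutting out the $\ICoh_0$-subcategories. You instead run the abstract extension-to-correspondences machinery of \cite[Chapter V.1]{Book} from scratch for $\ICoh_0$ itself, re-verifying base change and adjunction directly. These routes are logically equivalent --- the content-bearing verifications (that $\xi_{*}$ and $\xi^{!}$ preserve $\ICoh_0$, that the ambient base-change isomorphisms and units/counits therefore restrict) are identical and are delegated in both cases to \cite{centerH} and \cite{Book} --- but the paper's route inherits all the higher coherences for free from the ambient $\ICoh$ $\2$-functor, whereas yours in principle reproves them, so the paper's restriction argument is a bit slicker. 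Your ``main obstacle'' paragraph about $\Arr(\Stkevcoclfp)$ not being closed under arbitrary fiber products, and the need to choose vertical and admissible arrow classes accordingly, matches the discussion the paper gives immediately before stating the theorem.
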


\begin{rem}
The existence of the above $\2$-functor is deduced (essentially formally) by the $\2$-functor
\begin{equation} \label{funct:ICOH out of corresp}
\ICoh:
\Corr(\PreStk_\laft)_{\text{ind-inf-schem; all}}^{\text{ind-inf-sch \& ind-proper}} 
\longto
\DGCat
\end{equation}
constructed in \cite[Chapter III.3]{Book}. For later use, we will also need another fact from \loccit: the above $\2$-category of correspondences possesses a symmetric monoidal structure, and (\ref{funct:ICOH out of corresp}) is naturally symmetric monoidal. See \cite[Chapter V.3]{Book}.
It follows that the $\2$-functor on Theorem \ref{thm:1-functoriality of ICohzero} is symmetric monoidal, too.
\end{rem}

\sssec{Example}
For $f: \Y \to \Z$, the admissible arrow $\Y \to \Z^\wedge_\Y$ yields an adjuction
\begin{equation} 	\label{adj:monadic adjunction for ICOHzero}
\begin{tikzpicture}[scale=1.5]
\node (a) at (0,1) {$\QCoh(\Y)$};
\node (b) at (4,1) {$\ICoh_0(\Z^\wedge_\Y)$.};
\path[->,font=\scriptsize,>=angle 90]
([yshift= 1.5pt]a.east) edge node[above] {$('f)_{*,0} \simeq ('f)_*^\ICoh \circ \Upsilon_\Y $} ([yshift= 1.5pt]b.west);
\path[->,font=\scriptsize,>=angle 90]
([yshift= -1.5pt]b.west) edge node[below] {$('f)^{!,0} := \Phi_\Y \circ ('f)^! $} ([yshift= -1.5pt]a.east);
\end{tikzpicture}
\end{equation}
Let us also recall that $\ICoh_0(\Z^\wedge_\Y)$ is self-dual and that these two adjoints $('f)_{*,0}$ and $('f)^{!,0}$ are dual to each other.

\ssec{$\2$-categorical functoriality} \label{ssec:2-cat functoriality Hgoem}

In this section we enhance the assignment 
$$
\X
\squigto
\ICoh_0((\X \times\X)^\wedge_\X) =: \H(\X), 
$$
$$
[\X \leftto \W \to \Y]
\squigto
\ICoh_0((\X \times \Y)^\wedge_\W)
=: \Hgeomcorr \X \W \Y
$$
to a lax $\2$-functor
$$
\H^\geom: 
\Corr \bigt{\Stkevcoclfp }_{\evcoc;\all}
^{\schem \& \evcoc \& \proper}
\longto
\ALGBimod(\DGCat),
$$
which we will be prove to be strict towards the end of the paper (Theorem \ref{main-thm-Hgoem is strict}).
Here, we have used the notation $\H^{\geom}$ for emphasis, as later we will encounter a categorical construction producing a lax $\2$-functor $\Hcat$. We will eventually show that these two lax $\2$-functors are identified and then denoted simply by $\H$.

\begin{rem}
The condition of boundedness of the horizontal arrows is necessary to have a well-defined $\infty$-category of correspondences.
\end{rem}

\sssec{}

We begin by observing that, for any $\X \in \Stk$, the DG category 
$$
\bbI^{\wedge,\geom}  (\X)
:=
\ICoh(\form \X \X \X)
$$
possesses a convolution monoidal structure and that, for any correspondence $[\Y \leftto \W \to \Z]$ in $\Stk$, the DG category
$$
\bbI^{\wedge,\geom}_{\Y \leftto \W \to \Z} 
:=
\ICoh((\X \times \Y)^\wedge_\W)
\simeq
\ICoh( \form \Y \Y \W_\dR \times_{\Z_\dR} \Z)
$$
admits the structure of an $(\bbI^{\wedge,\geom}(\Y), \bbI^{\wedge,\geom}(\Z))$-bimodule.

\sssec{} \label{sssec:ICOHW goem out of corresp}

Let us now enhance the assignment 
$$
\X
\squigto
\ICoh((\X \times\X)^\wedge_\X)
=: \bbI^{\wedge,\geom}  (\X), 
$$
$$
[\X \leftto \W \to \Y]
\squigto
\ICoh((\X \times \Y)^\wedge_\W)
=: \bbI^{\wedge,\geom}_{\X \leftto \W \to \Y}
$$
to a \emph{lax} $\2$-functor
\begin{equation}
\label{eqn:ICOHW-geom-extra-general}
\bbI^{\wedge,\geom}:
\Corr(\Stk)_{\all;\all}^{\schem \& \proper}
\longto
\ALGBimod(\DGCat).
\end{equation}
To construct it, we first appeal to the lax symmetric monoidal structure on (\ref{funct:ICOH out of corresp}): Section \ref{sssec:from lax monoidal to alg and bimodules} yields a lax $\2$-functor
$$ 
\ICoh:
\ALGBimod
\bigt{
\Corr(\PreStk_\laft)_{\text{ind-inf-schem; all}}^{\text{ind-inf-sch \& ind-proper}} 
}
\longto 
\ALGBimod(\DGCat).
$$
It remains to precompose with the lax $\2$-functor
\begin{equation} \label{eqn:LOOP geometric}
\Corr(\Stk)_{\all;\all}^{\schem \& \proper}
\longto
\ALGBimod(\Corr(\PreStk_\laft))_{\text{ind-inf-schem; all}}^{\text{ind-inf-sch \& ind-proper}})
\end{equation}
that sends 
$$
\Y \squigto \Yform;
$$
\vspace{.1cm}
$$
[\Y \leftto \W \to \Z]
\squigto
\Y^\wedge_{\W} \ustimes{\W_\dR} \Z^\wedge_{\W}
\simeq
\Y \times_{\Y_\dR} \W_\dR \times_{\Z_\dR} \Z;
$$
$$ 
\begin{tikzpicture}[scale=1.5]
\node (0) at (0,0) {$[\Y$};
\node (1u) at (1,.3) {$\U$};
\node (1d) at (1,-.3) {$\W$};
\node (2) at (2,0) {$ \Z] \squigto  $};
\node (3) at (5,0)
{$
\bigt{
\Y \times_{\Y_\dR} \U_\dR \times_{\Z_\dR} \Z
\xto{\wt{f_\dR}}
\Y \times_{\Y_\dR} \W_\dR \times_{\Z_\dR} \Z
}$.};
\path[->,font=\scriptsize,>=angle 90]
(1u.west) edge node[above] {} (0.north east);
\path[->,font=\scriptsize,>=angle 90]
(1d.west) edge node[above] {} (0.south east);
\path[->,font=\scriptsize,>=angle 90]
(1u.east) edge node[above] {} (2.north west);
\path[->,font=\scriptsize,>=angle 90]
(1d.east) edge node[above] {} (2.south west);
\path[->,font=\scriptsize,>=angle 90]
(1u.south) edge node[right] {$f$} (1d.north);
\end{tikzpicture}
$$
Observe that the requirement that $f$ be schematic and proper implies that $f_\dR$, and hence $\wt{f_\dR}$, is inf-schematic and ind-proper.

\begin{rem}
The lax $\2$-functor (\ref{eqn:LOOP geometric}) is a geometric version of the formally similar lax $\2$-functor (\ref{eqn:loop-mod}).
\end{rem}

\sssec{}

Let us now turn to the construction of $\H^\geom$. For $\Y \in \Stkevcoclfp$, the canonical inclusion
$$
\iota:
\ICoh_0(\form \Y \Y\Y)
\hto
\ICoh(\form \Y \Y\Y)
$$
is monoidal. 
Moreover, the left action of $\ICoh_0(\Yform)$ on $\ICoh( (\Y \times \Z)^\wedge_\W)$ preserves the subcategory $\ICoh_0( (\Y \times \Z)^\wedge_\W)$. This is an easy diagram chase left to the reader.

\medskip

Thus, we are in the position to apply the paradigm of Section \ref{sssec: paradigm: subfunctor of a lax-functor} to obtain a lax $\2$-functor 
\begin{equation} \label{funct:Hgeom out of corresp}
\H^\geom: 
\Corr \bigt{\Stkevcoclfp }_{\evcoc;\all}
^{\schem \& \evcoc \& \proper}
\longto
\ALGBimod(\DGCat),
\end{equation}
as desired. We repeat here that one of the goals of this paper is to show that such lax $\2$-functor is actually strict: this is accomplished in Theorem \ref{main-thm-Hgoem is strict}.
In the next section, we give an overview of the strategy of the proof of such theorem. This could serve as a guide through the constructions of the remainder of the present article.

\ssec{Outline of the proof of Theorem \ref{main-thm-Hgoem is strict}}

It suffices to prove that the lax $\2$-functor $\Hgeom:\Corr \bigt{\Stkevcoclfp }_{\evcoc;\all}
\to
\ALGBimod(\DGCat)$ is strict. We will proceed in stages.

\sssec{}

First, we look at the restriction of $\H^\geom$ along the functor 
$$
\Affevcoclfp \to \Corr \bigt{\Stkevcoclfp }_{\evcoc;\all}
$$
which is the natural inclusion on objects, and 
$[S \to T] \squigto [S \xleftarrow{=} S \to T]$ on $1$-morphisms.

Using results from the theory of ind-coherent sheaves, we show in Theorem \ref{thm:H-strict-on-schemes} that such lax $\2$-functor is strict. By definition, this is simply the functor $\H: \Affevcoclfp \to \AlgBimod(\DGCat)$ discussed in the introduction, Section \ref{intro-sssec:H on affine}.

\sssec{}

Next, we show that the restriction of $\H^\geom$ to $\Corr(\Affevcoclfp)_{bdd;\all}$ is strict (Corollary \ref{cor:H-geom-strict-on-Corr Aff}). We do so in an indirect way, by establishing some important duality properties of $\H$.
Namely, we show that, for each map $U \to T$ in $\Affevcoclfp$, the bimodule $\Hdx UT$ admits a right dual (which happens to be a left dual as well), denoted by $\Hsx TU$. Having such right duals allows to form the bimodules
$$
\Hcorr SUT := \Hsx SU 
\usotimes{\H(U)} \Hdx UT,
\hspace{.4cm}
\Hopcorr SVT 
:=
\Hdx SV
\usotimes{\H(V)} \Hsx VT.
$$
We also show that $\Hopcorr SVT \simeq \Hcorr S{S \times_V T} T$ naturally, provided that at least one arrow between $S \to V$ and $T \to V$ is bounded. This is enough to extend $\H$ to a strict functor 
$$
\H^\Corr: \Corr(\Affevcoclfp)_{bdd;\all} \longto \AlgBimod(\DGCat).
$$
By inspection, such functor coincides with the restriction of $\H^\geom$ to $\Corr(\Affevcoclfp)_{bdd;\all}$, whence the latter is also strict.

\begin{rem} \label{rem:ambidexterity-comment}
The fact that left and right duals coincide implies that we could have also defined $\H^\Corr$ on $\Corr(\Affevcoclfp)_{\all; bdd}$. These two versions of $\H^\Corr$, exchanged by duality, agree on $\Corr(\Affevcoclfp)_{bdd; bdd}$.
\end{rem}

\sssec{}

To study $\H^\geom$ on stacks, we introduce the sheaf theory $\ShvCatH$, which is the right Kan extension of the functor 
$$
(\Affevcoclfp)^\op \to \inftyCat,
\hspace{.4cm}
S \squigto \H(S) \mmod.
$$
Note that Theorem \ref{thm:H-strict-on-schemes} is essential to make this well-defined. 

In principle, $\ShvCatH$ comes equipped only with pullback functors. However, thanks to the existence of the right duals $\Hsx TS$, there are also $*$-pushforward functors (right adjoints to pullbacks), which turn out to satisfy base-change against pullbacks. Symmetrically, the existence of the left duals provides $!$-pushforward functors (left adjoints to pullbacks), also satisfying base-change against pullbacks.\footnote{We will eventually show that pullbacks in $\ShvCatH$ are ambidextrous (i.e., $*$-pushforwards coincide with $!$-pushforwards), but this requires the $\H$-affineness theorem first.}

\sssec{}

In Theorem \ref{thm:H-affineness-stacks}, we prove the $\H$-affineness theorem, which states that, for any $\Y \in \Stkevcoclfp$, the $\infty$-category $\ShvCatH(\Y)$ is equivalent to $\Hgeom(\Y) \mmod$.
This theorem, together with the above base-change properties, automatically upgrades the assignment $\Y \squigto \Hgeom(\Y)$ to a strict $\2$-functor out of $\Corr(\Stkevcoclfp)_{bdd;\all}$. Fortunately, such functor is easily seen to match with $\Hgeom$, thereby proving that the latter is strict, too.

\sssec{}

An important technical result, which we use frequently, is the smooth descent property for $\ShvCatH$, proven in Section \ref{ssec:descent}: any object $\C \in \ShvCatH(\Y)$ is determined by its restrictions along \emph{smooth} maps $S \to \Y$, with $S$ affine. This is a very convenient simplification. For instance, let $\ICoh_{/\Y} \in \ShvCatH(\Y)$ be the sheaf corresponding to $\ICoh(\Y) \in \H^\geom(\Y) \mmod$ via $\H$-affineness.
In Section \ref{ssec:ICOH}, we will show that the restriction of $\ICoh_{/\Y}$ along a smooth map $S \to \Y$ is the $\H(S)$-module $\ICoh(S)$, whereas the restriction along a non-smooth map does not admit such a simple description.

\sec{Coefficient systems for sheaves of categories} \label{sec:Coeff systems}

In this section, we introduce one of the central notions of this paper, the notion of \emph{coefficient system}, together with its companion notion of \emph{lax coefficient system}. 

We present a list of examples, and, in particular, we define the coefficient system $\H$ related to Hochschild cochains. Let us anticipate that $\H$ arises naturally as a lax coefficient system and some work is needed in order to prove that it is actually strict. (Here and later, the adjective \virg{strict} is used to emphasize that a certain coefficient system is a genuine one, not a lax one.)

\ssec{Definition and examples}

Consider the $\2$-category $\ALGBimod(\DGCat)$, whose objects are monoidal DG categories, whose $1$-morphisms are bimodule categories, and whose $2$-morphisms are functors of bimodules.
Recall that the $\1$-category underlying $\ALGBimod(\DGCat)$ will be denoted by $\AlgBimod(\DGCat)$.

\medskip

A coefficient system is an functor
$$
\A: \Aff \longto \AlgBimod(\DGCat).
$$
A lax coefficient system is a lax $\2$-functor
$$
\A: \Aff \longto \ALGBimod(\DGCat).
$$

\sssec{}

Thus, a lax coefficient system $\bbA$ consists of:
\begin{itemize}
\item
a monoidal category $\bbA(S)$, for each affine scheme $S$;
\item
an $(\bbA(S), \bbA(T))$-bimodule $\bbA_{S \to T}$ for any map of affine schemes $S \to T$;
\item
an $(\bbA(S), \bbA(U))$-linear functor
$$
\eta_{S \to T \to U}: \bbA_{S \to T} \usotimes{\bbA(T)} \bbA_{T \to U}
\longto
\bbA_{S \to U}
$$
for any string $S \to T \to U$ of affine schemes;
\item
natural compatibilities for higher compositions.
\end{itemize}
Clearly, such $\bbA$ is a strict (that is, non-lax) coefficient system if and only if all functors $\eta_{S \to T \to U}$ are equivalences.

\sssec{}

One obtains variants of the above definitions by replacing the source $\infty$-category $\Aff$ with a subcategory $\Aff_\type$, where \virg{$\type$} is a property of affine schemes. For instance, we will often consider $\Aff_\aft$ (the full subcategory of affine schemes almost of finite type) or $\Affevcoclfp$ (affine schemes that are bounded and locally of finite presentation).

\medskip

We now give a list of examples of (lax) coefficient systems, in decreasing order of simplicity.

\sssec{Example 1}

Any monoidal DG category $\CA$ yields a \virg{constant} coefficient system $\ul\CA$ whose value on $S \to T$ is $\CA$, considered as a bimodule over itself.

\sssec{Example 2}

Slightly less trivial: coefficient systems induced by a functor $\Aff \to \Alg(\DGCat)^\op$  via the functor $\iota_{\Alg \to \Bimod}$ defined in (\ref{eqn:from-Alg-to-AlgBimod}). These coefficient systems are automatically strict.

For instance, we have the coefficient system $\bbQ$ which sends 
$$
S \squigto \QCoh(S), 
\hspace{.8cm}
[S \to T] \squigto \QCoh(S) \in (\QCoh(S), \QCoh(T)) \bbimod.
$$
Similarly, we have $\bbD$, obtained as above using $\fD$-modules rather than quasi-coherent sheaves. This coefficient system is defined only out of $\Aff_\aft \subset \Aff$.

\sssec{Example 3}

Let us pre-compose the lax $\2$-functor 
$$
\Loop_{\Mod}: \Mod(\DGCat)^\op
\longto
\ALGBimod(\DGCat)
$$ 
of Section \ref{sssec:loop-mod} with the functor
$$
\Aff_\aft \longto \Mod(\DGCat)^\op,
\hspace{.5cm}
S \squigto (\Dmod(S) \circlearrowright \ICoh(S) )
$$
that encodes the action of $\fD$-modules on ind-coherent sheaves. 
Since $\ICoh(S)$ is self-dual as a $\Dmod(S)$-module (Corollary \ref{cor:ICOH-self-dual-over-Dmod}), we obtain a lax coefficient system 
$$
\ICohW: \Aff_{\aft}
\longto
 \AlgBimod(\DGCat)
$$
described informally by
\begin{eqnarray}
\nonumber
& &  S \squigto \ICoh(\Sform) 
\\
\nonumber
& & [S \to T] \squigto \ICoh(\form STT) \in (\ICoh(\Sform), \ICoh(\form TTT)) \bbimod,
\\
\nonumber
& & [S \to T \to U] \squigto 
\ICoh(\form STT) \usotimes{ \ICoh(\form TTT)} \ICoh(\form TUU)
\longto \ICoh(\form SUU).
\end{eqnarray}
In other words, $\ICohW$ is obtained by restricting the very general $\bbI^{\wedge,\geom}$ defined in Section \ref{sssec:ICOHW goem out of corresp} to $\Aff_\aft$.
We will prove that $\bbI^\wedge$ is strict in Proposition \ref{prop: ICOH-geom-strict-functor-on-schemes}.

\sssec{Example 4}

As a variation of the above example, let $\H$ be the lax coefficient system
$$
\H:
\Affevcoclfp
\longto
\ALGBimod(\DGCat)
$$
defined by 
\begin{eqnarray}
\nonumber
& &  S \squigto \H(S) := \ICoh_0(\Sform) 
\\
\nonumber
& & [S \to T] 
\squigto 
\Hdx ST := \ICoh_0(\form STT) \in (\H(S), \H(T)) \bbimod,
\\
\nonumber
& & [S \to T \to U] \squigto 
\Hdx ST  \usotimes{ \H(T) } \Hdx TU 
\longto 
\Hdx SU.
\end{eqnarray}
Similarly to $\ICohW$, this is the restriction of (\ref{funct:Hgeom out of corresp}) to affine schemes. We will show that $\H$ is strict too.

\medskip

The importance of $\H$ comes from the monoidal equivalence
$$
\H(S) \simeq \HC(S)^\op \mod.
$$ 
To be precise, we have the following. First, the equivalence $\H(S) \simeq \HC(\ICoh(S))^\op \mod$ is obvious. Second, \cite[Proposition F.1.5.]{AG1} provides a natural isomorphism $\HC(\ICoh(S)) \simeq \HC(\QCoh(S)) =: \HC(S)$ of $E_2$-algebras. 

\sssec{Example 5}

One last example arising in a geometric fashion. Let $\Y: \Aff \to \Corr(\PreStk)_{\all;\all}^{\all}$ be an arbitrary lax $\2$-functor, described informally by the assignments
$$
S \squigto \Y_S, 
\hspace{.5cm}
[S \to T] \squigto
\Y_S \leftto \Y_{S \to T} \to \Y_T.
$$
The lax structure amounts to the data of maps
\begin{equation} \label{eqn:maps-between-transfer-spaces}
\Y_{S \to T} \ustimes{\Y_T} \Y_{T \to U} \longto \Y_{S \to U}
\end{equation}
over $\Y_S \times \Y_U$, for any string $S \to T \to U$. Recalling now the paradigm of Section \ref{sssec:example of lax 2 functor}, we obtain a lax $\2$-functor 
$$
 \Corr(\PreStk)_{\all;\all}^{\all} 
 \longto
 \ALGBimod(\DGCat)
$$
defined by sending
$$
\Y_S \squigto \QCoh(\Y_S), 
\hspace{.5cm}
[\Y_S \leftto \Y_{S \to T} \to \Y_T] 
\squigto
\QCoh(\Y_{S \to T}).
$$
The combination of this with $\Y$ yields a lax coefficient system, which is strict if the maps (\ref{eqn:maps-between-transfer-spaces}) are isomorphisms and the prestacks $\Y_{S \to T}$ are nice enough\footnote{Namely, nice enough so that $\QCoh$ interchanges fiber products among these prestacks with tensor products of categories. For instance, $1$-affine algebraic stacks are nice enough.}.

\sssec{Sub-example: singular support}

The theory of singular support provides an important example of the above construction: the assignment
$$
[S \to T] \squigto 
\Sing(S)/\Gm \leftto S \times_T \Sing(T)/\Gm \to \Sing(T)/\Gm,
$$
where $\Sing(U) := \Spec (\Sym_{H^0(U,\O_U)} H^1(U,\Tang_U))$ is equipped with the obvious weight-$2$ dilation action.

\medskip

We obtain a coefficient system $\bbS': \Aff_{\qsmooth} \longto \AlgBimod(\DGCat)$ defined on quasi-smooth affine schemes.
By construction, if $\C$ is a module category over $\bbS'(U)$, then objects of $\C$ are equipped with a notion of support in $\Sing(U)$, see \cite{AG1} for more details.


\ssec{The coefficient system $\ICohW$}

Let us prove that $\ICohW$ and $\H$ are strict coefficient systems. We will need to use the following fact.

\begin{lem} \label{lem:tensor-product-ICOH-over-Dmod}
For any diagram $Y \to W \leftto Z$ in $\Sch_\aft$, exterior tensor product yields the equivalence
\begin{equation} \label{eqn:tensor-prod-ICOH-over-Dmod}
\ICoh(Y) \usotimes{\Dmod(W)} \ICoh(Z)
\xto\simeq
\ICoh(Y \times_{W_\dR} Z).
\end{equation}
\end{lem}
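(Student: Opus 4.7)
The plan is to recognize this as an instance of the general tensor-product formula for $\ICoh$ over a prestack base.

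First, I would use the monoidal identification $\Dmod(W) \simeq \ICoh(W_\dR)$ and observe that, for any map $f: X \to W$ of schemes almost of finite type, the $\Dmod(W)$-action on $\ICoh(X)$ factors through the composite $X \to X_\dR \to W_\dR$: concretely, one pulls back along $f_\dR^!: \ICoh(W_\dR) \to \ICoh(X_\dR) = \Dmod(X)$ and then uses the tautological $\Dmod(X)$-action on $\ICoh(X)$. Hence the left-hand side rewrites as
$$
\ICoh(Y) \usotimes{\ICoh(W_\dR)} \ICoh(Z),
$$
with both factors regarded as $\ICoh(W_\dR)$-modules via their structure maps to $W_\dR$.

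Next, I would invoke the general base-change formula
$$
\ICoh(X_1) \usotimes{\ICoh(\X)} \ICoh(X_2) \xto{\sim} \ICoh(X_1 \times_\X X_2),
$$
valid when $\ICoh(\X)$ is rigid and the maps $X_1, X_2 \to \X$ are inf-schematic (see \cite{Book}, Chapter III). Applied to $\X = W_\dR$ with $X_1 = Y$ and $X_2 = Z$, this yields the desired equivalence. Both hypotheses are satisfied: rigidity of $\ICoh(W_\dR) = \Dmod(W)$ is standard, and the maps $Y, Z \to W_\dR$ are inf-schematic, since their base change along the inf-schematic surjection $W \to W_\dR$ recovers the schemes $Y$ and $Z$ themselves.

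The main obstacle is verifying that the equivalence produced by the general formula is indeed induced by the exterior tensor product appearing in the statement, as opposed to some twist of it. This is a matter of unwinding the construction of the formula in \cite{Book}, where the map in question is built by first applying $\boxtimes$ and then $!$-pulling back along the diagonal-type map $X_1 \times_\X X_2 \to X_1 \times X_2$, which matches exterior tensor product by construction. Should one prefer a more hands-on route, an alternative is to use the monadic adjunction $\pi^!: \Dmod(W) \rightleftarrows \ICoh(W): \pi_*$ associated to $\pi: W \to W_\dR$ (monadic by Barr--Beck, since $\pi_*$ is conservative and preserves colimits, $W$ being qcqs), and to reduce via base change for $\ICoh$ along the inf-schematic $\pi$ to the elementary tensor-product formula $\ICoh(Y) \otimes_{\ICoh(W)} \ICoh(Z) \simeq \ICoh(Y \times_W Z)$ for schemes of finite type, then passing to modules for the resulting de Rham monad on both sides.
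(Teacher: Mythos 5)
Your proposal takes a genuinely different route from the paper. The paper first observes the identification
$$
Y \times_{W_\dR} Z \simeq (Y \times Z)^\wedge_{Y \times_W Z},
$$
then invokes \cite[Proposition 3.1.2]{AG2} to rewrite $\ICoh$ of this formal completion as $\QCoh(Y \times_{W_\dR} Z) \otimes_{\QCoh(Y\times Z)} \ICoh(Y \times Z)$, and likewise rewrites the left-hand side, so that the assertion reduces to the well-known analogue for $\QCoh$ (which follows from $1$-affineness of $W_\dR$). That reduction is the entire content of the proof.

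The gap in your main route is the citation: you invoke a general formula $\ICoh(X_1)\otimes_{\ICoh(\X)}\ICoh(X_2)\simeq \ICoh(X_1\times_\X X_2)$ said to hold \virg{when $\ICoh(\X)$ is rigid and the maps $X_1,X_2\to\X$ are inf-schematic (see \cite{Book}, Chapter III),} but no theorem with exactly those hypotheses appears there. For $\QCoh$, rigidity of the base (equivalently, $1$-affineness of $\X$) is the right hypothesis; for $\ICoh$ the situation is more delicate (compact generation by $\Coh$, continuity of pullback, etc.), and the general statement you want over an inf-scheme base such as $W_\dR$ is essentially the lemma being proved. In effect the main route cites the result rather than proving it. The reduction-to-$\QCoh$ step via the formal-completion observation and \cite[Prop.~3.1.2]{AG2} is exactly what discharges this and is the ingredient your argument is missing.

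Your alternative monadic route has the adjunction the wrong way round: $\pi : W \to W_\dR$ is an ind-proper nil-isomorphism, so the adjunction is $\pi_*^\ICoh \dashv \pi^!$, with $\pi^!$ the \emph{right} adjoint (the forgetful functor $\oblv^r : \Dmod(W)\to\ICoh(W)$). Hence it is $\pi^!$, not $\pi_*$, whose conservativity and continuity are relevant for Barr--Beck monadicity of $\Dmod(W)$ over $\ICoh(W)$. (What you wrote, checking conservativity of $\pi_*$, would instead bear on a \emph{co}monadic description of $\ICoh(W)$ over $\Dmod(W)$.) Even with the adjunction corrected, the passage \virg{then passing to modules for the resulting de Rham monad on both sides} is too compressed to constitute a proof: one still needs to compare $\ICoh(Y)\otimes_{\Dmod(W)}\ICoh(Z)$ with the monadic description, and the natural way to do that is precisely through the formal-completion identity and \cite[Prop.~3.1.2]{AG2} that the paper uses.
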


\begin{proof}
Note that $\form YWZ \simeq (Y \times Z)^\wedge_{Y \times_W Z}$. Hence, by \cite[Proposition 3.1.2]{AG2}, the RHS is equivalent to 
$$
\QCoh(Y \times_{W_\dR} Z)
\usotimes{\QCoh(Y \times Z)}
\ICoh(Y \times Z),
$$
while the LHS is obviously equivalent to 
$$
\bigt{
\QCoh(Y) \usotimes{\Dmod(W)} \QCoh(Z)
}
\usotimes{\QCoh(Y \times Z)}
\ICoh(Y \times Z).
$$
Now, the statement reduces to the analogous statement with $\ICoh$ replaced by $\QCoh$, in which case it is well-known.
\end{proof}

\begin{cor} \label{cor:ICOH-self-dual-over-Dmod}
For $Y \in \Sch_\aft$, the DG category $\ICoh(Y)$ is self-dual as a $\Dmod(Y)$-module.
\end{cor}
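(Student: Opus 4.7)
The plan is to read off the self-duality directly from the lemma. Specializing the lemma to $Y = W = Z$ gives the identification
\[
\ICoh(Y) \usotimes{\Dmod(Y)} \ICoh(Y) \simeq \ICoh(\form YYY),
\]
so it suffices to exhibit unit and counit functors between $\Dmod(Y) = \ICoh(Y_\dR)$ and $\ICoh(\form YYY)$ satisfying the triangle identities for a $\Dmod(Y)$-linear self-duality.

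Let $f \colon Y \to Y_\dR$ be the canonical projection and $\Delta \colon Y \to \form YYY$ the diagonal. I would set
\[
\eta := \Delta_*^{\ICoh} \circ f^! \colon \ICoh(Y_\dR) \to \ICoh(\form YYY),
\qquad
\epsilon := f_*^{\ICoh} \circ \Delta^! \colon \ICoh(\form YYY) \to \ICoh(Y_\dR),
\]
which are well-defined since $\Delta$ is inf-schematic ind-proper and $f$ is inf-schematic. These functors are nothing but the images, under the symmetric monoidal $\2$-functor $\ICoh$ of (\ref{funct:ICOH out of corresp}), of the tautological self-duality data witnessing that $Y \to Y_\dR$ is self-dual in the slice $\infty$-category of correspondences $\Corr(\PreStk_\laft)_{/Y_\dR}$: namely, the unit correspondence $Y_\dR \leftto Y \xto{\Delta} \form YYY$ and the counit $\form YYY \xleftarrow{\Delta} Y \to Y_\dR$. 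In any $(\infty,2)$-category of correspondences equipped with its Cartesian monoidal structure, every object is tautologically self-dual in exactly this fashion, and in our relative setup the tensor product is fiber product over $Y_\dR$.

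The main obstacle is a bookkeeping point: one must verify that $\ICoh$, restricted to the slice $\Corr(\PreStk_\laft)_{/Y_\dR}$, interchanges fiber products over $Y_\dR$ with tensor products over $\Dmod(Y)$ in a symmetric monoidal way, so that the abstract self-duality of $Y$ over $Y_\dR$ genuinely descends to self-duality of $\ICoh(Y)$ in $\Dmod(Y)\mmod$. This is essentially the content of the lemma in the requisite generality. If one prefers a hands-on verification, the triangle identities for $(\eta, \epsilon)$ can instead be checked directly by base-change applied to the two projections $\form YYY \times_{Y_\dR} Y \rightrightarrows \form YYY$, which is routine.
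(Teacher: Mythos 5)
Your proof is correct and takes essentially the same route as the paper's terse one-line proof, which says precisely to use the lemma and write the evaluation and coevaluation as pull-push formulas — these are exactly your $\eta = \Delta_*^{\ICoh} \circ f^!$ and $\epsilon = f_*^{\ICoh} \circ \Delta^!$. Your added framing via tautological self-duality in the slice category of correspondences is a pleasant conceptual gloss, but as you acknowledge it reduces to the same base-change verification the paper is implicitly invoking.
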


\begin{proof}
One uses the equivalence of the above lemma to write the evaluation and coevaluation as standard pull-push formulas.
\end{proof}

\begin{cor} \label{cor:functors ICoh over D as fiber products}
For any map $Y \to Z$ in $\Sch_\aft$, we obtain a natural equivalence
$$
\ICoh ( \form YZZ)
\simeq
\Fun_{\Dmod(Z)}(\ICoh(Y), \ICoh(Z) ).
$$
In the special case $Y=Z$, the \virg{composition} monoidal structure on the RHS corresponds to the \virg{convolution} monoidal structure on LHS.
\end{cor}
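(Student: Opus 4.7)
The plan is to deduce the equivalence by combining the previous Lemma with the self-duality statement, and then to match the monoidal structures via functoriality.

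\medskip

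First, I would establish that $\ICoh(Y)$ is dualizable, and in fact self-dual, as a $\Dmod(Z)$-module. The case $Y=Z$ is exactly the preceding Corollary. For the general case $f: Y \to Z$, the proof is the same in spirit: by the Lemma (applied with both schemes equal to $Y$ and $W=Z$), there is a canonical equivalence
$$
\ICoh(Y) \usotimes{\Dmod(Z)} \ICoh(Y) \;\simeq\; \ICoh(Y \times_{Z_\dR} Y).
$$
The diagonal $\Delta : Y \to Y \times_{Z_\dR} Y$ is inf-schematic and ind-proper (because $Y \to Y_\dR$ has these properties), and the composite $Y \times_{Z_\dR} Y \to Z_\dR$ is likewise inf-schematic and ind-proper on the relevant formal completions. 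One thus obtains evaluation and coevaluation morphisms by the standard pull-push recipe — $\coev$ via $\Delta_*\circ\Upsilon$ applied to the pullback from $Z_\dR$, and $\ev$ via pushforward along the projection to $Z_\dR$ — and checks the triangle identities using base-change for $\ICoh$, precisely as in the proof of the preceding Corollary.

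\medskip

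Given $\Dmod(Z)$-self-duality of $\ICoh(Y)$, standard higher algebra gives
$$
\Fun_{\Dmod(Z)}(\ICoh(Y), \ICoh(Z)) \;\simeq\; \ICoh(Y)^{\vee} \usotimes{\Dmod(Z)} \ICoh(Z) \;\simeq\; \ICoh(Y) \usotimes{\Dmod(Z)} \ICoh(Z),
$$
and a second application of the Lemma (now with $W=Z$, and the diagram $Y \to Z \leftto Z$) identifies the last term with $\ICoh(Y \times_{Z_\dR} Z) = \ICoh(\form YZZ)$. This proves the equivalence asserted in the first sentence.

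\medskip

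For the monoidal statement, set $Y = Z$ and consider the \emph{three}-fold version of the Lemma,
$$
\ICoh(Y) \usotimes{\Dmod(Y)} \ICoh(Y) \usotimes{\Dmod(Y)} \ICoh(Y) \;\simeq\; \ICoh(Y \times_{Y_\dR} Y \times_{Y_\dR} Y).
$$
Under this identification, contraction of the middle factor in the tensor product — which on the functor side is precisely the composition of endofunctors — corresponds to pushforward along the projection $p_{13}: Y \times_{Y_\dR} Y \times_{Y_\dR} Y \to Y \times_{Y_\dR} Y$, while the pullbacks to the two outer factors match the pullbacks $p_{12}^!, p_{23}^!$. Thus composition of $\Dmod(Y)$-linear endofunctors of $\ICoh(Y)$ is transported to pull-push along the convolution correspondence, i.e. to the convolution monoidal structure on $\ICoh(\form YYY)$. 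Compatibility of associators is automatic from the associativity of the relative tensor product.

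\medskip

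The only non-routine step is the verification of $\Dmod(Z)$-self-duality of $\ICoh(Y)$; everything else is a formal consequence of the Lemma and of the standard dictionary between dualizable bimodules and functor categories. I therefore expect this to be the main obstacle, though it should be handled by essentially repeating the argument of the preceding Corollary with $Z_\dR$ in place of $Y_\dR$ as the target of the defining pull-push.
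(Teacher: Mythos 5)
Your overall route is the right one and matches what the paper intends: establish that $\ICoh(Y)$ is (self-)dualizable as a $\Dmod(Z)$-module, rewrite $\Fun_{\Dmod(Z)}(\ICoh(Y),\ICoh(Z))$ as the relative tensor product, and then invoke Lemma~\ref{lem:tensor-product-ICOH-over-Dmod}. The result you need ($\Dmod(Z)$-self-duality of $\ICoh(Y)$) is true, and can indeed be proved by the same pull-push argument as Corollary~\ref{cor:ICOH-self-dual-over-Dmod}; alternatively, it can be deduced formally by composing the $\Dmod(Y)$-self-duality of $\ICoh(Y)$ with the $\Dmod(Z)$-self-duality of $\Dmod(Y)$ (Verdier duality / rigidity), which keeps the argument closer to what is already in the text.

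Two points in your construction of the duality data need correction, though. The evaluation is \emph{not} the pushforward along the map $Y \times_{Z_\dR} Y \to Z_\dR$: as written, that is a genuinely different functor (it does not annihilate kernels supported off the diagonal). The correct evaluation is $q_*^{\ICoh} \circ \Delta^!$, where $\Delta : Y \to Y \times_{Z_\dR} Y$ is the relative diagonal and $q : Y \to Z_\dR$, i.e.\ restrict to the diagonal first, then push to $Z_\dR$. Similarly, $\Upsilon$ has no role in the coevaluation: $q^! : \Dmod(Z) \to \ICoh(Y)$ already lands in $\ICoh(Y)$, and the coevaluation is just $\Delta_*^{\ICoh} \circ q^!$. (You may be conflating this with the $\ICoh_0$ story, where $\Upsilon$ does intervene — but here we are in plain $\ICoh$.)

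For the monoidal statement, the three-fold tensor power you write down is the middle term of the convolution correspondence, not the source of composition. Composition of endofunctors naturally lives on $\ICoh(Y)^{\otimes_{\Dmod(Y)} 4}$; contracting the inner two copies via the evaluation is what produces the three-fold term, which then maps to $\ICoh(\form YYY)$ by $(p_{13})_*$. Your geometric intuition is correct, but \virg{contraction of the middle factor} of a three-fold tensor product is not a well-formed operation, and the sentence should be rewritten in the four-fold picture. None of these issues affect the truth of the statement; they are repairs to the exposition, not to the strategy.
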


\sssec{}

The lax-coefficient system $\ICohW$ is the restriction of the lax $\2$-functor  $\ICoh^{\wedge,\geom}$ to $\Aff_\aft$. Consider now the intermediate  lax $\2$-functor
$\Sch_\aft
\to
\ALGBimod(\DGCat)$, denoted also $\ICohW$ by abuse of notation. Our present goal is to prove the following result.

\begin{prop} \label{prop: ICOH-geom-strict-functor-on-schemes}
The lax $\2$-functor
$$
\ICohW:
\Sch_\aft
\longto
\ALGBimod(\DGCat)
$$ 
is strict.
\end{prop}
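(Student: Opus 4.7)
The proposition reduces to showing that, for each composable pair $Y \xto{f} Z \xto{g} W$ in $\Sch_\aft$, the lax composition
$$
\mu_{Y,Z,W}: \ICoh(\form YZZ) \usotimes{\ICoh(\form ZZZ)} \ICoh(\form ZWW) \longto \ICoh(\form YWW)
$$
is an equivalence of $(\ICoh(\form YYY), \ICoh(\form WWW))$-bimodules. My plan is to employ Lemma \ref{lem:tensor-product-ICOH-over-Dmod} to rewrite the source and target in terms of tensor products over $\Dmod$-categories, and then to reduce the statement to a single Morita-type identity involving $\ICoh(Z)$.

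Applying Lemma \ref{lem:tensor-product-ICOH-over-Dmod} yields the identifications $\ICoh(\form YZZ) \simeq \ICoh(Y) \otimes_{\Dmod(Z)} \ICoh(Z)$, $\ICoh(\form ZZZ) \simeq \ICoh(Z) \otimes_{\Dmod(Z)} \ICoh(Z)$, $\ICoh(\form ZWW) \simeq \ICoh(Z) \otimes_{\Dmod(W)} \ICoh(W)$, and $\ICoh(\form YWW) \simeq \ICoh(Y) \otimes_{\Dmod(W)} \ICoh(W)$. Under these, the right action of $\ICoh(\form ZZZ)$ on $\ICoh(\form YZZ)$ is realized by convolution on the inner $\ICoh(Z)$-factor, and it commutes with the outer $\Dmod(Z)$-action on $\ICoh(Y)$; an analogous observation applies to the left action on $\ICoh(\form ZWW)$. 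Consequently, the source of $\mu_{Y,Z,W}$ rewrites as
$$
\ICoh(Y) \usotimes{\Dmod(Z)} \Bigt{ \ICoh(Z) \usotimes{\ICoh(\form ZZZ)} \ICoh(Z) } \usotimes{\Dmod(W)} \ICoh(W).
$$
Thus, upon establishing the Morita-type identity
$$
\eta_Z: \ICoh(Z) \usotimes{\ICoh(\form ZZZ)} \ICoh(Z) \xto{\sim} \Dmod(Z)
$$
as a $(\Dmod(Z), \Dmod(Z))$-bimodule, the source becomes $\ICoh(Y) \otimes_{\Dmod(Z)} \Dmod(Z) \otimes_{\Dmod(W)} \ICoh(W) \simeq \ICoh(Y) \otimes_{\Dmod(W)} \ICoh(W) \simeq \ICoh(\form YWW)$, matching the target.

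The identity $\eta_Z$ is the crux of the proof. Using Corollary \ref{cor:functors ICoh over D as fiber products} to identify $\ICoh(\form ZZZ)$ monoidally with $\End_{\Dmod(Z)}(\ICoh(Z))$, together with the self-duality of $\ICoh(Z)$ as a $\Dmod(Z)$-module (Corollary \ref{cor:ICOH-self-dual-over-Dmod}), $\eta_Z$ is precisely the Morita equivalence associated to the dualizable $\Dmod(Z)$-module $\ICoh(Z)$. Geometrically, it expresses descent of $\ICoh$ along the effective epimorphism $Z \to Z_\dR$: the Čech nerve $Z^{\bullet/Z_\dR}$ has first stage exactly $\form ZZZ$, and the descent identification $\Dmod(Z) \simeq \ICoh(Z_\dR) \simeq \Tot(\ICoh(Z^{\bullet/Z_\dR}))$ realizes $\eta_Z$ as the comparison between a bar complex and a totalization. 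The main obstacle is to verify rigorously that $\ICoh(Z)$ is a Morita progenerator for $\Dmod(Z) \mmod$ (equivalently, that this comparison is an equivalence). A streamlined alternative would be to deduce the proposition directly from the symmetric monoidality of the $\2$-functor $\ICoh$ on correspondences (cf.\ the Remark following Theorem \ref{thm:1-functoriality of ICohzero}), by identifying the relevant tensor products with $\ICoh$ of an appropriate derived fiber product of formal completions in $\PreStk_\laft$.
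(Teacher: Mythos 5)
Your reduction matches the paper's proof exactly: by Lemma~\ref{lem:tensor-product-ICOH-over-Dmod}, strictness boils down to the single Morita-type identity $\eta_Z\colon \ICoh(Z) \otimes_{\ICoh(\form ZZZ)} \ICoh(Z) \xto{\sim} \Dmod(Z)$, which is precisely the counit of the adjunction~(\ref{eqn:ICoh-form-Dmod-adjunction}). So far so good, and your Fubini-style reshuffling of the double relative tensor product is the same implicit step the paper uses.

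Where you stop short, the paper has already done the work: the claim you flag as ``the main obstacle'' --- that $\ICoh(Z)$ is a Morita progenerator over $\Dmod(Z)$, equivalently that $\eta_Z$ is an equivalence --- is exactly the content of Lemma~\ref{lem:ICoh--wedge-equivalence-Dmod}. The paper's route is Barr--Beck: by~(\ref{eqn:tensor-prod-ICOH-over-Dmod}) the localization $\ICoh(\form ZZZ)\mmod \to \Dmod(Z)\mmod$ has a fully faithful left adjoint, so it suffices to check that the right adjoint $\ICoh(Z) \otimes_{\Dmod(Z)} -$ is conservative. That conservativity (Lemma~\ref{lem:ICOH(S)-Dmod(S)-conservative}) is proven by the very \v{C}ech-nerve descent you gesture at, but the crucial input is that each structure map of the nerve $\hat Y_\bullet \to Y_\dR$ is a nil-isomorphism between inf-schemes, hence the cosimplicial limit can be traded for a colimit along left adjoints; merely observing that $\form ZZZ$ is the first stage of the nerve does not by itself yield the bar-versus-totalization comparison. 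Your proposed ``streamlined alternative'' via symmetric monoidality of $\ICoh$ on correspondences faces a similar issue: symmetric monoidality controls absolute tensor products $\ICoh(X) \otimes \ICoh(Y) \simeq \ICoh(X \times Y)$, but the relative tensor product over $\ICoh(\form ZZZ)$ involves a geometric realization of a simplicial object, and one must separately argue that $\ICoh$ commutes with that realization. So the architecture of your argument is correct and identical to the paper's; what is missing is a proof of the conservativity statement that closes the Barr--Beck argument, and the paper supplies it.
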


 The proof of the above proposition will be explained after some preparation.

\sssec{} 

For $Y \in \Sch_\aft$, Corollary \ref{cor:functors ICoh over D as fiber products} shows that $\ICoh(Y)$ admits the structure of an $(\ICoh(\form YYY), \Dmod(Y))$-bimodule, as well as 
the structure of a $(\Dmod(Y), \ICoh(\form YYY))$-bimodule.
Now, one verifies directly that the latter bimodule is left dual to the former, i.e., there is an adjunction
\begin{equation} \label{eqn:ICoh-form-Dmod-adjunction}
\begin{tikzpicture}[scale=1.5]
\node (a) at (0,1) {$\ICoh(\form YYY) \mmod$};
\node (b) at (5,1) {$\Dmod(Y) \mmod.$};
\path[->,font=\scriptsize,>=angle 90]
([yshift= 1.8pt]a.east) edge node[above] {$\ICoh(Y) \usotimes{\ICoh(\form YYY)} -$} ([yshift= 1.8pt]b.west);
\path[->,font=\scriptsize,>=angle 90]
([yshift= -1.5pt]b.west) edge node[below] {$\ICoh(Y) \usotimes{\Dmod(Y)} -$} ([yshift= -1.5pt]a.east);
\end{tikzpicture}
\end{equation}

\begin{lem} \label{lem:ICoh--wedge-equivalence-Dmod}
These two adjoint functors form a pair of mutually inverse equivalences.
In particular, we also have an adjunction in the other direction:
\begin{equation} \label{eqn:ICoh-form-Dmod-opposite-adjunction}
\begin{tikzpicture}[scale=1.5]
\node (a) at (0,1) {$\Dmod(Y) \mmod$};
\node (b) at (5,1) {$\ICoh(\form YYY) \mmod.$};
\path[->,font=\scriptsize,>=angle 90]
([yshift= 1.8pt]a.east) edge node[above] {$\ICoh(Y) \usotimes{\Dmod(Y)} -$} ([yshift= 1.8pt]b.west);
\path[->,font=\scriptsize,>=angle 90]
([yshift= -1.5pt]b.west) edge node[below] {$\ICoh(Y) \usotimes{\ICoh(\form YYY)} -$} ([yshift= -1.5pt]a.east);
\end{tikzpicture}
\end{equation}
\end{lem}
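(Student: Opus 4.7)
The plan is to show that $L := \ICoh(Y) \otimes_{\ICoh(\form YYY)} -$ and $R := \ICoh(Y) \otimes_{\Dmod(Y)} -$ are mutually inverse equivalences, namely by proving $RL \simeq \id$ (the easy direction, coming directly from Lemma~\ref{lem:tensor-product-ICOH-over-Dmod}) and then $LR \simeq \id$ (the harder direction, requiring a form of descent along $\pi: Y \to Y_\dR$). Once this is done, the \emph{``in particular''} statement about the opposite adjunction \eqref{eqn:ICoh-form-Dmod-opposite-adjunction} is immediate, since any equivalence is both a left and a right adjoint to its inverse.

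For the first step, I would apply Lemma~\ref{lem:tensor-product-ICOH-over-Dmod} in the special case $Y = W = Z$ to obtain the equivalence
$$
\ICoh(Y) \otimes_{\Dmod(Y)} \ICoh(Y) \simeq \ICoh(Y \times_{Y_\dR} Y) = \ICoh(\form YYY).
$$
One then checks (using the fact that the tensor-product equivalence is induced by external product, hence is compatible with the natural $\QCoh(Y \times Y)$-actions on both sides) that this identification intertwines the $\ICoh(\form YYY)$-bimodule structures on the two sides. Plugging into $RL(\C) = \ICoh(Y) \otimes_{\Dmod(Y)} \ICoh(Y) \otimes_{\ICoh(\form YYY)} \C$ then collapses to $\C$, which proves $L$ is fully faithful.

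The second step amounts to the $\Dmod(Y)$-bimodule identification
$$
\ICoh(Y) \otimes_{\ICoh(\form YYY)} \ICoh(Y) \simeq \Dmod(Y).
$$
Since $L$ is now known to be fully faithful with right adjoint $R$, a standard triangle-identity argument shows $LR \simeq \id$ as soon as $R$ is conservative: indeed, the triangle identity forces $R(\epsilon_X) = \eta_{R(X)}^{-1}$ to be an isomorphism, so conservativity of $R$ upgrades this to $\epsilon_X$ being an isomorphism. To establish conservativity of $R$, I would exploit the fact that $\pi: Y \to Y_\dR$ is an effective epimorphism and that the monoidal pullback $\pi^!: \Dmod(Y) \to \ICoh(Y)$ is conservative and colimit-preserving, hence comonadic by Barr--Beck--Lurie; this object-level statement is then propagated to the level of module categories to yield conservativity of the tensor-up functor $R$.

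The main obstacle is precisely this last propagation: converting the object-level conservativity/comonadicity of $\pi^!$ into the $2$-categorical statement that $\ICoh(Y) \otimes_{\Dmod(Y)} -$ is conservative on $\Dmod(Y)\mmod$. A clean implementation is to reconstruct any $\D \in \Dmod(Y)\mmod$ from its \v{C}ech nerve along $\pi$, using Lemma~\ref{lem:tensor-product-ICOH-over-Dmod} to identify each level $\ICoh(Y)^{\otimes_{\Dmod(Y)} n+1} \simeq \ICoh(Y^{\times_{Y_\dR} n+1})$; an equivalent, more computational route is to write the bar resolution of $\ICoh(Y) \otimes_{\ICoh(\form YYY)} \ICoh(Y)$, apply Lemma~\ref{lem:tensor-product-ICOH-over-Dmod} termwise, and recognize the resulting simplicial object as the \v{C}ech nerve of $\pi$, whose colimit assembles into $\ICoh(Y_\dR) = \Dmod(Y)$ by descent for ind-coherent sheaves.
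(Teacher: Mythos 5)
Your outline matches the paper's: show $L$ is fully faithful via Lemma~\ref{lem:tensor-product-ICOH-over-Dmod} with $Y=W=Z$, then reduce to conservativity of $R$, and establish conservativity via the \v{C}ech nerve of $Y\to Y_\dR$ — this is exactly the content of Lemma~\ref{lem:ICOH(S)-Dmod(S)-conservative}, to which the paper defers. You also correctly flag where the real work lies: promoting the object-level descent statement $\Dmod(Y)\simeq\lim_n\ICoh(\wh Y_n)$ to the $2$-categorical statement that $\lim_n \bigl(\ICoh(\wh Y_n)\otimes_{\Dmod(Y)}\C\bigr)$ recovers $\C$ for an arbitrary $\Dmod(Y)$-module category $\C$.

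However, your proposal stops short of resolving that obstacle. Barr--Beck--Lurie comonadicity of $\pi^!$ on its own does not yield the propagation, because $-\otimes_{\Dmod(Y)}\C$ is a colimit-preserving functor and the \v{C}ech reconstruction is a totalization. The missing ingredient, used explicitly in the paper, is that each cosimplicial structure map $\ICoh(\wh Y_n)\to\ICoh(\wh Y_m)$ admits a left adjoint (the $*$-pushforward), since the underlying maps $\wh Y_m\to\wh Y_n$ are nil-isomorphisms of inf-schemes. This turns the totalization into a colimit along the left adjoints, which \emph{does} commute with $-\otimes_{\Dmod(Y)}\C$, giving the reconstruction and hence conservativity. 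Also, your ``more computational route'' via the bar resolution of $\ICoh(Y)\otimes_{\ICoh(\form YYY)}\ICoh(Y)$ does not identify termwise with the \v{C}ech nerve: the level-$n$ term of the (absolute) bar construction is $\ICoh\bigl(Y\times(\form YYY)^{\times n}\times Y\bigr)$, not $\ICoh\bigl(Y^{\times_{Y_\dR}(n+1)}\bigr)$, so that route as written does not go through.
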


\begin{proof}
The left adjoint in (\ref{eqn:ICoh-form-Dmod-adjunction}) is fully faithful by (\ref{eqn:tensor-prod-ICOH-over-Dmod}) and the right adjoint is colimit-preserving. By Barr-Beck, it suffices to show that the right adjoint in (\ref{eqn:ICoh-form-Dmod-adjunction})  is conservative, a statement which is the content of the next lemma.
\end{proof}

\begin{lem} \label{lem:ICOH(S)-Dmod(S)-conservative}
For $Y \in \Sch_\aft$, the functor
$$
\ICoh(Y) \usotimes{\Dmod(Y)} -: \Dmod(Y) \mmod \longto \DGCat
$$ 
is conservative. 
\end{lem}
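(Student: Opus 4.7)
My plan is to prove conservativity via $!$-descent for the canonical projection $p_{\dR}: Y \to Y_{\dR}$. First, I would reinterpret the situation: the identification $\Dmod(Y) \simeq \ICoh(Y_{\dR})$ makes the $\Dmod(Y)$-action on $\ICoh(Y)$ into pullback along $p_{\dR}$, and the functor $p_{\dR}^{!}: \Dmod(Y) \to \ICoh(Y)$ is a conservative, continuous, symmetric monoidal functor that admits a continuous left adjoint $p_{\dR,*}^{\IndCoh}$; the projection formula for $\IndCoh$ shows that both $p_{\dR,*}^{\IndCoh}$ and $p_{\dR}^{!}$ are $\Dmod(Y)$-linear.

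Second, I would invoke the $!$-descent theory for ind-coherent sheaves on ind-inf-schemes developed in \cite{Book}. Since $p_{\dR}$ is a formally smooth surjection, it satisfies $!$-descent, giving an equivalence
$$
\Dmod(Y) \;=\; \ICoh(Y_{\dR}) \;\simeq\; \mathrm{Tot}\bigl(\ICoh(Y^{\times_{Y_{\dR}} \bullet + 1})\bigr).
$$
Applying Lemma \ref{lem:tensor-product-ICOH-over-Dmod} term by term identifies each $\ICoh(Y^{\times_{Y_{\dR}} n+1})$ with $\ICoh(Y)^{\otimes_{\Dmod(Y)} n+1}$, so
$$
\Dmod(Y) \;\simeq\; \mathrm{Tot}\bigl(\ICoh(Y)^{\otimes_{\Dmod(Y)} \bullet + 1}\bigr) \qquad \text{in } \Dmod(Y) \mmod.
$$

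Third, given $\C \in \Dmod(Y) \mmod$ with $\ICoh(Y) \otimes_{\Dmod(Y)} \C = 0$, I would tensor this cosimplicial resolution with $\C$ over $\Dmod(Y)$. Every term in the resulting cosimplicial diagram factors through $\ICoh(Y) \otimes_{\Dmod(Y)} \C$ and hence vanishes; provided the totalization commutes with $- \otimes_{\Dmod(Y)} \C$, we conclude $\C \simeq \mathrm{Tot}(0) = 0$, as desired.

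The main obstacle is the exchange of totalization with tensor product in the last step, since $- \otimes_{\Dmod(Y)} \C$ preserves colimits but not arbitrary limits. The point is that the cosimplicial object in question is the \v{C}ech nerve of a $!$-cover: after a further base-change to $\ICoh(Y)$ it becomes split (the extra copy of $\ICoh(Y)$ provides the contracting extra degeneracy), and this splitting turns the relevant totalization into a geometric realization, which does commute with the tensor product. Alternatively, one may avoid this bookkeeping by packaging the same content as an $(\infty,2)$-categorical monadicity statement: the adjunction $\Dmod(Y) \mmod \rightleftarrows \ICoh(Y) \mmod$ induced by $p_{\dR}^{!}$ exhibits $\Dmod(Y) \mmod$ as (co)modules over an $(\infty,2)$-monad on $\ICoh(Y) \mmod$, so vanishing of the underlying $\ICoh(Y)$-linear object $\ICoh(Y) \otimes_{\Dmod(Y)} \C$ forces vanishing of $\C$.
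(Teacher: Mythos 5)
You correctly identify the overall strategy---pass to the \v{C}ech nerve of $p_{\dR}: Y \to Y_{\dR}$ and reduce to commuting the totalization with $- \otimes_{\Dmod(Y)} \C$---and you correctly flag the commutation as the crux. But that is exactly where your argument has a genuine gap, and neither of your two proposed fixes closes it.

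The paper closes the gap by observing that each structure functor in the cosimplicial diagram $\ICoh(\wh Y_\bullet)$ admits a \emph{left} adjoint: each structure map $\wh Y_m \to \wh Y_n$ is a nil-isomorphism of inf-schemes, so $\ICoh$-$*$-pushforward is left adjoint to $!$-pullback (and these left adjoints are $\Dmod(Y)$-linear). A limit in $\DGCat$ along a diagram whose arrows all admit left adjoints is simultaneously a colimit along the adjoint diagram, and $- \otimes_{\Dmod(Y)} \C$ preserves colimits; this gives the functorial equivalence $\C \simeq \lim_n (\ICoh(\wh Y_n) \otimes_{\Dmod(Y)} \C)$ with no circularity. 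Your fix (a) does not deliver this: from ``the cobar resolution becomes split after applying $\ICoh(Y) \otimes_{\Dmod(Y)} -$'' one can only conclude that the comparison map $\C \to \mathrm{Tot}\bigl(\ICoh(Y)^{\otimes_{\Dmod(Y)}\bullet+1} \otimes_{\Dmod(Y)} \C\bigr)$ becomes invertible \emph{after} a further application of $\ICoh(Y) \otimes_{\Dmod(Y)} -$, and to promote that to invertibility of the map itself you would need the very conservativity you are trying to prove. The splitting alone is a purely algebraic fact available for any algebra map; it is the geometric input (nil-isomorphisms, hence left adjoints) that makes the argument work. Your fix (b) is outright circular: the monadicity of the adjunction $\Dmod(Y)\mmod \rightleftarrows \ICoh(Y)\mmod$ is precisely Lemma~\ref{lem:ICoh--wedge-equivalence-Dmod}, which the paper \emph{deduces} from the present lemma via Barr--Beck.

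A secondary point: you only argue that a module $\C$ with $\ICoh(Y) \otimes_{\Dmod(Y)} \C = 0$ must vanish. Since $\DGCat$ and $\Dmod(Y)\mmod$ are not stable, ``trivial kernel'' does not imply conservativity. This is easy to repair once the commutation is in place (as in the paper: a $\Dmod(Y)$-linear $f: \M \to \N$ with $\id \otimes f$ invertible is, under the identifications $\M \simeq \lim_n (\ICoh(\wh Y_n) \otimes \M)$ and likewise for $\N$, a limit of equivalences), but as written the argument proves less than the lemma states.
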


\begin{proof}
Let $f: \M \to \N$ be a $\Dmod(Y)$-linear functor with the property that 
$$
\id \otimes f: \ICoh(Y) \usotimes{\Dmod(Y)} \M
\longto 
\ICoh(Y) \usotimes{\Dmod(Y)} \N
$$
is an equivalence. We need to show that $f$ itself is an equivalence.

\medskip

Denote by $\wh Y_\bullet$ the Cech nerve of $q: Y \to Y_\dR$. Recall that the natural arrow
$$
\Dmod(Y) := \ICoh(Y_\dR) \longto \ICoh(| \wh Y_\bullet |) \simeq \lim_{[n] \in \bDelta} \ICoh( \wh Y_n)
$$
is an equivalence and that each of the structure functors composing the above cosimplicial category admits a left adjoint (indeed, each structure map $\wh Y_m \to \wh Y_n$ is a nil-isomorphism between inf-schemes).
Consequently, the tautological functor
$$
\C \longto \lim_{[n] \in \bDelta} \Bigt{ \ICoh(\wh Y_n) \usotimes{\Dmod(Y)} \C }
$$
is an equivalence for any $\C \in \Dmod(S) \mmod$. Under these identifications, our functor $f: \M \to \N$ is the limit of the equivalences 
$$
\id \otimes f: 
\ICoh(\wh Y_n) \usotimes{\Dmod(Y)} \M
\longto
\ICoh(\wh Y_n) \usotimes{\Dmod(Y)} \N,
$$
whence it is itself an equivalence.
\end{proof}

\sssec{}

We are now ready for the proof of the proposition left open above.

\begin{proof} [Proof of Proposition \ref{prop: ICOH-geom-strict-functor-on-schemes}]
Thanks to (\ref{eqn:tensor-prod-ICOH-over-Dmod}), it suffices to prove that, for any $Y \in \Sch_\aft$, the obvious functor $q_*^{\ICoh} \circ \Delta^!: \ICoh(Y) \otimes \ICoh(Y) \to \Dmod(Y)$ induces an equivalence
\begin{equation} \label{eqn:ICohwedge-eval-into-Dmod}
\ICoh(Y) \usotimes{\ICoh \bigt{ \form YYY }} \ICoh(Y)
\xto{\;\; \simeq \;\;}
 \Dmod(Y).
\end{equation}
The latter is precisely the counit of the adjunction (\ref{eqn:ICoh-form-Dmod-adjunction}), which we have shown to be an equivalence.
\end{proof}

\ssec{The coefficient system $\H$}

Our present goal is to prove Theorem \ref{thm:H-strict-on-schemes}, which states that the lax coefficient system
$$
\H :  \Affevcoclfp 
\longto 
\ALGBimod(\DGCat)
$$ 
is \emph{strict}. Actually, such theorem proves something slightly stronger, i.e., the parallel statement for schemes that are not necessarily affine.

\sssec{}

We need a preliminary result, which is of interest in its own right.

\begin{prop} \label{prop:IndCoh-push-forward}
Let $f:X \to Y$ be a map in $\Schevcoclfp$. Then, the $(\Dmod(X), \H(Y))$-linear functor
\begin{equation} \label{eqn:ICoh-push-forwards}
\IndCoh(X) \usotimes{\H(X)} \Hdx XY
\longto
\IndCoh(Y^\wedge_X)
\end{equation}
obtained as the composition
\begin{eqnarray}
\nonumber
\IndCoh(X) \usotimes{\H(X)} \Hdx XY
& \longto &
\IndCoh(X) \usotimes{\ICohW(X) } \ICohdx XY \\
\nonumber
& \xto{\simeq} & 
\IndCoh(Y^\wedge_X).
\end{eqnarray} 
is an equivalence of categories.
\end{prop}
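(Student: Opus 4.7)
Since the second arrow in the composition defining~(\ref{eqn:ICoh-push-forwards}) is declared an equivalence in the statement itself, the task reduces to showing the first arrow
$$\alpha \colon\; \IndCoh(X) \usotimes{\H(X)} \Hdx XY \longto \IndCoh(X) \usotimes{\ICohW(X)} \ICohdx XY$$
is an equivalence. I will first indicate why the second arrow is an equivalence (for completeness), then give the plan for $\alpha$.

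\textbf{The second arrow.} Using the identifications $(X \times Y)^\wedge_X \simeq X \times_{Y_\dR} Y$, $X^\wedge_\Delta \simeq X \times_{X_\dR} X$ and $Y^\wedge_X \simeq X_\dR \times_{Y_\dR} Y$, Lemma~\ref{lem:tensor-product-ICOH-over-Dmod} yields $\IndCoh((X\times Y)^\wedge_X) \simeq \IndCoh(X) \otimes_{\Dmod(Y)} \IndCoh(Y)$, $\IndCoh(X^\wedge_\Delta) \simeq \IndCoh(X) \otimes_{\Dmod(X)} \IndCoh(X)$, and $\IndCoh(Y^\wedge_X) \simeq \Dmod(X) \otimes_{\Dmod(Y)} \IndCoh(Y)$. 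Inserted into the relevant tensor product, associativity together with the cancellation $\IndCoh(X) \otimes_{\IndCoh(X^\wedge_\Delta)} \IndCoh(X) \simeq \Dmod(X)$ from~(\ref{eqn:ICohwedge-eval-into-Dmod}) delivers the equivalence.

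\textbf{The first arrow.} My approach is to factor $\alpha$ as
$$\IndCoh(X) \otimes_{\H(X)} \Hdx XY \xto{\beta_1} \IndCoh(X) \otimes_{\H(X)} \ICohdx XY \xto{\beta_2} \IndCoh(X) \otimes_{\ICohW(X)} \ICohdx XY$$
and show that both $\beta_1$ and $\beta_2$ are equivalences. For $\beta_2$, the key point will be the equivalence $\IndCoh(X) \otimes_{\H(X)} \ICohW(X) \simeq \IndCoh(X)$ of right $\ICohW(X)$-modules, which expresses that the convolution action of $\ICohW(X)$ on $\IndCoh(X)$ is already \emph{induced} from its restriction to the monoidal subcategory $\H(X)$. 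For $\beta_1$, the plan is to apply Barr--Beck to the monadic adjunction~(\ref{adj:monadic adjunction for ICOHzero}) and its analog $\Hdx XY \rightleftarrows \QCoh(X)$: both sides will be presented as categories of modules over compatible monads on $\IndCoh(X) \otimes_{\H(X)} \QCoh(X)$, with the identification of the two monads following from the definitions of the $('f)_{*,0}$- and $('f)^{!,0}$-functors.

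\textbf{Main obstacle.} The subtlest step I anticipate is establishing $\IndCoh(X) \otimes_{\H(X)} \ICohW(X) \simeq \IndCoh(X)$. Heuristically, this asserts that the ``extra'' part of $\ICohW(X)$ beyond $\H(X)$ acts redundantly on $\IndCoh(X)$ after contraction. Concretely, I expect to prove it by combining the pullback square~(\ref{diag:definition-ICoh0}) defining $\H(X) \hookrightarrow \ICohW(X)$ with the $\Upsilon_X$--$\Phi_X$ adjunction witnessing the full faithfulness of $\QCoh(X) \hookrightarrow \IndCoh(X)$, and checking that the base-change relations translate into triviality of the $\ICohW(X)/\H(X)$-action after applying $\IndCoh(X) \otimes_{\H(X)} -$. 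The final verification that the resulting equivalence is $(\Dmod(X), \H(Y))$-linear is then routine from functoriality.
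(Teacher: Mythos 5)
Your treatment of the second arrow is fine; it is essentially Lemma~\ref{lem:tensor-product-ICOH-over-Dmod} plus the counit equivalence of Lemma~\ref{lem:ICoh--wedge-equivalence-Dmod}. The problem lies in the factorization $\alpha=\beta_2\circ\beta_1$: for singular $X$ neither $\beta_1$ nor $\beta_2$ is an equivalence on its own; only their composite is. The claim you isolate as the crux, $\IndCoh(X) \usotimes{\H(X)} \ICohW(X) \simeq \IndCoh(X)$, does not hold. Cancelling against $\IndCoh(X) \simeq \ICohW(X) \usotimes{\ICohW(X)} \IndCoh(X)$, it is equivalent to $\IndCoh(X) \usotimes{\H(X)} \IndCoh(X) \simeq \IndCoh(X) \usotimes{\ICohW(X)} \IndCoh(X) \simeq \Dmod(X)$. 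But $\H(X)$ is very rigid and $\IndCoh(X)$ is dualizable as a plain DG category, so Corollary~\ref{cor:very rigid has ambidextrous modules} together with the self-duality of $\IndCoh(X)$ identifies $\IndCoh(X) \usotimes{\H(X)} \IndCoh(X)$ with $\Fun_{\H(X)}(\IndCoh(X),\IndCoh(X))$ --- and the paper states explicitly (remarks around~\eqref{eqn:what acts on ICOH}: the morphism $\ICoh\colon\bbD\to\H$ is \emph{not} optimal) that this is strictly larger than $\Dmod(X)$ when $X$ has singularities, the expected answer being $\vDmod(LX)$. So $\beta_2$ fails; since $\beta_1$ in the case $Y=X$ is the identical statement, the Barr--Beck step cannot rescue $\beta_1$ either. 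Concretely, the two monads you would need to compare live over the different bases $\IndCoh(X)\usotimes{\H(X)}\QCoh(X)$ and $\IndCoh(X)\usotimes{\H(X)}\ICoh(X)$, which are not the same.

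The paper does not attempt such a factorization but analyzes the composite $\phi$ directly. The source $\IndCoh(X)\usotimes{\H(X)}\Hdx XY$ is compactly generated by objects $[C_X,\, ('f)_*^\ICoh(\omega_X)]$ with $C_X\in\Coh(X)$, and $\phi$ sends each to $('f)_*^\ICoh(C_X)$, so $\phi$ preserves compactness and its image generates $\IndCoh(Y^\wedge_X)$; hence $\phi$ has a continuous, conservative right adjoint. Full faithfulness on those generators is then read off from the $\H(X)$-linear adjunction $\beta_*^\ICoh\dashv\beta^!$ attached to the nil-isomorphism $\beta\colon(X\times X)^\wedge_X\to(X\times Y)^\wedge_X$: both adjoints preserve the $\ICoh_0$-subcategories, and $('f)_*^\ICoh(\omega_X)=\beta_*^\ICoh(1_{\H(X)})$, so the $\Hom$-computation reduces to one inside $\IndCoh(X)\usotimes{\H(X)}\H(X)\simeq\IndCoh(X)$. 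You should discard the two-step factorization and work with the composite directly along these lines.
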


\begin{proof}
The source category is compactly generated by objects of the form $[C_X, ('f)_*^\ICoh(\omega_X)]$ for $C_X \in \Coh(X)$.
Hence, it is clear that the functor in question, denote it by $\phi$, admits a continuous and conservative right adjoint: indeed, $\phi$ sends 
$$
[C_X, ('f)_*^\ICoh(\omega_X)]
\squigto 
('f)_*^\ICoh(C_X),
$$
whence it preserves compactness and generates the target under colimits. 
It remains to show that $\phi$ is fully faithful on objects of the form $[C_X, ('f)_*^\ICoh(\omega_X)]$.
The nil-isomorphism $\beta: (X \times X)^\wedge_X \to (X \times Y)^\wedge_X$ induces the adjunction
$$ 
\begin{tikzpicture}[scale=1.5]
\node (a) at (0,1) {$\beta_*^\ICoh : \ICoh((X \times X)^\wedge_X)$};
\node (b) at (4,1) {$\ICoh((X \times Y)^\wedge_X) : \beta^!$.};
\path[->,font=\scriptsize,>=angle 90]
([yshift= 1.5pt]a.east) edge node[above] {$ $} ([yshift= 1.5pt]b.west);
\path[->,font=\scriptsize,>=angle 90]
([yshift= -1.5pt]b.west) edge node[below] {$ $} ([yshift= -1.5pt]a.east);
\end{tikzpicture}
$$ 
Observe that both functors are $\ICoh((X \times X)^\wedge_X)$-linear and preserve the $\ICoh_0$-subcategories.
To conclude the proof, just note that $('f)_*^\ICoh(\omega_X)$ is the image of the unit of $\H(X)$ under $\beta_*^\ICoh$, and use the above adjunction.
\end{proof}

\begin{cor} \label{cor:ICoh-formal-complet-push-forward}
For $f:X \to Y$ as above and $\C$ a right $\ICohW(X)$-module, the natural functor
$$
\C \usotimes{\H(X)} \Hdx XY
\longto
\C \usotimes{\ICohW(X) } \ICohdx XY
$$
is an equivalence.
\end{cor}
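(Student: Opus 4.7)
The plan is to bootstrap the corollary from Proposition \ref{prop:IndCoh-push-forward} (which is the case $\C = \ICoh(X)$) via the equivalence of Lemma \ref{lem:ICoh--wedge-equivalence-Dmod}. That lemma exhibits $\ICoh(X) \usotimes{\Dmod(X)} -$ and $\ICoh(X) \usotimes{\ICohW(X)} -$ as mutually inverse equivalences between the $\infty$-categories of $\Dmod(X)$-modules and of $\ICohW(X)$-modules. Therefore any right $\ICohW(X)$-module $\C$ can be presented as $\C \simeq \M \usotimes{\Dmod(X)} \ICoh(X)$ for some $\Dmod(X)$-module $\M$, with the appropriate left/right conventions.

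Next, I would substitute this presentation into both sides of the claimed equivalence. Since the $\Dmod(X)$-action and the $\ICohW(X)$-action on $\ICoh(X)$ commute (and the same holds after restricting the latter along the monoidal inclusion $\H(X) \hookrightarrow \ICohW(X)$), associativity of the relative tensor product yields
$$
\C \usotimes{\H(X)} \Hdx XY
\;\simeq\;
\M \usotimes{\Dmod(X)} \Bigt{ \ICoh(X) \usotimes{\H(X)} \Hdx XY }
$$
and, analogously,
$$
\C \usotimes{\ICohW(X)} \ICohdx XY
\;\simeq\;
\M \usotimes{\Dmod(X)} \Bigt{ \ICoh(X) \usotimes{\ICohW(X)} \ICohdx XY }.
$$
The natural arrow of the corollary is obtained from applying $\M \usotimes{\Dmod(X)} -$ to the arrow
$$\ICoh(X) \usotimes{\H(X)} \Hdx XY \longto \ICoh(X) \usotimes{\ICohW(X)} \ICohdx XY$$
of Proposition \ref{prop:IndCoh-push-forward}. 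By that proposition the inner arrow is an equivalence, so its image under $\M \usotimes{\Dmod(X)} -$ is again an equivalence, and the corollary follows.

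The argument is essentially formal given Proposition \ref{prop:IndCoh-push-forward}; the only subtlety is the bookkeeping of bimodule structures needed to justify the associativity identifications above. This compatibility between the $\Dmod(X)$- and $\ICohW(X)$-actions on $\ICoh(X)$ is built into the construction recalled in Corollary \ref{cor:functors ICoh over D as fiber products}, so no serious obstacle arises.
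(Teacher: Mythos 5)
Your proof is correct and is essentially the same argument as the paper's, just organized differently. The paper reduces directly to the case $\C = \ICohW(X)$ (since both sides of the comparison are $\C \usotimes{\ICohW(X)} -$ applied to that case) and then plugs in $\ICohW(X) \simeq \ICoh(X) \usotimes{\Dmod(X)} \ICoh(X)$ to invoke Proposition \ref{prop:IndCoh-push-forward}; you instead invoke the Morita equivalence of Lemma \ref{lem:ICoh--wedge-equivalence-Dmod} to write $\C \simeq \M \usotimes{\Dmod(X)} \ICoh(X)$ and factor the comparison map through Proposition \ref{prop:IndCoh-push-forward}, which amounts to the same reduction.
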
 

\begin{proof}
It suffices to prove the assertion for $\C = \ICohW(X)$, viewed as a right module over itself. Thanks to the right $\ICohW(X)$-linear equivalence
$$
\ICohW(X) \simeq \ICoh(X) \usotimes{\Dmod(X)} \ICoh(X),
$$
the assertion reduces to the proposition above.
\end{proof}

\begin{thm} \label{thm:H-strict-on-schemes}
The lax $\2$-functor
$$
\H :
\Schevcoclfp 
\longto 
\ALGBimod(\DGCat),
$$ 
obtained by restricting $\Hgeom$ to schemes, is \emph{strict}.
\end{thm}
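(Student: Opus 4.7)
The goal is to show that for every composable pair $f\colon X\to Y$ and $g\colon Y\to Z$ in $\Schevcoclfp$, the canonical $(\H(X),\H(Z))$-bilinear comparison functor
$$
\eta\colon \Hdx XY \usotimes{\H(Y)} \Hdx YZ \longto \Hdx XZ,
$$
encoded by the lax $\2$-structure of $\Hgeom$, is an equivalence. My plan is to reduce this to the already-established strictness of $\ICohW$ (Proposition \ref{prop: ICOH-geom-strict-functor-on-schemes}) via Proposition \ref{prop:IndCoh-push-forward} together with Corollary \ref{cor:ICoh-formal-complet-push-forward}.

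First, I would test $\eta$ by applying the continuous functor $\IndCoh(X)\usotimes{\H(X)}(-)$ to both sides. By Proposition \ref{prop:IndCoh-push-forward} applied to $h=g\circ f$, the target becomes $\IndCoh(Z^\wedge_X)$; applying the same proposition to $f$, and using that $\IndCoh(X)\usotimes{\H(X)}(-)$ commutes with $(-)\usotimes{\H(Y)}\Hdx YZ$, the source becomes $\IndCoh(Y^\wedge_X)\usotimes{\H(Y)}\Hdx YZ$.

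Next, I observe that $\IndCoh(Y^\wedge_X)\simeq \Dmod(X)\otimes_{\Dmod(Y)}\IndCoh(Y)$ carries a natural right $\ICohW(Y)$-structure, since the convolution action of $\ICoh(\form YYY)$ on $\IndCoh(Y)$ preserves the full subcategory of objects supported along $X$. Corollary \ref{cor:ICoh-formal-complet-push-forward} then gives
$$
\IndCoh(Y^\wedge_X)\usotimes{\H(Y)}\Hdx YZ \;\simeq\; \IndCoh(Y^\wedge_X)\usotimes{\ICohW(Y)}\ICohdx YZ.
$$
Writing $\ICohdx YZ \simeq \IndCoh(Y)\otimes_{\Dmod(Z)}\IndCoh(Z)$ via Lemma \ref{lem:tensor-product-ICOH-over-Dmod}, and using the key intermediate identity $\IndCoh(Y^\wedge_X)\usotimes{\ICohW(Y)}\IndCoh(Y)\simeq \Dmod(X)$---which follows by combining the presentation $\IndCoh(Y^\wedge_X)\simeq \Dmod(X)\otimes_{\Dmod(Y)}\IndCoh(Y)$ with the counit equivalence \eqref{eqn:ICohwedge-eval-into-Dmod} from the proof of Proposition \ref{prop: ICOH-geom-strict-functor-on-schemes}---the right-hand side simplifies to $\Dmod(X)\otimes_{\Dmod(Z)}\IndCoh(Z)\simeq \IndCoh(Z^\wedge_X)$. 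Thus the two sides of $\eta$ agree after applying $\IndCoh(X)\usotimes{\H(X)}(-)$.

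To deduce that $\eta$ itself is an equivalence, I would establish that the test functor $\IndCoh(X)\usotimes{\H(X)}(-)$ is conservative on the two $\H(X)$-modules at hand, in the spirit of Lemma \ref{lem:ICOH(S)-Dmod(S)-conservative}; this uses the \v Cech nerve of $X\to X_\dR$ and the fact that each structure map is a nil-isomorphism of inf-schemes. The \emph{main obstacle} is this final conservativity step: the preceding geometric identifications follow fairly mechanically from the existing lemmas once the right $\ICohW(Y)$-action on $\IndCoh(Y^\wedge_X)$ is in place, but verifying conservativity (equivalently, that both sides of $\eta$ are generated in a sufficiently strong sense by the images of $('f)_{*,0}$ and $('h)_{*,0}$) requires care. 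A cleaner bypass is to check $\eta$ directly on the compact generators $('f)_{*,0}(P)\usotimes{\H(Y)}('g)_{*,0}(Q)$ for $P\in\Perf(X)$ and $Q\in\Perf(Y)$, using the monadic adjunction (\ref{adj:monadic adjunction for ICOHzero}) to compute Hom spaces on both sides.
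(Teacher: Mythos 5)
Your reduction to $\ICohW$ is the right instinct, and the chain of geometric identifications you give after applying the test functor $\IndCoh(X)\usotimes{\H(X)}(-)$ is correct: one indeed finds $\IndCoh(Z^\wedge_X)$ on both sides, using Proposition \ref{prop:IndCoh-push-forward}, Corollary \ref{cor:ICoh-formal-complet-push-forward}, Lemma \ref{lem:tensor-product-ICOH-over-Dmod}, and the counit equivalence \eqref{eqn:ICohwedge-eval-into-Dmod}. However, the final conservativity step, which you yourself flag as the main obstacle, is a genuine gap and not merely a technicality. Lemma \ref{lem:ICOH(S)-Dmod(S)-conservative} proves conservativity of $\ICoh(Y)\usotimes{\Dmod(Y)}(-)$ by presenting $\Dmod(Y)$ as the totalization $\lim_n \ICoh(\widehat Y_n)$ along the \v Cech nerve of $Y\to Y_\dR$, whose structure maps are nil-isomorphisms; there is no parallel geometric presentation of $\H(X)\mmod$ through $\IndCoh(X)$, so the argument does not transport. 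In fact $\H(X)\mmod$ is in general a strictly richer world than what $\IndCoh(X)$ detects (compare the discussion around \eqref{eqn:what acts on ICOH}: the commutant of the $\H(X)$-action on $\IndCoh(X)$ is expected to be $\vDmod(LX)$, not $\Dmod(X)$), so restricting conservativity to \virg{the two modules at hand} is not a property those modules carry on their own; you would need to exhibit a subcategory of $\H(X)\mmod$ containing both modules and the comparison map on which the test functor is conservative, and this is not done.

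The paper bypasses conservativity entirely. It factors the comparison map for a string $U\to X\to Y$ as
$$
\Hdx UX \usotimes{\H(X)} \Hdx XY \longto \ICohdx UX \usotimes{\H(X)} \Hdx XY \longto \ICohdx UY.
$$
The first arrow is fully faithful because the inclusion $\Hdx UX \hto \ICohdx UX$ is an $\H(X)$-linear colocalization with continuous $\H(X)$-linear right adjoint, so the unit $\id\simeq R\iota$ persists after $(-)\usotimes{\H(X)}\Hdx XY$. The second arrow is an equivalence by Corollary \ref{cor:ICoh-formal-complet-push-forward} applied to $\C=\ICohdx UX$ viewed as a right $\ICohW(X)$-module, combined with the strictness of $\ICohW$ from Proposition \ref{prop: ICOH-geom-strict-functor-on-schemes}. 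One then checks separately (by tracking the image of the compact generator, as in Proposition \ref{prop:IndCoh-push-forward}) that the essential image is exactly $\Hdx UY\subseteq\ICohdx UY$. This is much closer in spirit to your suggested \virg{cleaner bypass} than to the testing strategy you lead with; if you want to rescue your argument, replacing the conservativity step by the direct factorization above (or carrying out the compact-generator check) is what is actually needed.
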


\begin{proof}
Let $U \to X \to Y$ be a string in $\Schevcoclfp$. We need to prove that the convolution functor
\begin{equation} \label{eqn:ICOH_0-composition-into-ICOh}
\Hdx UX 
\usotimes{\H(X) } 
\Hdx XY
\longto
\ICohdx UY
\end{equation}
is an equivalence onto the subcategory $\Hdx UY \subseteq \ICohdx UY$. One easily checks that the essential image of the functor is indeed $\Hdx UY$, whence it remains to prove fully faithfulness.
By construction, (\ref{eqn:ICOH_0-composition-into-ICOh}) factors as the composition
$$
\Hdx UX 
\usotimes{\H(X) } 
\Hdx XY
\longto
\ICohdx UX 
\usotimes{\H(X) } 
\Hdx XY
\longto
\ICohdx UY.
$$
Now, the first arrow is obviously fully faithful, while the second one is an equivalence by the above corollary.
\end{proof}

\ssec{Morphisms between coefficient systems}
\renc{\A}{\bbA}
\renc{\B}{\bbB}

Coefficient systems assemble into an $\infty$-category:
$$
\MPreStk := \Fun(\Aff, \AlgBimod(\DGCat)).
$$
Hence, it makes sense to consider \emph{morphisms} of coefficient systems.
This notion has already been discussed in Section \ref{ssec:change of coeff}, where some examples have been given.
Here we just recall the only morphism of interest in this paper, the arrow $\Q \to \H$.

\sssec{}

Let $\A$ and $\B$ be two coefficient systems. Consider the following pieces of data:
\begin{itemize}
\item
for each $S \in \Aff$, a monoidal functor $\A(S) \to \B(S)$;
\item
for each $S \to T$, an $(\A(S), \A(T))$-linear functor
\begin{equation} \label{eqn:lax-morphism-from-A-to-B}
\eta_{S \to T}: 
\A_{S \to T}
\longto
 \B_{S \to T}
\end{equation}
that induces an $(\A(S), \B(T))$-equivalence $\Adx ST \otimes_{\A(T)} \B(T) \to \Bdx ST$;
\item
natural higher compatibilities with respect to strings of affine schemes.
\end{itemize}
These data give rise to a morphism $\A \to \B$.

\sssec{}

It is easy to see that the morphism $\Q \to \H$ (defined on $\Affevcoclfp$) falls under this rubric. Indeed, we just need to verify that the tautological $(\QCoh(S), \H(T))$-linear functor
\begin{equation} \label{eqn:QCoh-H-tensor}
\QCoh(S) \usotimes{\QCoh(T)} \H(T) 
\longto
\Hdx ST
\end{equation}
is an equivalence, for any $S \to T$ in $\Affevcoclfp$. This has been proven in \cite{centerH} in greater generality.

\section{Coefficient systems: dualizability and base-change} \label{sec:base-change}

As mentioned in the introduction, a coefficient system $\A: \Aff_\type \longto \AlgBimod(\DGCat)$ yields a functor
$$
\ShvCatA := \mmod \circ \,\A^\op :(\Aff_\type)^\op \longto \AlgBimod(\DGCat)^\op \xto{\mmod}
\inftyCat
$$
and then, by right Kan extension, a functor
$$
\ShvCatA:
(\Stk_\type)^\op
\longto
\inftyCat
$$
where $\Stk_\type$ denotes the $\infty$-category of algebraic stacks with affine diagonal and with an atlas in $\Aff_\type$.

This is half of what we need to accomplish though: it is not enough to just have pullbacks functors in $\ShvCatA$, we want pushforwards too. Said more formally, we wish to extend $\ShvCatA$ to a functor out of 
$$
\Corr(\Stk_\type)_{\vert;\horiz},
$$
for an appropriate choice of vertical and horizontal arrows. In this section, we inquire this possibility for affine schemes. Actually, we will look for something stronger: we check under what conditions the coefficient system $\A$ itself admits an extension to a functor
\begin{equation} 
\Corr(\Aff_\type)_{\vert;\horiz} \longto \AlgBimod(\DGCat),
\end{equation}
or even better to an $\2$-functor
\begin{equation} \label{eqn:A-corr-affine-format}
\Corr(\Aff_\type)_{\vert;\horiz}^{\adm} \longto \ALGBimod(\DGCat).
\end{equation}

\ssec{The Beck-Chevalley conditions}
\nc{\utype}{{\mathit{u \!\on -\! type}}}

As we now explain, the \emph{(left or right) Beck-Chevalley conditions} are conditions on a coefficient system $\A$ that automatically guarantee the existence of an $\2$-functor $\A^\Corr$ extending $\A$.

\sssec{}

We say that $\A$ satisfies the \emph{right Beck-Chevalley condition} if the two requirements of Sections \ref{sssec:first requirement} and \ref{sssec:second requirement} are met.

\sssec{The first requirement} \label{sssec:first requirement}

We ask that, for \emph{any} arrow $S \to T$ in $\Aff_\type$, the $(\A(S), \A(T))$-bimodule $\Adx ST$ be \emph{right dualizable}, see Section \ref{sssec:right-left-dualizability of bimodules} for our conventions. Let us denote by $\Asx TS$ such right dual.

\sssec{}

Assume now that $\A$ satisfies the above requirement, so that the bimodules $\Asx ??$ are defined.
Before formulating the second requirement, we need to fix some notation. For a commutative (but not necessarily cartesian) diagram
\begin{gather} \label{diag:SUTV}
\xy
(0,15)*+{ U }="01";
(0,0)*+{ S }="00";
(25,15)*+{  T}="11";
(25,0)*+{ V }="10";
%
{\ar@{->}^{ f } "00";"10"};
{\ar@{->}^{ F } "01";"11"};
%
{\ar@{->}^{ G } "01";"00"};
{\ar@{->}^{ g } "11";"10"};
\endxy
\end{gather}	
in $\Aff_\type$, define
$$
\Acorr SUT := \Asx SU \usotimes{\AA(U)} \Adx UT;
\hspace{.4cm}
\Aopcorr SVT := \Adx SV \usotimes{\AA(V)} \Asx VT. 
$$
Denote by $\utype$ the largest class of arrows in $\Aff_\type$ that makes $\Corr(\Aff_\type)_{\all;\utype}$ well-defined.\footnote{The letter \virg{u} in the notation $\utype$ stands for the word \virg{universal}. }
Namely, an arrow $S \to T$ in $\Aff_\type$ belongs to $\utype$ if, for any $T' \to T$ in $\Aff_\type$, the scheme $S \times_T T'$ belongs to $\Aff_\type$.

\sssec{}

Consider a commutative diagram like (\ref{diag:SUTV}). The resulting commutative diagram
\begin{gather} \nonumber
\xy
(0,15)*+{ \A(U) }="01";
(0,0)*+{ \A(S) }="00";
(25,15)*+{  \A(T) }="11";
(25,0)*+{ \A(V) }="10";
%
{\ar@{<-}^{ \Adx SV  } "00";"10"};
{\ar@{<-}^{ ì\Adx UT } "01";"11"};
%
{\ar@{<-}_{ \Adx US } "01";"00"};
{\ar@{<-}^{ \Adx TV } "11";"10"};
\endxy
\end{gather}	
in $\AlgBimod(\DGCat)$ gives rise, by changing the vertical arrows with their right duals, to a \emph{lax} commutative diagram
\begin{gather} \nonumber
\xy
(0,15)*+{ \A(U) }="01";
(0,0)*+{ \A(S) }="00";
(25,15)*+{  \A(T) }="11";
(25,0)*+{ \A(V).}="10";
(12,8)*+{ \nwarrow}="centro";
%
%
{\ar@{<-}^{ \Adx SV  } "00";"10"};
{\ar@{<-}^{ ì\Adx UT } "01";"11"};
%
{\ar@{->}_{ \Asx SU } "01";"00"};
{\ar@{->}^{ \Asx VT } "11";"10"};
\endxy
\end{gather}	
In other words, any commutative diagram (\ref{diag:SUTV}) yields a canonical $(\AA(S),\AA(T))$-linear functor
\begin{equation} \label{eqn:Acorr-natural-map-non-Cart}
\Aopcorr SVT \longto \Acorr S U T.
\end{equation}%

\sssec{The second requirement} \label{sssec:second requirement}

In particular, for $S \to V \in \utype$ and $T \to V$ arbitrary, we have
\begin{equation} \label{eqn:Acorr-natural-map}
\Aopcorr SVT \longto \Acorr S{S \times_V T}T
\end{equation}%
and we ask that such functor be an equivalence.

\sssec{}

Let us now explain what the right Beck-Chevalley condition is good for.
Tautologically, if $\A$ satifies the right Beck-Chevalley condition, the assignment 
\begin{equation}
\label{eqn:assignment-Acorr}
S \squigto \A(S),
\hspace{.5cm}
[S \leftto U \to T ]
\squigto
\Acorr SUT
\end{equation}
extends to a functor
$$
\Corr(\Aff_\type)_{\all;\utype}
\longto
\AlgBimod(\DGCat).
$$
Further, thanks to \cite[Chapter V.1, Theorem 3.2.2]{Book}, the latter automatically extends further to an $\2$-functor
$$
\A^{\RBeck}:
\Corr(\Aff_\type)_{\all;\utype}^{\utype, 2-\op}
\longto
\ALGBimod(\DGCat).
$$
Thus, for $\A$ satisfying the right Beck-Chevalley condition, the corresponding sheaf theory $\restr{\ShvCatA}{\Aff_\type^\op}$ admits $*$-pushforwards (defined to be right adjoint to pullbacks). Moreover, these pushforwards satisfy base-change against pullbacks along the appropriate fiber squares.

\sssec{}

The definition of \emph{left Beck-Chevalley condition} for $\A$ is totally symmetric: each $\Adx ST$ must admit a left dual $\Asx TS^{L}$ and, for any cartesian diagram (\ref{diag:SUTV}) with $T \to V$ in $\utype$, the structure functor
$$
\Asx SU ^L \usotimes{\AA(U)} \Adx UT
\longto
\Adx SV \usotimes{\AA(V)} \Asx VT ^L 
$$
must be an equivalence. 
Thus, if $\A$ satisfies the left Beck-Chevalley condition, the sheaf theory $\restr{\ShvCatA}{\Aff_\type^\op}$ admits $!$-pushforwards (defined to be left adjoint to pullbacks), again, satisfying base-change against pullbacks along the appropriate fiber squares.

\sssec{}

A coefficient system $\A$ is said to be \emph{ambidextrous} if it satisfies the right Beck-Chevalley condition and, for any $S \to T \in \Aff_\type$, the $(\A(T), \A(S))$-bimodule $\Adx ST$ is ambidextrous (see Section \ref{sssec:right-left-dualizability of bimodules} for the definition).
Any ambidextrous $\A$ automatically satisfies the left Beck-Chevalley condition as well. Thus, for $\A$ ambidextrous, we obtain two extensions of $\A$
$$
\A^{\RBeck}:
\Corr(\Aff_\type)_{\utype;\all}^{\utype, 2-\op}
\longto
\ALGBimod(\DGCat)
$$
$$
\A^{\LBeck}: 
\Corr(\Aff_\type)_{\all;\utype}^{\utype}
\longto
\ALGBimod(\DGCat)
$$
that are exchanged by duality. 

\sssec{} \label{sssec:spell out amibexterity}

Let us spell out these pieces of structure in more detail. First, up to switching vertical and horizontal arrows in $\A^{\RBeck}$ (see Remark \ref{rem:ambidexterity-comment}), the two $(\infty, 2)$-functors $\A^{\RBeck}, \A^{\LBeck}$ have a common underlying $(\infty, 1)$-functor
$$
\A^\Corr:
\Corr(\Aff_\type)_{\all;\utype}
\longto
\AlgBimod(\DGCat)
$$
$$
[S \leftto U \to T] \squigto \Acorr SUT := \Asx SU 
\usotimes{\A(U)} \Adx UT.
$$
Secondly, the two enhancements of $\A^\Corr$ to $\A^{\LBeck}$ and $\A^{\RBeck}$ amount to the following data: for $U' \to U$ of $\utype$ over $S \times T$, there are two mutually dual structure functors $\Acorr S{U'}T \rightleftarrows \Acorr S {U} T$, compatible in $U$ in the natural way.
Such enhancements will be used in Sections \ref{sssec: star pushforward shvcatH} and \ref{sssec: !-push for H}
to construct the two kinds of pushforwards in the setting of $\ShvCatH$ on stacks.

\sssec{Easy examples}

It is obvious that $\bbQ$ and $\bbD$ are ambidextrous. For instance, for the former,
$$
\Q^\Corr:
\Corr(\Aff)_{\all;\all}
\longto
\ALGBimod(\DGCat)
$$ 
is defined on $1$-arrows by $\Q_{S \leftto U \to T} \simeq \QCoh(U)$, the latter equipped with its obvious $(\QCoh(S), \QCoh(T))$-bimodule structure.  The two mutually dual structure functors $\Q_{S \leftto U' \to T} \rightleftarrows \Q_{S \leftto U \to T}$ are simply the pullback and pushforward functors along $U' \to U$.

We leave it as an exercise to show that the coefficient system $\bbS'$ responsible for singular support is ambidextrous: it extends to a functor out of $\Corr(\Aff_{\qsmooth})_{\all;\smooth}^{\smooth}$.

\sssec{}

Let us now turn to $\ICohW$. We have the following result, which will later help us understand base-change for $\H$.

\begin{prop}
The functor $\ICohW: \Sch_\aft \to \AlgBimod(\DGCat)$ satisfies the right Beck-Chevalley condition, so that it extends to an $\2$-functor
\begin{equation}
(\ICohW)^{\RBeck}: \Corr(\Sch_\aft)_{\all;\all}^{\all, 2-\op} 
\longto 
\ALGBimod(\DGCat).
\end{equation}
\end{prop}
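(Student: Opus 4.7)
The proof breaks into two parts: verifying right dualizability of each bimodule $\ICohdx ST$, and verifying the base-change equivalence (\ref{eqn:Acorr-natural-map}).

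\emph{Right dualizability.} I use Lemma \ref{lem:tensor-product-ICOH-over-Dmod} to obtain the concrete description
$$\ICohdx ST \simeq \ICoh(S) \usotimes{\Dmod(T)} \ICoh(T),$$
with its natural $(\ICohW(S), \ICohW(T))$-bimodule structure, where $\ICohW(S) \simeq \ICoh(S) \otimes_{\Dmod(S)} \ICoh(S)$ acts on the first factor and symmetrically $\ICohW(T)$ on the second. The self-duality of $\ICoh(Y)$ as a $\Dmod(Y)$-module (Corollary \ref{cor:ICOH-self-dual-over-Dmod}), applied to both $Y=S$ and $Y=T$, produces the explicit candidate right dual
$$\ICohsx TS \;:=\; \ICoh(T) \usotimes{\Dmod(T)} \ICoh(S) \;\simeq\; \ICoh(\form TTS),$$
as $(\ICohW(T), \ICohW(S))$-bimodule, whose unit and counit are built from those of the $\Dmod$-self-duality.

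\emph{Base change.} Given the diagram (\ref{diag:SUTV}) with $U = S \times_V T$, I show both sides of (\ref{eqn:Acorr-natural-map}) applied to $\ICohW$ reduce to $\ICoh(S \times_{V_\dR} T)$. For the left-hand side, Lemma \ref{lem:tensor-product-ICOH-over-Dmod} combined with the identity $\ICoh(V) \usotimes{\ICohW(V)} \ICoh(V) \simeq \Dmod(V)$ from (\ref{eqn:ICohwedge-eval-into-Dmod}) yields
$$\ICohopcorr SVT \;\simeq\; \ICoh(S) \usotimes{\Dmod(V)} \Dmod(V) \usotimes{\Dmod(V)} \ICoh(T) \;\simeq\; \ICoh(S \times_{V_\dR} T).$$
For the right-hand side, the same identity applied to $U$ together with the variant formula $\ICoh(X) \usotimes{\Dmod(X)} \Dmod(Y) \simeq \ICoh(X \times_{X_\dR} Y_\dR)$ (proved as in Lemma \ref{lem:tensor-product-ICOH-over-Dmod}, reducing to the $\QCoh$ analogue) gives
$$\ICohcorr SUT \;\simeq\; \ICoh(S\times T) \usotimes{\Dmod(S\times T)} \Dmod(U) \;\simeq\; \ICoh\bigl((S \times T) \times_{(S\times T)_\dR} U_\dR\bigr),$$
which equals $\ICoh(S \times_{V_\dR} T)$ once one invokes $U_\dR \simeq S_\dR \times_{V_\dR} T_\dR$ (de Rham commutes with fiber products).

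\emph{Main obstacle.} The principal technical point is to verify that the canonical comparison map (\ref{eqn:Acorr-natural-map}) matches the identity of $\ICoh(S \times_{V_\dR} T)$ under the two computations above, with matching $(\ICohW(S), \ICohW(T))$-bimodule structures. Concretely, this requires tracing the unit of duality constructed in the first part through the chain of equivalences of the second part. The coherence required is formal and ultimately follows from the symmetric monoidal $\2$-functoriality of $\ICoh$ on $\Corr(\PreStk_\laft)$ from \cite[Chapter V.3]{Book}, which encodes all the needed duality and base-change data for ind-coherent sheaves at the source of the construction of $\bbI^{\wedge,\geom}$ in Section \ref{ssec:2-cat functoriality Hgoem}.
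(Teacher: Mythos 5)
Your proposal is correct and broadly follows the paper's own route: both parts rest on Lemma \ref{lem:tensor-product-ICOH-over-Dmod} together with the $\Dmod$-self-duality of $\ICoh$, which yield the candidate right dual $\ICohsx TS \simeq \ICoh(\form TTS)$ with its unit and counit. Where you genuinely diverge is in the base-change check, and it is worth flagging because the paper's version is more economical. You compute $\ICohopcorr SVT$ and $\ICohcorr SUT$ separately, find both abstractly equivalent to $\ICoh(\form SVT)$, and then defer identifying the canonical comparison arrow with this abstract isomorphism to higher coherence — the \emph{main obstacle} you yourself flag. The paper instead observes that, once both sides are written via Lemma \ref{lem:tensor-product-ICOH-over-Dmod} as $\ICoh$ of explicit prestacks, the comparison arrow is \emph{itself} the $!$-pullback along the natural map $U_\dR \to (S \times_V T)_\dR$; since this map is an isomorphism whenever the square is nil-Cartesian, the arrow is an equivalence on the nose, with no coherence tracing required. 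This also gives the slightly stronger conclusion that the right Beck-Chevalley square commutes for all nil-Cartesian squares, not only genuinely Cartesian ones. Your appeal to the symmetric monoidal $\2$-functoriality of $\ICoh$ from \cite[Chapter V.3]{Book} can in principle supply the missing coherence, but the direct geometric identification of the arrow is the cleaner way to close the argument and is what you should add.
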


\begin{proof}
We start by setting up some notation. For $X \to Y$ in $\Sch_\aft$, consider the maps
$$
\zeta: 
(X \times X)^\wedge_X 
\simeq
\form XXX 
\longto
(X \times X)^\wedge_{X \times_Y X}
 \simeq 
 \form XYX,
$$
\begin{equation} \label{eqn:eta}
\eta: 
(Y \times Y)^\wedge_X 
\simeq
Y \times_{Y_\dR} X_\dR \times_{Y_\dR} Y 
\longto
(Y \times Y)^\wedge_Y
\simeq
 \form YYY,
\end{equation}
where $\zeta$ is induced by $\Delta_{X/Y}: X \to X \times_Y X$.
With the help of Lemma \ref{lem:tensor-product-ICOH-over-Dmod}, one can easily check that the functors
$$
\zeta^!: \ICohdx XY \usotimes{\ICohW(Y)} \ICohsx YX
\longto \ICohW(X)
\hspace{.5cm}
\eta^!: \ICohW(Y) \longto 
\ICohsx YX \usotimes{\ICohW(X)} \ICohdx XY 
$$
exhibit $\ICohsx YX := \ICoh(\form YYX)$ as the right dual of the $(\ICohW(X), \ICohW(Y))$-bimodule $\ICohdx XY$.

\medskip

Let now
\begin{gather} 
\xy
(0,10)*+{ U }="01";
(0,0)*+{ T}="00";
(15,10)*+{  S }="11";
(15,0)*+{ V}="10";
%
{\ar@{->}^{ f } "00";"10"};
{\ar@{->}^{ F } "01";"11"};
%
{\ar@{->}^{ G } "01";"00"};
{\ar@{->}^{ g } "11";"10"};
\endxy
\end{gather}	
be a commutative square in $\Sch_\aft$. By Lemma \ref{lem:tensor-product-ICOH-over-Dmod}, one easily gets equivalences
$$
\ICohW_{S \leftto U \to T} 
\simeq
\ICoh(S \times_{S_\dR} U_\dR \times_{T_\dR} T),
\hspace{.4cm}
\ICohW_{S \to V \leftto T} 
\simeq
\ICoh( \form SVT),
$$
compatible with the natural $(\ICohW(S), \ICohW(T))$-bimodule structures on both sides.
Further, the structure arrow induced by the right Beck-Chevalley condition
$$
\ICohW_{S \to V \leftto T}  
\longto 
\ICohW_{S \leftto U \to T} 
$$
is the $!$-pullback functor along the natural map $U_\dR \to (S \times_V T)_\dR$, whence it is an equivalence whenever the square is nil-Cartesian (that is, Cartesian at the level of reduced schemes).
\end{proof}

\begin{rem}
The same argument with the functors $\zeta_*^\ICoh$ and $\eta_*^\ICoh$ shows that $\ICohW$ satisfies the left Beck-Chevalley condition, too. It follows that $\ICohW$ is ambidextrous.
\end{rem}

\ssec{Base-change for $\H$}

The goal of this section is to show that $\bbH$ is ambidextrous (Theorem \ref{thm:H-ambidextrous}). After this is proven, we will summarize the important consequences of this result.

\sssec{}

Observe that, for any $S \in \Affevcoclfp$, the monoidal category $\H(S)$ is rigid and compactly generated. Recall now the definition of $1_{\H(S)}^{fake} \in \H(S)^*$ and the notion of very rigid monoidal category, see Section \ref{sssec:very rigid}.

\begin{prop}
For any $S \in \Affevcoclfp$, the monoidal DG category $\H(S)$ is \emph{very rigid}.
\end{prop}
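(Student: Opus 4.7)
The strategy is to reduce very rigidity to the \emph{cyclicity of the trace} on $A := \H(S)$, and then deduce the latter from the $E_2$-algebra structure on the Hochschild cochains $\HC(S)$.

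First I would unpack the condition. Since $\sigma_A$ is by construction a left $A$-linear equivalence, very rigidity amounts to the right $A$-equivariance of $\sigma_A$. Using that $1_A^{fake} \in S_A$ represents, via the tautological pairing $A \otimes S_A \to \Vect$, the functor $u^R = \Hom_A(1_A, -) : A \to \Vect$, and that the right $A$-action on $S_A$ is defined by dualizing the left $A$-action on $A$, a direct computation yields
$$ \sigma_A(a)(x) \;\simeq\; \Hom_A(1_A, x \star a). $$
Right $A$-equivariance of $\sigma_A$ then unravels (by Yoneda, after setting $x = 1_A$) to the natural equivalence
$$ \Hom_A(1_A, a \star b) \;\simeq\; \Hom_A(1_A, b \star a), \qquad a, b \in A, $$
i.e., to the cyclicity of the trace functor $\Hom_A(1_A, -)$.

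Next I would invoke the monoidal equivalence $\H(S) \simeq \HC(S)^{\op}\mod$ recalled in Example~4 of Section~4.1. The critical input is that $\HC(S)$ (and hence $\HC(S)^{\op}$) is canonically an $E_2$-algebra. For any $E_2$-algebra $B$, the $E_1$-monoidal DG category $B \mod$ acquires a braiding, since the $E_2$-structure is by definition the datum of a compatible $E_1$-structure on the $E_1$-multiplication of $B$; in particular one obtains natural equivalences $M \otimes_B N \simeq N \otimes_B M$. Applying $\Hom_A(1_A, -)$ to this braiding produces the desired cyclicity of the trace, which completes the proof by the first paragraph.

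The principal obstacle lies in verifying that the equivalence $\H(S) \simeq \HC(S)^{\op}\mod$ intertwines the convolution monoidal structure on $\ICoh_0(\Sform)$ with the $\otimes_{\HC(S)^{\op}}$-monoidal structure on the right-hand side in a way compatible with the claimed braiding. To sidestep this, one may argue purely geometrically: the $\ZZ/2$-symmetry of $\Sform$ swapping the two factors of $S \times S$ induces an anti-monoidal self-equivalence of $\H(S)$, and combined with the self-duality identification $S_A \simeq A$ from Section~3.1, it directly gives the cyclicity of the trace via a pull-push computation on the correspondence
$$ S \xrightarrow{\;\Delta_S\;} \Sform \xleftarrow{\;p_{13}\;} (S\!\times\!S\!\times\!S)^{\wedge}_S \xrightarrow{\;p_{12} \times p_{23}\;} \Sform \times \Sform, $$
whose outputs are manifestly invariant under exchanging the two inputs $a$ and $b$.
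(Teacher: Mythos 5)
Your central step — deducing a braiding on $\H(S)$ from the $E_2$-structure on $\HC(S)$ — is false. For an $E_2$-algebra $B$, the category $\mathrm{LMod}_B$ inherits only an $E_1$-monoidal structure (one level drops); to obtain a braiding on $\mathrm{LMod}_B$ you would need $B$ to be $E_3$. And $\HC(S)$ is genuinely only $E_2$ in general. Correspondingly, $\H(S)$ is not a braided monoidal category (for $S$ smooth it is essentially the monoidal category of bimodules over $\Dmod(S)$, a higher avatar of the non-commutative ring of differential operators). Separately, the asserted equivalence ``$M \otimes_B N \simeq N \otimes_B M$'' conflates the monoidal structure on $\mathrm{LMod}_B$ coming from the $E_2$-multiplication (a Day-type convolution along $B \otimes B \to B$) with the relative tensor product $\otimes_B$, which does not even produce an object of $\mathrm{LMod}_B$ when $M,N$ are both left modules. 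So the main route does not close.

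Your reduction of very rigidity to ``cyclicity of the trace'' also needs care in the $\infty$-categorical setting: a single natural equivalence $\Hom_A(1_A, a\star b) \simeq \Hom_A(1_A, b\star a)$ at the level of Hom-spaces is not a priori the same data as a lift of $\sigma_A$ to an $(A,A)$-bimodule equivalence — the latter is a coherent structure, not a property, so one has to actually exhibit the lift. Your fallback geometric argument (the $\ZZ/2$-swap on $\Sform$ and the claim that the pull-push along $S \to \Sform \leftarrow (S^{\times 3})^\wedge_S \to \Sform \times \Sform$ is ``manifestly invariant'' under exchanging inputs) names the right kind of symmetry, but it stops exactly at the point where the work begins: one must produce the bimodule-level datum, not just observe a set-theoretic symmetry of the correspondence. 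The paper's actual proof runs through a different mechanism: it exhibits $1^{fake}_{\H(S)}$ as $\Delta_*^{\ICoh}(\Xi_S(\O_S))$ and uses that the composite $\Dmod(S) \to \QCoh(S) \xto{\Xi_S} \ICoh(S) \to \H(S)^*$ factors through $\Fun_{\H(S)\otimes\H(S)^\rev}(\H(S),\H(S)^*)$ — a ``Drinfeld-center'' factorization established in the companion paper — which is precisely what furnishes the coherent bimodule lift you are missing.
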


\begin{proof}
It suffices to show that $1_{\H(S)}^{fake} \in \H(S)^*$ admits a lift through the forgetful functor
$$
\Fun_{\H(S) \otimes \H(S)^\rev} 
\bigt{
\H(S), 
\H(S)^*
}
\longto
\H(S)^*.
$$
Recall from \cite{centerH} that the functor
$$
\Dmod(S)
\xto{\oblv_L}
\QCoh(S)
\xto{\Upsilon_S}
\ICoh(S)
\xto{'\Delta_*^\ICoh}
\H(S)
$$
factors as the composition 
$$
\Dmod(S)
\longto
\Fun_{\H(S) \otimes \H(S)^\rev} 
\bigt{
\H(S), 
\H(S)
}
\longto
\H(S),
$$
where the DG category in the middle is by definition the Drinfeld center of $\H(S)$.
A variation of the argument there shows that 
$$
\Dmod(S)
\xto{\oblv_L}
\QCoh(S)
\xto{\Xi_S}
\ICoh(S)
\xto{'\Delta_*^\ICoh}
\H(S)^*
$$
factors as the composition 
$$
\Dmod(S)
\longto
\Fun_{\H(S) \otimes \H(S)^\rev} 
\bigt{
\H(S), 
\H(S)^*
}
\longto
\H(S)^*.
$$
Finally, one computes $1_{\H(S)}^{fake} \in \H(S)^*$ explicitly: it is readily checked that 
$$
1_{\H(S)}^{fake}
 \simeq 
 \Delta_*^\ICoh(\Xi_S(\O_S)), 
$$
a fact that concludes the proof.
\end{proof}

\sssec{}

Coupling this with Corollary \ref{cor:very rigid has ambidextrous modules}, we obtain that each bimodule $\Hdx ST$ is ambidextrous: moreover, its left and right duals are canonically identified with $(\Hdx ST)^*$. 

Let us now determine the right dual to $\Hdx ST$ explicitly. By the above, we already know what the DG category underlying $\Hsx TS :=(\Hdx ST)^R$ must be: it is the dual of the DG category $\ICoh_0((S \times T)^\wedge_S)$. The latter is self-dual as a plain DG category, so we are just searching for the correct $(\H(T), \H(S))$-bimodule structure on $\ICoh_0((S \times T)^\wedge_S)$.

We claim that $\Hsx TS$ is equivalent to $\ICoh_0((T \times S)^\wedge_S)$, equipped with the obvious $(\H(T), \H(S))$-bimodule structure.
We will establish this fact directly, by constructing the evaluation and coevaluation that make $\ICoh_0((T \times S)^\wedge_S)$ right dual to $\Hdx ST$.

\begin{lem}
For $S \to T$ a map in $\Affevcoclfp$, the natural functor
$$
\Hdx ST 
\usotimes{\H(T)}
\ICoh_0((T \times S)^\wedge_S)
\longto
\ICohdx ST \usotimes{\ICohW(T)} \ICohsx TS 
\simeq
\ICoh(\form STS)
\xto{\zeta^!}
\ICohW(S)
$$
lands into the full subcategory $\H(S) \subseteq \ICohW(S)$.
\end{lem}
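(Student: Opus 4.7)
The plan is to reduce the claim to a computation on a set of generators of the source, and then verify the condition explicitly using base change for $\ICoh$.

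First I would exploit the equivalence (\ref{eqn:QCoh-H-tensor}) and its symmetric analog to obtain the $(\H(S),\H(T))$- and $(\H(T),\H(S))$-linear equivalences
$$
\Hdx ST \simeq \QCoh(S) \usotimes{\QCoh(T)} \H(T),
\qquad
\ICoh_0((T \times S)^\wedge_S) \simeq \H(T) \usotimes{\QCoh(T)} \QCoh(S).
$$
Consequently, the source category $\Hdx ST \usotimes{\H(T)} \ICoh_0((T \times S)^\wedge_S)$ is generated, under colimits and the $(\H(S),\H(S))$-bimodule action, by the essential image of
$$
\QCoh(S) \otimes \QCoh(S) \longto \Hdx ST \otimes \ICoh_0((T\times S)^\wedge_S) \longto \Hdx ST \usotimes{\H(T)} \ICoh_0((T\times S)^\wedge_S).
$$
Since $\H(S) \subseteq \ICohW(S)$ is closed under colimits and under the $(\H(S),\H(S))$-action (being defined by a pointwise condition against $\Delta^!$), and since the functor in question is $(\H(S),\H(S))$-linear, it suffices to check that the image of a generator $x \boxtimes y$ with $x,y \in \QCoh(S)$ lies in $\H(S)$.

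Next I would identify this image concretely. Unwinding the construction, the composite $\QCoh(S) \otimes \QCoh(S) \to \Hdx ST \usotimes{\H(T)} \Hsx TS \to \ICoh(\form STS)$ factors through $\QCoh(S) \usotimes{\QCoh(T)} \QCoh(S) \simeq \QCoh(S \times_T S)$, followed by the $\ICoh$-pushforward along the natural nil-isomorphism $(S \times_T S)^\wedge_S \to \form STS$ applied to $\Upsilon_{(S\times_T S)^\wedge_S}$ of the corresponding $\QCoh$-object. This factorization follows, as in the proof of Proposition~\ref{prop: ICOH-geom-strict-functor-on-schemes}, by unwinding the construction of the equivalence $\ICohdx ST \usotimes{\ICohW(T)} \ICohsx TS \simeq \ICoh(\form STS)$ from Lemma~\ref{lem:tensor-product-ICOH-over-Dmod} and base change for $\ICoh$.

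Finally, we need to verify $\Delta_S^! \zeta^!$ applied to such an image lies in $\Upsilon_S(\QCoh(S))$. Since $\zeta \circ \Delta_S = \Delta_{S/T} \colon S \to \form STS$ factors through $(S \times_T S)^\wedge_S$ via a nil-isomorphism, base change along the cartesian square
$$
\begin{tikzcd}
S \ar[r,"\Delta_{S/T}"] \ar[d,equal] & (S \times_T S)^\wedge_S \ar[d] \\
S \ar[r] & \form STS
\end{tikzcd}
$$
identifies the result with $\Delta_{S/T}^!\,\Upsilon_{(S\times_T S)^\wedge_S}(x \boxtimes y)$, and using the compatibility of $\Upsilon$ with $!$-pullback along the nil-isomorphism $S \to (S\times_T S)^\wedge_S$, this is $\Upsilon_S$ of a $\QCoh$-object, as required. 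The main obstacle will be verifying the explicit factorization claimed in the second step and handling the base change carefully on the formal side; both rest on the compatibility of $\ICoh$-pushforward with $\Upsilon$ along nil-isomorphisms and on the self-duality results for $\ICoh$ over $\Dmod$ established in Lemmas \ref{lem:tensor-product-ICOH-over-Dmod}--\ref{lem:ICoh--wedge-equivalence-Dmod}.
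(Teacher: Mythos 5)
Your reduction to $\QCoh(S)\otimes\QCoh(S)$-generators is a legitimate variant of what the paper does (the paper simply uses the single generator corresponding to $\O_S\boxtimes\O_S$), and you correctly identify that the heart of the matter is computing $\Delta_S^!\zeta^!$ of the image. However, the middle and final steps contain a genuine error.

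First, the factorization claim is misstated, and in a way that affects the computation. Unwinding the convolution as you suggest (using base change for $\ICoh$), one finds that the image of $x\boxtimes y$ in $\ICoh(\form STS)$ is $\xi_*^\ICoh\bigl(\Upsilon_{S\times_T S}(\tau^*(x\boxtimes y))\bigr)$, where $\tau: S\times_T S\to S\times S$ and $\xi: S\times_T S\to\form STS=(S\times S)^\wedge_{S\times_T S}$ is the canonical map from the core of the completion. That $\xi$ is a nil-isomorphism. The map you use instead, $(S\times_T S)^\wedge_S\to\form STS$, has source with reduced locus $S_{\mathrm{red}}$ and target with reduced locus $(S\times_T S)_{\mathrm{red}}$, so it is \emph{not} a nil-isomorphism in general, and your stated factorization (pushforward of a $\Upsilon$-object from $(S\times_T S)^\wedge_S$) is not what the functor produces.

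Second, the square you use for base change is not cartesian. For instance take $T=\pt$: then $\form STS = S\times S$ and $\zeta\circ\Delta_S = \Delta_S$, so the fiber product $(S\times_T S)^\wedge_S\times_{\form STS}S = (S\times S)^\wedge_S\times_{S\times S}S$ is just $S$, not $(S\times S)^\wedge_S$. The genuine fiber product $S\times_{\form STS}(S\times_T S)$ is $(S\times_{S\times T}S)^\wedge_S$, the formal completion along the derived self-intersection of the graph $\Gamma_f: S\to S\times T$ — this is the square the paper uses, and note it is \emph{not} the one you drew. (Closed immersions of derived schemes are not monomorphisms, which is the point where your heuristic ``the diagonal factors uniquely'' breaks down.)

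Third, and most importantly, with the correct factorization and correct cartesian square, base change yields
$$
\wt\Delta_{S/T}^{\,!}\,\xi_*^\ICoh\bigl(\Upsilon_{S\times_T S}(\alpha)\bigr) \;\simeq\; \pi_*^\ICoh\bigl(\Upsilon_{(S\times_{S\times T}S)^\wedge_S}(\ldots)\bigr), \qquad \pi:(S\times_{S\times T}S)^\wedge_S\to S,
$$
and there is no general principle that a $*$-pushforward of a $\Upsilon$-object is again a $\Upsilon$-object. For $\alpha=\O$, this object is $\pi_*^\ICoh\pi^!(\omega_S)$, which is the universal envelope of the Lie algebroid $\Tang_{S/S\times T}\to\Tang_S$; showing it lies in $\Upsilon_S(\QCoh(S))$ is precisely where the paper invokes the lfp assumption (so that $\Tang_{S/S\times T}$ is perfect) and \cite[Proposition 3.2.3]{AG2}. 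Your proposed conclusion ``compatibility of $\Upsilon$ with $!$-pullback along a nil-isomorphism'' is a much softer statement that would make the lemma hold without the finite-presentation hypothesis; the fact that your argument never touches that hypothesis is itself a signal that a step has gone wrong.
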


\begin{proof}
We will use the following commutative diagram
$$ 
\begin{tikzpicture}[scale=1.5]
\node (00) at (0,0) {$S \times_T S$};
\node (10) at (2,0) {$(S \times S)^\wedge_{S \times_T S} $};
\node (11) at (4,.7) {$ (S \times S)^\wedge_S $};
\node (12) at (2,1.4) {$ S $};
\node (02) at (0,1.4) {$ (S \times_{S \times T} S)^\wedge_S $};
\path[->,font=\scriptsize,>=angle 90]
(00.east) edge node[above] {$\xi$} (10.west);
\path[->,font=\scriptsize,>=angle 90]
(02.east) edge node[above] {$\pi$} (12.west);
\path[->,font=\scriptsize,>=angle 90]
(02.south) edge node[right] {} (00.north);
\path[->,font=\scriptsize,>=angle 90]
(12.east) edge node[above] {$\;\;'\Delta$} (11.north west);
\path[->,font=\scriptsize,>=angle 90]
(11.south west) edge node[below] {$\zeta$} (10.north east);
\path[->,font=\scriptsize,>=angle 90]
(12.south) edge node[right] {$\wt\Delta_{S/T}$} (10.north);
\end{tikzpicture}
$$
with cartesian square. 
The DG category 
$$
\Hdx ST \usotimes{\H(T)}
\ICoh_0((T \times S)^\wedge_S)
$$ 
is generated by a single canonical compact object, which is sent by our functor to $\zeta^! \circ \xi_*^\ICoh(\omega_{S \times_T S}) \in \ICohW(S)$. 
Hence, it suffices to show that the object 
$$
(\wt\Delta_{S/T})^! \circ \xi_*^\ICoh(\omega_{S \times_T S})
\simeq
\pi_*^\ICoh \circ \pi^! (\omega_S)
$$ 
belongs to the image of $\Upsilon_S: \QCoh(S) \hto \ICoh(S)$. This is clear: $\pi_*^\ICoh  \pi^!$ is equivalent as a functor to the universal envelope of the Lie algebroid $\Tang_{S/ S \times T} \to \Tang_S$, and by assumption $\Tang_{S/S \times T}$ belongs to $\Upsilon_S(\Perf(S))$. We conclude as in \cite[Proposition 3.2.3]{AG2}.
\end{proof}

\sssec{}

Hence, we have constructed an $(\H(S), \H(S))$-linear functor
\begin{equation} \label{eqn:evaluation-for-HdxST}
\Hdx ST \usotimes{\H(T)} \ICoh_0((T \times S)^\wedge_S)
\longto
\H(S),
\end{equation}
which will be our evaluation.
To construct the coevaluation, we need another lemma.

\begin{lem} \label{lem:Hcorr_SUT}
For a diagram $S \leftto U \to T$ in $\Affevcoclfp$, the functor
$$
\ICoh_0((S \times U)^\wedge_U) \usotimes{\H(U)} \Hdx UT
\to
\ICohW_{S \leftto U} \usotimes{\ICohW(U)} \ICohW_{U \to T}
\xto{\simeq}
\ICoh(S \times_{S_\dR} U_\dR \times_{T_\dR} T)
\simeq
\ICoh((S \times T)^\wedge_U)
$$
is an equivalence onto the subcategory $\ICoh_0((S \times T)^\wedge_U) \subseteq \ICoh((S \times T)^\wedge_U)$.
\end{lem}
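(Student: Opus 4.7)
The plan is to reduce the statement to the corresponding (and already established) fact for $\ICohW$ by factoring the given functor through intermediate tensor products, and then to pin down the essential image by exhibiting compatible generating sets. Concretely, I would write the map as the composite
$$
\Hsx SU \usotimes{\H(U)} \Hdx UT
\xto{\;\alpha\;}
\ICohsx SU \usotimes{\H(U)} \Hdx UT
\xto{\;\beta\;}
\ICohsx SU \usotimes{\ICohW(U)} \ICohdx UT
\simeq
\ICoh((S\times T)^\wedge_U),
$$
where the final identification follows from Lemma \ref{lem:tensor-product-ICOH-over-Dmod} as in the proof of Proposition \ref{prop: ICOH-geom-strict-functor-on-schemes}. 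The arrow $\beta$ is an equivalence immediately from Corollary \ref{cor:ICoh-formal-complet-push-forward}, applied to $\C := \ICohsx SU$ viewed as a right $\ICohW(U)$-module.

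For the arrow $\alpha$, the inclusion $\iota: \Hsx SU \hookrightarrow \ICohsx SU$ is a fully faithful, colimit-preserving, $\H(U)$-linear embedding whose right adjoint $\iota^R$ is continuous and $\H(U)$-linear, since the $\ICoh_0$-defining condition on $!$-restriction to $U$ is stable under the $\H(U)$-action. Because $\H(U)$ is very rigid, $\Hdx UT$ is dualizable as a right $\H(U)$-module, so the functor $- \usotimes{\H(U)} \Hdx UT$ admits both a left and a right adjoint; in particular $\iota^R \usotimes{\H(U)} \id_{\Hdx UT}$ is right adjoint to $\alpha$, and from $(\iota^R \iota) \usotimes{\H(U)} \id \simeq \id$, the functor $\alpha$ is fully faithful. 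Thus $\beta \circ \alpha$ is fully faithful.

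It remains to identify the essential image of $\beta \circ \alpha$ with $\ICoh_0((S\times T)^\wedge_U)$. By the monadic adjunction (\ref{adj:monadic adjunction for ICOHzero}), each of the three categories $\Hsx SU$, $\Hdx UT$, and $\ICoh_0((S\times T)^\wedge_U)$ is generated under colimits by pushforwards $('\pi_i)_{*,0}(\F)$ of objects $\F \in \QCoh(U)$. An unwinding of the convolution along the correspondence $(S \times U)^\wedge_U \xleftarrow{p_1} (S \times U \times T)^\wedge_U \to (S \times T)^\wedge_U$ identifies the image of an elementary tensor of such generators with a pushforward of the form $('\pi)_{*,0}(\F')$, giving both containment directions simultaneously. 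The main obstacle, I expect, will be justifying carefully the $\H(U)$-linearity and continuity of $\iota^R$ in the argument for $\alpha$---this ultimately relies on $\iota$ preserving compact objects, which is inherited from the construction of $\H$ as a pullback of compactly generated DG categories. The generators computation in the final step is then routine once the convolution is written out explicitly.
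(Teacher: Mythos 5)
Your proposal is correct and follows essentially the same route as the paper: factor through $\ICohW$, invoke Corollary \ref{cor:ICoh-formal-complet-push-forward} for the equivalence $\beta$, argue full faithfulness of $\alpha$ from the fully faithful embedding $\iota$ and dualizability of $\Hdx UT$ over the rigid $\H(U)$, and then pin down the essential image via generators. You are more explicit than the paper about why $\alpha$ is fully faithful (the paper declares the analogous step ``evident''), and your justification via $\iota$ preserving compacts and $\H(U)$-rigidity is the right one. Where you are vaguer is the final step: the paper does not unwind a general convolution but rather observes that the source is compactly generated by a single canonical object (the elementary tensor of $('\pi)_*^{\ICoh}(\omega_U)$'s) and runs one explicit base-change square to show this object maps to $'\phi_*^{\ICoh}(\omega_U)$, the compact generator of $\ICoh_0((S\times T)^\wedge_U)$; this is a tighter argument than ``routine once the convolution is written out'' and is worth doing explicitly since it simultaneously identifies the essential image and certifies that the functor hits a generator.
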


\begin{proof}
Denote by $\phi: U \to S \times T$ and by $'\phi: U \to (S \times T)^\wedge_U$ the obvious maps.
The source DG category is compactly generated by a single canonical object. Base-change along the pullback square
\begin{gather} \nonumber
\xy
(0,00)*+{ U \times U }="00";
(40,0)*+{ \form SUU \times \form UTT }="10";
(0,16)*+{ U}="01";
(40,16)*+{ \form SSU \times_{T_\dR} T }="11";
%
{\ar@{<-}_{   } "10";"00"};
{\ar@{->}^{ } "01";"11"};
%
{\ar@{->}^{ \Delta } "01";"00"};
{\ar@{->}^{  \Delta } "11";"10"};
\endxy
\end{gather}	
shows that such object is sent to $'\phi_*^\ICoh(\omega_U) \in \ICoh((S \times T)^\wedge_U)$, which is a compact generator of $\ICoh_0((S \times T)^\wedge_U)$. It remains to show that the functor
$$
\ICoh_0((S \times U)^\wedge_U) \usotimes{\H(U)} \Hdx UT 
\longto
\ICohsx SU \usotimes{\ICohW(U)} \ICohdx UT 
$$
is fully faithful. This is evident: the functor in question arises as the composition
$$
\ICoh_0((S \times U)^\wedge_U)  \usotimes{\H(U)} \Hdx UT 
\hto
\ICohsx SU \usotimes{\H(U)} \Hdx UT 
\xto{\simeq }
\ICohsx SU \usotimes{\ICohW(U)} \ICohdx UT,
$$
where the second arrow is an equivalence thanks to Corollary \ref{cor:ICoh-formal-complet-push-forward}.
\end{proof}

\sssec{}

We now use $\eta^!: \ICoh((T \times T)^\wedge_T) \longto \ICoh((T \times T)^\wedge_S)$ as in (\ref{eqn:eta}), together with the equivalence 
$$
\theta:
\ICoh_0((T \times S)^\wedge_S)
 \usotimes{\H(S)} \Hdx ST \to  \ICoh_0( (T \times T)^\wedge_S)
$$
of the above lemma, to construct the functor
\begin{equation} \label{eqn:coevaluaiton-for-HdxST}
\H(T) 
\xto{\eta^!} 
\ICoh_0( (T \times T)^\wedge_S)
\xto{\theta^{-1} }
\ICoh_0((T \times S)^\wedge_S)
 \usotimes{\H(S)} \Hdx ST.
\end{equation}
As the next proposition shows, this is the coevaluation we were looking for.

\begin{prop}
Let $f: S \to T$ be a map in $\Affevcoclfp$. Then the functors (\ref{eqn:evaluation-for-HdxST}) and (\ref{eqn:coevaluaiton-for-HdxST}) exhibit $\ICoh_0((T \times S)^\wedge_S)$, with its natural $(\H(T), \H(S))$-bimodule structure, as the right dual of $\Hdx ST$.
\end{prop}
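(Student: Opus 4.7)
The strategy is to verify the two triangle identities for the proposed duality by reducing them to the corresponding identities at the $\ICohW$-level, where $\ICohsx TS := \ICoh(\form TTS)$ has already been shown to be right dual to $\ICohdx ST$, with evaluation $\zeta^!$ and coevaluation $\eta^!$. The key observation is that both (\ref{eqn:evaluation-for-HdxST}) and (\ref{eqn:coevaluaiton-for-HdxST}) have been constructed precisely as the restrictions of these $\ICohW$-level structure maps along fully faithful embeddings.

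More concretely, by construction the evaluation (\ref{eqn:evaluation-for-HdxST}) factors as the tautological functor $\Hdx ST \usotimes{\H(T)} \Hsx TS \to \ICohdx ST \usotimes{\ICohW(T)} \ICohsx TS$ followed by $\zeta^!$, which the preceding lemma shows lands in $\H(S) \subseteq \ICohW(S)$. On the coevaluation side, \lemref{lem:Hcorr_SUT} (applied to the diagram $T \leftarrow S \to T$) produces the identification $\theta: \Hsx TS \usotimes{\H(S)} \Hdx ST \xto{\simeq} \ICoh_0((T \times T)^\wedge_S)$, which embeds fully faithfully into $\ICoh((T \times T)^\wedge_S) \simeq \ICohsx TS \usotimes{\ICohW(S)} \ICohdx ST$; by definition (\ref{eqn:coevaluaiton-for-HdxST}) is $\eta^!$ with target restricted via this embedding. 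Granting these compatibilities, the composition
\[
\Hdx ST \xto{\id \otimes \coev} \Hdx ST \usotimes{\H(T)} \Hsx TS \usotimes{\H(S)} \Hdx ST \xto{\ev \otimes \id} \Hdx ST
\]
embeds into the analogous $\ICohW$-level zig-zag composition, which equals the identity by the ambidexterity of $\ICohW$ already established. Since $\Hdx ST \hookrightarrow \ICohdx ST$ is fully faithful, the $\H$-level composition must itself equal the identity, giving the first triangle identity. The second triangle identity is verified in completely analogous fashion, using that $\H(S) \hookrightarrow \ICohW(S)$ is fully faithful.

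The main obstacle will be carefully justifying that the identification $\theta$ provided by \lemref{lem:Hcorr_SUT} is compatible, under the fully faithful embedding of its source into $\ICoh((T \times T)^\wedge_S)$, with the natural decomposition $\ICohsx TS \usotimes{\ICohW(S)} \ICohdx ST \simeq \ICoh((T \times T)^\wedge_S)$ used in defining $\eta^!$ at the $\ICohW$-level. Concretely, this amounts to checking commutativity of a square of $(\H(T), \H(T))$-linear functors on the canonical compact generator of $\H(T)$, which by base-change along the diagram from the proof of \lemref{lem:Hcorr_SUT} reduces to a routine identification of $!$-pullbacks on formal completions. Once this compatibility is verified, the proposition follows formally from the ambidexterity of $\ICohW$.
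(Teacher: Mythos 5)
Your proof is correct and spells out the same formal reduction to the $\ICohW$-level duality that the paper invokes in its one-line proof. However, the \emph{main obstacle} you flag---compatibility of $\theta$ with the identification $\ICohsx TS\otimes_{\ICohW(S)}\ICohdx ST\simeq\ICoh((T\times T)^\wedge_S)$---is not actually an obstacle: \lemref{lem:Hcorr_SUT} \emph{defines} $\theta$ precisely as the composite that passes through $\ICohsx TS\otimes_{\ICohW(S)}\ICohdx ST$ before landing in $\ICoh_0((T\times T)^\wedge_S)$. So the square relating $\theta$ to the $\ICohW$-level coevaluation commutes by construction, and there is no need for a separate check on compact generators or a base-change computation. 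Once that is noted, your argument reduces to exactly the observation the paper has in mind: the evaluation (\ref{eqn:evaluation-for-HdxST}) and coevaluation (\ref{eqn:coevaluaiton-for-HdxST}) are by construction the restrictions along the fully faithful embeddings $\Hdx ST\hto\ICohdx ST$, $\H(S)\hto\ICohW(S)$, etc., of the $\ICohW$-level evaluation $\zeta^!$ and coevaluation $\eta^!$; the triangle identities at the $\ICohW$-level therefore restrict to triangle identities at the $\H$-level, and full faithfulness upgrades the resulting natural transformations to identities.
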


\begin{proof}
This follows formally from the analogous statement for $\ICohdx ST$.
\end{proof}

\sssec{}

Henceforth, we will freely use the $(\H(T),\H(S))$-linear equivalence $\Hsx TS \simeq \ICoh_0((T \times S)^\wedge_S)$.
We are finally ready to settle the ambidexterity of the coefficient system $\H$. 

\begin{thm} \label{thm:H-ambidextrous}
The coefficient system $\H: \Affevcoclfp \longto \AlgBimodDGCat$ is ambidextrous.
\end{thm}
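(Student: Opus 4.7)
By definition, ambidexterity requires two properties: (a) each bimodule $\Hdx ST$ is ambidextrous, and (b) $\H$ satisfies the right Beck-Chevalley condition. Property (a) is already secured: the preceding proposition identifies the right dual as $\Hsx TS \simeq \ICoh_0((T \times S)^\wedge_S)$, and since $\H(S), \H(T)$ are very rigid and $\Hdx ST$ is dualizable as a plain DG category (being compactly generated), Corollary \ref{cor:very rigid has ambidextrous modules} automatically produces a left dual canonically equivalent to the right dual.

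For (b), given $f: S \to V$ and $g: T \to V$ with pullback $U := S \times_V T$, we must show that the natural $(\H(S), \H(T))$-linear map
\begin{equation*}
\alpha: \Hdx SV \usotimes{\H(V)} \Hsx VT \longto \Hsx {S}{U} \usotimes{\H(U)} \Hdx {U} T
\end{equation*}
is an equivalence. Lemma \ref{lem:Hcorr_SUT} identifies the target with $\ICoh_0((S \times T)^\wedge_U)$. The plan is to factor $\alpha$ through the already-established right Beck-Chevalley equivalence for $\ICohW$:
\begin{equation*}
\Hdx SV \usotimes{\H(V)} \Hsx VT
\;\longto\;
\ICohdx SV \usotimes{\ICohW(V)} \ICohsx VT
\;\xto{\,\simeq\,}\;
\ICoh((S \times T)^\wedge_U),
\end{equation*}
and to verify that the resulting composite is fully faithful with essential image precisely $\ICoh_0((S \times T)^\wedge_U)$. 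Essential surjectivity is checked on the canonical compact generator of the source: by a diagram chase parallel to the one in the proof of Lemma \ref{lem:Hcorr_SUT}, this generator is sent via base-change along the relevant nil-cartesian square to the object corresponding to the canonical generator of $\ICoh_0((S \times T)^\wedge_U)$.

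The chief technical obstacle is the fully faithfulness step, since a priori the tensor product over $\H(V)$ could be strictly larger than the one over $\ICohW(V)$. The remedy is a two-step reduction via Corollary \ref{cor:ICoh-formal-complet-push-forward}: one first contracts the inclusion $\Hdx SV \hto \ICohdx SV$ against $\Hsx VT$ on the right over $\H(V)$, enlarging to $\ICohW(V)$ in the process, and then symmetrically for $\Hsx VT \hto \ICohsx VT$; the symmetric version of the corollary is obtained formally by dualization from the original, using the already-proven ambidexterity of $\ICohW$. Once fully faithfulness is in place, the combination with essential surjectivity onto $\ICoh_0$ yields the desired equivalence, which by construction coincides with $\alpha$.
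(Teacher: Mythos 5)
Your overall strategy — split ambidexterity into (a) ambidexterity of each bimodule and (b) the right Beck-Chevalley condition, dispatch (a) via very-rigidity, and for (b) identify source and target with $\ICoh_0$ of the appropriate formal completions by checking fully faithfulness and hitting the compact generator — matches the shape of the paper's argument, and your treatment of (a) and of essential surjectivity is fine. The fully faithfulness step, which you rightly flag as the crux, is where your proposal breaks down. Corollary \ref{cor:ICoh-formal-complet-push-forward} asserts $\C \otimes_{\H(X)} \Hdx XY \simeq \C \otimes_{\ICohW(X)} \ICohdx XY$ for $\C$ a right $\ICohW(X)$-module, where $\Hdx XY = \ICoh_0((X\times Y)^\wedge_X)$ is completed along the \emph{source} $X$ and carries the left $\H(X)$-action; by duality one obtains a mirror statement for the right $\H(X)$-action on $\Hsx YX = \ICoh_0((Y\times X)^\wedge_X)$, again completed along $X$. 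But in $\Hdx SV \otimes_{\H(V)} \Hsx VT$ neither factor is completed along $V$: $\Hdx SV = \ICoh_0((S\times V)^\wedge_S)$ is completed along $S$, and $\Hsx VT = \ICoh_0((V\times T)^\wedge_T)$ is completed along $T$. So neither the corollary nor its dual matches the $\H(V)$-action you need to contract against, and (worse) $\Hdx SV$ is not a right $\ICohW(V)$-module, so the corollary's hypothesis on $\C$ fails even before one worries about which leg is completed. Your \virg{two-step reduction} therefore doesn't get off the ground.

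The paper's actual argument avoids all of this by a $\QCoh$-trick that you don't invoke: assuming (WLOG) $S \to V$ bounded, one uses the morphism $\Q \to \H$ to write $\Hdx SV \simeq \QCoh(S) \otimes_{\QCoh(V)} \H(V)$, converting the $\H(V)$-tensor into a $\QCoh(V)$-tensor, namely $\Hdx SV \otimes_{\H(V)} \Hsx VT \simeq \QCoh(S) \otimes_{\QCoh(V)} \Hsx VT$. Then full faithfulness propagates through the embedding $\Hsx VT \hto \ICohsx VT$ (rigidity of $\QCoh(V)$), and the final input is \cite[Prop. 4.4.2]{ICoh}: $\QCoh(S) \otimes_{\QCoh(V)} \ICoh(V) \to \ICoh(S)$ is fully faithful when $S \to V$ is bounded. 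You also don't mention the boundedness hypothesis (that at least one of $S \to V$, $T \to V$ is bounded); this is not cosmetic — without it $S \times_V T$ need not be bounded and the target $\ICoh_0((S\times T)^\wedge_{S\times_V T})$ is not even defined, which is why the paper's $\H^\Corr$ is only a functor out of $\Corr(\Affevcoclfp)_{\all;\evcoc}$ rather than $\Corr(\Affevcoclfp)_{\all;\all}$.
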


Half of the proof of this theorem has been done in Lemma \ref{lem:Hcorr_SUT}. It remains to add the following statement.

\begin{lem} 
Let $S \to V \leftto T$ be a diagram in $\Affevcoclfp$, with either $S \to V$ or $T \to V$ bounded.\footnote{This ensures that $S \times_V T$ is bounded, so that $\ICoh_0((S \times T)^\wedge_{S \times_V T})$ is well-defined.}
Then the functor
$$
\Hdx SV \usotimes{\H(V)} \Hsx VT 
\longto
\ICohdx {S}{V} \usotimes{\ICohW(V)} \ICohsx VT 
\xto\simeq
\ICoh(\form SVT)
$$
is an equivalence onto the subcategory 
$$
\ICoh_0((S \times T)^\wedge_{S \times_V T})
\subseteq \ICoh(\form SVT).
$$
\end{lem}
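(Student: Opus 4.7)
The plan is to mimic the structure of the proof of Lemma \ref{lem:Hcorr_SUT} with the roles of $\Hdx$ and $\Hsx$ interchanged. The assumption that one of $S \to V$ or $T \to V$ is bounded guarantees that $S \times_V T \in \Schevcoclfp$, so that $\ICoh_0((S \times T)^\wedge_{S \times_V T})$ is well-defined via the usual pullback diagram; moreover, at the geometric level one has $\form SVT \simeq (S \times T)^\wedge_{S \times_V T}$, which identifies the target.

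Denote by $\phi: S \times_V T \to S \times T$ the canonical map and by $'\phi: S \times_V T \to (S \times T)^\wedge_{S \times_V T}$ its factorization through the formal completion. First I would verify that the source DG category is compactly generated by a single canonical object, obtained by tensoring the canonical compact generators of $\Hdx SV$ and $\Hsx VT$. Using base-change along the cartesian square coming from the fiber product $S \times_V T \to (S \times V)^\wedge_S \ustimes{V} (V \times T)^\wedge_T$, this canonical generator is sent to $('\phi)_*^{\ICoh} \Upsilon_{S \times_V T}(\O_{S \times_V T}) \in \ICoh(\form SVT)$, which is a compact generator of $\ICoh_0((S \times T)^\wedge_{S \times_V T})$. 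This pins down the essential image.

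For full-faithfulness, I would factor the functor as
$$
\Hdx SV \usotimes{\H(V)} \Hsx VT \hookrightarrow \ICohdx SV \usotimes{\H(V)} \Hsx VT \xto{\;\simeq\;} \ICohdx SV \usotimes{\ICohW(V)} \ICohsx VT,
$$
followed by the identification $\ICohdx SV \usotimes{\ICohW(V)} \ICohsx VT \simeq \ICoh(\form SVT)$ coming from the Beck--Chevalley property of $\ICohW$ established in the preceding proposition. The first arrow is fully faithful because the inclusion $\Hdx SV \hookrightarrow \ICohdx SV$ is a colocalization whose right adjoint is $(\H(S), \H(V))$-linear, so tensoring against $\Hsx VT$ on the right over $\H(V)$ preserves the adjunction and hence full-faithfulness.

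The main obstacle lies in the middle arrow, which requires a symmetric analogue of Corollary \ref{cor:ICoh-formal-complet-push-forward}: for any right $\ICohW(V)$-module $\C$, the natural map $\C \usotimes{\H(V)} \Hsx VT \to \C \usotimes{\ICohW(V)} \ICohsx VT$ is an equivalence. This is proved by exactly the same strategy as Corollary \ref{cor:ICoh-formal-complet-push-forward}: through the bar complex it reduces to the mirror of Proposition \ref{prop:IndCoh-push-forward}, namely the statement that $\Hsx VT \usotimes{\H(T)} \IndCoh(T) \xto{\;\simeq\;} \IndCoh(V^\wedge_T)$, which is established by compact generation together with the adjunction attached to the nil-isomorphism $(T \times T)^\wedge_T \to (V \times T)^\wedge_T$.
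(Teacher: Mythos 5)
Your factoring runs into a genuine obstacle at the middle arrow. You claim that
$$
\ICohdx SV \usotimes{\H(V)} \Hsx VT \longto \ICohdx SV \usotimes{\ICohW(V)} \ICohsx VT
$$
is an equivalence, supported by a "symmetric analogue" of Corollary \ref{cor:ICoh-formal-complet-push-forward}. That analogue is false, and so is the equivalence. Take $V = \pt$ and $T$ a singular bounded lfp affine scheme. Then $\Hsx \pt T = \ICoh_0(T^\wedge_T) = \QCoh(T)$, $\ICohsx \pt T = \ICoh(T)$, $\H(\pt) = \ICohW(\pt) = \Vect$, and $\ICohdx S\pt = \ICoh(S)$, so the middle arrow becomes $\id_{\ICoh(S)} \otimes \Upsilon_T : \ICoh(S) \otimes \QCoh(T) \to \ICoh(S) \otimes \ICoh(T)$, which is fully faithful but not essentially surjective unless $T$ is smooth. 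The underlying asymmetry is real: Corollary \ref{cor:ICoh-formal-complet-push-forward} tensors over $\H(X)$ for the \emph{source} of the map $X \to Y$ (and $\Hdx XY$ records the formal moduli problem $X \to Y^\wedge_X$), whereas your "mirror" tensors over $\H(V)$ for the \emph{target} of $T \to V$ (with $\Hsx VT = \ICoh_0((V\times T)^\wedge_T)$, where $V$ is a spectator). These two situations are not interchangeable.

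There is also a secondary slip in the sketched bar reduction: reducing your putative Corollary to $\C = \ICohW(V) \simeq \ICoh(V) \usotimes{\Dmod(V)} \ICoh(V)$ and factoring through the right copy of $\ICoh(V)$ leads to the statement $\ICoh(V) \usotimes{\H(V)} \Hsx VT \simeq \ICoh(T)$ (a tensor over $\H(V)$), not to $\Hsx VT \usotimes{\H(T)} \ICoh(T) \simeq \ICoh(V^\wedge_T)$ (a tensor over $\H(T)$). The latter is indeed a true mirror of Proposition \ref{prop:IndCoh-push-forward}, but it is not what the reduction produces; and the former fails in the same $V = \pt$ example, since it would give $\QCoh(T) \simeq \ICoh(T)$.

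The paper proves full faithfulness by a different and more elementary route that sidesteps Corollary \ref{cor:ICoh-formal-complet-push-forward} entirely: assuming say $S \to V$ bounded, one uses $\Hdx SV \simeq \QCoh(S) \usotimes{\QCoh(V)} \H(V)$ to rewrite the source as $\QCoh(S) \usotimes{\QCoh(V)} \Hsx VT$, embed it in $\QCoh(S) \usotimes{\QCoh(V)} \ICohsx VT \simeq \QCoh(S) \usotimes{\QCoh(V)} \ICoh(V) \usotimes{\Dmod(V)} \ICoh(T)$, and then invoke the standard fact that $\QCoh(S) \usotimes{\QCoh(V)} \ICoh(V) \to \ICoh(S)$ is fully faithful when $S \to V$ is bounded. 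To repair your argument you would at minimum need to establish full faithfulness (not equivalence) of your middle arrow by a different mechanism, at which point you have essentially recreated the paper's argument.
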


\begin{proof}
Let $\xi: S \times_V T \to (S \times T)^\wedge_{S \times_V T} \simeq \form SVT$ be the canonical map.
As before, $\Hdx SV \usotimes{\H(V)} \Hsx VT $ is compactly generated by its canonical object. 
Now, the functor in question sends such object to $\xi_*^\ICoh(\omega_{S \times_V T})$, which is a compact generator of $\ICoh_0((S \times T)^\wedge_{S \times_V T})$.
Hence, it remains to verify that the functor
$$
\Hdx SV \usotimes{\H(V)} \Hsx VT 
\longto
\ICohdx {S}{V} \usotimes{\ICohW(V)} \ICohsx VT 
$$
is fully faithful. Assume that $S \to V$ is bounded, the argument for the other case is symmetric. 
We have the following sequence of left $\QCoh(S)$-linear fully faithful functors:
\begin{eqnarray}
\nonumber
\Hdx SV \usotimes{\H(V)} \Hsx VT 
& \simeq & 
\QCoh(S) \usotimes{\QCoh(V)} \Hsx VT
\\
\nonumber
& \hto & 
\QCoh(S) \usotimes{\QCoh(V)} \ICohsx VT
\\
\nonumber
& \simeq &
\QCoh(S) \usotimes{\QCoh(V)} \ICoh(V) \usotimes{\Dmod(V)} \ICoh(T).
\end{eqnarray}
To conclude, recall (\cite[Proposition 4.4.2]{ICoh}) that the tautological functor $\QCoh(S) \otimes_{\QCoh(V)} \ICoh(V) \to \ICoh(S)$ is fully faithful whenever $S \to V$ is bounded.
\end{proof}

\sssec{}

Following the template of Section \ref{sssec:spell out amibexterity}, let us summarize the consequences of the ambidexterity of $\H$. First off, we obtain that $\H$ extends to a functor
$$
\H^\Corr: 
\Corr(\Affevcoclfp)_{\all; \evcoc} 
\longto 
\AlgBimod(\DGCat),
$$
which has been shown to send
$$
[S \leftto U \to T] 
\squigto \Hcorr SUT
:= 
\Hsx SU 
\usotimes{\H(U)} 
\Hdx UT
\simeq 
\ICoh_0( (S \times T)^\wedge_U).
$$
In other words, $\H^\Corr$ coincides with the restriction of $\H^\geom$ on $\Corr(\Affevcoclfp)_{\all; \evcoc}$. Therefore:

\begin{cor} \label{cor:H-geom-strict-on-Corr Aff}
The lax $\2$-functor $\H^\geom$ is \emph{strict} when restricted to $\Corr(\Affevcoclfp)_{\all; \evcoc}$.
\end{cor}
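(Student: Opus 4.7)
The plan is to identify the restriction of $\H^\geom$ to $\Corr(\Affevcoclfp)_{\all;\evcoc}$ with the strict functor $\H^\Corr$ just built from the ambidexterity of $\H$. Since $\H^\Corr$ is strict by construction (being produced by the Beck--Chevalley formalism applied to the ambidextrous coefficient system $\H$), such an identification immediately delivers the corollary.

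First, I would match the two functors on objects and $1$-morphisms. On objects, both send $S$ to $\H(S) = \ICoh_0((S \times S)^\wedge_S)$. On a correspondence $[S \xleftarrow{g} U \xrightarrow{f} T]$ with $g$ bounded, $\H^\geom$ produces $\ICoh_0((S \times T)^\wedge_U)$, whereas $\H^\Corr$ produces $\Hsx SU \usotimes{\H(U)} \Hdx UT$; the canonical equivalence of $(\H(S), \H(T))$-bimodules between these is exactly (the content of) Lemma \ref{lem:Hcorr_SUT}.

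Next, for composable correspondences $c_1 = [S \leftto U \to T]$ and $c_2 = [T \leftto V \to W]$ in $\Corr(\Affevcoclfp)_{\all;\evcoc}$, the $\H^\Corr$-composition resolves
\[
\Hsx SU \usotimes{\H(U)} \Hdx UT \usotimes{\H(T)} \Hsx TV \usotimes{\H(V)} \Hdx VW
\]
into $\Hcorr{S}{U \times_T V}{W}$ in two steps: the middle pairing $\Hdx UT \usotimes{\H(T)} \Hsx TV$ is identified with $\Hsx U{U \times_T V} \usotimes{\H(U \times_T V)} \Hdx{U \times_T V}V$ via the ambidexterity/Beck--Chevalley equivalence (which applies since $U \to T$ is bounded), and the outer tensor products are contracted using the strict functoriality of $\H$ on $\Affevcoclfp$ (Theorem \ref{thm:H-strict-on-schemes}). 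On the $\H^\geom$ side, the composition is the lax map coming from convolution of ind-coherent sheaves on formal completions, via the construction of Section \ref{sssec:ICOHW goem out of corresp} and the loop construction (\ref{eqn:LOOP geometric}).

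The main step, and the principal bookkeeping obstacle, is to verify that under the identifications from the previous paragraph the two composition maps \emph{coincide}. This amounts to inspecting that both arise from pull--push along the natural correspondence
\[
(S \times T)^\wedge_U \ustimes{T_\dR} (T \times W)^\wedge_V
\longleftarrow
(S \times W)^\wedge_{U \times_T V}
\longrightarrow
(S \times W)^\wedge_{U \times_T V}
\]
at the level of $\ICoh$, with the $\ICoh_0$-subcategories preserved throughout; the higher coherences follow from the $\2$-categorical naturality of the symmetric monoidal functor (\ref{funct:ICOH out of corresp}) and of (\ref{eqn:LOOP geometric}). Once these data match, strictness of $\H^\geom$ on the restricted domain is inherited from strictness of $\H^\Corr$.
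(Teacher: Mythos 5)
Your approach matches the paper's: identify the restriction of $\Hgeom$ to $\Corr(\Affevcoclfp)_{\all;\evcoc}$ with the strict functor $\H^\Corr$ produced by the Beck--Chevalley machinery, using the ambidexterity of $\H$ (Theorem~\ref{thm:H-ambidextrous}) and the equivalence of Lemma~\ref{lem:Hcorr_SUT}. The paper itself is even terser than your write-up (it simply records that $\H^\Corr$ coincides with the restriction of $\Hgeom$ and draws the conclusion), so your elaboration on the composition check goes beyond what the paper provides; note only that the displayed convolution correspondence in your last step is garbled (its right-hand leg should not be the identity on $(S \times W)^\wedge_{U \times_T V}$), but this is a notational slip and does not undermine the structure of the argument.
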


\sssec{} \label{sssec:summary of B-Chev for H}

Secondly, $\H^\Corr$ admits two extensions to $\2$-functors
$$
\bbH^\RBeck: \Corr(\Affevcoclfp)_{\all; \evcoc}^{\evcoc, 2-\op} \longto \ALGBimod(\DGCat)
$$
and
$$
\bbH^\LBeck: \Corr(\Affevcoclfp)_{\all; \evcoc}^{\evcoc} \longto \ALGBimod(\DGCat),
$$
described as follows. To a $2$-morphism
$$
[S \leftto U' \to T] \to [S \leftto U \to T]
$$
induced by $U' \to U$ bounded, $\H^\RBeck$ assigns the $!$-pullback
$$
\ICoh_0( (S \times T)^\wedge_U)
\longto \ICoh_0( (S \times T)^\wedge_{U'}),
$$
while the $\H^\LBeck$ assigns the dual $(*,0)$-pushforward
$$
\ICoh_0( (S \times T)^\wedge_{U'})
\longto \ICoh_0( (S \times T)^\wedge_{U}),
$$
which is well-defined thanks to boundedness, see Theorem \ref{thm:1-functoriality of ICohzero}.

\section{Sheaves of categories relative to $\H$} \label{sec:shvcatH}

\renc{\A}{\bbA}

The coefficient system $\H$ allows to define the $\infty$-category $\ShvCatH(\X)$, for any prestack $ \X \in\Fun((\Affevcoclfp)^\op, \inftyGrpd)$. As we are only interested in studying $\ShvCatH$ on algebraic stacks, we only consider the functor
$$
\ShvCatH:
(\Stkevcoclfp)^\op
\longto
\inftyCat,
$$
where $\Stkevcoclfp$ consists of those bounded algebraic stacks that have affine diagonal and perfect cotangent complex.

In this section, we explain several constructions regarding $\ShvCatH$, which we then use to prove our main theorems.
We first show that $\ShvCatH$ satisfies smooth descent. 
Secondly, we discuss pushforwards and base-change as follows: by Theorem \ref{thm:H-ambidextrous}, $\H$ is ambidextrous; accordingly, $\ShvCatH$ will admit extensions to categories of correspondences in two mutually dual ways.
Next, we discuss the notion of $\H$-affineness of objects of $\Stkevcoclfp$: we show that $\ShvCatH(\Y)$ is the $\infty$-category of modules over the monoidal DG category $\Hgeom(\Y)$.
Finally, we deduce that the lax $\2$-functor $\Hgeom$ is actually strict.

\ssec{Descent} \label{ssec:descent}

Define
$$
\ShvCatH: 
(\Stkevcoclfp)^\op
\longto
\inftyCat
$$
to be the right Kan extension of 
$$
\ShvCatH = \mmod \circ \H : (\Affevcoclfp)^\op 
\longto
\inftyCat
$$
along the inclusion $\Affevcoclfp \hto \Stkevcoclfp$.
The purpose of this section is to show that the functor $\ShvCatH$ satisfies smooth descent.

\sssec{}

Objects of 
$$
\ShvCatH(\Y) 
\simeq 
\lim_{S \in (\Affevcoclfp)_{/\Y}} \H(S) \mmod
$$
will be often represented simply by 
$\C \simeq \{\C_S\}_{S \in (\Affevcoclfp)_{/\Y}}$, leaving the coherent system of compatibilities $\Hdx ST \otimes_{\H(T)} \C_T \simeq \C_S$ implicit.
For any $f: \X \to \Y$ in $\Stkevcoclfp$, denote by $f^{*,\H}$ the structure functor. 
Explicitly (and tautologically), $f^{*,\H}$ sends 
$$
\{\C_S\}_{S \in (\Affevcoclfp)_{/\Z}}
\squigto
\{\C_S\}_{S \in (\Affevcoclfp)_{/\Y}}.
$$ 
In what follows, elements of $S \in (\Affevcoclfp)_{/\Y}$ will be denoted by $\phi_{S \to \Y}: S \to \Y$. It is obvious that $(\phi_{S \to \Y })^{*,\H}(\C) = \C_S$.

\begin{thm} \label{thm:smooth-descent-for-ShvCatH}
The functor $\ShvCatH: (\Stkevcoclfp)^\op \to \inftyCat$ satisfies smooth descent.
In particular, for any $\Y$, the restriction functor 
$$
\ShvCatH(\Y)
\longto
\lim_{S \in  (\Affevcoclfp)_{/\Y, \smooth} }
\H(S) \mmod
$$
is an equivalence. Here, $(\Affevcoclfp)_{/\Y, \smooth}$ is the subcategory of $(\Affevcoclfp)_{/\Y}$ whose objects are smooth maps $S \to \Y$ and whose morphisms are triangles $S \to T \to \Y$ with all maps smooth.
\end{thm}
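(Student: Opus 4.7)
The plan is to reduce the statement to smooth descent in the affine setting, invoke the Barr--Beck--Lurie theorem, and then transfer to stacks via the Kan extension.

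First I would reduce to affines: by the right Kan extension description of $\ShvCatH$, the desired conclusion follows once we know that, for every smooth surjection $p: S \to T$ in $\Affevcoclfp$ with \v{C}ech nerve $S^{\bullet/T}$ (whose terms lie in $\Affevcoclfp$, thanks to the affine diagonal of $\Y$ and the stability of $\Affevcoclfp$ under smooth base change), the canonical map
$$\H(T) \mmod \longto \Tot\bigl( \H(S^{\bullet/T}) \mmod \bigr)$$
is an equivalence.

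Next, I would verify the Barr--Beck--Lurie hypotheses for the pullback $p^{*,\H} = \Hdx ST \otimes_{\H(T)} (-)$. This functor admits both a left and a right adjoint, since $\Hdx ST$ is ambidextrous as an $(\H(S), \H(T))$-bimodule by Theorem \ref{thm:H-ambidextrous}; in particular, it preserves all (co)limits, and it suffices to check conservativity. For this I would exploit the morphism of coefficient systems $\QCoh : \Q \to \H$: the equivalence \eqref{eqn:QCoh-H-tensor} yields an intertwining $\oblv_S \circ p^{*,\H} \simeq p^{*,\Q} \circ \oblv_T$ between $\H$-pullback and $\Q$-pullback, where $\oblv$ denotes the forgetful functor (which is conservative, since it acts trivially on underlying DG categories). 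Faithfully flat descent for $\QCoh$-modules then implies conservativity of $p^{*,\H}$.

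The heart of the argument is the identification of the two comonads on $\H(S) \mmod$. The one associated to the adjunction $(p^{*,\H}, p_{*,\H})$ is given by tensoring with $\Hdx ST \otimes_{\H(T)} \Hsx TS$, which by the functoriality of $\H^\Corr$ on $\Corr(\Affevcoclfp)_{\all;\evcoc}$ (Section \ref{sssec:summary of B-Chev for H}) equals $\Hcorr S{S \times_T S}S \simeq \ICoh_0((S \times S)^\wedge_{S \times_T S})$. On the other hand, the comonad encoded by the \v{C}ech cosimplicial object $\H(S^{\bullet/T}) \mmod$ must be matched with the previous one by unwinding the face maps of the \v{C}ech nerve and iteratively applying Lemma \ref{lem:Hcorr_SUT} to recognize the relevant compositions of transfer bimodules. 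This step is the main technical obstacle; I expect to handle it by reducing to an explicit comparison between compositions of the bimodules $\Hsx{S_n}S$ and $\Hdx{S}{S_n}$ along the face maps of $S^{\bullet/T}$.

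Finally, to pass from affine descent to stacks: given $\phi: T \to \Y$ in $\Affevcoclfp$, pulling back a smooth affine atlas $S \to \Y$ produces a smooth affine cover $S \times_\Y T \to T$ (affine because $\Y$ has affine diagonal), each of whose terms still maps smoothly to $\Y$. The affine case applied pointwise then computes the Kan extension $\H(T) \mmod$ as a totalization indexed by objects of $(\Affevcoclfp)_{/\Y, \smooth}$; by the universality of limits this identifies $\ShvCatH(\Y)$ with $\lim_{S \in (\Affevcoclfp)_{/\Y, \smooth}} \H(S) \mmod$, as desired.
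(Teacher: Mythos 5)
Your proposal shares the same opening reduction to the affine case, and both your argument and the paper's ultimately exploit the morphism $\Q \to \H$ of coefficient systems to leverage the known $\QCoh$-descent; but the engines are different. The paper avoids applying Barr--Beck--Lurie directly to the comparison functor $\alpha: \H(S)\mmod \to \lim_{[n]}\H(U_n)\mmod$. Instead it constructs the explicit left adjoint $\alpha^L$ (available because each $\Hdx{U_n}{S}$ has a left dual) and checks that $\alpha$ and $\alpha^L$ are \emph{each} fully faithful, hence mutually inverse. Both fully-faithfulness checks reduce, via Lemma \ref{lem:Hcorr_SUT} and base change, to the single cleanly isolated statement Lemma \ref{lem:help for descent}: for $\Y \in \Stkevcoclfp$ under $S$ with smooth affine Cech cover $U_\bullet \to S$, the functor $\uscolim{[n]\in\bDelta^\op}\ICoh_0(\Y^\wedge_{U_n}) \to \ICoh_0(\Y^\wedge_S)$ is an equivalence. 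The Barr--Beck argument (with the discontinuous right adjoints $(\Phi_n)^?$) and the $\Q \to \H$ reduction live \emph{inside} the proof of that helper lemma, not at the top level. This buys a clean separation of concerns: the delicate simplicial bookkeeping happens once, in a statement purely about $\ICoh_0$, and the theorem itself becomes a pair of short reductions.

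Your version runs Barr--Beck--Lurie directly. The conservativity step is handled correctly: the intertwining $\oblv_S \circ p^{*,\H} \simeq p^{*,\Q} \circ \oblv_T$ from \eqref{eqn:QCoh-H-tensor} together with $\Q$-descent and conservativity of $\oblv$ does give conservativity of $p^{*,\H}$, and ambidexterity of $\Hdx ST$ gives the required limit/colimit preservation. Your identification of the adjunction comonad with tensoring by $\ICoh_0((S\times S)^\wedge_{S\times_T S})$ via $\H^\Corr$ is also correct. But the step you flag as ``the main technical obstacle'' -- matching this comonad with the one encoded by the cosimplicial object $\H(S^{\bullet/T})\mmod$ -- is a genuine gap as written. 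In the $\infty$-categorical setting this is not a level-$0$ computation: you need the Beck--Chevalley condition to hold coherently at all levels of the Cech nerve so that the totalization is itself comonadic over $\H(S)\mmod$ with the \emph{same} comonad, and you need to produce that coherence, not just identify the underlying endofunctor. The ingredients you would use (Theorem \ref{thm:H-ambidextrous} and base change, plus iterating Lemma \ref{lem:Hcorr_SUT}) are indeed sufficient, but a worked-out version of this step would effectively reprove something close to Lemma \ref{lem:help for descent}, which the paper proves once and then reuses twice. So: correct in outline, genuinely different in structure, and with one real (if surmountable) hole.
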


\sssec{}

We will need a few preliminary results that will be stated and proven after having fixed some notation. 

Let $\phi: U \to S$ be a smooth cover in $\Affevcoclfp$ and let $U_\bullet$ be its associated Cech simplicial scheme.
For any arrow $[m] \to [n]$ in $\bDelta^\op$, denote by $\phi_{[m] \to [n]}: U_m \to U_n$ and $\phi_n: U_n \to S$ the induced (smooth) maps. 

\medskip

Now, let $\Y \in \Stkevcoclfp$ be a stack under $S$. The above maps induce functors 
$$
(\Phi_{[m] \to [n]})_{*,0}:
\ICoh_0(\Y^\wedge_{U_n}) 
\longto
\ICoh_0(  \Y^\wedge_S)
$$
$$
(\Phi_n)_{*,0}:
\ICoh_0( \Y^\wedge_{U_n}) 
\longto
\ICoh_0(  \Y^\wedge_S).
$$
We obtain a functor
\begin{equation} \label{eqn:from-ICoh-star-to-ICOH-shriek}
\eps: \uscolim{[n] \in \bDelta^\op} \,
\ICoh_0(\Y^\wedge_{U_n}) 
\longto
\ICoh_0(  \Y^\wedge_S).
\end{equation}

\begin{lem} \label{lem:help for descent}
The functor (\ref{eqn:from-ICoh-star-to-ICOH-shriek}) is an equivalence. 
\end{lem}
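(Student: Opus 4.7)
The plan is to convert the colimit in question into a limit via adjunction, and then derive it from the pullback square \eqref{diag:definition-ICoh0} defining $\ICoh_0$ together with standard smooth descent statements for $\ICoh$ and $\QCoh$.

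First, I would verify that each transition $(\Phi_{[m] \to [n]})_{*,0}$ in the simplicial diagram admits a continuous right adjoint $(\Phi_{[m] \to [n]})^{!,0}$, given by the restriction of the usual $!$-pullback on $\ICoh$. The key input is that a smooth morphism $\phi$ between bounded schemes is Gorenstein, which gives $\phi^! \circ \Upsilon \simeq \Upsilon \circ \phi^*$; hence $\phi^!$ preserves the defining condition of $\ICoh_0$ and restricts accordingly. The same persists after base-changing through $\Y \times_{\Y_\dR} (-)_\dR$. Invoking the standard $\DGCat$-principle that a colimit over left adjoints coincides with the limit over their right adjoints, this yields
$$
\uscolim{[n] \in \bDelta^\op} \ICoh_0(\Y^\wedge_{U_n}) \;\simeq\; \lim_{[n] \in \bDelta} \ICoh_0(\Y^\wedge_{U_n}),
$$
under which $\eps$ is carried into the canonical comparison $\eta : \ICoh_0(\Y^\wedge_S) \to \lim_n \ICoh_0(\Y^\wedge_{U_n})$ assembled from the $(!,0)$-pullbacks.

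Second, I would prove that $\eta$ is an equivalence by applying $\lim_n$ to the defining pullback square \eqref{diag:definition-ICoh0} attached to each $W = U_n$, with transition functors given by smooth $!$- (resp.\ $*$-) pullbacks. Since limits commute with pullback squares in $\DGCat$, the resulting square has three of its corners identified: $\lim_n \ICoh(\Y^\wedge_{U_n}) \simeq \ICoh(\Y^\wedge_S)$ by smooth $!$-descent for $\ICoh$ on the ind-inf-scheme $\Y^\wedge_S$; $\lim_n \ICoh(U_n) \simeq \ICoh(S)$ by smooth $!$-descent for $\ICoh$ on $S$; and $\lim_n \QCoh(U_n) \simeq \QCoh(S)$ by classical smooth $*$-descent for $\QCoh$. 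The universal property of pullbacks then identifies the remaining corner with $\ICoh_0(\Y^\wedge_S)$, which is exactly what is needed.

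The main obstacle I anticipate is verifying smooth $!$-descent for $\ICoh$ on the formal completion $\Y^\wedge_S$. The plan here is to observe that $\Y^\wedge_{U_\bullet} \to \Y^\wedge_S$ is obtained by pulling back the smooth \v{C}ech nerve $U_\bullet \to S$ through $\Y \times_{\Y_\dR} (-)_\dR$, which exhibits $\Y^\wedge_S \simeq |\Y^\wedge_{U_\bullet}|$ as prestacks, thereby reducing the claim to smooth $!$-descent for $\ICoh$ on ind-inf-schemes as developed in \cite[Chapter III.3]{Book}. Once this is granted, the remainder of the argument is a formal manipulation with adjoints and pullback squares in $\DGCat$.
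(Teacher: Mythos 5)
The argument has a genuine gap in its first step. You claim that the continuous right adjoint of $(\Phi_{[m] \to [n]})_{*,0}$ is $(\Phi_{[m] \to [n]})^{!,0}$, the restriction of $!$-pullback. This reverses the actual adjunction direction: for a smooth and not-necessarily-proper morphism $\phi$ (which is what the face maps of the Cech nerve are), $\phi^!$ is, up to a cohomological shift by $-2\dim_\phi$, the \emph{left} adjoint of $\phi_*^\ICoh$, not the right adjoint. The proper adjunction $\phi_*\dashv \phi^!$ is exactly what fails for non-proper maps, and the paper records the correct statement in Step~1 of Lemma~\ref{lem:Beck-Chevalley-for-ICoh-formal-complet-?}: $(\Phi_f)^{*,\ICoh} \simeq (\Phi_f)^![-2\dim_f]$ is the \emph{left} adjoint of $(\Phi_f)_*^\ICoh$. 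The actual right adjoint of $(\Phi_{[m] \to [n]})_{*,0}$ is the possibly discontinuous functor $(\Phi_{[m]\to[n]})^{?}$ introduced in the paper's proof, and there is no reason for it to coincide with $!$-pullback. Your Gorenstein observation ($\phi^!\circ\Upsilon\simeq\Upsilon\circ\phi^*$) correctly shows that $(\Phi)^{!,0}$ is well-defined, but it does not make it a right adjoint of $(\Phi)_{*,0}$.

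Consequently the colimit-to-limit conversion in Step~1 identifies the wrong totalization: $\colim_{\bDelta^\op}$ over the $(*,0)$-pushforwards equals the totalization over the $(\Phi)^{?}$'s, not the totalization over the $(!,0)$-pullbacks. Your Step~2 is, on its own, a valid argument that the $(!,0)$-pullback totalization recovers $\ICoh_0(\Y^\wedge_S)$, using \eqref{diag:definition-ICoh0} together with smooth $!$-descent for $\ICoh$ and smooth $*$-descent for $\QCoh$; but it identifies a different object from the colimit the lemma is about, so the proof is not complete. The paper instead stays with the $\Phi^{?}$-totalization and handles the discontinuity by Barr--Beck: it reduces the monadicity of $(\Phi_0)^{?}$ to the $\QCoh$-case via the conservative forgetful functor coming from $\Q\to\H$, and it verifies the monadic Beck--Chevalley condition through Lemma~\ref{lem:Beck-Chevalley-for-ICoh-formal-complet-?}, whose proof is exactly the passage to the \emph{left} adjoint $(\Phi_f)^{*,\ICoh}\simeq(\Phi_f)^![-2\dim_f]$, base-change there, and then a final right-adjoint pass. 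To salvage your approach you would have to independently show that the colimit along $(*,0)$-pushforwards agrees with the limit along $(!,0)$-pullbacks, which is a nontrivial ambidexterity statement and not a formal consequence of the adjoint-diagram principle.
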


\begin{proof}
\renc{\epsilon}{\eps}
Denote by 
$$
\ICoh_0(\Y^\wedge_S)_{[U,*]}
$$
the colimit category appearing in the LHS of (\ref{eqn:from-ICoh-star-to-ICOH-shriek}).
We will proceed in several steps.

\sssec*{Step 1}

We need to introduce an auxiliary category. 
Denote by $(\Phi_n)^?$ and $(\Phi_{[m] \to [n]})^{?}$ the possibly discontinuous right adjoints to $(\Phi_n)_{*,0}$ and $(\Phi_{[m] \to [n]})_{*,0}$.
Consider the cosimplicial DG category 
\begin{equation} \label{cat:cosimpl-category-wtih-?}
\bigt{
\ICoh_0( \Y^\wedge_{U_\bullet}),
(\Phi_{[m] \to [n]})^{?}
}
\end{equation}
and define $\ICoh_0(\Y^\wedge_S)^{[U,?]}$ to be its totalization.
Of course, 
$$
\ICoh_0(\Y^\wedge_S)^{[U,?]}
\simeq
\ICoh_0(\Y^\wedge_S)_{[U,*]}
$$ 
via the usual limit-colimit procedure. However, the former interpretation allows to write $\eps^R$ as the functor
$$
\epsilon^R: 
\ICoh_0(  \Y^\wedge_S)
\longto 
\ICoh_0(\Y ^\wedge_S)^{[U,?]}
$$
given by the limit of the $(\Phi_n)^?$'s.

\sssec*{Step 2}

We will prove the lemma by showing that $\epsilon^R$ is an equivalence. By a standard argument, it suffices to check two facts:
\begin{itemize}
\item
the (discontinuous) forgetful functor
$$
(\Phi_0)^?
:
\ICoh_0(\Y^\wedge_S)^{[U,?]}
\longto
\ICoh_0(\Y^\wedge_U)
$$
is monadic;

\item
the cosimplicial category (\ref{cat:cosimpl-category-wtih-?}) satisfies the monadic Beck-Chevalley condition.
\end{itemize}

\sssec*{Step 3}

In this step, we will prove the first item above.
To this end, we define 
$$
\QCoh(S)_{[U,*]} := \uscolim{[n], \phi_*} \; \QCoh(U_n)
\hspace{.4cm}
\QCoh(S)^{[U,?]} := \lim_{[n], \phi^?} \QCoh(U_n),
$$
where $(\phi_{[m] \to [n]})^?$ is the discontinuous right adjoint to $(\phi_{[m] \to [n]})_*$.
It is easy to see that there is a commutative square
$$
\begin{tikzpicture}[scale=1.5]
\node (00) at (0,0) {$\QCoh(S)^{[U,?]}$};
\node (10) at (4,0) {$\QCoh(U)$.};
\node (01) at (0,1.2) {$\ICoh_0(\Y^\wedge_S)^{[U,?]}$};
\node (11) at (4,1.2) {$\ICoh_0(\Y^\wedge_U)$};
\path[->,font=\scriptsize,>=angle 90]
(00.east) edge node[above] {$(\phi_0)^? := ((\phi_0)_*)^R$} (10.west);
\path[->,font=\scriptsize,>=angle 90]
(01.east) edge node[above] {$(\Phi_0)^?$} (11.west);
\path[->,font=\scriptsize,>=angle 90]
(01.south) edge node[right] { } (00.north);
\path[->,font=\scriptsize,>=angle 90]
(11.south) edge node[right] { } (10.north);
\end{tikzpicture} 
$$
where the vertical arrows are the structure (conservative) functors induced by the morphism $\Q \to \H$. Hence, it suffices to show that the bottom horizontal arrow is monadic, and the latter has been established in \cite[Section 8.1]{shvcat}.

\sssec*{Step 4}
It remains to verify the second item of Step 2 above. This is a particular case of the lemma below.
\end{proof}

\begin{lem} \label{lem:Beck-Chevalley-for-ICoh-formal-complet-?}
Consider a diagram
\begin{equation}
\nonumber
\begin{tikzpicture}[scale=1.5]
\node (U) at (0,0) {$U$ };
\node (V) at (1,0) {$V$};
\node (UU) at (0,1) {$U'$};
\node (VV) at (1,1) {$V'$};
\node (Y) at (2,0) {$Z$};
\path[->,font=\scriptsize,>=angle 90]
(UU.south) edge node[right] { $v'$ } (U.north);
\path[->,font=\scriptsize,>=angle 90]
(VV.south) edge node[right] {  $v$ } (V.north);
\path[->,font=\scriptsize,>=angle 90]
(UU.east) edge node[above] { $ h' $ } (VV.west);
\path[->,font=\scriptsize,>=angle 90]
(U.east) edge node[above] { $ h $ } (V.west);
\path[->,font=\scriptsize,>=angle 90]
(V.east) edge node[above] {  } (Y.west);
\end{tikzpicture}
\end{equation}
in $\Affevcoclfp$, where the square is cartesian with all maps smooth. We do not require that $V \to Z$ be smooth. Then the natural lax commutative diagram
\begin{equation} \label{diag: ? pullbacks for ICOH0 base-change}
\begin{tikzpicture}[scale=1.5]
\node (VZ) at (0,0) { $\ICoh_0(Z^\wedge_V)$ };
\node (VVZ) at (0,1) {$\ICoh_0(Z^\wedge_{V'})$};
\node (UZ) at (3,0) {$\ICoh_0(Z^\wedge_U)$ };
\node (UUZ) at (3,1) {$\ICoh_0(Z^\wedge_{U'})$ };
\path[<-,font=\scriptsize,>=angle 90]
(UUZ.south) edge node[right] { $(\Phi_{v'})^?$ } (UZ.north);
\path[<-,font=\scriptsize,>=angle 90]
(VVZ.south) edge node[right] { $(\Phi_{v})^?$ } (VZ.north);
\path[<-,font=\scriptsize,>=angle 90]
(VZ.east) edge node[above] { $(\Phi_{h})_{*,0}$ } (UZ.west);
\path[<-,font=\scriptsize,>=angle 90]
(VVZ.east) edge node[above] { $(\Phi_{h'})_{*,0}$ } (UUZ.west);
\end{tikzpicture}
\end{equation}
is commutative.\footnote{As usual, for $f: X \to V$ one of the above maps, we have denoted by $(\Phi_f)_{*,0}$ and $(\Phi_f)^?$ the induced functors.}
\end{lem}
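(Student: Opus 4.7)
The plan is to deduce the claimed base-change identity in $\ICoh_0$ from the standard base-change formula for $!$-pullbacks and $*$-pushforwards on $\ICoh$ of ind-inf-schemes, developed in \cite[Chapter III.3]{Book}. The passage between the two is mediated by the fully faithful inclusion $\iota : \ICoh_0 \hookrightarrow \ICoh$ and its (possibly discontinuous) right adjoint $\iota^R$. By construction of the $(\Phi_f)_{*,0}$'s in Theorem~\ref{thm:1-functoriality of ICohzero}, for each $f \in \{v,v',h,h'\}$ one has $\iota \circ (\Phi_f)_{*,0} \simeq (\Phi_f)_*^{\ICoh} \circ \iota$; passing to right adjoints yields the dual identity
\[
(\Phi_f)^? \circ \iota^R \simeq \iota^R \circ (\Phi_f)^!.
\]

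The key technical input is this: for a smooth map $v : V' \to V$ in $\Affevcoclfp$ and an arbitrary $V \to Z$, the functor $(\Phi_v)^! : \ICoh(Z^\wedge_V) \to \ICoh(Z^\wedge_{V'})$ preserves the $\ICoh_0$-subcategories. Indeed, smoothness of $v$ makes $v^!$ equal to $v^*$ twisted by an invertible object, so $v^! \circ \Upsilon_V$ factors through $\Upsilon_{V'}$; combined with the evident naturality of the structure maps $V \to Z^\wedge_V$, which gives a compatibility of $(\Phi_v)^!$ with $!$-pullback to the reduced bases, this shows that $(\Phi_v)^!$ sends $\ICoh_0(Z^\wedge_V)$ into $\ICoh_0(Z^\wedge_{V'})$. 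In particular, $(\Phi_v)^?$ is continuous and coincides with the restriction of $(\Phi_v)^!$ to $\ICoh_0$; the same applies to $v', h, h'$, all smooth by hypothesis.

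Next, I would invoke $\ICoh$-base-change for the square of formal completions over $Z$ induced by the original Cartesian square. This square is itself Cartesian as a diagram of ind-inf-schemes, because formal completion commutes with smooth base-change: $Z^\wedge_{U'} \simeq U' \times_V Z^\wedge_V \simeq Z^\wedge_U \times_{Z^\wedge_V} Z^\wedge_{V'}$. Base-change then gives $(\Phi_{v'})^! (\Phi_h)_*^{\ICoh} \simeq (\Phi_{h'})_*^{\ICoh} (\Phi_v)^!$. Chaining the three ingredients, for $\F \in \ICoh_0(Z^\wedge_U)$ one computes
\[
(\Phi_{v'})^? (\Phi_h)_{*,0}(\F) \simeq \iota^R (\Phi_{v'})^! (\Phi_h)_*^{\ICoh} \iota(\F) \simeq \iota^R (\Phi_{h'})_*^{\ICoh} (\Phi_v)^! \iota(\F) \simeq (\Phi_{h'})_{*,0} (\Phi_v)^? (\F),
\]
where the last isomorphism uses that $(\Phi_v)^! \iota(\F) \in \iota(\ICoh_0(Z^\wedge_{V'}))$ by the preceding paragraph, so that $\iota^R$ acts on it by undoing $\iota$.

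The main obstacle will be the smoothness input of the second paragraph, in particular the precise compatibility of $\Upsilon$ with $!$-pullback along smooth maps and the assertion that formal completions commute with smooth base-change in the indicated Cartesian form. Both are standard facts in the Gaitsgory--Rozenblyum formalism, but they need to be verified carefully within the $\evcoc$-and-$\lfp$ hypotheses built into $\Affevcoclfp$. Once those are in place, the rest reduces to the formal manipulation of adjunctions displayed above.
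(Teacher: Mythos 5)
There is a genuine gap at the start of your argument. Passing to right adjoints in $\iota \circ (\Phi_f)_{*,0} \simeq (\Phi_f)_*^{\ICoh} \circ \iota$ yields $(\Phi_f)^? \circ \iota^R \simeq \iota^R \circ \bigl((\Phi_f)_*^{\ICoh}\bigr)^R$, but you then identify $\bigl((\Phi_f)_*^{\ICoh}\bigr)^R$ with the $!$-pullback $(\Phi_f)^!$. This identification holds only when $\Phi_f$ is ind-proper, and for $f$ smooth of positive relative dimension it fails: $\Phi_f: Z^\wedge_X \to Z^\wedge_V$ is a base change of $f_\dR$, which is ind-proper only if $f^{\mathrm{red}}$ is proper. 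Concretely, the paper's identity $(\Phi_f)^{*,\ICoh} \simeq (\Phi_f)^![-2\dim_f]$ shows that, for $f$ smooth, $(\Phi_f)^!$ is (up to shift) the \emph{left} adjoint of $(\Phi_f)_*^{\ICoh}$; it cannot also be the right adjoint unless $\dim_f = 0$. The downstream conclusion that "$(\Phi_v)^?$ coincides with the restriction of $(\Phi_v)^!$ to $\ICoh_0$" therefore does not follow, and the final chain of isomorphisms, which rests on this identification together with $(!,*)$-base-change in $\ICoh$, collapses.

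The paper's proof uses the same smoothness hypothesis but stays on the correct side of the adjunction: for $h, h'$ smooth, the $*$-pullbacks $(\Phi_h)^{*,\ICoh}$, $(\Phi_{h'})^{*,\ICoh}$ are the \emph{left} adjoints of the corresponding $*$-pushforwards, are (shifted) $!$-pullbacks, and preserve the $\ICoh_0$-subcategories. One first proves commutativity of the square built from these $*$-pullbacks along $h, h'$ and the $(*,0)$-pushforwards along $v, v'$ (a consequence of base-change in the $\ICoh_0$-formalism of Theorem \ref{thm:1-functoriality of ICohzero}), and only then passes to right adjoints of the entire commutative square. That single passage to right adjoints turns the $*$-pullbacks along $h, h'$ into $(*,0)$-pushforwards and the $(*,0)$-pushforwards along $v, v'$ into the $?$-pullbacks, giving exactly diagram (\ref{diag: ? pullbacks for ICOH0 base-change}) without ever needing an explicit description of $(\Phi_v)^?$.
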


\begin{proof}

We proceed in steps here as well.

\sssec*{Step 1}

For $f: X \to V$ a map in $\Sch_\aft$, denote by $\Phi_f: Z^\wedge_X \to Z^\wedge_V$ the induced functor. Recall the equivalence
\begin{equation} \label{eqn:ICOh-formal-completion-as-tensor-product}
\ICoh(Z^\wedge_V) \usotimes{\Dmod(V)} \Dmod(X) 
\xto{\;\;\simeq\;\;} \ICoh(Z^\wedge_X)
\end{equation}
given by exterior tensor product (Lemma \ref{lem:tensor-product-ICOH-over-Dmod}).
One immediately checks that, under such equivalence, $(\Phi_{f})_*^\ICoh$ goes over to the functor
$$
\ICoh(Z^\wedge_V) \usotimes{\Dmod(V)} \Dmod(X) 
\xto{ \id \otimes f_{*,\dR}} 
\ICoh(Z^\wedge_V) \usotimes{\Dmod(V)} \Dmod(V) 
\simeq
\ICoh(Z^\wedge_V).
$$
Thus, whenever $f$ is smooth,  $(\Phi_f)_*^\ICoh$ admits a left adjoint which we denote by $(\Phi_f)^{*,\ICoh}$: this is obtained from the $\fD$-module $*$-pullback  $f^{*,\dR} \simeq f^{!, \dR}[-2 \dim_f]$ by tensoring up. Hence, for $f$ smooth, we have an equivalence
\begin{equation} \label{eqn:shift for * pullbacks and ! in smooth case}
(\Phi_f)^{*,\ICoh} \simeq (\Phi_f)^![-2 \dim_f].
\end{equation}

\sssec*{Step 2}

Applying the above to $h$ and $h'$, we see that the functors $(\Phi_h)^{*,\ICoh}$ and $(\Phi_{h'})^{*,\ICoh}$ preserve the $\ICoh_0$-subcategories. We thus have a diagram
\begin{equation}
\label{diag:*pullbacks for ICoh0 basechange smooth}
\begin{tikzpicture}[scale=1.5]
\node (VZ) at (0,0) { $\ICoh_0(Z^\wedge_V)$ };
\node (VVZ) at (0,1) {$\ICoh_0(Z^\wedge_{V'})$};
\node (UZ) at (3,0) {$\ICoh_0(Z^\wedge_U)$, };
\node (UUZ) at (3,1) {$\ICoh_0(Z^\wedge_{U'})$ };
\path[->,font=\scriptsize,>=angle 90]
(UUZ.south) edge node[right] { $(\Phi_{v'})_{*,0}$ } (UZ.north);
\path[->,font=\scriptsize,>=angle 90]
(VVZ.south) edge node[right] { $(\Phi_{v})_{*,0}$ } (VZ.north);
\path[->,font=\scriptsize,>=angle 90]
(VZ.east) edge node[above] { $(\phi_{h})^{*, \ICoh}$ } (UZ.west);
\path[->,font=\scriptsize,>=angle 90]
(VVZ.east) edge node[above] { $(\phi_{h'})^{*,\ICoh}$ } (UUZ.west);
\end{tikzpicture}
\end{equation}
which is immediately seen commutative thanks to (\ref{eqn:shift for * pullbacks and ! in smooth case}) and base-change for $\ICoh_0$. 

\sssec*{Step 3}

We leave it to the reader to check that the horizontal arrows in the commutative diagram (\ref{diag:*pullbacks for ICoh0 basechange smooth}) are left adjoint to the horizontal arrows of (\ref{diag: ? pullbacks for ICOH0 base-change}). Hence, we obtain the desired assertion by passing to the diagram right adjoint to (\ref{diag:*pullbacks for ICoh0 basechange smooth}).
\end{proof}

\sssec{}

Let us finally prove Theorem \ref{thm:smooth-descent-for-ShvCatH}.

\begin{proof}[Proof of Theorem \ref{thm:smooth-descent-for-ShvCatH}.]

It suffices to prove that the functor $\ShvCatH: (\Affevcoclfp)^\op \to \inftyCat$ satisfies smooth descent.
For $S \in\Affevcoclfp$, let $f: U \to S$ be a smooth cover and $U_\bullet$ the corresponding Cech resolution. Denote by $f_n: U_n \to S$ the structure maps. 
We are to show that the natural functor
$$
\alpha: \H(S) \mmod \longto \lim_{[n] \in \bold\Delta}
\H(U_n) \mmod,
\hspace{.4cm}
\C \squigto \{\Hdx{U_n}{S} \otimes_{\H(S)} \C  \}_{n \in \bDelta }
$$
is an equivalence. 

\medskip

Note that $\alpha$ admits a left adjoint, $\alpha^L$, which sends 
$$
\{\C_n\}_{n \in \bold\Delta} 
\squigto 
\uscolim{[n] \in \bold\Delta^\op} 
\Bigt{
\Hsx S{U_n} \usotimes{\H(U_n)} \C_n
}
,
$$
where we have used the left dualizability of the $\Hdx {U_n}S$.
We will show that $\alpha$ and $\alpha^L$ are both fully faithful.

\medskip

For $\alpha$, it suffices to verify that the natural functor $\alpha^L \circ \alpha(\H(S)) \to \H(S)$ is an equivalence. 
Such functor is readily rewritten as
$$
\eps: \uscolim{[n] \in \bold\Delta^\op} 
\Bigt{
\Hsx S{U_n} \usotimes{\H(U_n)} \Hdx {U_n}S
}
\longto
\H(S).
$$ 
By Lemma \ref{lem:Hcorr_SUT}, our claim is exactly the content of Lemma \ref{lem:help for descent} applied to $\Y = S \times S$.

\medskip

Next, we prove $\alpha^L$ is fully faithful: it suffices to check that the natural functor
$$
\Hdx US \otimes_{\H(S)}
\uscolim{[n] \in \bold\Delta^\op} 
\Bigt{\Hsx S{U_n} \usotimes{\H(U_n)} \C_n}
\longto
\C_0
$$
is an equivalence. Using base-change for $\H$, this reduces to proving that 
$$
\uscolim{[n] \in \bold\Delta^\op} 
\Hcorr {U}{U\times_S U_n}U
\longto
\H(U)
$$ 
is an equivalence. This is again an instance of Lemma \ref{lem:help for descent}.
\end{proof}

\ssec{Localization and global sections} \label{ssec:localiz-global-sect-H-affineness}

Let $\Y \in \Stkevcoclfp$.  In this section, we equip $\ShvCatH(\Y)$ with a canonical object that we denote $\H_{/\Y}$. We then use such object to define a fundamental adjunction and the notion of $\H$-affineness.

\sssec{}

For $S \in \Affevcoclfp$ mapping to $\Y$, consider the left $\H(S)$-module
$$
\Hdx S\Y := \Hgeomdx S\Y = \ICoh_0( (S \times \Y)^\wedge_\Y).
$$
Let $U \to \Y$ be an affine atlas with induced Cech complex $U_\bullet$. By \cite{centerH}, there is a natural left $\H(S)$-linear equivalence
\begin{equation} \label{eqn:tensor up ICOH0 with QCoh for atlas}
\ICoh_0((S \times \Y)^\wedge_S)
\usotimes{\QCoh(\Y)}
\QCoh(U_n)
\simeq
\ICoh_0( (S \times U_n)^\wedge_{S \times_\Y  U_n})
\simeq
\Hcorr S {S \times_\Y U}U
\end{equation}
from which we obtain a left $\H(S)$-linear equivalence
\begin{equation} \label{eqn:writing Hdx SY}
\Hdx S\Y
\simeq
\lim_{U \in (\Affevcoclfp)_{/\Y, \smooth}}
\Hcorr S {S \times_\Y U}U,
\end{equation}
where the limit on the RHS is formed using the $(!,0)$-pullbacks. 
We now show that the same category $\Hdx S\Y$ can be expressed as a colimit.

\begin{lem} \label{lem:Hdx SY as a colimit along star-zero push}
Let $S$, $\Y$, $U_\bullet$ be as above. Then the natural functor
$$
\uscolim{[n] \in \bDelta^\op} 
\ICoh_0( (S \times U_n)^\wedge_{S \times_\Y U_n})
\longto 
\ICoh_0((S \times \Y)^\wedge_S)
$$
given by the $(*,0)$-pushforward functors is an equivalence. 
\end{lem}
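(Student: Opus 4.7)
The plan is to reduce the claim to smooth descent for $\QCoh$ on $\Y$, combined with the local-to-global formula \eqref{eqn:tensor up ICOH0 with QCoh for atlas}. For each $[n]$, that formula will provide a canonical $\H(S)$-linear equivalence
$$
\ICoh_0((S \times \Y)^\wedge_S) \usotimes{\QCoh(\Y)} \QCoh(U_n) \simeq \ICoh_0((S \times U_n)^\wedge_{S \times_\Y U_n}).
$$
First I will verify that these equivalences are natural in the simplicial variable, and that under them, the $(*,0)$-pushforward between $\ICoh_0$-categories induced by a face map $U_n \to U_m$ corresponds to the functor obtained by tensoring up the $\QCoh$-pushforward along $U_n \to U_m$.

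Granting this naturality, I will pass to the colimit over $[n] \in \bDelta^\op$. Using the continuity of $\ICoh_0((S \times \Y)^\wedge_S) \usotimes{\QCoh(\Y)} -$, the displayed equivalences assemble into
$$
\uscolim{[n] \in \bDelta^\op} \ICoh_0((S \times U_n)^\wedge_{S \times_\Y U_n}) \simeq \ICoh_0((S \times \Y)^\wedge_S) \usotimes{\QCoh(\Y)} \Bigl( \uscolim{[n] \in \bDelta^\op} \QCoh(U_n) \Bigr).
$$
To conclude, I will invoke smooth descent for $\QCoh$ in its colimit form: since $U \to \Y$ is a smooth cover of a stack with affine diagonal, the natural functor $\uscolim{[n]\in\bDelta^\op} \QCoh(U_n) \to \QCoh(\Y)$ (whose structure maps are the $*$-pushforwards) is an equivalence; this is a standard consequence of smooth descent for $\QCoh$ phrased dually, and is recorded in \cite{shvcat}. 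Substituting gives $\ICoh_0((S \times \Y)^\wedge_S) \usotimes{\QCoh(\Y)} \QCoh(\Y) \simeq \ICoh_0((S \times \Y)^\wedge_S)$, as desired.

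The main obstacle in this approach is the naturality of the family of equivalences \eqref{eqn:tensor up ICOH0 with QCoh for atlas} in the simplicial variable, and specifically the identification of the structure functors: one must match the $(*,0)$-pushforwards along the maps $(S \times U_n)^\wedge_{S \times_\Y U_n} \to (S \times U_m)^\wedge_{S \times_\Y U_m}$ on one side with the $\QCoh$-pushforwards (tensored up from $\QCoh(\Y)$) on the other. This should follow by tracing the construction of the equivalence in \cite{centerH}, together with the base-change properties of the $(*,0)$-pushforward encoded in the $\2$-functoriality of $\ICoh_0$ established in Theorem \ref{thm:1-functoriality of ICohzero}.
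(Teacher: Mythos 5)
Your proof is correct and follows essentially the same route as the paper: apply the equivalence \eqref{eqn:tensor up ICOH0 with QCoh for atlas} termwise, pull the colimit through the tensor product, and invoke the fact that $\uscolim{[n]}\QCoh(U_n)\to\QCoh(\Y)$ (with $*$-pushforward structure maps) is a $\QCoh(\Y)$-linear equivalence, for which the paper cites \cite[Proposition 6.2.7]{shvcat}. The naturality-in-$[n]$ check that you flag as the main obstacle is indeed the one implicit step; the paper leaves it unremarked, but your sketch of how to verify it is the right one.
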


\begin{proof}
Under the equivalence (\ref{eqn:tensor up ICOH0 with QCoh for atlas}), the LHS becomes
$$
\uscolim{[n] \in \bDelta^\op} 
\Bigt{ 
\ICoh_0((S \times \Y)^\wedge_S)
\usotimes{\QCoh(\Y)}
\QCoh(U_n)
}
\simeq
\ICoh_0((S \times \Y)^\wedge_S)
\usotimes{\QCoh(\Y)}
\Bigt{
\uscolim{[n] \in \bDelta^\op} 
 \QCoh(U_n)}
,
$$
where the colimit on the RHS is taken with respect to the $*$-pushforward functors.
It suffices to recall again that the obvious functor
$$
\uscolim{[n] \in \bDelta^\op} \QCoh(U_n)
\longto
\QCoh(\Y)
$$
is a $\QCoh(\Y)$-linear equivalence, see \cite[Proposition 6.2.7]{shvcat}.
\end{proof}

\begin{lem} \label{lem:definition of H over Y}
The collection $\{\Hdx S\Y \}_{S \in (\Affevcoclfp)_{/\Y}}$ assembles to an object of $\ShvCatH(\Y)$ that we shall denote by $\H_{/\Y}$.
\end{lem}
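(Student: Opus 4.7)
My plan is to construct $\H_{/\Y}$ via the lax $\2$-functor $\Hgeom$ of Section \ref{ssec:2-cat functoriality Hgoem} and then check that its structural morphisms are equivalences. Specifically, for each $S \to T$ in $(\Affevcoclfp)_{/\Y}$ with composite $S \to \Y$, the lax composition in $\Hgeom$ supplies a canonical $\H(S)$-linear map
\[
\mu_{S \to T} : \Hdx ST \usotimes{\H(T)} \Hdx T\Y \longrightarrow \Hdx S\Y,
\]
and the lax $\2$-functor structure further packages coherent associativity for longer composable strings $S_0 \to S_1 \to \cdots \to S_k \to \Y$ under $\Y$. The lemma therefore reduces to showing that every $\mu_{S \to T}$ is an equivalence: once this is verified, the lax section becomes a strict cocartesian section of $\int \H(-) \mmod$ over $(\Affevcoclfp)_{/\Y}^\op$, hence a bona fide object of $\ShvCatH(\Y) \simeq \lim_{S \in (\Affevcoclfp)_{/\Y}} \H(S) \mmod$.

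To verify that $\mu_{S \to T}$ is an equivalence, I would fix a smooth affine atlas $U \to \Y$ with Cech simplicial scheme $U_\bullet$. Applying Lemma \ref{lem:Hdx SY as a colimit along star-zero push} to both $T$ and $S$ expresses
\[
\Hdx T\Y \simeq \uscolim{[n] \in \bDelta^\op} \Hcorr T{T \times_\Y U_n}{U_n},
\qquad
\Hdx S\Y \simeq \uscolim{[n] \in \bDelta^\op} \Hcorr S{S \times_\Y U_n}{U_n}
\]
as sifted colimits in left $\H(T)$-, resp.\ $\H(S)$-modules, formed along the $(*,0)$-pushforwards. Since $\Hdx ST \otimes_{\H(T)} -$ is a left adjoint, it commutes with the left-hand colimit, reducing the question termwise to showing
\[
\Hdx ST \usotimes{\H(T)} \Hcorr T{T \times_\Y U_n}{U_n}
\; \xrightarrow{\;\simeq\;} \;
\Hcorr S{S \times_\Y U_n}{U_n}.
\]
But this displayed map is nothing other than the strict composition in $\Corr(\Affevcoclfp)_{\all;\evcoc}$ of the two $1$-morphisms $[S \xleftarrow{=} S \to T]$ and $[T \leftarrow T \times_\Y U_n \to U_n]$, whose composite correspondence is $[S \leftarrow S \times_\Y U_n \to U_n]$. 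It is therefore an equivalence by the strictness of $\H^\Corr$ on affine correspondences established in Corollary \ref{cor:H-geom-strict-on-Corr Aff}.

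The main obstacle I anticipate is identifying the composition map $\mu_{S \to T}$, produced by the lax $\2$-functor $\Hgeom$ on stacks, with the strict composition coming from $\H^\Corr$ on affine correspondences. Both maps arise ultimately from convolution of $\ICoh_0$-sheaves on formal completions, so the identification should follow from unwinding the definitions, but some care is required to track the coherence data encoding associativity for strings of length $\geq 3$. A secondary bookkeeping point is that the intermediate schemes $T \times_\Y U_n$ and $S \times_\Y U_n$ must actually lie in $\Affevcoclfp$; this holds because $U \to \Y$ is a smooth affine atlas (so each $U_n \to \Y$ is smooth and affine, using that $\Y$ has affine diagonal), whence base-change along $S \to \Y$ or $T \to \Y$ preserves affineness, boundedness, and local finite presentation.
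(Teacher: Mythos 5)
Your argument is correct and reaches the same termwise reduction as the paper, but it gets there by a genuinely different (and arguably cleaner) route. The paper uses the \emph{limit} description (equation \eqref{eqn:writing Hdx SY}), $\Hdx S\Y \simeq \lim_{U} \Hcorr S{S\times_\Y U}U$, and must implicitly invoke the ambidexterity of the bimodule $\Hdx{S'}{S}$ to commute $\Hdx{S'}{S}\otimes_{\H(S)}-$ past the limit before applying base-change for $\H$ termwise. You instead invoke the \emph{colimit} description (Lemma \ref{lem:Hdx SY as a colimit along star-zero push}), so commutation with the tensor product is automatic, and you then appeal to strictness of $\H$ on affine correspondences. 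Both routes ultimately rest on the same ingredients — base-change (Theorem \ref{thm:H-ambidextrous}) and strictness of $\H$ on $\Affevcoclfp$ (Theorem \ref{thm:H-strict-on-schemes}) — so the two proofs are closely parallel; yours trades the ambidexterity-commutes-with-limits step for the free commutation of tensor with colimits.

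One technical point deserves care in the last step. You identify the termwise map with a composition in $\Corr(\Affevcoclfp)_{\all;\evcoc}$ and then cite Corollary \ref{cor:H-geom-strict-on-Corr Aff}, but the \emph{right} legs of $[S\xleftarrow{=}S\to T]$ and $[T\leftarrow T\times_\Y U_n\to U_n]$ — namely $S\to T$ and the base-change of $T\to\Y$ — are not bounded in general, so these are not $1$-morphisms of $\Corr(\Affevcoclfp)_{\all;\evcoc}$ as stated. Their \emph{left} legs are bounded ($\id_S$, and the base-change of the smooth affine $U_n\to\Y$), so they do live in the mirror category $\Corr(\Affevcoclfp)_{\evcoc;\all}$; by ambidexterity (Remark \ref{rem:ambidexterity-comment} and Theorem \ref{thm:H-ambidextrous}) the restriction of $\Hgeom$ to that category is strict as well, so your conclusion stands. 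Alternatively, and perhaps more economically, the termwise isomorphism
\[
\Hdx ST \usotimes{\H(T)} \Hcorr T{T\times_\Y U_n}{U_n}\;\simeq\;\Hcorr S{S\times_\Y U_n}{U_n}
\]
can be obtained without any reference to $\Corr$-composition: first apply base-change for the Cartesian square over $S\to T$ and the bounded leg $T\times_\Y U_n\to T$ to rewrite $\Hdx ST\otimes_{\H(T)}\Hsx T{T\times_\Y U_n}\simeq\Hcorr S{S\times_\Y U_n}{T\times_\Y U_n}$, and then use strictness of $\H$ on $\Affevcoclfp$ to collapse $\Hdx{S\times_\Y U_n}{T\times_\Y U_n}\otimes_{\H(T\times_\Y U_n)}\Hdx{T\times_\Y U_n}{U_n}\simeq\Hdx{S\times_\Y U_n}{U_n}$. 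This is exactly the computation the paper performs inside the limit.
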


\begin{proof}
We need to prove that, for $S' \to S$ a map in $\Affevcoclfp$, the canonical arrow
$$
\Hdx {S'}{S}
\usotimes{\H(S)}
\Hdx S\Y
\longto
\Hdx{S'}\Y
$$
is an equivalence. We use the canonical left $\H(S)$-linear equivalence
$$
\Hdx S\Y
:=
\ICoh_0( (S \times \Y)^\wedge_\Y)
\simeq
\lim_{U \in (\Affevcoclfp)_{/\Y, \smooth}}
\Hcorr S {S \times_\Y U}U,
$$
discussed above. Since the left leg of each correspondence above is smooth, base-change for $\H$ can be applied to yield
$$
\Hdx {S'}{S}
\usotimes{\H(S)}
\Hdx S\Y
\simeq
\Hdx {S'}{S}
\usotimes{\H(S)}
\lim_{U \in (\Affevcoclfp)_{/\Y, \smooth}}
\Hcorr S {S \times_\Y U}U
\simeq
\lim_{U \in (\Affevcoclfp)_{/\Y, \smooth}}
\Hcorr {S'} {S' \times_\Y U}U.
$$
The latter is $\Hdx {S'}\Y$, as desired.
\end{proof}

\sssec{}

Set $\H(\Y) := \Hgeom(\Y)$. Recall then that the left $\H(S)$-module category $\Hdx S \Y := \Hgeomdx S\Y$ is actually an $(\H(S), \H(\Y))$-bimodule, where both actions are given by convolution. Since $\Hdx S \Y$ is dualizable as a DG category and the monoidal DG categories $\H(S)$ and $\H(\Y)$ are both \emph{very rigid}, Corollary \ref{cor:very rigid has ambidextrous modules} implies that $\Hdx S \Y$ is ambidextrous.

\medskip

By Lemma \ref{lem:Hdx SY as a colimit along star-zero push} and the ambidexterity of $\H$, its (right, as well as left) dual is easily seen to be the obvious $(\H(\Y), \H(S))$-bimodule
$$
\Hsx \Y S 
:=
\Hgeomsx \Y S
:=
\ICoh_0((\Y \times S)^\wedge_S).
$$

\sssec{}

We can now introduce the fundamental adjunction
\begin{equation} \label{eqn:adj-Loc-bGamma-H}
\begin{tikzpicture}[scale=1.5]
\node (a) at (0,1) {$\bLoc^\H_\Y: \H(\Y) \mmod $};
\node (b) at (3,1) {$\ShvCatH(\Y): \bGamma^\H_\Y$.};
\path[->,font=\scriptsize,>=angle 90]
([yshift= 1.5pt]a.east) edge node[above] { } ([yshift= 1.5pt]b.west);
\path[->,font=\scriptsize,>=angle 90]
([yshift= -1.5pt]b.west) edge node[below] {} ([yshift= -1.5pt]a.east);
\end{tikzpicture}
\end{equation}
The left adjoint sends $\C \in \H(\Y) \mmod$ to the $\H$-sheaf of categories represented by 
$$
\{ \Hdx S\Y \usotimes{\H(\Y)} \C  \}_{{S \in  (\Affevcoclfp){/\Y} } }:
$$ 
this makes sense in view of Lemma \ref{lem:definition of H over Y}. The right adjoint sends $\C  = \{\C_S\}_S \in \ShvCatH(\Y)$ to the $\H(\Y)$-module
\begin{equation} \label{eqn:formula for bGamma}
\bGamma^\H_\Y (\C)
=
\lim_{S \in  ((\Affevcoclfp)_{/\Y, \smooth})^\op } 
\Hsx \Y S
\usotimes{\H(S)}
\C_S,
\end{equation}
where we have used Theorem \ref{thm:smooth-descent-for-ShvCatH}.

\medskip

We say that $\Y$ is \emph{$\H$-affine} if the adjoint functors (\ref{eqn:adj-Loc-bGamma-H}) are mutually inverse equivalences.

\begin{rem} \label{rem:bGamma via homs out of basic object H over Y}
Note that $\bGamma^\H_\Y (\C)$ can be computed as 
$$
\HHom_{\ShvCatH(\Y)}(\H_{/\Y}, \C),
$$
where $\ShvCatH(\Y)$ is regarded as an $\2$-category and $\HHom$ denotes the $\1$-category of $1$-arrows in an $\2$-category.
\end{rem}

\ssec{Pushforwards and the Beck-Chevalley conditions}

For any arrow $f: \Y \to \Z$ in $\Stkevcoclfp$, the functor $f^{*,\H}$ commutes with colimits, whence it admits a right adjoint, denoted by $f_{*,\H}$. Moreover, since $\H$ satisfies the left Beck-Chevalley condition, $f^{*,\H}$ commutes with limits as well, whence it also admits a left adjoint, denoted by $f_{!,\H}$.

In this section we give formulas for these pushforward functors and discuss base-change for $\ShvCatH$.

\sssec{} \label{sssec: star pushforward shvcatH}

Let $f: \Y \to \Z$ be an arrow in $\Stkevcoclfp$. For $\C \in \ShvCatH(\Y)$, we will compute the $\H$-sheaf of categories $f_{*,\H}(\C)$. 
By Theorem \ref{thm:smooth-descent-for-ShvCatH}, it suffices to specify the value of $f_{*,\H}(\C)$ on affine schemes $U \in \Affevcoclfp$ mapping \emph{smoothly} to $\Z$.
For each such $\phi_{U \to \Y}: U \to \Y$, consider the $\H(U)$-module
$$
\E_U := \lim_{V \in ((\Affevcoclfp)_{/U \times_\Z \Y, \smooth})^\op}
\Hsx UV \usotimes{\H(V)} \C_V.
$$
The limit is well-defined thanks to the left Beck-Chevalley condition, that is, exploiting the $\2$-functor $\H^{\LBeck}$ of Section \ref{sssec:summary of B-Chev for H}. Next, using the right Beck-Chevalley condition, one readily checks that the natural functor
$$
\Hdx {U'}{U} \usotimes{\H(U)} \E_U 
\longto
\E_{U'}
$$
is an equivalence for any smooth map $U' \to U$ in $\Aff$.  
This guarantees that $\{\E_U\}_{U \in (\Affevcoclfp)_{/\Z, \smooth}}$ is a well-defined object of $\ShvCatH(\Z)$.
We leave it to the reader to verify that such object in the sought-for pushforward $f_{*,\H}(\C)$. 

\sssec{} \label{sssec: !-push for H}

Similarly, the $!$-pushforward of $\C$ is written as 
$$
f_{!,\H}(\C) \simeq \{\D_U\}_{U \in (\Affevcoclfp)_{/\Z, \smooth}},
$$
where $\D_U$ is defined, using the $\2$-functor $\H^{\RBeck}$, as 
$$
\D_U 
:= 
\uscolim{V \in (\Affevcoclfp)_{/U \times_\Z \Y, \smooth}}
\Hsx UV \usotimes{\H(V)} \C_V.
$$

\sssec{}

It is then \emph{tautological} to verify that the $\ShvCatH$ has the right Beck-Chevalley condition with respect to bounded arrows, that is, the assignment 
$$
[\X \xleftarrow{h} \W \xto{v} \Y]
\squigto 
v_{*,\H} \circ h^{*,\H}
$$
upgrades to an $\2$-functor
\begin{equation} \label{eqn:shvcatA-corr-**}
\ShvCatH_{*,*}:
\Corr(\Stk_\type)_{\evcoc;\all}^{\evcoc, 2-\op}
\longto 
\inftyCat
\end{equation}
where $\inftyCat$ is regarded here as an $\2$-category.
Symmetrically, the assignment
$$
[\X \xleftarrow{h} \W \xto{v} \Y]
\squigto 
v_{!,\H} \circ h^{*,\H}
$$
upgrades to an $\2$-functor
\begin{equation} \label{eqn:shvcatA-corr-!*}
\ShvCatH_{!,*}:
\Corr(\Stk_\type)_{\all;\evcoc}^{\evcoc}
\longto 
\inftyCat.
\end{equation}
\begin{rem}
Combining the two functors together, we deduce that we have base-change isomorphisms
$$
g^{*,\H} \circ f_{*,\H}
\simeq
F_{*,\H} \circ G^{*,\H},
\hspace{.4cm}
g^{*,\H} \circ f_{!,\H}
\simeq
F_{!,\H} \circ G^{*,\H},
$$
as soon as at least one between $f$ and $g$ is bounded. 
\end{rem}

\begin{rem}
We will show later that $!$- and $*$-pushforwards of $\H$-sheaves of categories are naturally identified, see Corollary \ref{cor:ambidexterity}.
\end{rem}

\ssec{Extension/restriction of coefficients}

In this section, we relate $\H$-sheaves of categories with the more familiar quasi-coherent sheaves of categories developed in \cite{shvcat}. The latter are the ones obtained from the coefficient system $\Q$.

\sssec{}

The relation between $\ShvCatH$ and $\ShvCatQ$ is induced by the map $\Q \to \H$ of coefficient systems on $\Affevcoclfp$.
Precisely, $\Q \to \H$ induces a natural transformation
$$
{\oblv}^{\Q \to \H} : \ShvCatH \implies \ShvCatQ
$$
between functors out of $(\Stkevcoclfp)^\op$. In other words, this means that ${\oblv}^{\Q \to \H}$ is compatible with the pullback functors.

\begin{lem}
For $\Y \in \Stkevcoclfp$, the functor $\oblv_\Y^{\Q \to \H}: \ShvCatH(\Y) \to \ShvCatQ(\Y)$ is conservative and admits a left adjoint, which we will call $\ind_\Y^{\Q \to \H}$.
\end{lem}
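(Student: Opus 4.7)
The plan is to treat conservativity directly from the pointwise picture on affines, and to produce the left adjoint abstractly using the adjoint functor theorem for presentable $\infty$-categories, rather than by a naive pointwise construction.

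For conservativity, recall that on each $S \in \Affevcoclfp$ the functor $\oblv_S^{\Q \to \H} : \H(S) \mmod \to \QCoh(S) \mmod$ is restriction of scalars along the monoidal functor $\QCoh(S) \to \H(S)$, and hence does not alter the underlying DG category; in particular it is conservative. Since $\ShvCatH(\Y)$ is by construction the limit of the $\H(S) \mmod$ along pullbacks, and $\oblv_\Y^{\Q \to \H}$ is the pointwise forgetful functor, any $\C = \{\C_S\}_S$ with $\oblv_\Y^{\Q \to \H}(\C) = 0$ satisfies $\C_S = 0$ for every $S \to \Y$, hence $\C = 0$.

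For the existence of the left adjoint, I invoke the adjoint functor theorem. Both $\ShvCatH(\Y)$ and $\ShvCatQ(\Y)$ are presentable, arising as limits over $((\Affevcoclfp)_{/\Y})^\op$ of presentable DG module categories along accessible (indeed colimit-preserving) pullback functors. The functor $\oblv_\Y^{\Q \to \H}$ is the limit of the pointwise forgetful functors $\oblv_S^{\Q \to \H}$, each of which is right adjoint to $\ind_S^{\Q \to \H} := \H(S) \usotimes{\QCoh(S)} -$; accordingly, each $\oblv_S^{\Q \to \H}$ preserves limits and is accessible, and these properties pass to the limit. The adjoint functor theorem then yields the desired left adjoint, which we name $\ind_\Y^{\Q \to \H}$.

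The main subtlety, and the very reason for invoking the abstract machinery, is that $\ind_\Y^{\Q \to \H}$ does \emph{not} admit a naive pointwise description $\{\ind_S^{\Q \to \H}(\D_S)\}_S$; the pointwise adjunctions simply do not satisfy a Beck-Chevalley condition against pullbacks. Explicitly, for $f : S \to T$ in $\Affevcoclfp$ one finds $\ind_S^{\Q \to \H} \circ f^{*,\Q}(\D_T) \simeq \H(S) \usotimes{\QCoh(T)} \D_T$, whereas $f^{*,\H} \circ \ind_T^{\Q \to \H}(\D_T) \simeq \Hdx ST \usotimes{\QCoh(T)} \D_T$, and already in the extreme case $T = \pt$ the underlying DG categories $\H(S)$ and $\Hdx S\pt \simeq \QCoh(S)$ differ. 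Consequently the left adjoint produced by the abstract argument is essentially non-explicit at this level of generality, and any concrete behaviour of $\ind_\Y^{\Q \to \H}$ on specific sheaves of interest will have to be unpacked later, after the $\H$-affineness theorem is available.
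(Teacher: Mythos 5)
Your argument is correct and is essentially the paper's: the paper disposes of conservativity as "obvious" (your pointwise observation that restriction of scalars does not alter the underlying DG category, applied term by term in the limit, is exactly what is meant), and obtains the left adjoint from the adjoint functor theorem, noting only that $\oblv_\Y^{\Q \to \H}$ commutes with limits. Your additional diagnosis of why the naive pointwise formula fails — the non-equivalence $\H(S) \not\simeq \Hdx ST$, seen already for $T = \pt$ — is a correct and instructive remark that the paper leaves implicit. One small caveat on the closing sentence: the paper does give an explicit formula immediately after the lemma, namely
$$
\ind_\Y^{\Q \to \H} (\C)
\simeq
\uscolim{S \in (\Affevcoclfp)_{/\Y, \smooth} }
(\phi_{S \to \Y})_{!,\H} \bigt{ \H(S) \otimes_{\QCoh(S)} \C_S },
$$
which uses the $!$-pushforwards on $\ShvCatH$ (available already from the left Beck--Chevalley property of $\H$, before $\H$-affineness); so the left adjoint is not so much "non-explicit" as "not pointwise." The formula is the standard colimit expression for the left adjoint of a functor assembled as a limit of right adjoints, rather than a term-by-term induction, which is exactly the point your Beck--Chevalley computation makes.
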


\begin{proof}
Conservativity is obvious.
The existence of the left adjoint is clear thanks to the fact that $\oblv_\Y^{\Q \to \H}$ commutes with limits.
\end{proof}

\sssec{}

The functor
$$
\ind_\Y^{\Q \to \H}: 
\ShvCatQ(\Y) 
\longto 
\ShvCatH(\Y)
$$ 
is really easy to describe explicitly. Namely,
$$
\ind_\Y^{\Q \to \H} (\C)
\simeq
\uscolim{S \in (\Affevcoclfp)_{/\Y, \smooth} }
(\phi_{S \to \Y})_{!,\H} \bigt{ \H(S) \otimes_{\QCoh(S)} \C_S }.
$$

\begin{lem} \label{lem:computation of induction of Q over Y}
The induction functor $\ind_{\Y}^{\Q \to \H}:
\ShvCat^\Q(\Y) \to \ShvCatH(\Y)$ sends $\Q_{/\Y}$ to $\H_{/\Y}$.
\end{lem}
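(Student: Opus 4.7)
The plan is to verify the equivalence affine-locally. By smooth descent for $\ShvCatH$ (Theorem \ref{thm:smooth-descent-for-ShvCatH}), it suffices to show that for each smooth affine $T \to \Y$, the restriction $(\ind_\Y^{\Q \to \H}(\Q_{/\Y}))_T$ is equivalent to $(\H_{/\Y})_T = \Hdx T\Y$. The canonical map in this direction is obtained by adjunction from the $\ShvCatQ(\Y)$-morphism $\Q_{/\Y} \to \oblv_\Y^{\Q \to \H}(\H_{/\Y})$ whose $S$-component sends $\O_S \in \QCoh(S)$ to the canonical compact generator $\Delta_*^\ICoh(\Upsilon_S(\O_S)) \in \Hdx S\Y$, with $\Delta: S \hookrightarrow (S \times \Y)^\wedge_S$; the naturality in $S$ is standard base change for these canonical objects.

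I would next unpack the LHS. Using the explicit formula for $\ind^{\Q \to \H}$ together with $(\Q_{/\Y})_S = \QCoh(S)$ and $\H(S) \otimes_{\QCoh(S)} \QCoh(S) = \H(S)$, we may write
$$\ind_\Y^{\Q \to \H}(\Q_{/\Y}) \simeq \uscolim{S \in (\Affevcoclfp)_{/\Y, \smooth}} (\phi_{S \to \Y})_{!,\H}(\H_{/S}),$$
where $\H_{/S}$ denotes the basic $\H$-sheaf on $S$. Since $(\,\cdot\,)_T$ preserves colimits (being both a left and right adjoint), the formula for $!$-pushforwards from Section \ref{sssec: !-push for H}, combined with Lemma \ref{lem:Hcorr_SUT}, yields
$$((\phi_{S \to \Y})_{!,\H}(\H_{/S}))_T \simeq \uscolim{V \in (\Affevcoclfp)_{/T \times_\Y S, \smooth}} \ICoh_0((T \times S)^\wedge_V).$$
As $\Y$ has affine diagonal and $T, S$ are smooth affines over $\Y$, the fiber product $T \times_\Y S$ lies in $\Affevcoclfp$; hence $\mathrm{id}_{T \times_\Y S}$ is a terminal object of the inner indexing category, and the inner colimit collapses to $\ICoh_0((T \times S)^\wedge_{T \times_\Y S})$. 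Combining, we get
$$(\ind_\Y^{\Q \to \H}(\Q_{/\Y}))_T \simeq \uscolim{S \in (\Affevcoclfp)_{/\Y, \smooth}} \ICoh_0((T \times S)^\wedge_{T \times_\Y S}).$$

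Finally, I would identify this colimit with $\Hdx T\Y$. Fixing an affine smooth atlas $U \to \Y$ with Cech nerve $U_\bullet$, Lemma \ref{lem:Hdx SY as a colimit along star-zero push} applied to $T \to \Y$ gives
$$\Hdx T\Y \simeq \uscolim{[n] \in \bDelta^\op} \ICoh_0((T \times U_n)^\wedge_{T \times_\Y U_n}),$$
which is exactly the restriction of the above $(\Affevcoclfp)_{/\Y, \smooth}$-indexed colimit to the sub-diagram given by the Cech nerve. The main (and only) obstacle is to verify the cofinality statement: the natural map from this Cech-indexed colimit to the colimit over all smooth affines over $\Y$ is an equivalence. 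This follows from the smooth descent property of $\ICoh_0$ for formal completions (Lemma \ref{lem:help for descent}): for any smooth affine $S \to \Y$, base-changing the atlas yields a smooth cover $S \times_\Y U \to S$ whose Cech nerve $S \times_\Y U_\bullet$ refines $S$, and Lemma \ref{lem:help for descent} expresses $\ICoh_0((T \times S)^\wedge_{T \times_\Y S})$ as the colimit over this refinement, thereby exhibiting the atlas Cech diagram as cofinal.
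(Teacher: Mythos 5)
Your proof is correct and follows essentially the same route as the paper's: write out the explicit colimit formula for $\ind_\Y^{\Q\to\H}$, compute the $T$-component via the formula for $!$-pushforward from Section \ref{sssec: !-push for H} together with Lemma \ref{lem:Hcorr_SUT}, and then simplify the resulting double colimit. You make one genuine simplification: for the inner colimit over $(\Affevcoclfp)_{/T\times_\Y S, \smooth}$ you observe that $T\times_\Y S$ itself lies in $\Affevcoclfp$ (affine since $\Y$ has affine diagonal, bounded lfp since it is smooth over $S$), so its identity is a terminal object and the colimit collapses for elementary reasons. The paper invokes Lemma \ref{lem:help for descent} for this step, which is valid but unnecessary; your terminal-object observation is cleaner.

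For the outer colimit, both you and the paper must pass from a colimit indexed by all of $(\Affevcoclfp)_{/\Y, \smooth}$ to a colimit over the Cech nerve of a fixed atlas $U\to\Y$, after which Lemma \ref{lem:Hdx SY as a colimit along star-zero push} identifies it with $\Hdx T\Y$. The paper cites Lemma \ref{lem:help for descent} without elaborating on this passage; you correctly isolate it as the remaining point. However, the phrase \virg{exhibiting the atlas Cech diagram as cofinal} is misleading: the inclusion $\bDelta^\op\hookrightarrow (\Affevcoclfp)_{/\Y,\smooth}$ given by the Cech nerve is generally \emph{not} a final functor in Lurie's sense (the undercategory at a smooth affine $S\to\Y$ that fails to factor through $U$ is empty, hence not weakly contractible). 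What your descent-based sketch is actually gesturing at is that the diagram $S \mapsto \ICoh_0((T\times S)^\wedge_{T\times_\Y S})$ with $(*,0)$-pushforward transitions satisfies \emph{codescent}, so the colimit over the smooth site agrees with the Cech colimit; equivalently, one can tensor the $\QCoh(\Y)$-module $\Hdx T\Y$ against the identification $\uscolim{S}\QCoh(S)\simeq\QCoh(\Y)$. This is the same passage the paper treats informally; the substance of your argument is right, but you should not call it cofinality.
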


\begin{proof}
The above formula and Section \ref{sssec: !-push for H} yield
$$
\ind_{\Y}^{\Q \to \H} (\Q_{/\Y})
\simeq
\uscolim{S \in (\Affevcoclfp)_{/\Y} }
(\phi_{S \to \Y})_{!,\H} (\H(S))
\simeq
\uscolim{S \in (\Affevcoclfp)_{/\Y, \smooth} }
\left\{
\uscolim{V \in (\Affevcoclfp)_{/U \times_\Y S, \smooth}}
\Hsx UV \usotimes{\H(V)} \Hdx VS
\right\}_U.
$$
We now apply Lemma \ref{lem:help for descent} twice. First,
$$
\uscolim{V \in (\Affevcoclfp)_{/U \times_\Y S, \smooth}}
\Hsx UV \usotimes{\H(V)} \Hdx VS
=
\uscolim{V \in (\Affevcoclfp)_{/U \times_\Y S, \smooth}}
\ICoh_0((U \times S)^\wedge_V)
$$
is equivalent to $\ICoh_0((U \times S)^\wedge_{U \times_\Y S})$. Secondly,
$$
\uscolim{S \in (\Affevcoclfp)_{/\Y, \smooth} }
\ICoh_0((U \times S)^\wedge_{U \times_\Y S})
\simeq
\ICoh_0((U \times \Y)^\wedge_{U}) =: \Hdx U\Y.
$$
This concludes the computation.
\end{proof}

\ssec{$\H$-affineness}

In this section, we prove our  main theorem, the $\H$-affineness of algebraic stacks, and deduce that $\H^\geom$ is a strict $\2$-functor.

\begin{thm} \label{thm:H-affineness-stacks}
Any $\Y \in \Stkevcoclfp$ is $\H$-affine, that is, the adjunction
\begin{equation}
\begin{tikzpicture}[scale=1.5]
\node (a) at (0,1) {$\bLoc^\H_\Y: \H(\Y) \mmod $};
\node (b) at (3,1) {$\ShvCatH(\Y): \bGamma^\H_\Y$.};
\path[->,font=\scriptsize,>=angle 90]
([yshift= 1.5pt]a.east) edge node[above] { } ([yshift= 1.5pt]b.west);
\path[->,font=\scriptsize,>=angle 90]
([yshift= -1.5pt]b.west) edge node[below] {} ([yshift= -1.5pt]a.east);
\end{tikzpicture}
\end{equation}
is an equivalence of $\infty$-categories.
\end{thm}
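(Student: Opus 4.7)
The plan is to reduce to the known $\Q$-affineness of $\Y$ (\cite[Theorem 2.2.6]{shvcat}) via the morphism $\Q \to \H$ of coefficient systems. A key initial calculation is the identification $\bGamma^\H_\Y(\H_{/\Y}) \simeq \H(\Y)$: by \eqref{eqn:formula for bGamma} this global-sections object equals $\lim_{S \in ((\Affevcoclfp)_{/\Y, \smooth})^\op} \Hsx \Y S \usotimes{\H(S)} \Hdx S \Y$, which, using Lemma \ref{lem:Hcorr_SUT} together with an $\ICoh_0$-descent argument in the spirit of Lemma \ref{lem:help for descent} (now applied to a smooth cover of $\Y$ inside $\Y \times \Y$, rather than a smooth cover of $S$), identifies with $\ICoh_0((\Y \times \Y)^\wedge_\Y) = \H(\Y)$.

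For the main argument, the monoidal functor $\QCoh(\Y) \to \H(\Y)$ and the morphism $\Q \to \H$ produce two parallel monadic adjunctions: the induction/restriction adjunction $\H(\Y) \usotimes{\QCoh(\Y)} - \dashv \oblv : \QCoh(\Y)\mmod \rightleftarrows \H(\Y)\mmod$, and the adjunction $\ind_\Y^{\Q \to \H} \dashv \oblv_\Y^{\Q \to \H}$ between $\ShvCatQ(\Y)$ and $\ShvCatH(\Y)$, whose right adjoint is conservative. These fit into a square with $\bLoc^\H_\Y \dashv \bGamma^\H_\Y$ on one row and $\bLoc^\Q_\Y \dashv \bGamma^\Q_\Y$ on the other. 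I would verify the compatibility $\bLoc^\H_\Y \circ (\H(\Y) \usotimes{\QCoh(\Y)} -) \simeq \ind_\Y^{\Q \to \H} \circ \bLoc^\Q_\Y$ by testing on the generator $\QCoh(\Y)$, where both sides return $\H_{/\Y}$ --- the LHS tautologically (since $\bLoc^\H_\Y(\H(\Y)) = \{\Hdx S \Y\}_S = \H_{/\Y}$), the RHS by Lemma \ref{lem:computation of induction of Q over Y} --- and extend to arbitrary $\QCoh(\Y)$-modules by colimit-preservation. Passing to right adjoints yields the dual identity $\oblv \circ \bGamma^\H_\Y \simeq \bGamma^\Q_\Y \circ \oblv_\Y^{\Q \to \H}$, which intertwines the two associated monads on $\QCoh(\Y)\mmod \simeq \ShvCatQ(\Y)$.

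By Barr--Beck, applied to the conservative adjunction $\ind_\Y^{\Q \to \H} \dashv \oblv_\Y^{\Q \to \H}$, both $\ShvCatH(\Y)$ and $\H(\Y)\mmod$ are realised as modules in $\QCoh(\Y)\mmod$ over a common monad. Invoking $\Q$-affineness to identify $\QCoh(\Y)\mmod$ with $\ShvCatQ(\Y)$ produces the desired equivalence $\ShvCatH(\Y) \simeq \H(\Y)\mmod$, and unpacking the construction shows it is induced by the canonical adjunction of the theorem.

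The main obstacle will be the coherent $\infty$-categorical assembly of the square of adjunctions so as to transport the Barr--Beck data: one must verify that the Beck--Chevalley transformation between the two monads is an equivalence with the full coherence required to apply the $\infty$-categorical monadicity theorem. This is precisely where smooth descent (Theorem \ref{thm:smooth-descent-for-ShvCatH}) enters, reducing the global statement to its tautologous affine analogue, via the functorial formulas for $\bGamma^\H_\Y$ and $f_{*,\H}$ developed earlier in the paper.
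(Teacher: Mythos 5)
Your overall strategy is the same as the paper's: reduce to the $\Q$-affineness of $\Y$ (\cite[Theorem 2.2.6]{shvcat}) through the morphism of coefficient systems $\Q \to \H$, using the commutativity of the square $\bLoc^\H_\Y \circ \ind[\delta_\Y] \simeq \ind_\Y^{\Q\to\H} \circ \bLoc^\Q_\Y$ tested on the generator via Lemma \ref{lem:computation of induction of Q over Y}, and the conservativity of the two forgetful functors. Where you diverge is in the final step: the paper directly verifies that the unit and counit of $\bLoc^\H_\Y \dashv \bGamma^\H_\Y$ are equivalences by applying the conservative functors $\oblv_\Y^{\Q\to\H}$ and $\oblv[\delta_\Y]$, whereas you invoke Barr--Beck monadicity and attempt to match the two monads on $\QCoh(\Y)\mmod \simeq \ShvCatQ(\Y)$.

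The trouble is that the two squares you establish are not enough to identify the monads. You prove $\bLoc^\H_\Y \circ \ind[\delta_\Y] \simeq \ind_\Y^{\Q\to\H} \circ \bLoc^\Q_\Y$ and, by passing to right adjoints, $\oblv[\delta_\Y] \circ \bGamma^\H_\Y \simeq \bGamma^\Q_\Y \circ \oblv_\Y^{\Q\to\H}$. Plugging these into the monad $T_2 := \bGamma^\Q_\Y \circ \oblv_\Y^{\Q\to\H} \circ \ind_\Y^{\Q\to\H} \circ \bLoc^\Q_\Y$ gives $T_2 \simeq \oblv[\delta_\Y] \circ \bGamma^\H_\Y \circ \bLoc^\H_\Y \circ \ind[\delta_\Y]$, and comparing this with $T_1 = \oblv[\delta_\Y] \circ \ind[\delta_\Y]$ requires $\bGamma^\H_\Y \circ \bLoc^\H_\Y \simeq \id$ on the image of $\ind[\delta_\Y]$ --- which is essentially what we are trying to prove. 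So the argument as written is circular. You do flag the Beck--Chevalley obstruction yourself at the end, but you attribute its resolution to smooth descent, which is not what closes the gap. The missing ingredient is the genuine (not merely lax) commutativity of the third square $\oblv_\Y^{\Q\to\H} \circ \bLoc^\H_\Y \simeq \bLoc^\Q_\Y \circ \oblv[\delta_\Y]$, which the paper verifies directly: the lax comparison map identifies with the canonical $(\QCoh(S), \H(\Y))$-linear functor $\QCoh(S) \otimes_{\QCoh(\Y)} \H(\Y) \to \Hdx S\Y$, and \emph{that} is known to be an equivalence from \cite{centerH} (via \eqref{eqn:QCoh-H-tensor} and the gluing of \eqref{eqn:tensor up ICOH0 with QCoh for atlas}--\eqref{eqn:writing Hdx SY}). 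Once this third square is in place, both your Barr--Beck route and the paper's direct unit/counit check go through; the paper's route is somewhat more economical, since it avoids the need to identify the resulting monadic equivalence with the canonical adjunction of the theorem. Your opening computation $\bGamma^\H_\Y(\H_{/\Y}) \simeq \H(\Y)$ is correct (and a nice sanity check) but plays no role in closing the argument.
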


\begin{proof}
Our strategy is to reduce to the known $\Q$-affineness of such stacks, see \cite[Theorem 2.2.6]{shvcat}, using the adjunction
\begin{equation}
\nonumber
\begin{tikzpicture}[scale=1.5]
\node (a) at (0,1) {$\ind_\Y^{\Q \to \H}: 
\ShvCatQ(\Y) $};
\node (b) at (3,1) {$\ShvCatH(\Y): \oblv_\Y^{\Q \to \H}$.};
\path[->,font=\scriptsize,>=angle 90]
([yshift= 1.5pt]a.east) edge node[above] { } ([yshift= 1.5pt]b.west);
\path[->,font=\scriptsize,>=angle 90]
([yshift= -1.5pt]b.west) edge node[below] {} ([yshift= -1.5pt]a.east);
\end{tikzpicture}
\end{equation}

\sssec*{Step 1}

For a monoidal functor $f: \CA \to \CB$, we denote by $\ind[f]: \CA \mmod \rightleftarrows \CB \mmod:\oblv[f]$ the standard adjunction.
Let $\delta_\Y: \QCoh(\Y) \to \H(\Y)$ be the usual monoidal functor. 

\medskip

By Lemma \ref{lem:computation of induction of Q over Y}, the diagram
\begin{gather} \label{diag:LOC-HC-ind-commute}
\xy
(0,20)*+{\QCoh(\Y) \mmod }="01";
(0,0)*+{ \ShvCat(\Y)}="00";
(42,20)*+{ \H(\Y) \mmod }="11";
(42,0)*+{ \ShvCatH(\Y).}="10";
{\ar@{<-}_{ \ind_\Y^{\Q \to \H} } "10";"00"};
{\ar@{<-}_{ \ind[\delta_\Y]  } "11";"01"};
%
{\ar@{->}_{ \bLoc_\Y } "01";"00"};
{\ar@{->}_{ \bLoc_\Y^\H } "11";"10"};
\endxy
\end{gather}	 
is commutative. It follows that the square
\begin{gather} \label{diag:bGamma-H-oblv-commute}
\xy
(0,20)*+{\QCoh(\Y) \mmod }="01";
(0,0)*+{ \ShvCat(\Y)}="00";
(42,20)*+{ \H(\Y) \mmod }="11";
(42,0)*+{ \ShvCatH(\Y)}="10";
{\ar@{->}_{ \oblv_\Y^{\Q \to \H} } "10";"00"};
{\ar@{->}_{ \oblv[\delta_\Y]  } "11";"01"};
%
{\ar@{<-}_{ \bGamma^{\Q}_\Y } "01";"00"};
{\ar@{<-}_{ \bGamma_\Y^\H } "11";"10"};
\endxy
\end{gather}	 
is commutative too.

\sssec*{Step 2}

By changing the vertical arrows with their left adjoints, we obtain a lax commutative diagram
\begin{gather} \label{diag:LOC-HC-oblv-commute}
\xy
(0,20)*+{\QCoh(\Y) \mmod }="01";
(0,0)*+{ \ShvCat(\Y)}="00";
(40,20)*+{ \H(\Y) \mmod }="11";
(40,0)*+{ \ShvCatH(\Y).}="10";
(20,10)*+{ \searrow}="centro";
%
{\ar@{->}_{ \oblv^{\Q \to \H}_\Y} "10";"00"};
{\ar@{->}_{ \oblv[\delta_\Y] } "11";"01"};
%
{\ar@{->}_{ \bLoc_\Y } "01";"00"};
{\ar@{->}_{ \bLoc^\H_\Y } "11";"10"};
\endxy
\end{gather}	 
However, this diagram is genuinely commutative thanks to the  canonical $(\QCoh(S), \H(\Y))$-linear equivalence
$$
\QCoh(S) \usotimes{\QCoh(\Y)} \H(\Y)
\xto{\; \; \simeq \; \; }
\Hdx S \Y.
$$

\sssec*{Step 3}

We are now ready to prove the theorem by checking that the two compositions $\bLoc^\H_\Y \circ \bGamma^\H_\Y$ and $\bGamma^\H_\Y \circ \bLoc^\H_\Y$ are isomorphic to the corresponding identity functors.
This is easily done by using the commutative diagrams (\ref{diag:bGamma-H-oblv-commute}) and (\ref{diag:LOC-HC-oblv-commute}), the conservativity of the functors
$$
\oblv_\Y^{\Q \to \H}: \ShvCatH(\Y) \longto \ShvCatQ(\Y),
\hspace{.5cm}
\oblv[\delta_\Y]: \H(\Y) \mmod \longto \QCoh(\Y) \mmod,
$$
and the $\Q$-affineness of $\Y$.
\end{proof}

\sssec{}

Combining the $\2$-functor 
$$
\ShvCatH_{*,*}:
\Corr(\Stkevcoclfp)_{\evcoc;\all}
^{\evcoc, 2 -\op} 
\longto
\inftyCat
$$ 
of (\ref{eqn:shvcatA-corr-**}) with Theorem \ref{thm:H-affineness-stacks}, we obtain  
another \emph{strict} $\2$-functor 
\begin{equation} \label{eqn:ACorr-Stacks}
\Hcat: 
\Corr(\Stkevcoclfp)_{\evcoc;\all}^{\evcoc, 2-\op} 
\longto
\ALGBimod(\DGCat),
\end{equation}
defined by 
$$
\X 
\squigto
\Hcat(\X) := \H(\X)
$$
$$
[\X \xleftarrow{v} \W \xto{h} \Y]
\squigto 
(\Hcat)_{\X \leftto \W \to \Y}
:=
 \bGamma^\H_\Y \circ (h_{*,\H} \circ v^{*,\H}) \circ \bLoc^\H_\Y(\H(\Y)) .
$$

\begin{thm} \label{main-thm-Hgoem is strict}
The lax $\2$-functor 
$$
\H^\geom: 
\Corr \bigt{\Stkevcoclfp }_{\evcoc;\all}
^{\schem \& \evcoc \& \proper}
\longto
\ALGBimod(\DGCat)
$$
of Section \ref{ssec:2-cat functoriality Hgoem} is naturally equivalent to the restriction of $\Hcat$ to $\Corr \bigt{\Stkevcoclfp }_{\evcoc;\all}
^{\schem \& \evcoc \& \proper}$.
Hence, $\Hgeom$ is strict.
\end{thm}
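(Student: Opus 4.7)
The plan is to exhibit a natural equivalence of lax $\2$-functors $\Hgeom \simeq \Hcat|_{\Corr(\Stkevcoclfp)_{\evcoc;\all}^{\schem \& \evcoc \& \proper}}$. Since $\Hcat$ is strict by construction (being obtained from the strict $\2$-functor $\ShvCatH_{*,*}$ of (\ref{eqn:shvcatA-corr-**})), the strictness of $\Hgeom$ will follow formally, as any lax $\2$-functor naturally equivalent to a strict one is itself strict. Both functors agree on objects, assigning $\H(\X)$ to each $\X \in \Stkevcoclfp$. Moreover, restricted to $\Corr(\Affevcoclfp)_{\all;\evcoc}$ the functor $\Hcat$ evaluates to $\H^\Corr$, which by Corollary \ref{cor:H-geom-strict-on-Corr Aff} coincides with the restriction of $\Hgeom$; this provides the base case, and the task becomes to extend the equivalence to correspondences of stacks via smooth descent.

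Fix a correspondence $[\X \xleftarrow{v} \W \xto{h} \Y]$ and choose smooth affine atlases $S_0 \to \X$ and $T_0 \to \Y$ in $\Affevcoclfp$ with \v{C}ech resolutions $S_\bullet$ and $T_\bullet$. On the $\Hcat$-side, combining Theorem \ref{thm:smooth-descent-for-ShvCatH}, the formula (\ref{eqn:formula for bGamma}) for $\bGamma^\H_\Y$, and the explicit $*$-pushforward of Section \ref{sssec: star pushforward shvcatH}, I will rewrite the bimodule $\bGamma^\H_\Y(h_{*,\H} v^{*,\H} \H_{/\X})$ as the double limit
$$
\lim_{[m],[n]} \Hsx{T_n}{S_m \times_\X \W \times_\Y T_n} \usotimes{\H(S_m \times_\X \W \times_\Y T_n)} \Hdx{S_m \times_\X \W \times_\Y T_n}{S_m},
$$
where I must further resolve the bounded, but generally non-affine, intermediate stack $S_m \times_\X \W \times_\Y T_n$ by a smooth affine atlas of its own. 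By Corollary \ref{cor:H-geom-strict-on-Corr Aff} each term rewrites, pointwise in $[m], [n]$, as $\ICoh_0((S_m \times T_n)^\wedge_{S_m \times_\X \W \times_\Y T_n})$. On the $\Hgeom$-side, iterated application of Lemma \ref{lem:help for descent} to the smooth covers $S_\bullet \times \Y \to \X \times \Y$ and $\X \times T_\bullet \to \X \times \Y$ of the target of the formal completion $(\X \times \Y)^\wedge_\W$ produces a double limit of the same shape. The left and right Beck-Chevalley properties of $\H$ summarized in Section \ref{sssec:summary of B-Chev for H} will ensure that the structure maps of the two diagrams match, providing the desired natural $(\H(\X), \H(\Y))$-bilinear identification.

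The equivalence on $2$-morphisms, which here are schematic bounded proper maps $f: \W' \to \W$ over $\X \times \Y$, is handled by the same descent argument combined with the $\ICoh_0$-pushforward functoriality of Theorem \ref{thm:1-functoriality of ICohzero}. Coherence of higher compositions is then automatic: once the natural equivalence is fixed on $0$- and $1$-cells, the lax composition comparison maps of $\Hgeom$ are forced to agree with those of $\Hcat$, and are therefore equivalences. The main technical obstacle I anticipate is the pointwise identification of the two double limits above, since the intermediate stack $S_m \times_\X \W \times_\Y T_n$ is not affine; dealing with it requires a third, nested application of smooth descent for $\ICoh_0$ (a direct analogue of Lemma \ref{lem:help for descent} with a stack in place of the scheme $Y$ there) together with compatibility with the bimodule actions, which in turn relies crucially on the ambidexterity of $\H$ established in Theorem \ref{thm:H-ambidextrous}.
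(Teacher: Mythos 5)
Your proposal replaces the paper's key organizing move with a more direct but considerably heavier unwinding. The paper computes $\Hcatcorr \X\W\Y$ by first using Remark~\ref{rem:bGamma via homs out of basic object H over Y} to interpret $\bGamma^\H_\Y$ as $\HHom_{\ShvCatH(\Y)}(\H_{/\Y}, -)$, and then applying the adjunction $h^{*,\H} \dashv h_{*,\H}$ to move the entire computation into $\ShvCatH(\W)$ in one step. This gives, after a single smooth descent over $\W$ and the right dualizability of $\Hdx U\X$,
$$
\Hcatcorr \X\W\Y
\;\simeq\;
\lim_{U \in ((\Affevcoclfp)_{/\W, \smooth})^\op}
\Hsx \X U \usotimes{\H(U)} \Hdx U \Y,
$$
which is then expanded to a triple limit via \eqref{eqn:writing Hdx SY} and collapsed by Lemma~\ref{lem:help for descent}. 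Your proposal skips this adjunction step and instead unwinds the formulas \eqref{eqn:formula for bGamma} and the $*$-pushforward of Section~\ref{sssec: star pushforward shvcatH} directly. This can in principle be made to work, since the underlying ingredients you name (smooth descent, base-change, ambidexterity from Theorem~\ref{thm:H-ambidextrous}) are the right ones — but it is materially harder to organize, and your displayed ``double limit'' is not what those formulas actually yield: the $*$-pushforward formula introduces a limit over affines mapping smoothly to $T_n \times_\Y \W$, not a limit indexed by the \v{C}ech nerves of $\X$ and $\Y$, and the term $\Hdx V\X$ that appears there is itself a limit that still needs expansion. So you end up with an iterated triple limit whose structure maps you must then match against the $\ICoh_0$-descent on the $\Hgeom$ side, which is exactly the bookkeeping the adjunction step was designed to avoid. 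Your identification of the ``base case'' on affine schemes via Corollary~\ref{cor:H-geom-strict-on-Corr Aff} is fine, and the deduction that a lax $\2$-functor equivalent to a strict one is strict is correct, but I would encourage you to adopt the adjunction-then-descend-over-$\W$ route: it produces the triple limit in a canonical order and reduces the matching of structure maps to repeated applications of a single lemma rather than a comparison of two differently organized diagrams.
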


Henceforth, we will denote both $\2$-functors simply by $\H$.

\begin{proof}
By Remark \ref{rem:bGamma via homs out of basic object H over Y}, the DG category underlying $\Hcatcorr{\X}{\W}{\Y}$ is computed as follows:
\begin{eqnarray}
\nonumber
\Hcatcorr{\X}{\W}{\Y}
& \simeq & 
\HHom_{\ShvCatH(\Y)}
\bigt{
\H_{/\Y},
h_{*,\H} \circ  v^{*,\H} (\H_{/\X})
}
\\
\nonumber
& \simeq & 
\HHom_{\ShvCatH(\W)}  
\bigt{
 h^{*,\H} (\H_{/\X}) ,
v^{*,\H} (\H_{/\Y})
}
\\
\nonumber
& \simeq &
\lim_{U \in ((\Affevcoclfp)_{/\W, \smooth})^\op} 
\HHom_{\H(U)} \bigt{ \Hdx U \X, \Hdx U \Y }
\\
\nonumber
& \simeq &
\lim_{U \in ((\Affevcoclfp)_{/\W, \smooth})^\op} 
\Hsx \X U 
\usotimes{\H(U)} \Hdx U \Y 
\\
\nonumber
& \simeq &
\lim_{U \in ((\Affevcoclfp)_{/\W, \smooth})^\op} 
\lim_{S \in ((\Affevcoclfp)_{/\X, \smooth})^\op} 
\lim_{T \in ((\Affevcoclfp)_{/\Y, \smooth})^\op} 
\Hcorr S {S \times_\X U} U \usotimes{\H(U)}  \Hcorr U {U \times_\Y T}T.
\end{eqnarray} 
By base-change for $\H$, we have
$$
\Hcorr S {S \times_\X U} U \usotimes{\H(U)}  \Hcorr U {U \times_\Y T}T
\simeq
\Hcorr S {S \times_\X U \times_\Y T} T
\simeq
\ICoh_0
\bigt{
(S \times T)^\wedge_{S \times_\X U \times_\Y T}
}.
$$
By taking the limit, we obtain
$$
\Hcatcorr{\X}{\W}{\Y}
\simeq
\ICoh_0((\X \times \Y)^\wedge_\W)
=:
\Hgeomcorr \X \W \Y,
$$
as desired.
\end{proof}

\begin{cor} \label{cor:push-pull-under-H-affineness}
For $f: \Y \to \Z$ in $\Stkevcoclfp$. Then the functors $f_{*,\H}$ and $f^{*,\H}$ correspond under $\H$-affineness to the functors of $\Hsx \Z \Y \otimes_{\H(\Y)} -$ and $\Hdx \Y \Z \otimes_{\H(\Z)} -$, respectively. 
\end{cor}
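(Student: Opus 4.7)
The plan is to establish the identification for $f^{*,\H}$ directly from the strictness of $\Hgeom$ on correspondences, and then deduce the identification for $f_{*,\H}$ via the uniqueness of right adjoints together with the right-duality of $\Hdx \Y\Z$ established in Theorem \ref{thm:H-ambidextrous}.

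For the pullback, by $\H$-affineness (Theorem \ref{thm:H-affineness-stacks}), the claimed identification is equivalent to a natural equivalence
$$
f^{*,\H}(\bLoc^\H_\Z(\C)) \;\simeq\; \bLoc^\H_\Y(\Hdx \Y\Z \otimes_{\H(\Z)} \C)
$$
in $\ShvCatH(\Y)$, for any $\C \in \H(\Z)\mmod$. I would evaluate both sides at an arbitrary smooth affine $S \to \Y$: the left-hand side gives $\Hdx S\Z \otimes_{\H(\Z)} \C$ (by the explicit formula for $\bLoc^\H_\Z$ applied to the composite $S \to \Y \to \Z$), while the right-hand side gives $\Hdx S\Y \otimes_{\H(\Y)} \Hdx \Y\Z \otimes_{\H(\Z)} \C$. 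The equivalence of these reduces to the natural isomorphism $\Hdx S\Y \otimes_{\H(\Y)} \Hdx \Y\Z \simeq \Hdx S\Z$, which is the composition law for the $\2$-functor $\Hgeom$ on the composable correspondences $[S = S \to \Y]$ and $[\Y = \Y \to \Z]$, guaranteed by the strictness of $\Hgeom$ from Theorem \ref{main-thm-Hgoem is strict}.

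For the pushforward, $f_{*,\H}$ is right adjoint to $f^{*,\H}$ by construction. Transporting through the $\H$-affineness equivalence, it must therefore coincide with the right adjoint of $\Hdx \Y\Z \otimes_{\H(\Z)} - : \H(\Z)\mmod \to \H(\Y)\mmod$. By Theorem \ref{thm:H-ambidextrous}, the $(\H(\Y),\H(\Z))$-bimodule $\Hdx \Y\Z$ is right-dualizable with right dual $\Hsx \Z\Y$, so the convention of Section \ref{sssec:right-left-dualizability of bimodules} identifies this right adjoint with $\Hsx \Z\Y \otimes_{\H(\Y)} - : \H(\Y)\mmod \to \H(\Z)\mmod$, establishing the claim.

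I do not anticipate any serious obstacle: the proof is essentially a formal consequence of the $\H$-affineness theorem, the strictness of $\Hgeom$ (Theorem \ref{main-thm-Hgoem is strict}), and the right-duality of $\H$-bimodules (Theorem \ref{thm:H-ambidextrous}), all of which are already in place. The only point requiring a bit of care is keeping the left/right bimodule conventions straight when invoking the dualization formula for right adjoints.
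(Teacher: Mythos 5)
Your argument takes a genuinely different route from the paper's, so it is worth comparing. The paper handles the pushforward first: it writes both $\bGamma_\Z^\H\circ f_{*,\H}\circ\bLoc_\Y^\H$ and $\Hsx\Z\Y\otimes_{\H(\Y)}-$ as $\H(\Y)$-linear functors, reduces to the free case $\C=\H(\Y)$, and observes that for $\C=\H(\Y)$ the claim is precisely the defining formula for $\Hcat_{\Y\leftto\Y\to\Z}$ combined with Theorem \ref{main-thm-Hgoem is strict}; the pullback assertion is then \virg{proven similarly}. You instead do the pullback first, directly and cleanly: evaluating both sheaves at a smooth affine $S\to\Y$ reduces the statement to $\Hdx S\Y\otimes_{\H(\Y)}\Hdx\Y\Z\simeq\Hdx S\Z$, which is exactly the composition law for $\Hgeom$ on the composable correspondences $[S=S\to\Y]$ and $[\Y=\Y\to\Z]$, and Theorem \ref{main-thm-Hgoem is strict} guarantees strictness. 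This part is correct, and is arguably more transparent than the paper's \virg{similarly}.

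The one point I would flag is the citation in your pushforward step. You invoke Theorem \ref{thm:H-ambidextrous} for the right-dualizability of the $(\H(\Y),\H(\Z))$-bimodule $\Hdx\Y\Z$; but that theorem is stated only for the coefficient system $\H$ on $\Affevcoclfp$, i.e.\ for maps of \emph{affine} schemes. For $\Y,\Z$ both stacks, the ambidexterity of $\Hdx\Y\Z$ comes instead from the very-rigidity of $\H(\Y)$ and $\H(\Z)$ (asserted in Section \ref{ssec:localiz-global-sect-H-affineness}) together with Corollary \ref{cor:very rigid has ambidextrous modules}; and even then, the explicit identification of the dual as $\Hsx\Z\Y$ is spelled out in the paper only for $\Hdx S\Y$ with $S$ affine (via Lemma \ref{lem:Hdx SY as a colimit along star-zero push}), not for the fully stacky $\Hdx\Y\Z$. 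So your pushforward step is recoverable but needs a different citation and a small additional argument; as written, the cited theorem does not cover the case at hand. To be fair, the paper's own pushforward reduction implicitly assumes $f_{*,\H}$ is colimit-preserving (equivalently, that $\Hdx\Y\Z$ has a continuous right dual), so some version of this point lurks in both proofs; but you should not attribute it to Theorem \ref{thm:H-ambidextrous}.
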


\begin{proof}
Let $\C \in \H(\Y) \mmod$. We need to exhibit a natural equivalence
$$
\bGamma_\Z^\H
\circ
f_{*,\H}
\circ 
\bLoc^\H_\Y(\C)
\simeq
\Hsx \Z \Y 
\usotimes{\H(\Y)}
\C.
$$
This easily reduces to the case $\C= \H(\Y)$, where it holds true by construction.
The assertion for $f^{*,\H}$ is proven similarly.
\end{proof}

\begin{cor} \label{cor:ambidexterity}
Pullbacks of $\H$-sheaves of categories are \emph{ambidextrous}: for any $f: \Y \to \Z$ in $\Stkevcoclfp$, there is a canonical equivalence $f_{!,\H} \simeq f_{*,\H}$.
\end{cor}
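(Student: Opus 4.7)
The plan is to reduce the corollary to ambidexterity of the bimodule $\Hdx\Y\Z$ and then establish the latter by extending the argument of \thmref{thm:H-ambidextrous} from the affine to the stack setting. By \corref{cor:push-pull-under-H-affineness}, under the equivalence of $\H$-affineness the functor $f^{*,\H}$ corresponds to $\Hdx\Y\Z \usotimes{\H(\Z)} -$ and $f_{*,\H}$ corresponds to $\Hsx\Z\Y \usotimes{\H(\Y)} -$. Uniqueness of adjoints identifies $\Hsx\Z\Y$ with the right dual $(\Hdx\Y\Z)^R$, and dually $f_{!,\H}$ must correspond to $(\Hdx\Y\Z)^L \usotimes{\H(\Y)} -$. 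Hence the desired equivalence $f_{!,\H} \simeq f_{*,\H}$ reduces to the $(\H(\Z),\H(\Y))$-linear equivalence $(\Hdx\Y\Z)^L \simeq \Hsx\Z\Y$, i.e., to ambidexterity of $\Hdx\Y\Z$ as a bimodule.

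To establish this ambidexterity, I would mimic the template of the affine case: verify that $\H(\Y)$ and $\H(\Z)$ are very rigid and that $\Hdx\Y\Z$ is dualizable as a plain DG category, and then invoke \corref{cor:very rigid has ambidextrous modules}. Very rigidity for $\H(\X)$ with $\X \in \Stkevcoclfp$ can be proved by extending the affine-case proposition via smooth descent: along a smooth atlas $S \to \X$ with \v{C}ech nerve $S_\bullet$, the category $\H(\X)$ is the limit of the $\H(S_n)$, and the $(\H(S_n),\H(S_n))$-linear lifts of $1_{\H(S_n)}^{fake}$ produced by the affine argument assemble into a compatible cosimplicial system whose totalization provides the required lift over $\X$. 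Dualizability of $\Hdx\Y\Z$ as a DG category follows from the presentation $\Hdx S\Y \simeq \lim_U \Hcorr S{S\times_\Y U}U$ for $S,U$ ranging over smooth atlases of $\Y$ and $\Z$, combined with base-change for $\H$: one exhibits explicit compact generators coming from the affine pieces.

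With both inputs in place, \corref{cor:very rigid has ambidextrous modules} yields canonical $(\H(\Z),\H(\Y))$-linear equivalences $(\Hdx\Y\Z)^L \simeq (\Hdx\Y\Z)^* \simeq (\Hdx\Y\Z)^R \simeq \Hsx\Z\Y$, completing the proof. The main obstacle is the first step, namely very rigidity of $\H(\X)$ for $\X$ a stack: in the affine case the proof proceeded by an explicit computation identifying $1_{\H(S)}^{fake}$ with $\Delta_*^\ICoh(\Xi_S(\O_S))$, and upgrading this to stacks requires a careful verification that these local data are compatible with smooth pullback. This compatibility should in turn follow from the base-change properties of $\H$ already established in the paper and from smooth descent for $\ShvCatH$ (\thmref{thm:smooth-descent-for-ShvCatH}); once it is in place, the rest of the argument is essentially formal.
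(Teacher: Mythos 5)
Your first paragraph is a correct reformulation: by \corref{cor:push-pull-under-H-affineness}, $f^{*,\H}$ corresponds to $\Hdx\Y\Z \usotimes{\H(\Z)} -$ and $f_{*,\H}$ to $\Hsx\Z\Y \usotimes{\H(\Y)} -$, and since both $f_{!,\H}$ and $f_{*,\H}$ correspond (under $\H$-affineness) to tensoring with the bimodules $f_{!,\H}(\H_{/\Y})=(\Hdx\Y\Z)^L$ and $f_{*,\H}(\H_{/\Y})=(\Hdx\Y\Z)^R$, the corollary is exactly ambidexterity of the bimodule $\Hdx\Y\Z$. The paper's own proof performs the same reduction, but then finishes in one stroke: it writes out $f_{!,\H}(\H_{/\Y})$ and $f_{*,\H}(\H_{/\Y})$ as the explicit colimit and limit over smooth affine atlases given in Sections \ref{sssec: star pushforward shvcatH} and \ref{sssec: !-push for H}, and identifies the two by the colimit-to-limit comparison already established in \lemref{lem:help for descent}. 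That direct construction sidesteps any appeal to very rigidity of $\H$ over a stack.

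Your proposed way of finishing — proving that $\H(\X)$ is very rigid for $\X\in\Stkevcoclfp$ by descent along a smooth atlas, then invoking \corref{cor:very rigid has ambidextrous modules} — has a genuine gap. You assert that ``$\H(\X)$ is the limit of the $\H(S_n)$'' along a Cech nerve $S_\bullet$, and propose to assemble the $(\H(S_n),\H(S_n))$-linear lifts of $1_{\H(S_n)}^{fake}$ into a cosimplicial system. But there is no cosimplicial diagram of \emph{monoidal} DG categories $\H(S_\bullet)$ with limit $\H(\X)$: for a map $S_m\to S_n$ the two monoidal categories $\H(S_m)$ and $\H(S_n)$ are related only via the transfer bimodule $\Hdx{S_m}{S_n}$, not by a monoidal functor. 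Consequently the phrase ``compatible cosimplicial system of lifts of $1^{fake}$'' has no meaning, and even if one formulated descent of $\ICoh_0$ on the formal diagonals it would not be a descent of monoidal structures from which the very-rigidity datum could be assembled. The smooth-descent theorem you cite (\thmref{thm:smooth-descent-for-ShvCatH}) is a statement about $\H$-module categories, not about the acting monoidal category itself. Where the paper does need very rigidity of $\H(\Y)$ for $\Y$ a stack (in the setup of the fundamental adjunction), it takes it as the same pointwise argument used for affine $S$ — the factorisation through the Drinfeld centre and the explicit identification $1_{\H(\Y)}^{fake}\simeq \Delta_*^\ICoh(\Xi_\Y(\O_\Y))$ — rather than attempting a descent. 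So if you wanted to go your route, the correct move would be to observe that the affine very-rigidity proof makes no essential use of affineness, not to try to glue the affine structures along a cover.
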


\begin{proof}
Recall the formulas for $f_{!,\H}$ and $f_{*,\H}$ from Sections \ref{sssec: star pushforward shvcatH} and \ref{sssec: !-push for H}.
By $\H$-affineness, it suffices to exhibit a natural equivalence  $f_{!,\H}(\H_{/\Y}) \simeq f_{*,\H}(\H_{/\Y})$. The latter is constructed as in Lemma \ref{lem:help for descent}.
\end{proof}

\ssec{The $\H$-action on $\ICoh$} \label{ssec:ICOH}

This final section contains an example of our techniques. We view $\ICoh(\Y)$ as a left module for $\H(\Y)$ and compute $\H$-pullbacks along smooth maps, as well as $\H$-pushforwards along arbitrary maps.

\begin{lem} \label{lem:H-pullback-ICoh-stacks}
For a smooth map $\X \to \Y$ in $\Stkevcoclfp$, the natural $\H(\X)$-linear functor
$$
\Hdx \X \Y \usotimes{\H(\Y)} \ICoh(\Y)
\longto
\ICoh(\X)
$$
is an equivalence.  
\end{lem}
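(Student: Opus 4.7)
The plan is to prove the assertion first in the completely affine case and then globalize using smooth descent for $\ShvCatH$ together with $\H$-affineness.

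\textbf{Affine case.} Suppose $g: U \to V$ is a smooth map in $\Affevcoclfp$. The natural functor factors as
\[
\Hdx UV \usotimes{\H(V)} \ICoh(V) \xto{\alpha} \ICohdx UV \usotimes{\ICohW(V)} \ICoh(V) \xto{\beta} \ICoh(U).
\]
The equivalence $\beta$ is a short direct computation: by Lemma \ref{lem:tensor-product-ICOH-over-Dmod} we have $\ICohdx UV \simeq \ICoh(U) \usotimes{\Dmod(V)} \ICoh(V)$, and combining this with the identification $\ICoh(V) \usotimes{\ICohW(V)} \ICoh(V) \simeq \Dmod(V)$ established in \eqref{eqn:ICohwedge-eval-into-Dmod} gives
\[
\ICohdx UV \usotimes{\ICohW(V)} \ICoh(V) \simeq \ICoh(U) \usotimes{\Dmod(V)} \Dmod(V) \simeq \ICoh(U).
\]
The equivalence $\alpha$ is a left-module analogue of Corollary \ref{cor:ICoh-formal-complet-push-forward}: I would prove that for any left $\ICohW(V)$-module $\D$, the natural map $\Hdx UV \usotimes{\H(V)} \D \to \ICohdx UV \usotimes{\ICohW(V)} \D$ is an equivalence. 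By compact generation this boils down to the case $\D = \ICohW(V)$, i.e.\ to showing that $\Hdx UV \usotimes{\H(V)} \ICohW(V) \to \ICohdx UV$ is an equivalence; this is a left-handed variant of Proposition \ref{prop:IndCoh-push-forward}, proved by a parallel monadic argument using the adjunction between $\ICohW(V) \mmod$ and $\Dmod(V) \mmod$ from Lemma \ref{lem:ICoh--wedge-equivalence-Dmod}.

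\textbf{Globalization.} For general smooth $f: \X \to \Y$ in $\Stkevcoclfp$, both $\Hdx \X\Y \usotimes{\H(\Y)} \ICoh(\Y)$ and $\ICoh(\X)$ are $\H(\X)$-modules, which by Theorem \ref{thm:H-affineness-stacks} correspond to objects of $\ShvCatH(\X)$. By smooth descent for $\ShvCatH$ (Theorem \ref{thm:smooth-descent-for-ShvCatH}), to compare these sheaves it suffices to check the assertion after $\H$-pullback along every smooth affine $p: U \to \X$. Using Corollary \ref{cor:push-pull-under-H-affineness} together with the strictness of $\H$ on $\Stkevcoclfp$ (Theorem \ref{main-thm-Hgoem is strict}), this $\H$-pullback transforms the statement into the analogous identity $\Hdx U\Y \usotimes{\H(\Y)} \ICoh(\Y) \simeq \ICoh(U)$ for the smooth composition $U \to \Y$, together with the same identity for $p: U \to \X$. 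This \virg{affine-source} case is then reduced to the affine-affine case of Step~1 by choosing a smooth affine atlas $V \to \Y$ with \v{C}ech nerve $V_\bullet$ and combining the descent $\ICoh(\Y) \simeq \lim_n \ICoh(V_n)$, the colimit formula of Lemma \ref{lem:Hdx SY as a colimit along star-zero push} for $\Hdx U\Y$, and the base-change properties of $\H$ collected in Section \ref{sssec:summary of B-Chev for H}.

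\textbf{Main obstacle.} The principal technical point is the equivalence $\alpha$ in the affine case, which requires transferring Proposition \ref{prop:IndCoh-push-forward} and Corollary \ref{cor:ICoh-formal-complet-push-forward} to the opposite side of the action. Once this left-handed variant is in hand, the remainder of the proof is a formal descent argument resting on results already established in the paper.
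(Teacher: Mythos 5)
There is a genuine gap in your affine case: the claim that the functor $\alpha$ is an equivalence ``for any left $\ICohW(V)$-module $\D$'' is false for non-smooth $U \to V$, so no ``left-handed variant'' of Proposition~\ref{prop:IndCoh-push-forward} or Corollary~\ref{cor:ICoh-formal-complet-push-forward} can be proved by a formal transport of the monadic argument. Those results concern the tensor product over the monoidal category of the scheme along which one completes (the $X$ in $(X\times Y)^\wedge_X$), and they hold for arbitrary morphisms; your $\alpha$ instead tensors over the monoidal category of the \emph{target} $V$, and this is genuinely asymmetric. To see the failure directly: as you note, $\beta$ is an equivalence with no smoothness hypothesis (via Lemma~\ref{lem:tensor-product-ICOH-over-Dmod} and \eqref{eqn:ICohwedge-eval-into-Dmod}); so if $\alpha$ were also unconditionally an equivalence, the Lemma would hold for arbitrary $\X \to \Y$, contradicting the remark immediately after it (take $\Y = \pt$). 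In fact, reducing $\alpha$ to $\D=\ICohW(V)$ asks precisely that $\Hdx UV \otimes_{\H(V)} \ICohW(V) \to \ICohdx UV$ be an equivalence, and using $\Hdx UV \simeq \QCoh(U) \otimes_{\QCoh(V)} \H(V)$ and $\ICohW(V)\simeq\ICoh(V)\otimes_{\Dmod(V)}\ICoh(V)$ this unwinds exactly to $\QCoh(U)\otimes_{\QCoh(V)}\ICoh(V)\simeq\ICoh(U)$ --- the very statement that requires smoothness. Your outline never introduces smoothness and gives no indication of where it would enter the ``parallel monadic argument''; as written, the argument would ``prove'' a false statement.

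Once one sees that the content of $\alpha$ is literally $\QCoh(U)\otimes_{\QCoh(V)}\ICoh(V)\simeq\ICoh(U)$, the entire detour through $\ICohW$-tensor products, as well as the globalization step via smooth descent and $\H$-affineness, becomes unnecessary. The paper's proof is a one-liner: the $(\QCoh(\X),\H(\Y))$-bilinear equivalence $\Hdx{\X}{\Y}\simeq\QCoh(\X)\otimes_{\QCoh(\Y)}\H(\Y)$ (part of the $\Q\to\H$ morphism, valid for stacks) gives
$\Hdx \X\Y \otimes_{\H(\Y)} \ICoh(\Y) \simeq \QCoh(\X)\otimes_{\QCoh(\Y)}\ICoh(\Y)$, and the latter is $\ICoh(\X)$ for smooth $\X\to\Y$ by \cite[Proposition 4.5.3]{ICoh}. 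This handles stacks directly, with no descent.
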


\begin{proof}
This is just a consequence of the $(\QCoh(\X), \H(\Y))$-bilinear equivalence 
$$
\Hdx \X \Y 
\simeq 
\QCoh(\X) 
\usotimes{\QCoh(\Y)} 
\H(\Y),
$$
together with \cite[Proposition 4.5.3]{ICoh}.
\end{proof}

\begin{rem}
The example of $\Y=\pt$ shows that we should not expect this result to be true for non-smooth maps. 
\end{rem}

\begin{prop} \label{prop:H-push-ICoh-stacks}
For a map $f:\Y \to \Z$ in $\Stkevcoclfp$, the natural $\H(\Z)$-linear functor
$$
\Hsx \Z \Y \usotimes{\H(\Y)} \ICoh(\Y)
\longto
\ICoh(\Z^\wedge_\Y)
$$
is an equivalence.
\end{prop}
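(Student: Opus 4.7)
My plan is to establish the scheme case by a direct computation mirroring Proposition \ref{prop:IndCoh-push-forward}, and then lift to the stack case via $\H$-affineness and smooth descent.

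For the scheme case, let $f: Y \to Z$ be a map in $\Schevcoclfp$ and consider the natural functor
\[
\phi: \Hsx ZY \usotimes{\H(Y)} \ICoh(Y) \longto \ICoh(Z^\wedge_Y).
\]
The source is compactly generated by objects of the form $[(\,'f)_*^\ICoh(\omega_Y), C_Y]$ with $C_Y \in \Coh(Y)$, and $\phi$ sends such an object to $(\,'f)_*^\ICoh(C_Y) \in \Coh(Z^\wedge_Y)$; these images are compact and generate the target under colimits, so $\phi$ admits a continuous conservative right adjoint. It then suffices to check full faithfulness on the canonical compact generators, which is done using the adjunction $\beta_*^\ICoh \dashv \beta^!$ for the nil-isomorphism $\beta: (Y \times Y)^\wedge_Y \to (Z \times Y)^\wedge_Y$, exactly as in the proof of Proposition \ref{prop:IndCoh-push-forward}.

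For the stack case, by Corollary \ref{cor:push-pull-under-H-affineness} the $\H(\Z)$-module $\Hsx \Z \Y \usotimes{\H(\Y)} \ICoh(\Y)$ corresponds under $\H$-affineness (Theorem \ref{thm:H-affineness-stacks}) to $f_{*,\H}(\ICoh_{/\Y}) \in \ShvCatH(\Z)$, where $\ICoh_{/\Y} := \bLoc_\Y^\H(\ICoh(\Y))$. By Lemma \ref{lem:H-pullback-ICoh-stacks} combined with smooth descent (Theorem \ref{thm:smooth-descent-for-ShvCatH}), $\ICoh_{/\Y}$ is the $\H$-sheaf characterized by the property that its restriction along any smooth $V \to \Y$ with $V \in \Affevcoclfp$ is $\ICoh(V)$. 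Applying the formula for $f_{*,\H}$ from Section \ref{sssec: star pushforward shvcatH}, together with base-change and the scheme case (applied to each $V \to T$ with $V \in \Affevcoclfp$ smooth over $T \times_\Z \Y$), the value of $f_{*,\H}(\ICoh_{/\Y})$ on a smooth $T \to \Z$ (with $T \in \Affevcoclfp$) is
\[
\lim_{V \to T \times_\Z \Y,\, \smooth} \Hsx T V \usotimes{\H(V)} \ICoh(V) \;\simeq\; \lim_V \ICoh(T^\wedge_V) \;\simeq\; \ICoh(T^\wedge_{T \times_\Z \Y}),
\]
where the last equivalence is smooth descent for $\ICoh$ on formal completions, reducing via $\ICoh(T^\wedge_V) \simeq \ICoh(T) \otimes_{\Dmod(T)} \Dmod(V)$ (Lemma \ref{lem:tensor-product-ICOH-over-Dmod}) to $\fD$-module descent on $T \times_\Z \Y$. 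Taking $\bGamma_\Z^\H$ and invoking smooth descent for $\ICoh$ on $\Z^\wedge_\Y$ along a smooth atlas of $\Z$, one reassembles these pieces into $\ICoh(\Z^\wedge_\Y)$.

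The main obstacle is not conceptual but technical: one must carefully marshal iterated smooth descent for $\ICoh$ on formal completions (once on $\Y$, once on $\Z$), and verify at each step that the underlying DG-categorical identifications lift to $\H(\Z)$-linear equivalences, so that the resulting isomorphism matches the natural comparison arrow of the statement rather than merely being abstract.
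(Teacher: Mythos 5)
Your proposal follows the same strategy as the paper: reduce to the affine case (Proposition \ref{prop:IndCoh-push-forward}), pass to $\ShvCatH$ via $\H$-affineness, and compute $\bGamma_\Z^\H f_{*,\H}(\ICoh_{/\Y})$ using Lemma \ref{lem:H-pullback-ICoh-stacks} and smooth descent, then invoke Corollary \ref{cor:push-pull-under-H-affineness}. The difference is in the order of the iterated limits. The paper keeps the outer limit over $V \in (\Affevcoclfp)_{/\Y,\smooth}$ and, for each fixed $V$, expands $\Hsx \Z V$ as a limit over $U \in (\Affevcoclfp)_{/\Z,\smooth}$ of $\Hcorr U{U\times_\Z V}V$; every factor $\Hcorr U{U\times_\Z V}V \otimes_{\H(V)} \ICoh(V)$ is an instance of the \emph{affine} case, giving $\ICoh(U^\wedge_{U \times_\Z V})$ before the two limits are taken successively. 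You instead first compute $(f_{*,\H}(\ICoh_{/\Y}))_T \simeq \ICoh(T^\wedge_{T\times_\Z\Y})$ for each smooth $T \to \Z$, and then apply $\bGamma_\Z^\H$. That last step is where you lose clean control: you must then identify $\lim_T \Hsx \Z T \otimes_{\H(T)} \ICoh(T^\wedge_{T\times_\Z\Y})$ with $\ICoh(\Z^\wedge_\Y)$, but $\Hsx \Z T \otimes_{\H(T)} \ICoh(T^\wedge_{T\times_\Z\Y})$ is not an instance of the affine result ($T\times_\Z\Y$ is a stack), so you need another round of commuting tensors with limits (via rigidity/dualizability of the bimodules) and a cofinality argument to match the doubly-indexed diagram $(T,V)$ with the singly-indexed one over $V \to \Y$ smooth. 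You gesture at this in your final paragraph, but it is exactly the step the paper's ordering avoids, since the paper's inner tensors always stay at the level of affine schemes. So: same tools, same endgame, but the paper's ordering of the computation is the one that makes the reduction to affines immediate and saves the cofinality/commutation bookkeeping you would otherwise need.
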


\begin{proof}
Let 
$$
\ICoh_{/\Y} := \bLoc_\Y^\H(\ICoh(\Y)) \in \ShvCatH(\Y).
$$
Lemma \ref{lem:H-pullback-ICoh-stacks} gives the equivalence $(\phi_{V \to \Y})^{*,\H}(\ICoh_{/\Y}) \simeq \ICoh(V)$ for any affine scheme $V$ mapping smoothly to $\Y$. 
We then have: 
\begin{eqnarray}
\nonumber
\bGamma_\Z^\H f_{*,\H} (\ICoh_{/\Y}) 
& \simeq &
\lim_{V \in ((\Affevcoclfp)_{/\Y, \smooth})^\op}
\Hsx \Z V \usotimes{\H(V)} \ICoh(V)
\\
\nonumber
& \simeq &
\lim_{V \in ((\Affevcoclfp)_{/\Y, \smooth})^\op}
\lim_{U \in ((\Affevcoclfp)_{/\Z, \smooth})^\op}
\Hcorr U {U \times_\Z V} V \usotimes{\H(V)} \ICoh(V)
\\
\nonumber
& \simeq &
\lim_{V \in ((\Affevcoclfp)_{/\Y, \smooth})^\op}
\lim_{U \in ((\Affevcoclfp)_{/\Z, \smooth})^\op}
\ICoh(U^\wedge_{U \times_\Z V})
\\
\nonumber
& \simeq &
\lim_{V \in ((\Affevcoclfp)_{/\Y, \smooth})^\op}
\ICoh(\Z^\wedge_V)
\\
\nonumber
& \simeq &
\ICoh(\Z^\wedge_\Y).
\end{eqnarray}
Here we have used the self-duality of $\ICoh(S)$, the rigidity of $\H(S)$, Proposition \ref{prop:IndCoh-push-forward} (i.e., the special case of the assertion for affine schemes), Lemma \ref{lem:H-pullback-ICoh-stacks} and smooth descent for $\ICoh$.
The conclusion now follows from Corollary \ref{cor:push-pull-under-H-affineness}.
\end{proof}




\end{document}